\newtheorem{thm}{Theorem}[section]
\newtheorem{prop}[thm]{Proposition}
\newtheorem{lem}[thm]{Lemma}
\newtheorem{cor}[thm]{Corollary}
\newtheorem{rem}[thm]{Remark}
\theoremstyle{definition}
\newtheorem{defn}[thm]{Definition}
\newcommand{\ds}{\displaystyle}
\newcommand{\area}{\mathbf{a}}
\newcommand{\bounce}{\mathbf{b}}
\newcommand{\ab}{\mathbf{ab}}
\newcommand{\abp}{\text{ab}}
\newcommand{\qbinom}[2]{\genfrac[]{0pt}{0}{#1}{#2}_q}
\DeclareMathOperator{\Par}{Par}
\newcommand{\dycks}{\mathcal{D}}
\newcommand{\dycksb}{\mathcal{D}^*}
\newcommand{\cdycks}{\mathcal{C}}
\newcommand{\comp}{\textbf{Comp}}
\renewcommand{\ds}{\displaystyle}
\DeclareMathOperator{\N}{N}
\DeclareMathOperator{\E}{E}
\DeclareMathOperator{\AF}{\mathcal{AF}}
\DeclareMathOperator{\BF}{\mathcal{BF}}
\DeclareMathOperator{\bnc}{bnc}
\DeclareMathOperator{\row}{row}
\newcommand{\CAP}[1]{{\small #1}}
\begin{document}

\title
[An area-bounce exchanging bijection on a subset of Dyck paths]
{An area-bounce exchanging bijection on a large subset of Dyck paths}
\author{Arvind Ayyer}
\address{Arvind Ayyer, Department of Mathematics,
  Indian Institute of Science, Bangalore  560012, India.}
\email{arvind@iisc.ac.in}

\author{Naren Sundaravaradan}
\address{Naren Sundaravaradan, HFN Inc., Bengaluru, India 560076}
\email{nano.naren@gmx.com}

\date{\today}

\begin{abstract}
  It is a longstanding open problem to find a bijection exchanging area and bounce statistics on Dyck paths. We settle this problem for an exponentially large subset of Dyck paths via an explicit bijection. Moreover, we prove that this bijection is natural by showing that it maps what we call bounce-minimal paths to area-minimal paths. 
As a consequence of the proof ideas, we show combinatorially that a path with area $a$ and bounce $b$ exists if and only if a path with area $b$ and bounce $a$ exists.
We finally show that the number of distinct values of the sum of the area and bounce statistics is the number of nonzero coefficients in Johnson's $q$-Bell polynomial.
\end{abstract}

\keywords{$(q, t)$-Catalan combinatorics, Dyck paths, area, bounce, bijection}
\subjclass[2010]{05A19, 05A15, 05A05}
\maketitle

\section{Introduction}

The subject of $(q, t)$-Catalan combinatorics arose in the study of diagonal harmonics by Garsia--Haiman~\cite{garsia-haiman-1996} and has given rise to a rich body of work involving objects counted by Catalan numbers~\cite{garsia-haglund-2002,haglund-2003,haglund-loehr-2005,loehr-2005,haglund-2008,pappe-paul-schilling-2022}. In its simplest formulation, there are two statistics on Dyck paths known as \emph{area} and \emph{bounce} which have a symmetric joint distribution; see \cref{sec:def} for the basic definitions. There are other known statistics which have the same distribution, such as \emph{dinv}~\cite[Corollary~5.6.1]{haglund-2008}, \emph{depth}~\cite[Eq. (2.6)]{pappe-paul-schilling-2022}
and \emph{ddinv}~\cite[Eq. (2.7)]{pappe-paul-schilling-2022}, but we will focus only on area and bounce in this article.

There are very few bijective results in this area. The most well-known is the so-called \emph{zeta map}~\cite[Theorem 3.15]{haglund-2008}, which is a bijection on Dyck paths which takes the pair (area, dinv) to (bounce, area). 
{Lee--Li--Loehr~\cite{lee-li-loehr-2014,lee-li-loehr-2018} have constructed a bijection for certain subsets of Dyck paths which switch the dinv and area statistics.
Han--Lee--Li--Loehr have further considerably extended the work of Lee--Li--Loehr in a series of articles~\cite{han-lee-li-loehr-2020,han-lee-li-loehr-2022}. 
}
In recent work of Pappe--Paul--Schilling~\cite[Theorem 3.19]{pappe-paul-schilling-2022}, natural bijections are constructed showing that area and depth, as well as dinv and ddinv, have symmetric joint distributions.

In this work, we make progress on a longstanding open problem of Haglund's \cite[Open Problem 3.11]{haglund-2008} by exhibiting a {direct} bijection between area and bounce statistics on a nontrivial subset of Dyck paths.
{Since our bijection is close in spirit of the works of Han, Lee, Li and Loehr (HLLL for short), we point out the key differences.}

{
First of all, the bijection of HLLL works on what they call \emph{Dyck partitions}, which are the partitions formed by the cells above the Dyck paths. Therefore, the size of the Dyck path does not matter to them. 
In one sense, this is good because this bijection is uniform for all sizes. They are also able to show a complete bijection for Dyck paths up to size $7$.
Lee--Li--Loehr first show a bijection between
two small subsets of Dycks paths.
Secondly, they conjecture~\cite[Conjecture 6.9]{lee-li-loehr-2018} that all Dyck paths can be decomposed into disjoint sets which are in bijection. 
The other papers of HLLL make progress on this conjecture. In particular, they prove properties that these sets must satisfy. As a result, they are able to extend their bijection for Dyck paths up to size $11$.
}

{
On the other hand, in our work, we construct an explicit bijection which works for every fixed size. 
We have verified that our bijections are different, even after using the zeta map~\cite[Theorem 3.15]{haglund-2008} to compare our results, but 
it is difficult to make a direct comparison between our bijections.
In our bijection, the Dyck path $p_{n,\lambda}$ is mapped to $p_{n,\lambda'}$ (see \eqref{def-pnalpha}), which seems a natural property to have. However, HLLL only
  conjecture that their bijection will extend the bijection
  $\zeta^{-1}(p_{n,\lambda}) \leftrightarrow \zeta^{-1}(p_{n,\lambda'})$.
We do show a lower bound that is exponential in the size, but their techniques do not seem to allow them to give bounds for any fixed size.
}

The plan of this article is as follows.
We first give the basic definitions in \cref{sec:def}.

We present the bijection in \cref{sec:bij}. We first define operators that modify the Dyck path by adding or removing boxes in \cref{sec:ops}.
We then construct the bijection and prove that exponentially many Dyck paths are in bijection in \cref{sec:const}.
We then illustrate how this bijection could be extended in \cref{sec:ext bij}.

We then show that this bijection is natural for another subset of Dyck paths,
namely those with fixed sum of area and bounce statistics. For this subset of Dyck paths, we show that the bijection maps paths with minimal bounce to those with minimal area in \cref{sec:minimal}. 
We first define the operators which increase and decrease the area while keeping this sum fixed in \cref{sec:UD}.
We then study properties of bounce paths of these minimal area and minimal bounce paths, and prove the bijection in \cref{sec:area-bounce-minimal}.
We then show the existence of paths with intermediate values of area and bounce in this subset in some special cases, and explicitly characterize paths with largest and next largest sum of area and bounce, in \cref{sec:intermediate}.

Lastly, we count the number of distinct values of the sum of area and bounce statistics in \cref{sec:sums}. Specifically, we show that this number is intimately related to one of the many $q$-generalizations of the Bell numbers.

We give illustrative examples throughout the article. For every Dyck path, we
write the area and bounce at the bottom left of the figure as values of $a$ and
$b$. 

{The code that implements the bijection along with computed examples are available on the open \texttt{GitHub} repository: 
\begin{quote}
\url{https://github.com/nanonaren/qtcatalan-bijection}. 
\end{quote}
The interested reader can download the \texttt{Haskell} program to compute the bijection from \cref{sec:bij} as shown and verify \cref{thm:Phi_bijection}. The bijection for all Dyck paths is $\AF_n$ is precomputed in a separate file for each $n$, $1 \leq n \leq 15$ in the \texttt{examples} folder. For each Dyck path in these examples, the reader can also verify \cref{thm:area-bounce-minimal}.
}

\section{Definitions}
\label{sec:def}

\begin{defn}
A \emph{Dyck path} of semilength $n$ is a path in $\mathbb{Z}^2$ starting at the origin, taking unit north and east steps, denoted $\N$ and $\E$ respectively, such that the path always stays on or above the diagonal line $x = y$, and ends at $(n, n)$.
\end{defn}

We will write Dyck paths as words in the alphabet $\{\N, \E\}$.
Let $\dycks(n)$ be the set of Dyck paths of semilength $n$.

The bounce path of a Dyck path $\pi$ is constructed as follows: we start
at $(0, 0)$ and take $\N$ steps until we meet a point of $\pi$.
At that
point we take $\E$ steps until we hit the diagonal. Now, we again take $\N$
steps until we meet an $\E$ step of $\pi$ and repeat the process until we reach
$(n, n)$. Let the points we hit on the diagonal be $(0,0) = (b_0,b_0), (b_1,b_1), \dots,
(b_\ell,b_\ell) = (n,n)$. 
Then $b_0,\dots, b_\ell$ are called the \emph{bounce points} of $\pi$ 
and the \emph{bounce} of $\pi$ is $\bounce(\pi) =
\sum_{i=1}^\ell (n-b_i)$.
The \emph{bounce path} of a Dyck path $\pi$ having bounce points $b_0, \dots, b_\ell$ is then the path
\begin{equation}
\label{def-pnalpha}
  p_{n,\alpha} = \N^{\alpha_1} \E^{\alpha_1} \dots \allowbreak \N^{\alpha_{\ell}}\E^{\alpha_{\ell}}.
\end{equation}
Here $\alpha = (\alpha_1, \dots, \alpha_\ell)$ is a \emph{composition} with $\alpha_i = b_i - b_{i-1}$, where we recall that a composition is a tuple of positive integers. 
The sum of the parts of a composition $\alpha$ is called its \emph{size},
denoted $|\alpha|$, and the number of parts is called its \emph{length}, denoted
$\ell(\alpha)$. 

Notice that bounce paths $p_{n,\alpha}$ are Dyck paths, and the  bounce path of $p_{n,\alpha}$ is also $p_{n,\alpha}$.
Let $\cdycks(n) = \{ p_{n,\alpha} \mid \ell(\alpha) \leq n, |\alpha| = n \}$ be the set of such Dyck paths.

\begin{figure}[h!]
  \centering
    \includegraphics[width=0.22\textwidth]{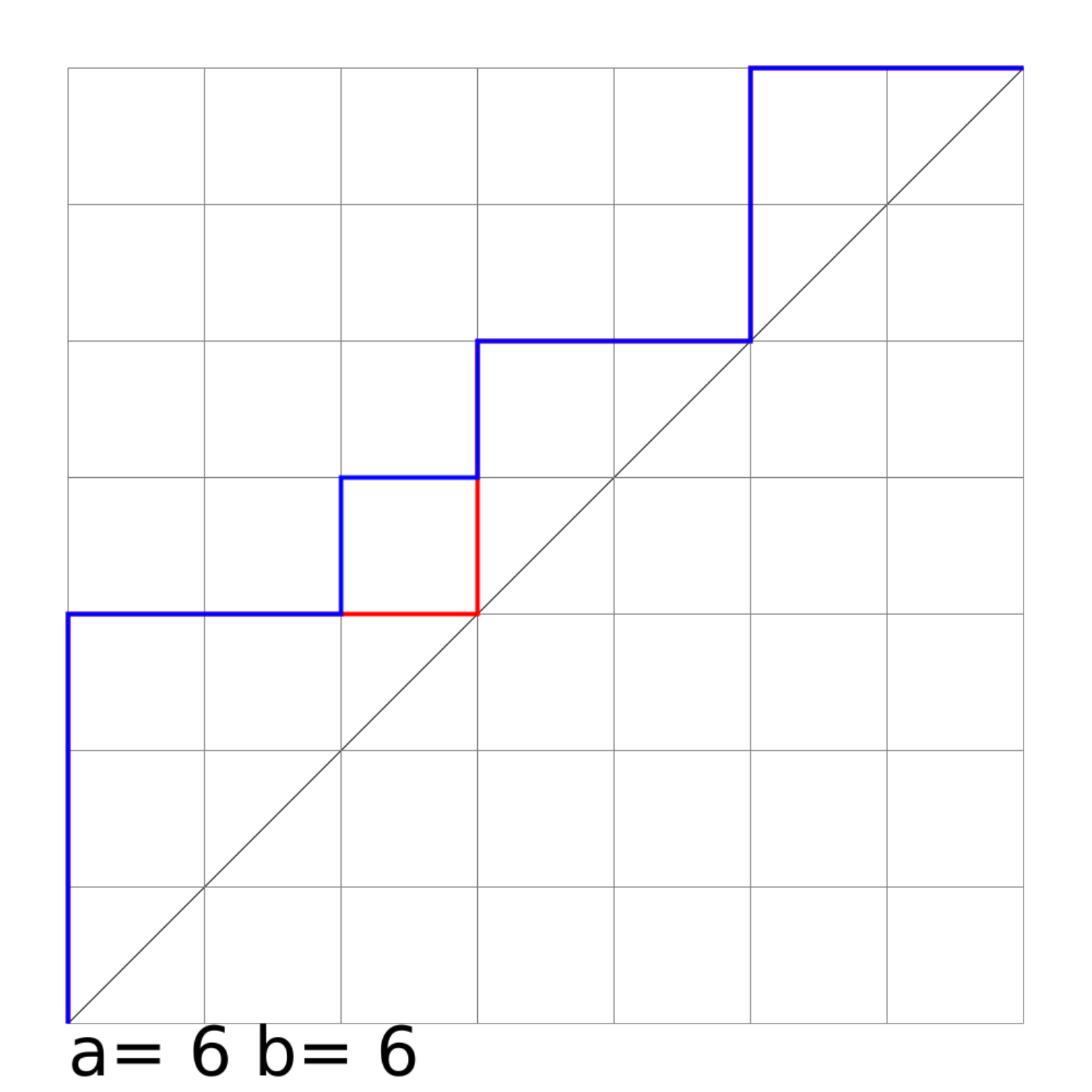}
    \caption{The Dyck path, $\pi = \N^3 \E^2 \N \E \N \E^2 \N^2 \E^2$ has area sequence $a_\pi = (0,1,2,1,1,0,1)$, area $\area(\pi)=6$, bounce path (in red) $p_{7, (3,2,2)}$, bounce points $(0,3,5,7)$ and bounce $\bounce(\pi)=6$.
      This path has $1$ floating cell.}
    \label{fig:properties}
\end{figure}

\begin{prop}[{\cite{haglund-2003}}]
The number of Dyck paths in $\dycks(n)$ with bounce path $p_{n,\alpha}$  is given by
\begin{equation}
  \prod_{j=2}^{\ell(\alpha)}\binom{\alpha_{j-1} + \alpha_j - 1}{\alpha_j}.
\end{equation}
\end{prop}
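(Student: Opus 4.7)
The plan is to parametrize each $\pi \in \dycks(n)$ by its sequence of \emph{column tops}: for $0 \leq x \leq n-1$, let $t_x$ be the $y$-coordinate at which the $(x+1)$-st $\E$ step of $\pi$ occurs. Then $\pi \mapsto (t_0, t_1, \ldots, t_{n-1})$ is a bijection onto the set of weakly increasing integer sequences satisfying $t_x \geq x+1$ and $t_{n-1} = n$.

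Next I would unroll the bounce construction in this language. Starting from $(b_{i-1}, b_{i-1})$, the bounce moves $\N$ until meeting an $\E$ of $\pi$, which happens precisely at the top of the column over abscissa $b_{i-1}$, i.e.\ at height $t_{b_{i-1}}$; it then moves $\E$ to the diagonal at $(t_{b_{i-1}}, t_{b_{i-1}})$. This yields the recursion $b_i = t_{b_{i-1}}$, so $\pi$ has bounce path $p_{n,\alpha}$ if and only if $t_{b_{i-1}} = b_i$ for every $i = 1, \ldots, \ell(\alpha)$. All remaining $t_x$ are free subject only to weak monotonicity (and, a priori, to $t_x \geq x+1$).

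It then remains to count. Between two consecutive pinned positions $b_{i-1}$ and $b_i$ (for $1 \leq i \leq \ell(\alpha)-1$), the free entries $t_{b_{i-1}+1}, \ldots, t_{b_i - 1}$ form a weakly increasing sequence of length $\alpha_i - 1$ with values in the integer interval $[b_i, b_{i+1}]$ of size $\alpha_{i+1}+1$. The Dyck constraint $t_x \geq x+1$ is automatic here, since $x+1 \leq b_i \leq t_x$. The number of such multisets is
\[
\binom{(\alpha_i-1)+(\alpha_{i+1}+1)-1}{\alpha_i - 1} \;=\; \binom{\alpha_i + \alpha_{i+1} - 1}{\alpha_{i+1}}.
\]
After the last pinned position $b_{\ell(\alpha)-1}$, monotonicity forces all remaining $t_x$ to equal $n$, contributing $1$. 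Multiplying across blocks and reindexing $j = i+1$ recovers the stated product.

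No step looks technically difficult; the main obstacle is really just setting up the right bookkeeping, namely the column-top parametrization, under which the bounce condition cleanly becomes a set of pinnings and the surviving entries decouple block by block into independent multiset choices.
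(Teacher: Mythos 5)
Your argument is correct and complete. The column-top parametrization is sound: each Dyck path corresponds bijectively to a weakly increasing sequence $t_0 \le \dots \le t_{n-1}=n$ with $t_x \ge x+1$ (in the paper's notation $t_x = h_{x+1}$), and since exactly one east step of $\pi$ begins on each vertical line $x=b_{i-1}$, namely at height $t_{b_{i-1}}$, the bounce recursion is exactly $b_i = t_{b_{i-1}}$ (this matches the paper's example in Figure 1, where $b_2 = t_3 = 5$ rather than $4$, confirming that ``meeting an east step'' means reaching its west endpoint). Pinning $t_{b_{i-1}}=b_i$ is then necessary and sufficient, the diagonal constraint is indeed vacuous inside each block, the blocks decouple, the multiset count $\binom{\alpha_i+\alpha_{i+1}-1}{\alpha_{i+1}}$ per block is right, and the final block is forced, so the product follows after reindexing; the edge case $\ell(\alpha)=1$ also comes out correctly as an empty product. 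Note that the paper does not prove this proposition at all but cites Haglund, whose standard argument decomposes the path into the pieces lying between consecutive bounce heights, each a lattice path confined to an $\alpha_{j-1}\times\alpha_j$ rectangle with one step forced, giving the same binomial factor; your version is an equivalent but self-contained way of organizing that count, with the advantage that the independence of the blocks is completely transparent from the pinning of the sequence $(t_x)$.
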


Note that every path lies above its bounce path.
For a path $\pi$, we call the cells that lie between $\pi$ and its bounce path \emph{floating cells}.
  The \emph{area sequence} $a_\pi = (a_1, \dots, a_n)$ of a Dyck path $\pi$ counts the
  number of entire unit cells between $\pi$ and the diagonal in each row from $1$ to
  $n$ reading bottom to top. The \emph{area} of $\pi$ is then given by 
  $\area(\pi) = \sum_{i=1}^n a_i$.
\cref{fig:properties} gives an example of a Dyck path and its statistics. It is easy to see that the area and bounce of $p_{n,\alpha}$ are given by
\begin{equation}
  \label{eq:ab_wrt_composition}
  \area(p_{n,\alpha}) = \sum_{i=1}^{\ell(\alpha)} \binom{\alpha_i}{2}
    \quad \text{and} \quad 
  \bounce(p_{n,\alpha}) = \sum_{i=1}^{\ell(\alpha)} (i-1)\alpha_i.
\end{equation}
Recall that a \emph{partition} is a composition whose parts are weakly decreasing.
  Let $\Par_n$ be the set of \emph{partitions} of size $n$.
If $\lambda$ is a partition, then $\bounce(p_{n,\lambda})$ is an important statistic in combinatorics, commonly denoted $n(\lambda)$ in the literature and sometimes called the \emph{weighted size}. Recall that the conjugate of a partition $\lambda$, denoted $\lambda'$ is the partition obtained by transposing the Young diagram of $\lambda$. Then one can show that $\area(p_{n,\lambda}) =  n(\lambda')$; see \cite[Equations (1.5) and (1.6)]{macdonald-1995} for example.

Let $F_n(q,t)$ be the \emph{$(q,t)$-Catalan polynomial} given by
\begin{equation}
\label{def-Fn}
  F_n(q,t) = \sum_{\pi \in \dycks(n)} q^{\area(\pi)}t^{\bounce(\pi)}.
\end{equation}
The open problem~\cite[Open Problem 3.11]{haglund-2008} is to show bijectively that $F_n(q, t) = F_n(t, q)$. We will prove such a statement for a subset of $\dycks(n)$ in \cref{sec:bij}.

Let $P_n(a,b) \subseteq \dycks(n)$ be the subset of paths having area $a$ and bounce sequence $b = (b_0, b_1, \dots)$. Let $\sim$ be an equivalence relation on $\dycks(n)$ such that $\pi
\sim \tau$ if and only if $\area(\pi) = \area(\tau)$ and {$\bounce(\pi) = \bounce(\tau)$}. Let
$[\pi]$ represent the paths equivalent to $\pi$ under this relation.

\section{The bijection}
\label{sec:bij}

\subsection{The operators $A_i$, $C_i$, $S_i$ and $B_{i,k}$}
\label{sec:ops}

\begin{defn}
  Let $\dycksb(n) = \dycks(n) \cup \{\bot\}$ be the set of Dyck paths of semilength $n$ adjoined with a symbol $\bot$.
\end{defn}

We define several operators on $\dycksb(n)$. For the operators $A_i$ and $C_i$, $i$ will run from $1$ through $n$.
For the operators $S_i$ and $B_{i,k}$, $i$ will be indexed by the indices of the bounce points of the path.
All these operations are applied left to right and all of them applied on $\bot$ result in $\bot$.

The operator $A_i : \dycksb(n) \rightarrow \dycksb(n)$ adds a cell to the left
of the path at row $i$ {; i.e., if $(a_1,\dots,a_n)$ is the area
  sequence of a path $\pi$, then $(a_1, \dots, a_i+1,a_{i+1},\dots,a_n)$ is the
  area sequence of $A_i(\pi)$.}. Similarly, $A_i^{-1} : \dycksb(n) \rightarrow
\dycksb(n)$ removes the leftmost cell from the path at row $i$. If either of
these operations results in a path which is not a Dyck path, it returns $\bot$.
The figure below shows some examples. Note that $\pi \cdot A_5 = \bot$.

  \begin{center}
    \begin{tabular*}{0.75\textwidth}{c c c}
      \includegraphics[width=0.2\textwidth]{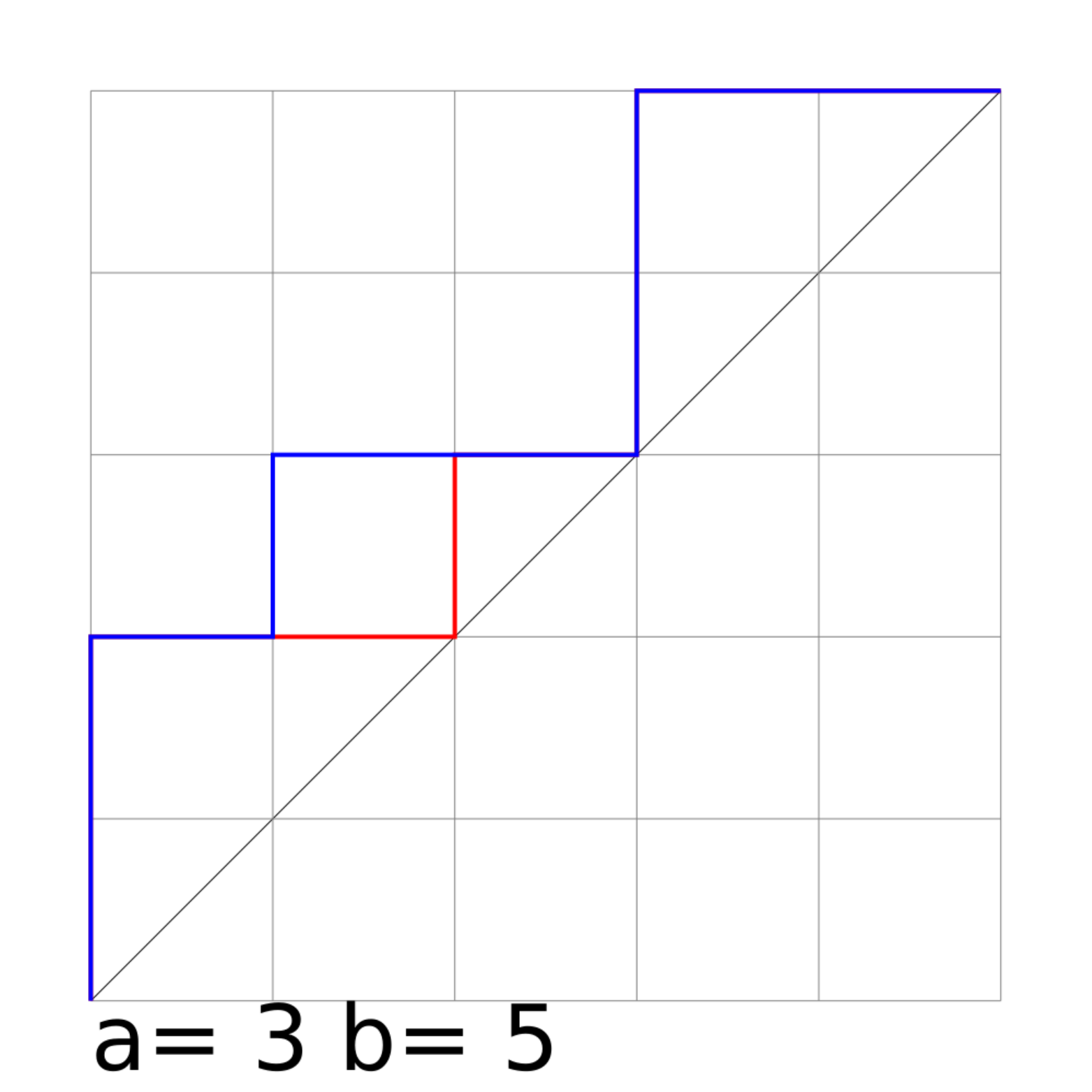} &
      \includegraphics[width=0.2\textwidth]{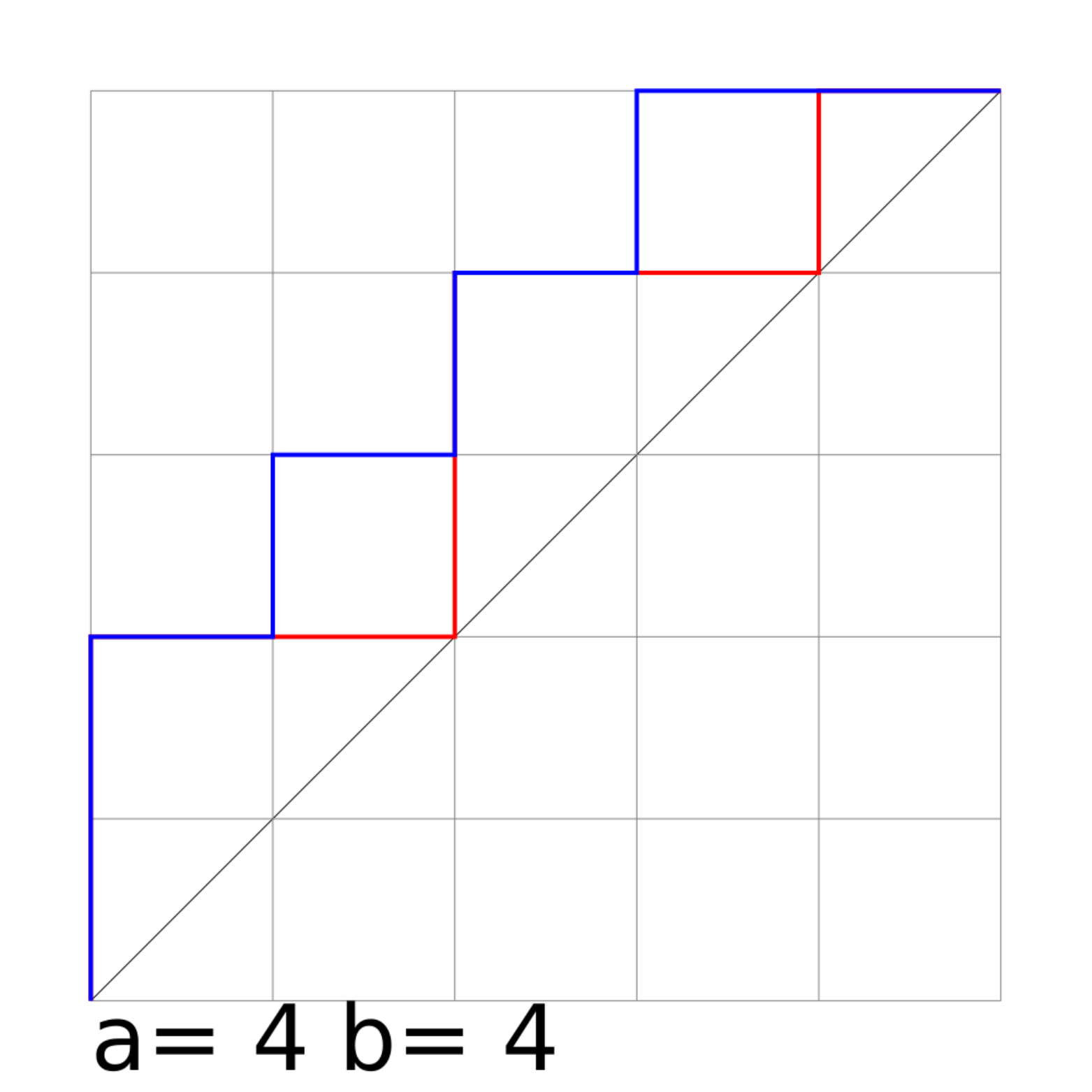} &
      \includegraphics[width=0.2\textwidth]{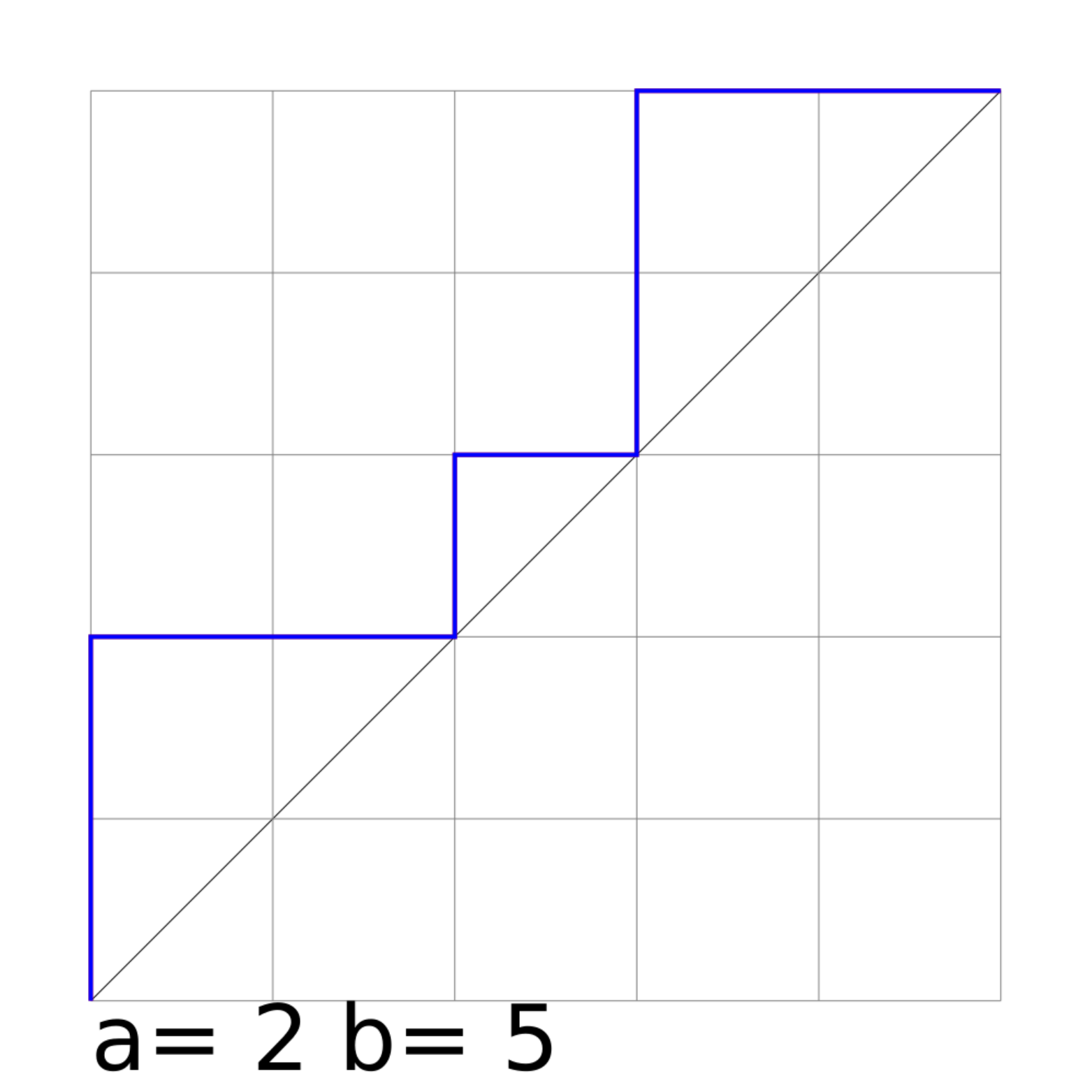} \\
      \CAP{$\pi$} & \CAP{$\pi \cdot A_4$} & \CAP{$\pi \cdot A_3^{-1}$}
    \end{tabular*}
  \end{center}

The operator $C_i : \dycksb(n) \rightarrow \dycksb(n)$ adds a cell to the top of
the path at column $i$. Similarly, $C_i^{-1} : \dycksb(n) \rightarrow
\dycksb(n)$ removes the topmost cell from the path at column $i$. If either of
these operations results in a path which is not a Dyck path, it returns $\bot$.
The figure below shows some examples. Note that $\pi \cdot C_3 = \bot$.

  \begin{center}
    \begin{tabular*}{0.75\textwidth}{c c c}
      \includegraphics[width=0.2\textwidth]{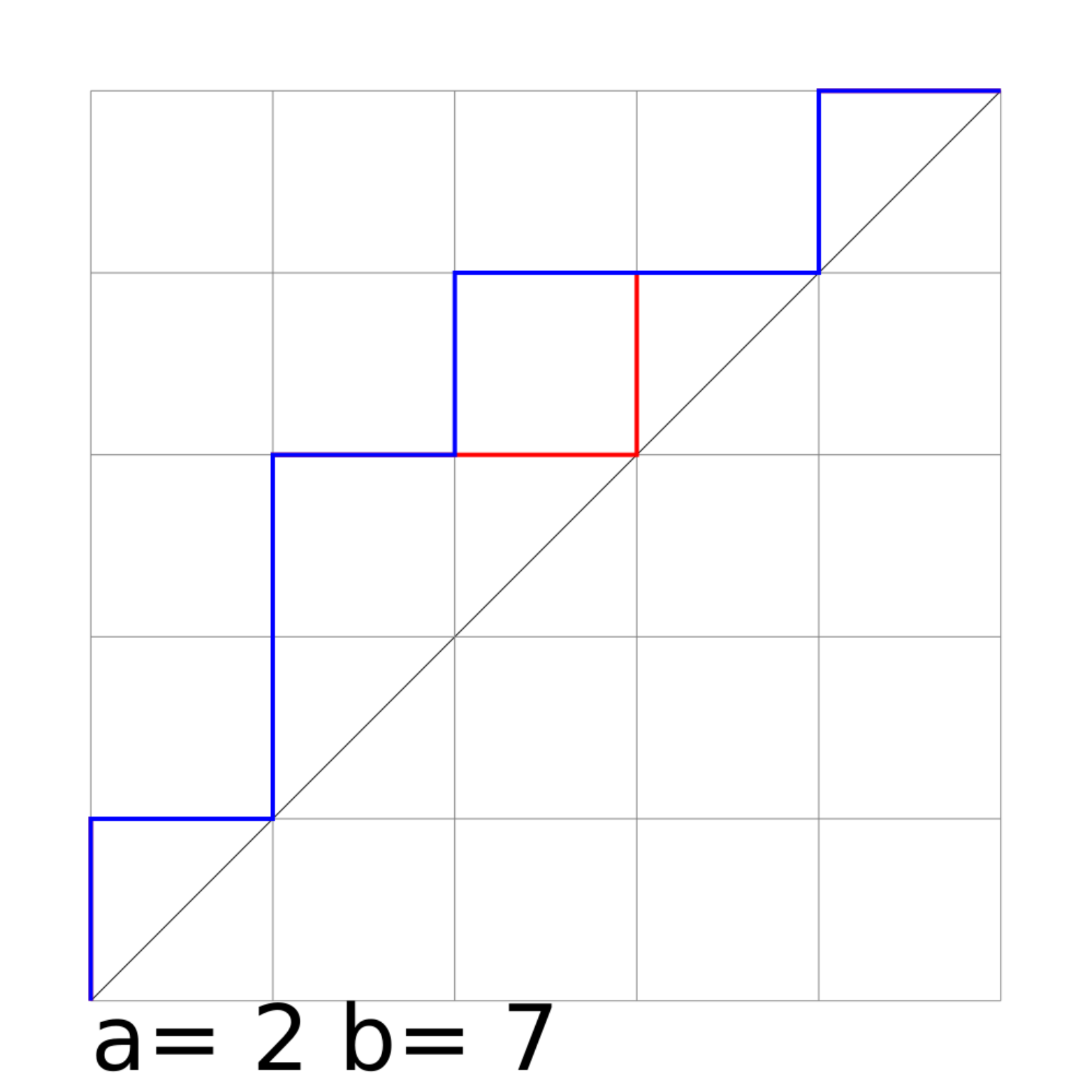} &
      \includegraphics[width=0.2\textwidth]{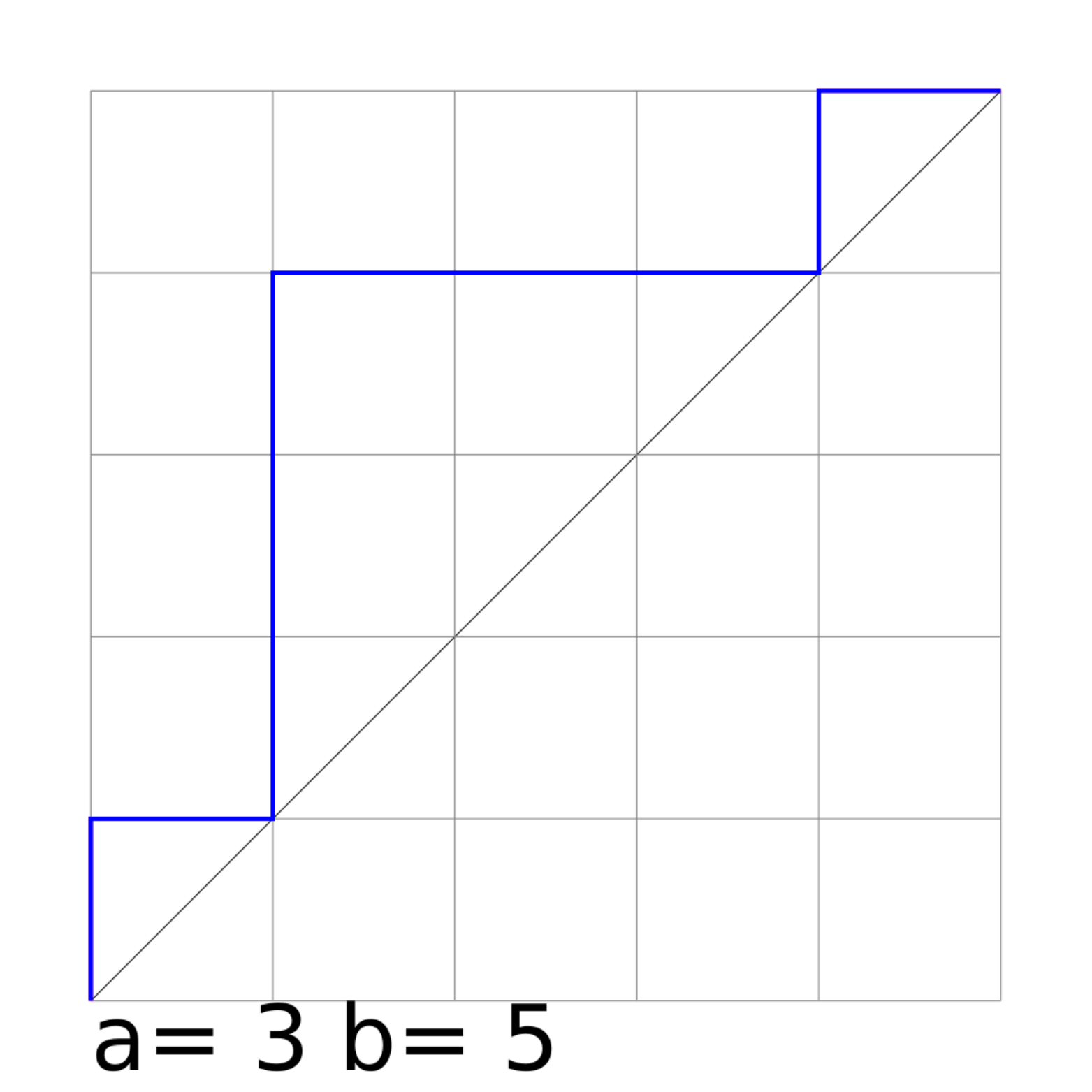} &
      \includegraphics[width=0.2\textwidth]{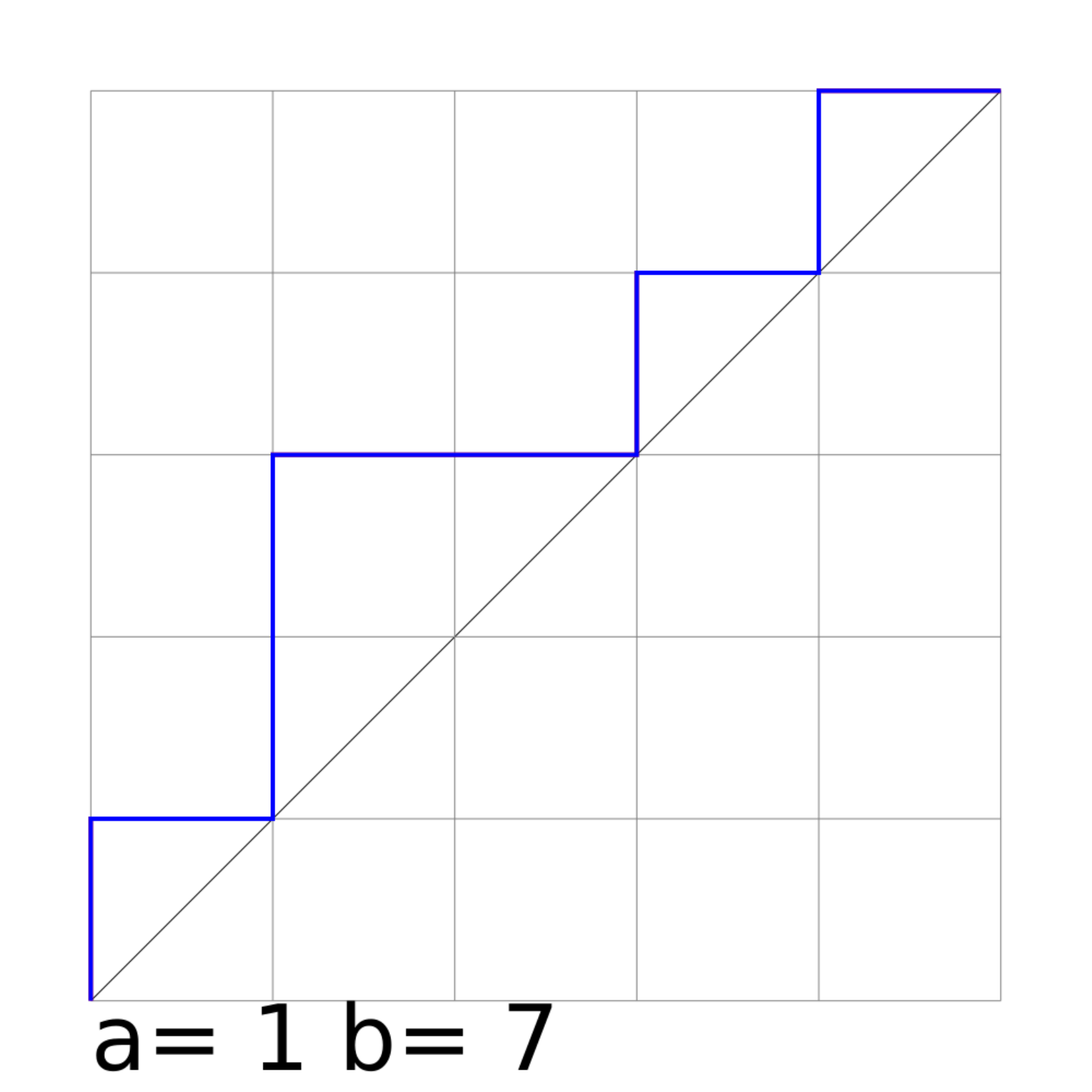} \\
      \CAP{$\pi$} & \CAP{$\pi \cdot C_2$} & \CAP{$\pi \cdot C_3^{-1}$}
    \end{tabular*}
  \end{center}

  For a Dyck path $\pi \in \dycks(n)$, let the height of column $i$, $h_i \equiv
  h_i(\pi)$,
  be the number of cells between $\pi$ and the $x$-axis in column $i$ for $1 \leq i \leq n$. 
  We now define the map $S_i$ for $i \geq 1$ as follows. {The map $S_i$ will
  try to reversibly shift the $i$'th bounce point of a path $\pi$ down by $1$ while preserving
  the area of the path. 
  To be precise, let $s^{(i)} = b_{i+1} - h_{b_i}$.
  The operation will achieve this by removing $s^{(i)}$ cells
  in a row and adding them to a column.} 
  Then
  \begin{equation}\label{eq:S}
    \pi \cdot S_i =
    \begin{cases}
      \bot &\text{if } h_{b_{i-1}} = b_i,\\
      \pi \cdot A_{b_i}^{-s^{(i)}} C_{b_i}^{s^{(i)}} &\text{ otherwise.}
    \end{cases}
  \end{equation}
  Note that $\pi \cdot S_i = \bot$ if $s^{(i)} > a_{b_i}$ or if $h_{b_{i-1}+j} >
  b_i$ for any $1 \le j \le s^{(i)}$. \cref{fig:S} illustrates the operation and
  the following figure shows some examples.
  
  \begin{center}
    \begin{tabular*}{0.75\textwidth}{c c c}
      \includegraphics[width=0.2\textwidth]{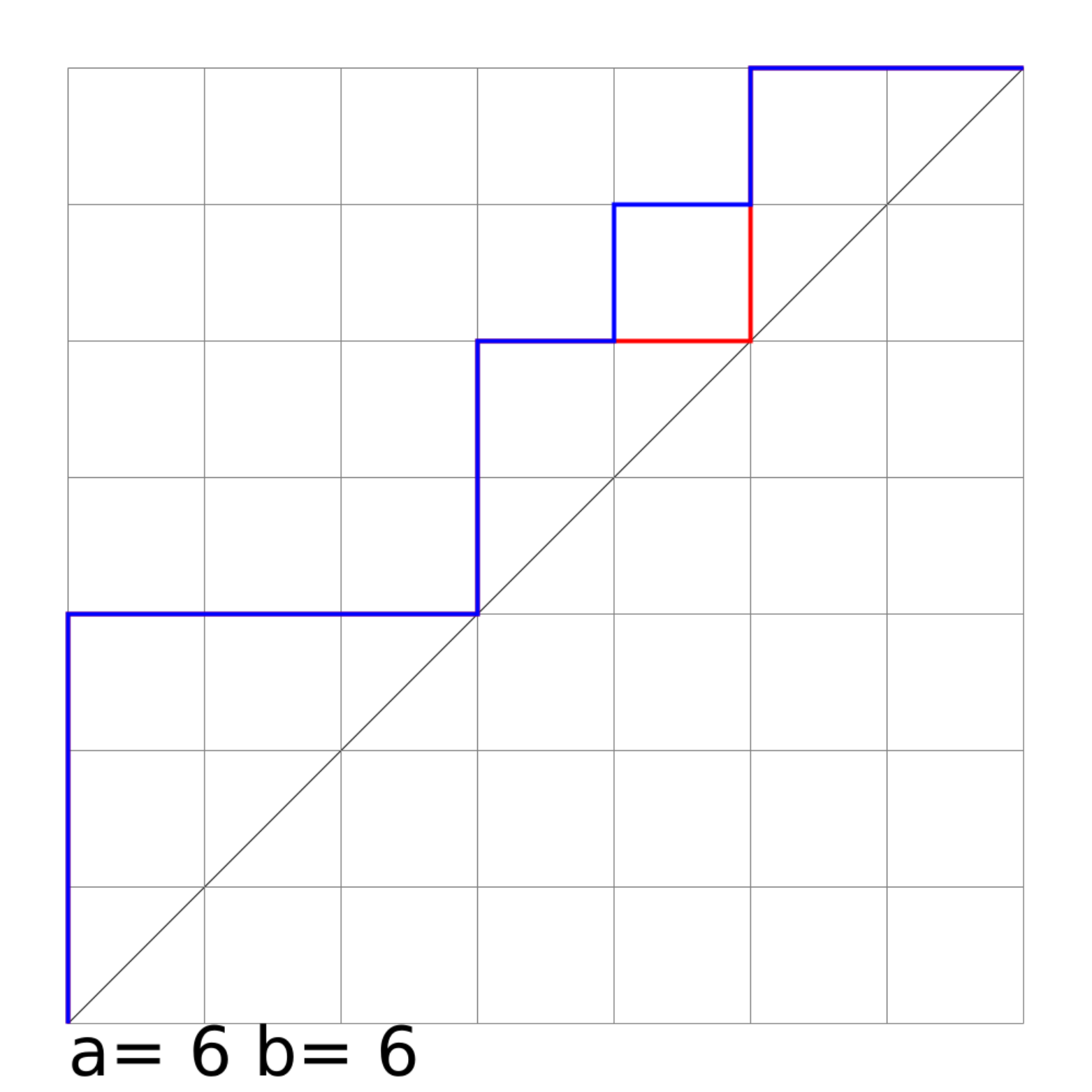} &
      \includegraphics[width=0.2\textwidth]{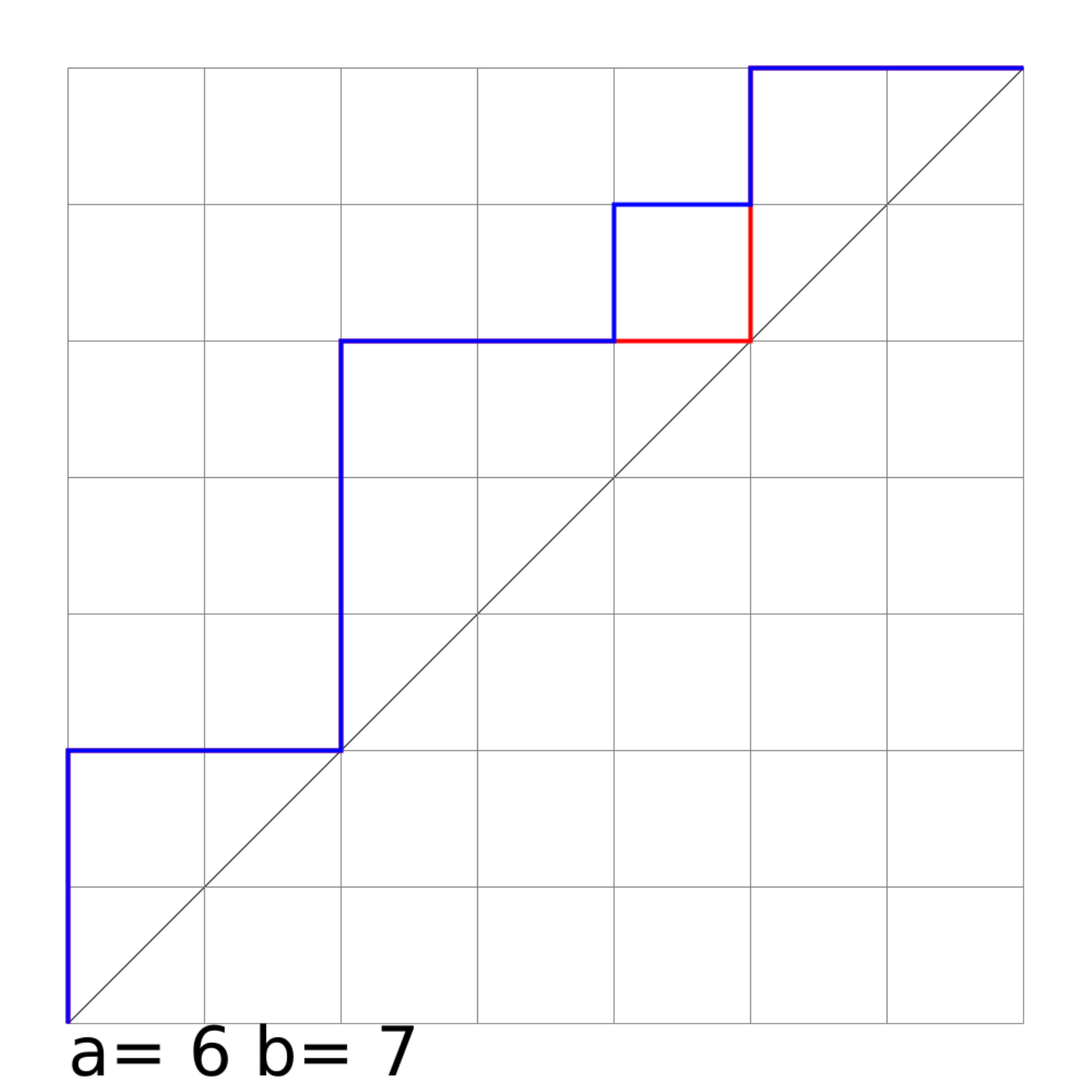} &
      \includegraphics[width=0.2\textwidth]{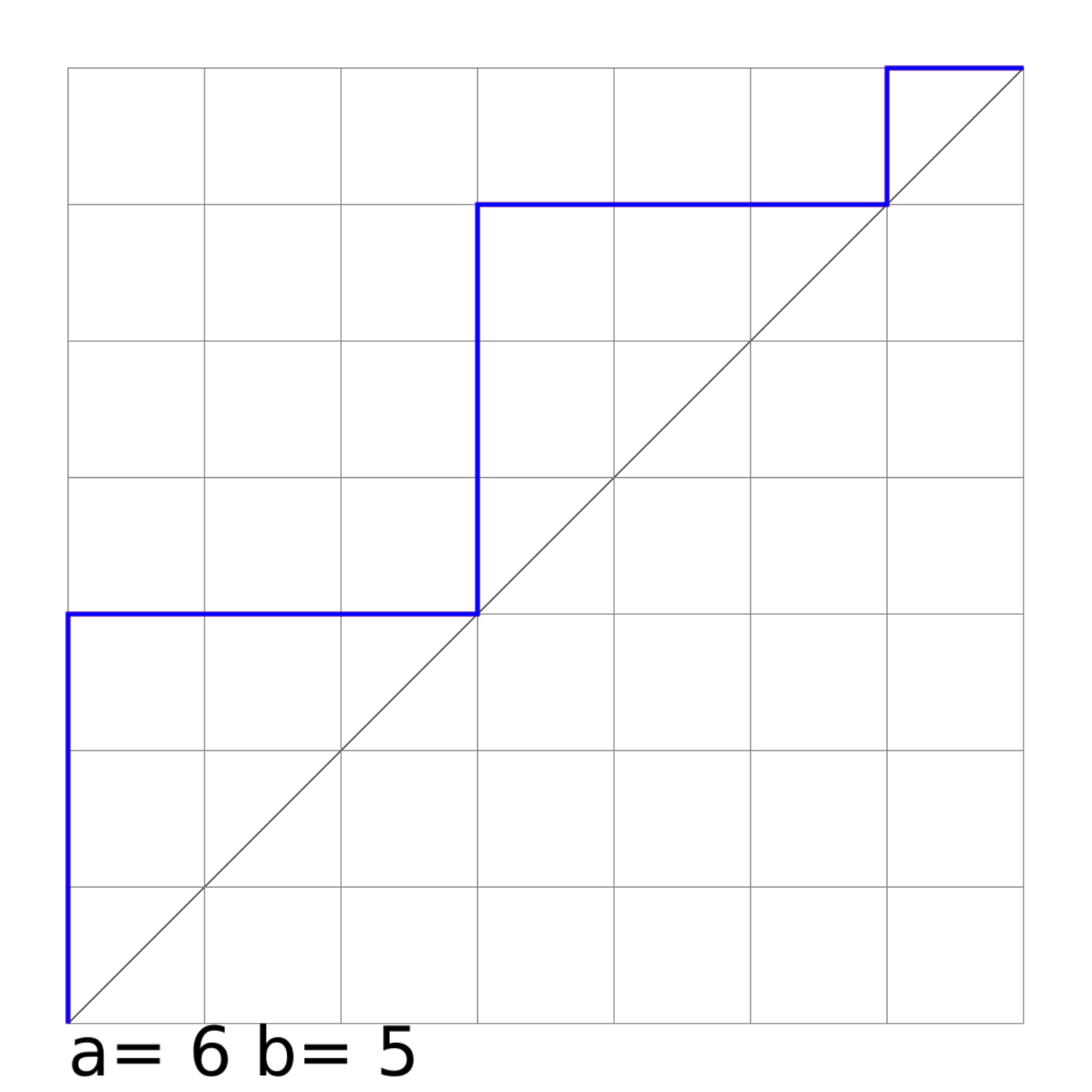} \\
      \CAP{$\pi$} & \CAP{$\pi \cdot S_1$} & \CAP{$\pi \cdot S_2^{-1}$}
    \end{tabular*}
  \end{center}

  It is not difficult to see that whenever $\tau = \pi \cdot S_i \ne \bot$, the map $S_i$ can be inverted. Specifically, $s^{(i)}$ can be recovered from $\tau$ as the number of east steps starting from $(b_{i-1},b_i-1)$ till the first north step.
  The following result is then immediate.

  \begin{figure}[h!]
    \centering
  \begin{tikzpicture}[scale=0.6]
      \draw[gray,very thin] (0.5,0.5) -- (8,8);

      \draw[dashed] (0.5,0.5) -- (0.5,4) -- (4,4) -- (4,8) -- (8,8);

      \draw[color=white,fill=red,opacity=0.3] (0.5,3.5) -- (0.5,4) -- (2,4) -- (2,3.5);

      \draw[color=white,fill=blue,opacity=0.3] (3.5,6.5) -- (3.5,8) -- (4,8) -- (4,6.5);

      \draw[dashed,color=blue] (0.5,3.5) -- (3.5,3.5) -- (3.5,8);

      \draw (0.5,3) -- (0.5,4) -- (2,4) -- (2.5,4);
      \draw (2.5,4) -- (2.5,6) -- (3,6) -- (3,6.5) -- (4,6.5) -- (4,8) -- (5,8);

      \node at (4.7,4) {$b_i$};
      \node at (4.6,3.4) {$b_i-1$};
      \node at (1.5,0.5) {$b_{i-1}$};
      \node at (8.7,8) {$b_{i+1}$};
      \draw[<->,thin] (4.2,6.5) -- (4.2,8) node[midway,xshift=9] {$s^{(i)}$};
      \draw[<->,thin] (0.5,3.3) -- (2,3.3) node[midway,yshift=-7]{$s^{(i)}$};
  \end{tikzpicture}
  \caption{Illustration of the action of $S_i$.}
  \label{fig:S}
\end{figure}
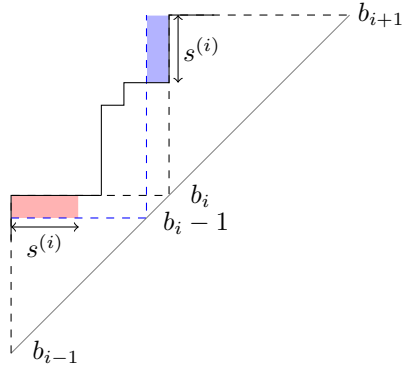

\begin{prop}
\label{prop:Si action}
If $\pi \cdot  S_i \neq \bot$, then $\area(\pi \cdot  S_i) = \area(\pi)$ and $\bounce(\pi \cdot  S_i) = \bounce(\pi) + 1$.
\end{prop}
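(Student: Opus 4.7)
For the area claim, the plan is essentially by construction. Since $S_i = A_{b_i}^{-s^{(i)}} C_{b_i}^{s^{(i)}}$, each of the $s^{(i)}$ applications of $A_{b_i}^{-1}$ removes exactly one cell, while each of the $s^{(i)}$ applications of $C_{b_i}$ adds exactly one cell. The hypothesis $\pi \cdot S_i \neq \bot$ ensures no intermediate step returns $\bot$, so the net change in area is $-s^{(i)} + s^{(i)} = 0$.

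For the bounce claim, the plan is to show that the bounce composition of $\pi \cdot S_i$ is
\[
\alpha' = (\alpha_1, \ldots, \alpha_{i-1}, \alpha_i - 1, \alpha_{i+1} + 1, \alpha_{i+2}, \ldots, \alpha_\ell),
\]
equivalently, that among the bounce points only $b_i$ moves (to $b_i - 1$) while all others are preserved. From \eqref{eq:ab_wrt_composition}, shifting a unit from $\alpha_i$ to $\alpha_{i+1}$ changes bounce by $-(i-1) + i = 1$, which gives the result.

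To establish this composition change, I would trace $S_i$'s effect on $\pi$ near columns $b_{i-1}$ and $b_i$. The first case in \eqref{eq:S} forces $h_{b_{i-1}} < b_i$, while the remarks after \eqref{eq:S} give $s^{(i)} \leq a_{b_i}$ and $h_{b_{i-1}+j} \leq b_i$ for $1 \leq j \leq s^{(i)}$. Combined with the bouncing inequalities $h_c \geq b_i$ for $c \in [b_{i-1}+1, b_i]$ (since $\pi$ lies above its bounce path $p_{n,\alpha}$) and the monotonicity of $c \mapsto h_c$, these identities imply $h_c < b_i$ for all $c \leq b_{i-1}$ and $h_{b_{i-1}+j} = b_i$ for $1 \leq j \leq s^{(i)}$; in particular $a_{b_i} = \alpha_i - 1$, so $b_{i-1} + s^{(i)} < b_i$. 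From here I would argue that $A_{b_i}^{-s^{(i)}}$ slides a single $\N$ step at row $b_i$ from column $b_{i-1}$ rightward to column $b_{i-1}+s^{(i)}$, leaving the path at all columns strictly less than $b_{i-1}$ untouched while lowering the height reached in column $b_{i-1}$ from $b_i$ to $b_i - 1$, and leaving column $b_i$ alone at this stage. Then $C_{b_i}^{s^{(i)}}$ transfers $s^{(i)}$ consecutive $\N$ steps from column $b_i$ to column $b_i - 1$, raising the entering east step at column $b_i$ from $h_{b_i}$ to $h_{b_i} + s^{(i)} = b_{i+1}$, while columns beyond $b_i$ are unchanged. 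Reading off the new bounce points finishes the argument: columns $0, b_1, \ldots, b_{i-2}$ are untouched so $b_1, \ldots, b_{i-1}$ are preserved; the new path exits column $b_{i-1}$ at height $b_i - 1$, giving new $i$-th bounce point $b_i - 1$; and it exits column $b_i - 1$ at height $b_{i+1}$, recovering $b_{i+1}$ and all subsequent bounce points.

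The main obstacle is the bookkeeping through the iterated $A_{b_i}^{-1}$ and $C_{b_i}$ operations: verifying that every intermediate path is a valid Dyck path and that at each step the operation acts precisely as described on the affected columns. The three conditions built into the definition of $S_i$, namely $h_{b_{i-1}} \neq b_i$, $s^{(i)} \leq a_{b_i}$, and $h_{b_{i-1}+j} \leq b_i$, are exactly calibrated to make each step of the iteration land on a Dyck path.
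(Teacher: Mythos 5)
Your proposal is correct and follows essentially the same route as the paper: it traces the effect of $A_{b_i}^{-s^{(i)}}$ and $C_{b_i}^{s^{(i)}}$ near the bounce points to show that the $i$'th bounce point drops from $b_i$ to $b_i-1$ while all other bounce points are preserved, whence bounce increases by exactly $1$ and area is unchanged by construction. Your version merely supplies more of the bookkeeping (e.g.\ that the non-$\bot$ conditions force $h_{b_{i-1}+j}=b_i$ for $1\le j\le s^{(i)}$) and computes the bounce change via the composition formula $\sum_j (j-1)\alpha_j$ rather than directly from $\sum_j (n-b_j)$, which is an immaterial difference.
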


\begin{proof}
Let $b_1,\dots,b_m$ be the bounce points of $\pi$.
  It is clear that the first $i-1$ bounce points of $\pi$ are also the first $i-1$
  bounce points of $\pi \cdot S_i$. To determine the $i$'th bounce point, start
  at $(b_{i-1},b_{i-1})$ and head north. The first east step occurs at
  $(b_{i-1},b_i-1)$ due to the removed cells in row $b_i$ (see \cref{fig:S}).
  So the $i$'th bounce point is $b_i-1$. Now, heading north from $(b_i-1,b_i-1)$,
  the first east step occurs at $(b_i-1,b_{i+1})$ since the height of column
  $b_i$ in $\pi \cdot S_i$ will be exactly $b_{i+1}$. Hence, the rest of the bounce
  points are $b_{i+1},\dots,b_m$ and $\bounce(\pi \cdot S_i) = \bounce(\pi)+1$.
  It is clear from \eqref{eq:S} that $\area(\pi \cdot S_i) = \area(\pi)$.
\end{proof}

\begin{figure}[h!]
  \captionsetup[subfigure]{labelformat=empty}
  \begin{tabular}{ccc}
  \begin{subfigure}{0.3\textwidth}
  \begin{tikzpicture}[scale=0.5]
    \draw[gray,very thin] (0,0) -- (7,7);

    \draw[dashed,color=blue] (0,0) -- (0,4) -- (4,4) -- (4,6) -- (6,6) -- (6,7);

    \draw (-1,1.5) -- (0,1.5) -- (0,4) -- (4,4) -- (4,6) -- (4.5,6) -- (4.5,7);

    \draw (0,2) circle (2pt) node[left=3pt] {$X$};
    \draw (4,6) circle (2pt) node[left=3pt] {$Y$};

    \node at (1,0) {$b_{i-1}$};
    \node at (4.8,4) {$b_i$};
    \node at (7,6) {$b_{i+1}$};

    \node at (0.6,3.5) {\tiny$\N^{\delta}$};
    \node at (0.6,2) {\tiny $\N^{k}$};
    \node at (2,4.3) {\tiny $\E^{\alpha_i}$};
    \node at (3.2,5.3) {\tiny $\N^{\alpha_{i+1}}$};
  \end{tikzpicture}
  \caption{$\pi$}
  \end{subfigure}
  &
  \begin{subfigure}{0.3\textwidth}
    \begin{tikzpicture}[scale=0.5]
      \draw[gray,very thin] (0,0) -- (7,7);

      \draw[dashed,color=blue] (0,0) -- (0,3.5) -- (3.5,3.5) -- (3.5,6) -- (6,6) -- (6,7);

      \draw (-1,1.5) -- (0,1.5) -- (0,3.5) -- (2,3.5) -- (2,4) -- (3.5,4) -- (3.5,6) -- (4.5,6) -- (4.5,7);

      \draw (0,2) circle (2pt) node[left=3pt] {$X$};
      \draw (4,6) circle (2pt) node[above=3pt] {$Y$};

      \node at (1,0) {$b_{i-1}$};
      \node at (5,3.5) {$b_i-1$};
      \node at (7,6) {$b_{i+1}$};

      \node at (0.9,3) {\tiny $\N^{\delta-1}$};
      \node at (0.6,2) {\tiny $\N^{k}$};
      \node at (1,3.8) {\tiny $\E^{\alpha_{i+1}}$};
      \node at (1,3.8) {\tiny $\E^{\alpha_{i+1}}$};
      \node at (2.6,5) {\tiny $\N^{\alpha_{i+1}}$};
      \node at (3.7, 5.7) {\tiny$\E$};
    \end{tikzpicture}
    \caption{$\pi \cdot S_i$}
  \end{subfigure}
  &
  \begin{subfigure}{0.3\textwidth}
    \begin{tikzpicture}[scale=0.5]
      \draw[gray,very thin] (0,0) -- (7,7);

      \draw[dashed,color=blue] (0,0) -- (0,2) -- (2,2) -- (2,6) -- (6,6) -- (6,7);

      \draw (-1,1.5) -- (0,1.5) -- (0,2) -- (2,2) -- (2,6) -- (3.5,6) -- (4.5,6) -- (4.5,7);

      \draw (0,2) circle (2pt) node[above=3pt] {$X$};
      \draw (4,6) circle (2pt) node[above=3pt] {$Y$};

      \node at (1,0) {$b_{i-1}$};
      \node at (3.3,2) {$b_i-\delta$};
      \node at (7,6) {$b_{i+1}$};

      \node at (-0.5,2) {\tiny $\N^k$};
      \node at (1.1,2.3) {\tiny $\E^{\alpha_{i+1}}$};
      \node at (1.5,4.5) {\tiny $\N^{\alpha_i}$};
      \node at (2.75, 5.7) {\tiny $\E^\delta$};
    \end{tikzpicture}
    \caption{$\pi' = \pi \cdot S_i^{\delta}$}
  \end{subfigure}
  \end{tabular}

  \caption{An illustration of \cref{prop:si_not_bottom} where $\alpha_i > \alpha_{i+1}$, $k \ge 0$ and $\delta = \alpha_i - \alpha_{i+1}$.}
  \label{fig:si_not_bottom}
\end{figure}
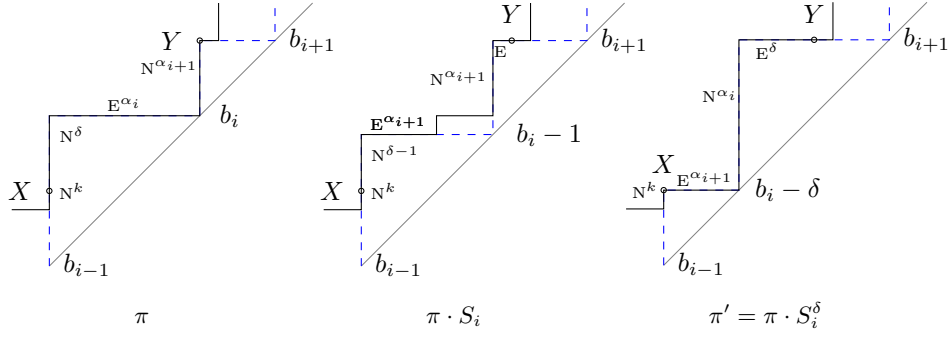

We now show two results that list the conditions under which $S_i$ can be applied
repeatedly and also the conditions under which this is reversible.

\begin{prop}
\label{prop:si_not_bottom}
Suppose $\pi \in \dycks(n)$ has bounce path $p_{n,\alpha}$ such that $\alpha_i > \alpha_{i+1}$ for some $1 \le i < \ell(\alpha)$. Let $\delta = \alpha_i - \alpha_{i+1}$.
Further, suppose
$\pi$ reaches the point ${X} = (b_{i-1}, b_{i-1} + \alpha_{i+1})$ and afterwards
takes steps $\N^\delta \E^{\alpha_i} \N^{\alpha_{i+1}}$ reaching ${Y} = (b_i, b_{i+1})$.
Then, $\pi \cdot S_i^p \ne \bot$ for all $1 \le p \leq \delta$. Moreover, $\pi' = \pi \cdot S_i^\delta$ modifies $\pi$ to $\E^{\alpha_{i+1}} \N^{\alpha_i}\E^\delta$ between the points $X$ and $Y$ and leaves the rest of the path unchanged, and has bounce path $p_{n, (\dots, \alpha_{i+1}, \alpha_i, \dots)}$.
\end{prop}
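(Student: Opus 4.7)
The plan is an induction on $p$ that tracks the shape of the path between $A$ and $B$. The inductive claim I would use is: for every $0 \le p \le \delta$, $\pi \cdot S_i^p$ agrees with $\pi$ outside the stretch from $A$ to $B$ and reads
\[
\N^{\delta-p}\,\E^{\alpha_{i+1}}\,\N^{p}\,\E^{\delta-p}\,\N^{\alpha_{i+1}}\,\E^{p}
\]
between $A$ and $B$. At $p=0$ this collapses to $\N^{\delta}\E^{\alpha_i}\N^{\alpha_{i+1}}$, matching the hypothesis on $\pi$. Reading column heights off this word gives $h_c = b_i - p$ for $c \in \{b_{i-1}+1,\dots,b_i-\delta\}$, $h_c = b_i$ for $c \in \{b_i-\delta+1,\dots,b_i-p\}$, and $h_c = b_{i+1}$ for $c \in \{b_i-p+1,\dots,b_i\}$; running the bounce procedure then produces $i$-th bounce point $b_i - p$ with all other bounce points unchanged.

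For the inductive step I apply $S_i$ once more to $\pi \cdot S_i^p$ for $p < \delta$. By the hypothesis the $i$-th bounce point is $b_i - p$ and $h_{b_i-p} = b_i$, giving $s^{(i)} = \alpha_{i+1}$ and $S_i = A_{b_i-p}^{-\alpha_{i+1}} C_{b_i-p}^{\alpha_{i+1}}$. The removal strips the top cells from columns $b_{i-1}+1,\dots,b_i-\delta$ (these are the leftmost $\alpha_{i+1}$ cells of row $b_i - p$ and all sit at the tops of their columns), lowering each to height $b_i - (p+1)$; the Dyck condition $b_i - (p+1) \ge b_{i-1}+\alpha_{i+1}$ is exactly $p + 1 \le \delta$, so the removal stays above the diagonal. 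The addition then raises column $b_i - p$ from $b_i$ to $b_{i+1}$, which is legal because the neighbour $b_i - p + 1$ already sits at $b_{i+1}$. The new heights are precisely the $(p+1)$-case of the hypothesis. Moreover, since $\pi$ reaches $A$ by climbing column $b_{i-1}$, we have $h_{b_{i-1}} \le b_{i-1}+\alpha_{i+1} < b_i - p$ throughout, so the $\bot$-clause $h_{b_{i-1}} = b_i - p$ never fires.

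Setting $p = \delta$ collapses the word to $\E^{\alpha_{i+1}}\N^{\alpha_i}\E^{\delta}$, the advertised local shape of $\pi'$. Reading the bounce off the final heights produces a new $i$-th bounce point $b_{i-1}+\alpha_{i+1}$ (the first east step above $(b_{i-1},b_{i-1})$ is at column $b_{i-1}+1$, height $b_{i-1}+\alpha_{i+1}$) followed by the unchanged $(i+1)$-th bounce point $b_{i+1}$, i.e.\ the bounce composition $(\dots,\alpha_{i+1},\alpha_i,\dots)$ as claimed. The main obstacle I anticipate is purely bookkeeping: keeping the three-interval height structure straight as $p$ grows and pinpointing the validity condition for the removal as the equivalence $b_i - (p+1) \ge b_{i-1}+\alpha_{i+1} \iff p + 1 \le \delta$, which is exactly what forces the iteration to run for $\delta$ steps and no more.
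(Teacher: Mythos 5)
Your proof is correct and takes essentially the same route as the paper's: both arguments track the explicit local shape of the path between $A$ and $B$ under repeated application of $S_i$ and check that each removal/addition is legal. Your inductive invariant $\N^{\delta-p}\E^{\alpha_{i+1}}\N^{p}\E^{\delta-p}\N^{\alpha_{i+1}}\E^{p}$ specializes at $p=1$ to the paper's intermediate form $\N^{\delta-1}\E^{\alpha_{i+1}}(\N\E^{\delta-1})\N^{\alpha_{i+1}}\E$, and merely makes the paper's ``apply $S_i$ again and the same property persists'' step fully explicit.
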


\begin{proof}
  \cref{fig:si_not_bottom} illustrates the proof ideas.
  First, compute $\pi \cdot S_i$ using \eqref{eq:S}. We have $s^{(i)} = \alpha_{i+1}$ and since $a_{b_i} = \alpha_i > \alpha_{i+1}$, remove $\alpha_{i+1}$ cells from row $b_i$ and add $\alpha_{i+1}$ cells to column $b_i$.
  If $\delta = 1$, $\pi$ between points $X$ and $Y$ is
  $\N \E^{\alpha_i} \N^{\alpha_i-1}$
and the path of $\pi \cdot S_i$ between those points is
$\E^{\alpha_i-1}\N^{\alpha_i} \E$. In this case ($\delta=1$), $\alpha_i$ and $\alpha_{i+1}$ swap.
On the other hand, if $\delta > 1$, the path of $\pi \cdot S_i$
between $X$ and $Y$ is
$\N^{\delta-1} \E^{\alpha_{i+1}} (\N \E^{\delta-1}) \N^{\alpha_{i+1}}\E$,
where the steps in parenthesis form the new floating cells
(see the middle diagram in \cref{fig:si_not_bottom}).
Notice that there are an equal number, namely $\alpha_{i+1}$, east (resp. north) steps before (resp. after) these floating cells.
Therefore, $S_i$ can be applied once again and the resulting path will once again have this property; see \cref{fig:si_not_bottom} for an illustration.
We can repeatedly apply $S_i$ $\delta$ many times in total,
until we obtain $\pi'$ as stated.
\end{proof}

Let $\pi \in \dycks(n)$ with bounce path $p_{n,\alpha}$ and fix $i <
\ell(\alpha)$.
Let {$\Delta_r = \alpha_i - \alpha_r$ for $r > i$}.
For $k$ a positive integer, let $m$ be the minimum value of $j$ such that
$d = k - \sum_{j=1}^m \max\{0,\Delta_{i+j}\}$ and $d \le 0$. Then define
\begin{equation}
  \label{eq:B}
  \pi \cdot B_{i,k} = \pi \cdot S_i^{\max\{0,\Delta_{i+1}\}} S_{i+1}^{\max\{0,\Delta_{i+2}\}} \cdots S_{i+m-1}^{\max\{0,\Delta_{i+m}\}{{+d}}}.
\end{equation}
By \cref{prop:Si action}, $\bounce(\pi \cdot B_{i,k}) = \bounce(\pi) + k$
if $\pi \cdot B_{i,k} \neq \bot$.
\cref{fig:b_reachable} shows an
example.
The following corollary now follows from \cref{prop:si_not_bottom}.

\begin{figure}[h!]
  \captionsetup[subfigure]{labelformat=empty}
  \centering
  \begin{subfigure}[t]{0.24\textwidth}
    \includegraphics[width=\textwidth]{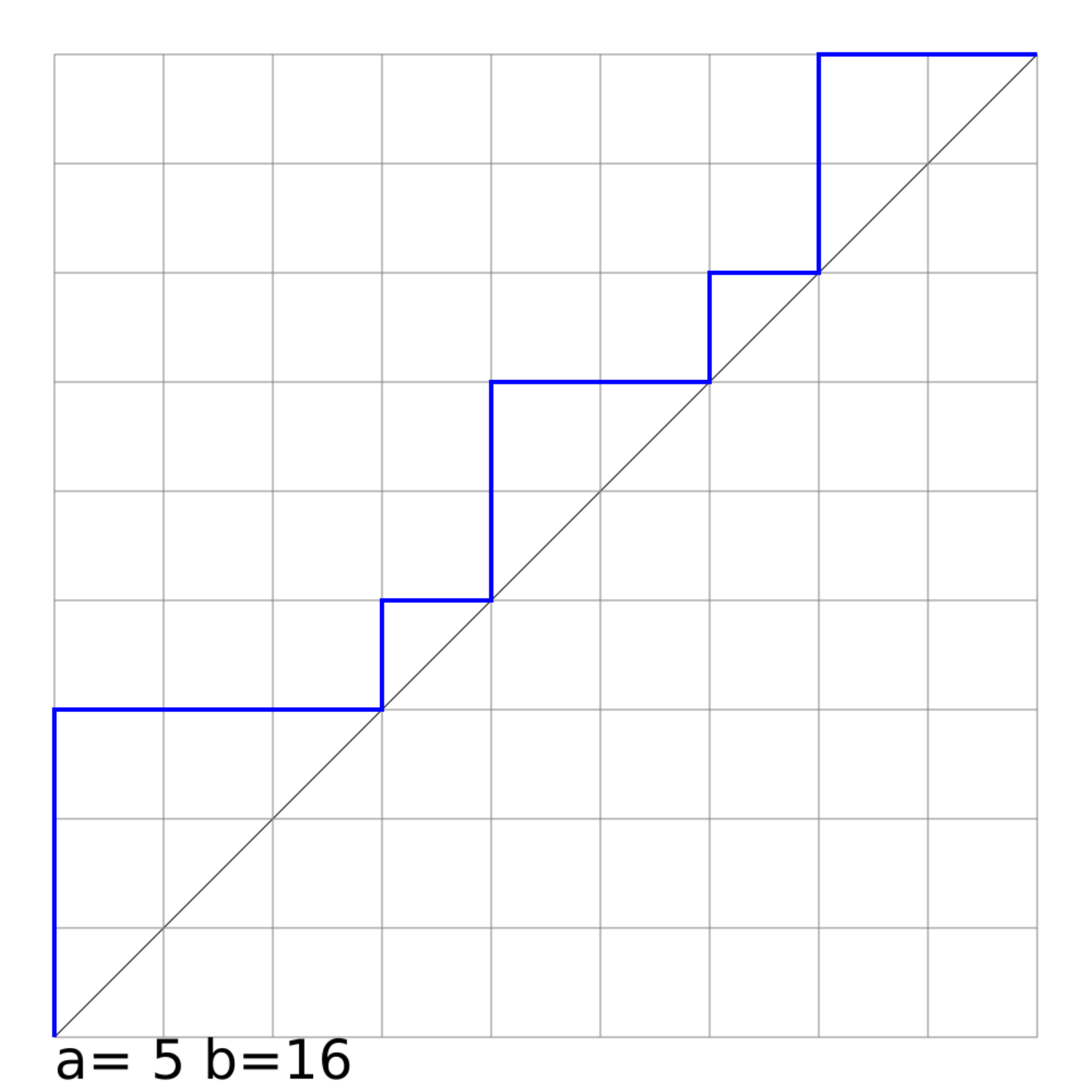}
    \caption{$\pi$}
  \end{subfigure}
  \hspace{10pt}
  \begin{subfigure}[t]{0.24\textwidth}
    \includegraphics[width=\textwidth]{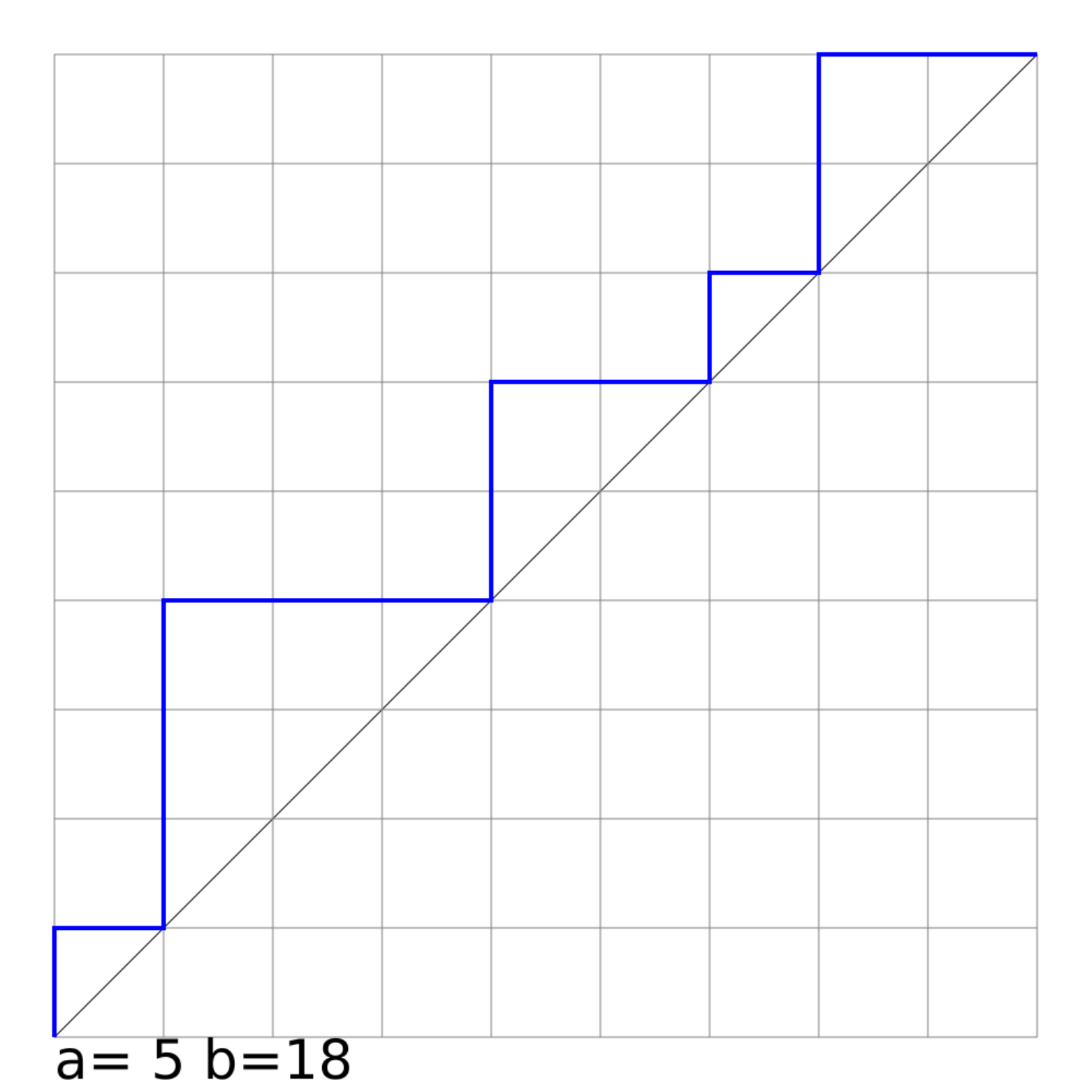}
    \caption{$\pi \cdot B_{1,2}=S_1^2$}
  \end{subfigure}
  \hspace{10pt}
  \begin{subfigure}[t]{0.24\textwidth}
    \includegraphics[width=\textwidth]{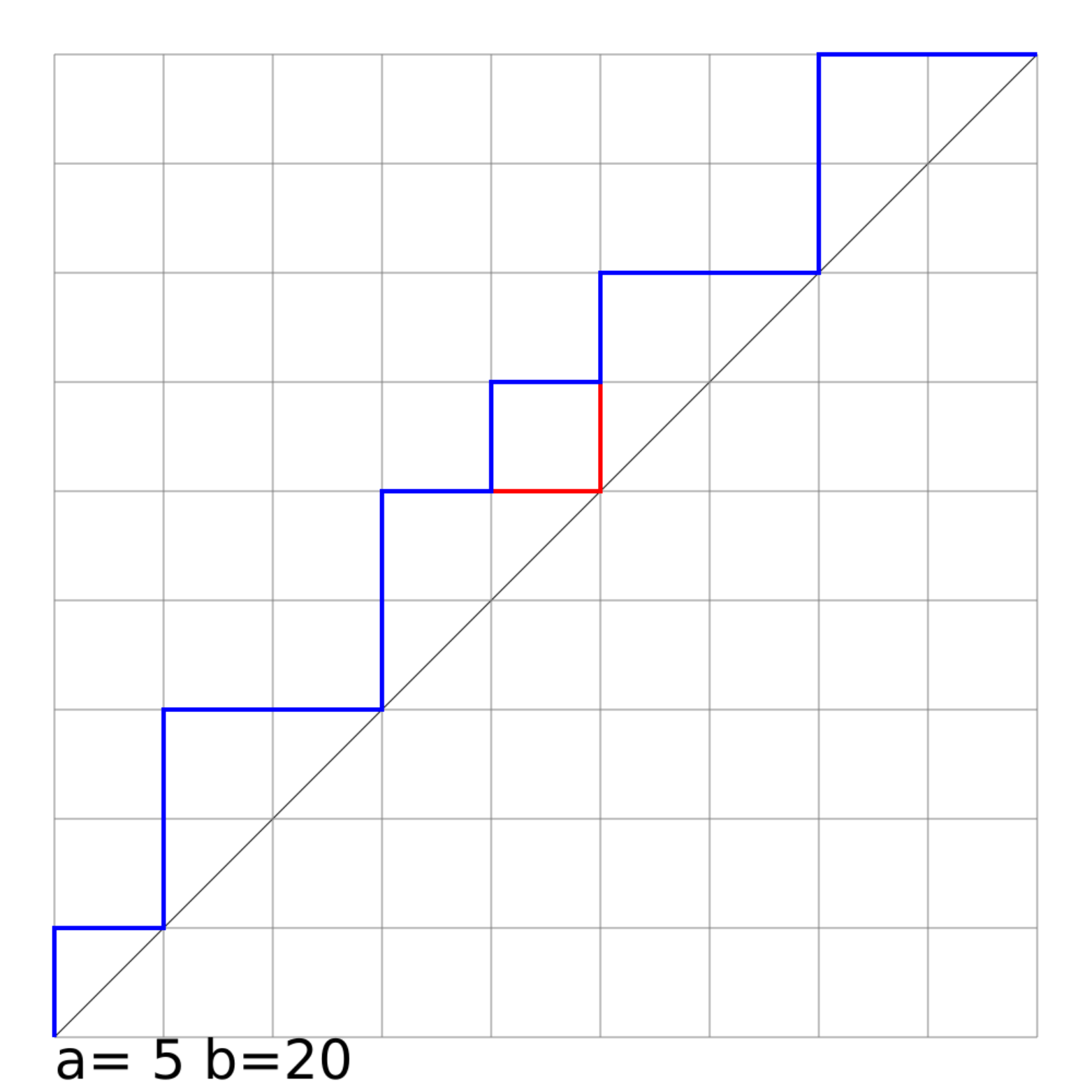}
    \caption{$\pi \cdot B_{1,4}= \pi \cdot S_1^2S_2^1S_3^1$ where $\Delta_2 = 2$, $\Delta_3=1$, and $\Delta_4=2$}
  \end{subfigure}
  \caption{Examples of the action of $B_{i,k}$.}
  \label{fig:b_reachable}
\end{figure}

\begin{cor}
  \label{cor:B_not_bottom}
  Let $\pi \in \dycks(n)$ with bounce path $p_{n,\alpha}$ so that 
  \[
    \pi = \dots (\N^{\alpha_i-\alpha_{i+1}}\E^{\alpha_i})(\N^{\alpha_{i+1}}\E^{\alpha_{i+1}}) \dots (\N^{\alpha_l}\E^{\alpha_l})
  \]
  for $l = \ell(\alpha)$ and some $1 \le i < \ell(\alpha)$.
  Suppose for all $j > i$, $\alpha_i > \alpha_j$.
  Then, $B_{i,k}(\pi) \ne \bot$
  for all $1 \le k \le {\sum_{j=i+1}^\ell} |\alpha_i - \alpha_j|$.
\end{cor}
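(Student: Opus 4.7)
The plan is to prove the corollary by iterating \cref{prop:si_not_bottom} one index at a time, maintaining an invariant that after each completed $S$-block the path has a ``fully minimal'' form from some bounce point onwards. I will describe the case $\alpha_i > \alpha_j$ for all $j > i$ in detail; the reverse case for $\bar B_{i,k}$ is symmetric, using the dual moves and the transpose of \cref{fig:si_not_bottom}.

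First, I would verify that $\pi$ itself satisfies the hypothesis of \cref{prop:si_not_bottom} at index $i$: the assumed block form forces $\pi$ to reach $A = (b_{i-1}, b_{i-1} + \alpha_{i+1})$ and then perform exactly $\N^{\alpha_i-\alpha_{i+1}} \E^{\alpha_i} \N^{\alpha_{i+1}}$ into $B = (b_i, b_{i+1})$. Hence \cref{prop:si_not_bottom} gives $\pi \cdot S_i^p \neq \bot$ for all $1 \le p \le \alpha_i - \alpha_{i+1}$, and $\pi_1 := \pi \cdot S_i^{\alpha_i - \alpha_{i+1}}$ has bounce composition $\beta = (\alpha_1, \dots, \alpha_{i-1}, \alpha_{i+1}, \alpha_i, \alpha_{i+2}, \dots, \alpha_l)$. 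The main bookkeeping step is to observe that the new east run $\E^{\alpha_i - \alpha_{i+1}}$ created by the modification concatenates with the next block's $\E^{\alpha_{i+1}}$ of the unchanged tail, so that $\pi_1$ from $(b'_i, b'_i) = (b_{i-1} + \alpha_{i+1}, b_{i-1} + \alpha_{i+1})$ onwards actually reads $(\N^{\beta_{i+1}} \E^{\beta_{i+1}}) \cdots (\N^{\beta_l} \E^{\beta_l})$ and thus agrees with the bounce path $p_{n,\beta}$ past that point.

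With this sharper shape in hand, I iterate. The minimality of $\pi_1$ past $(b'_i, b'_i)$ places $\pi_1$ exactly in the hypothesis of \cref{prop:si_not_bottom} at index $i+1$: the inequality $\beta_{i+1} > \beta_{i+2}$ is $\alpha_i > \alpha_{i+2}$, which holds by assumption, and the required local shape $\N^{\alpha_i - \alpha_{i+2}} \E^{\alpha_i} \N^{\alpha_{i+2}}$ at the appropriate point is automatic. By induction on $r \ge 1$, the composition $S_i^{\alpha_i - \alpha_{i+1}} S_{i+1}^{\alpha_i - \alpha_{i+2}} \cdots S_{i+r-1}^{\alpha_i - \alpha_{i+r}}$ is well-defined on $\pi$ and yields a path in minimal form past the new bounce point $i+r$, obtained by sliding $\alpha_i$ past $\alpha_{i+1}, \dots, \alpha_{i+r}$ in the bounce composition. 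Incomplete final blocks $S_{i+r}^p$ with $1 \le p < \alpha_i - \alpha_{i+r+1}$ are handled by the partial-exponent clause of \cref{prop:si_not_bottom}. Summing all block lengths gives the upper bound $\sum_{j>i}(\alpha_i - \alpha_j) = \sum_{j>i}|\alpha_i - \alpha_j|$, so $\pi \cdot B_{i,k} \neq \bot$ throughout the stated range.

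The main obstacle is the bookkeeping in the sharper shape: confirming that after one full block $\pi_1$ acquires the strong ``minimal past $(b'_i, b'_i)$'' property, not merely the weaker local property encoded in \cref{prop:si_not_bottom}. The entire induction hinges on this strengthening, because it is what continuously feeds the hypothesis of \cref{prop:si_not_bottom} at the next index. Once the invariant is established, the remainder is essentially automatic, and the $\bar B_{i,k}$ case follows by a mirror argument interchanging the roles of north and east steps.
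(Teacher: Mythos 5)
Your proof is correct and is essentially the paper's own route: the paper deduces this corollary directly from \cref{prop:si_not_bottom} via the definition of $B_{i,k}$ in \eqref{eq:B}, and your block-by-block iteration, with the invariant that after each completed block the path again has the full-block (minimal) shape beyond the new bounce point so that \cref{prop:si_not_bottom} applies at the next index, is exactly the bookkeeping the paper leaves implicit when it says the corollary ``now follows.'' Your mirror treatment of the $\bar{B}_{i,k}$ case is at the same level of detail as the paper, which neither defines $\bar{B}_{i,k}$ nor argues that case separately.
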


Recall that the \emph{conjugate} of a partition $\lambda$, denoted $\lambda'$, is a partition such that $\lambda'_i = \max \{j \mid \lambda_j \geq i\}$.
Let $m_\lambda(i)$ be the number of parts in $\lambda$ of size $i$.
We will need to keep track of the number of distinct parts of a partition $\lambda$. So let $\bar{\lambda}$ be the partition with parts in $\lambda$ such that each part occurs once, and $\ell(\bar{\lambda})$ be the number of distinct parts in $\lambda$.
For instance, if $\lambda = (4,3,3,1,1,1)$, $\ell(\bar\lambda) = 3$ and
$m_\lambda(3)=2$.
It is easy to see that $\ell(\bar\lambda) = \ell(\overline{\lambda'})$. In the above example, $\lambda' = (6, 3, 3, 1)$ and $\ell(\overline{\lambda'}) = 3$. 
Note that $\bar{\lambda}_i$ is the $i$'th largest part of $\lambda$.

\subsection{Construction}
\label{sec:const}

We now construct our bijection. We need two sets to index, on the one hand, the locations of the bounce points, and on the other, the rows where boxes can be added to increase the area.

\begin{defn}
Let $\ell = \ell(\bar{\lambda})$ and let $\mathcal{I}_{\bnc}
(\lambda), \mathcal{I}_{\row}(\lambda) \subset \mathbb{N}^2$ be two indexing sets defined by
  \begin{align*}
    \mathcal{I}_{\bnc}(\lambda) &= \{ (i,r) \mid 1 \le i \leq \ell-1, 1 \le r \le m_\lambda(\bar{\lambda}_{\ell-i}) \},\\
    \mathcal{I}_{\row}(\lambda) &= \{ (i,r) \mid 1 \le i \leq \ell-1, 1 \le r \le \bar\lambda_{i+1}\}.
  \end{align*}
We call a function from one of these index sets to $\mathbb{N}$ a 
\emph{count map}.
\end{defn}

\begin{defn}
A \emph{bounce map} maps $(i,r) \in \mathcal{I}_{\bnc}(\lambda)$ to the {index} of a bounce point in $p_{n, \lambda}$,
\begin{equation}
  \bnc_\lambda(i,r) = 1-r + \sum_{j=1}^{\ell-i} m_\lambda(\bar{\lambda}_j).
\end{equation}
Similarly, a \emph{row map} maps $(i,r) \in \mathcal{I}_{\row}(\lambda)$ to a row
\begin{equation}
  \row_\lambda(i,r) = r + \sum_{j=1}^i m_\lambda(\bar{\lambda}_j)\bar{\lambda}_j.
\end{equation}
\end{defn}

\cref{fig:bncRow} shows an example of bounce and row maps. 

\begin{figure}[h!]
  \centering
  \includegraphics[width=0.5\textwidth]{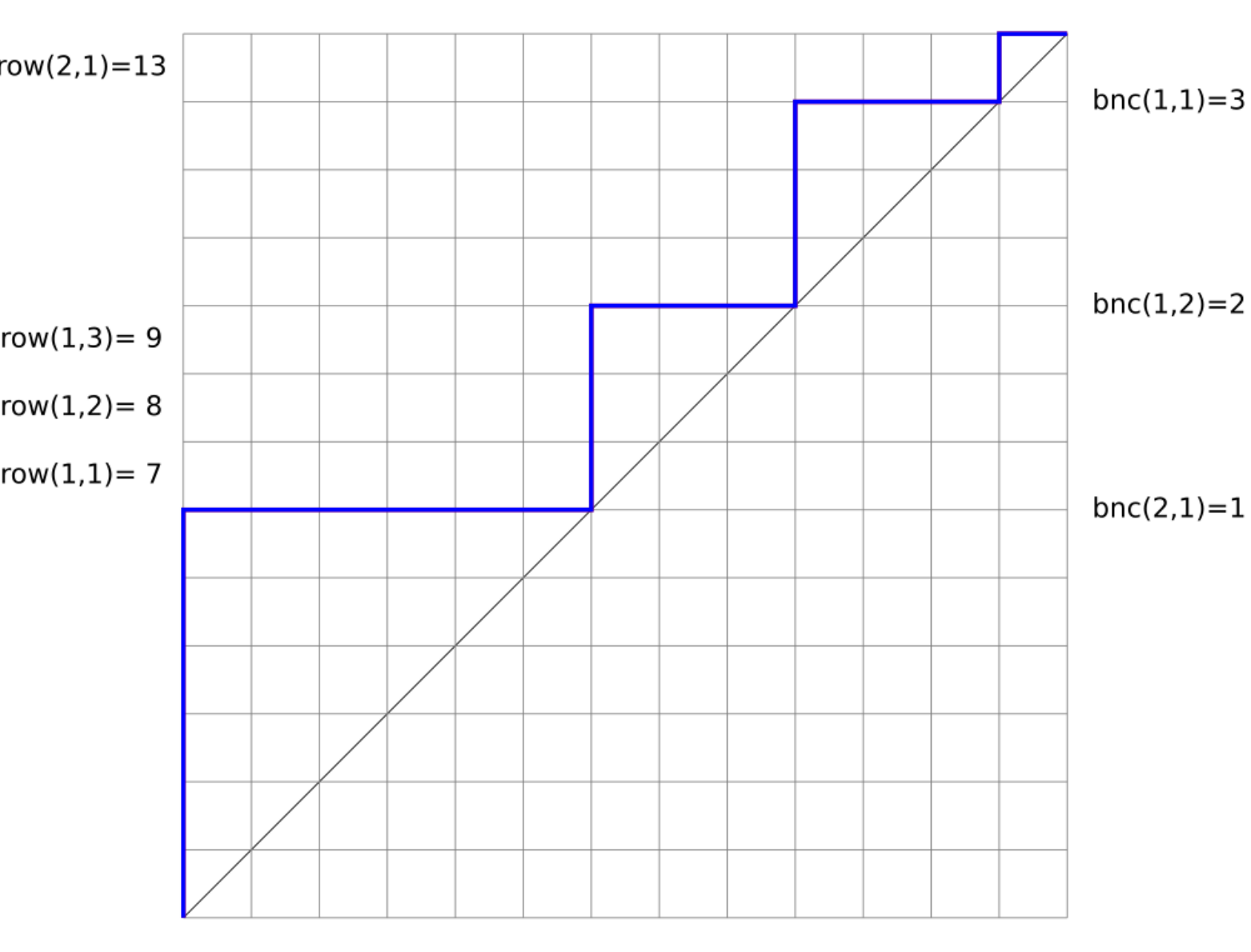}
  \caption{The bounce and row maps annotated on $p_{13, \lambda}$, where
  $\lambda = (6, 3, 3, 1)$. Thus, $\overline{\lambda}_1=6$ and $\overline{\lambda}_2=3$. 
  Note that $\mathcal{I}_{\bnc}(\lambda) = \{(1, 1), (1, 2), (2, 1)\}$
  and $\mathcal{I}_{\row}(\lambda) = \{(1, 1), (1, 2), (1, 3), (2, 1)\}$}.
  \label{fig:bncRow}
\end{figure}

Given a count map $f$ and a path $p_{n, \lambda}$ having partition $\lambda$ of size $n$, define the operators $A^f$ and $B_f$ by
\begin{align*}
  p_{n,\lambda} \cdot A^f & := p_{n, \lambda} \cdot \prod_{(i,r) \in \mathcal{I}_{\row}(\lambda)} A_{\row_\lambda(i,r)}^{f(i,r)}\\
  p_{n,\lambda} \cdot B_f & := p_{n, \lambda} \cdot \prod_{(i,r) \in \mathcal{I}_{\bnc}(\lambda)} B_{\bnc_\lambda(i,r), f(i,r)}
\end{align*}
where the operations are ordered from left to right by $i$ first and then by $r$ in ascending order.
We call a pair of Dyck paths $(\tau,\tau')$ for which
$\area(\tau)=\bounce(\tau')$ and $\bounce(\tau)=\area(\tau')$ an ab-\emph{pair}.
From the discussion after \eqref{eq:ab_wrt_composition}, the following result is
clear.

\begin{prop}
  \label{prop:partition}
  For $\lambda \in \Par_n$,  $(p_{n,\lambda}, p_{n,\lambda'})$ forms an \emph{$\abp$}-pair.
\end{prop}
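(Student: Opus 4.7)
The plan is to observe that \cref{prop:partition} is essentially immediate from the two identities already recorded in the text following \eqref{eq:ab_wrt_composition}, combined with the involutivity of partition conjugation. So the whole argument reduces to quoting and reindexing.

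First, I would recall the two formulas in \eqref{eq:ab_wrt_composition}. Applied to a partition $\lambda$, these give $\area(p_{n,\lambda}) = \sum_i \binom{\lambda_i}{2}$ and $\bounce(p_{n,\lambda}) = \sum_i (i-1)\lambda_i = n(\lambda)$, the second equality being the definition of $n(\lambda)$. The identity $\sum_i \binom{\lambda_i}{2} = n(\lambda')$ is a standard Young-diagram double-count (sum over cells $(i,j)$ of $(j-1)$ grouped by rows versus by columns), which is exactly the content of the displayed reference to \cite[Equations (1.5) and (1.6)]{macdonald-1995}. Thus we have the two equations
\begin{equation*}
  \area(p_{n,\lambda}) = n(\lambda'), \qquad \bounce(p_{n,\lambda}) = n(\lambda).
\end{equation*}

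Next, I would apply these same formulas to the conjugate partition $\lambda'$ in place of $\lambda$. Since $\lambda'$ is itself a partition of $n$, the same identities yield $\area(p_{n,\lambda'}) = n(\lambda'') = n(\lambda)$ and $\bounce(p_{n,\lambda'}) = n(\lambda')$, using that conjugation is an involution.

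Finally I would compare the two displays: $\area(p_{n,\lambda}) = n(\lambda') = \bounce(p_{n,\lambda'})$ and $\bounce(p_{n,\lambda}) = n(\lambda) = \area(p_{n,\lambda'})$, which is precisely the definition of an $\abp$-pair. No part of this is an obstacle; the only thing one must be a little careful about is distinguishing the general composition case in \eqref{eq:ab_wrt_composition} (where the area formula $\sum \binom{\alpha_i}{2}$ is not automatically $n(\alpha')$) from the partition case, where the conjugate is defined and the identification works. Since the hypothesis here is $\lambda \in \Par_n$, this distinction causes no trouble.
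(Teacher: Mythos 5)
Your proof is correct and is essentially the paper's own argument: the authors also deduce the proposition directly from the identities $\area(p_{n,\lambda}) = n(\lambda')$ and $\bounce(p_{n,\lambda}) = n(\lambda)$ noted after \eqref{eq:ab_wrt_composition}, applied to both $\lambda$ and $\lambda'$ with conjugation an involution. Your added detail (the cell-counting justification of $\sum_i \binom{\lambda_i}{2} = n(\lambda')$) only makes explicit what the paper cites from Macdonald.
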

The following result shows when we can construct a larger set of $\abp$-pairs.

\begin{thm}
  \label{thm:ab_pairs}
  Let $\lambda \in \Par_n$ and $\ell = \ell(\bar{\lambda})$. Suppose $f$ is a count map on $\mathcal{I}_{\bnc}(\lambda')$ which satisfies
  \begin{equation}
    \bar\lambda_{i} > f(i,1) \ge f(i,2) \ge \dots \ge f(i,m_{\lambda'}(\overline{\lambda'}_{\ell-i}))
  \end{equation}
for all $1 \le i \leq \ell-1$ and $p_{n,\lambda'} \cdot B_f \ne \bot$.
Then {$p_{n,\lambda} \cdot A^f \ne \bot$ and} $(p_{n,\lambda} \cdot A^f, p_{n,\lambda'} \cdot B_f)$ is an \emph{$\abp$}-pair.
\end{thm}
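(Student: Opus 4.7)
The plan is to use \cref{prop:partition} as a base case---$(p_{n,\lambda}, p_{n,\lambda'})$ is already an $\abp$-pair---and show that applying $A^f$ and $B_f$ to the two components respectively preserves this property. Setting $N := \sum_{(i,r) \in \mathcal{I}_{\bnc}(\lambda')} f(i,r)$, the target identities are
\begin{align*}
  \area(p_{n,\lambda'} \cdot B_f) &= \area(p_{n,\lambda'}), & \bounce(p_{n,\lambda'} \cdot B_f) &= \bounce(p_{n,\lambda'}) + N, \\
  \area(p_{n,\lambda} \cdot A^f) &= \area(p_{n,\lambda}) + N, & \bounce(p_{n,\lambda} \cdot A^f) &= \bounce(p_{n,\lambda});
\end{align*}
combined with $\area(p_{n,\lambda}) = \bounce(p_{n,\lambda'})$ and $\bounce(p_{n,\lambda}) = \area(p_{n,\lambda'})$ from the base case, these give the $\abp$-pair property of $(p_{n,\lambda}\cdot A^f, p_{n,\lambda'}\cdot B_f)$ by direct substitution.

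The $B_f$ side is the easier half: by the definition in \eqref{eq:B}, each $B_{i,k}$ is a product of $S_j$ operators whose exponents sum to $k$, and \cref{prop:Si action} says every non-$\bot$ application of $S_j$ preserves area and increments bounce by one. Since the hypothesis guarantees $p_{n,\lambda'} \cdot B_f \ne \bot$, no intermediate $S_j$ in the composition is $\bot$, and summing the contributions over $(i,r) \in \mathcal{I}_{\bnc}(\lambda')$ yields both $B_f$ identities at once.

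The $A^f$ side carries the main technical weight. Granted non-$\bot$-ness of every intermediate $A_{\row_\lambda(i,r)}$, the area identity is automatic since each successful $A_j$ adds exactly one cell. I plan to process the pairs $(i,r)$ in the prescribed order and argue by induction on $i$; here it is convenient to extend $f$ by $0$ outside $\mathcal{I}_{\bnc}(\lambda')$ so that $A^f$ is well-defined on $\mathcal{I}_{\row}(\lambda)$. For fixed $i$, the rows $\row_\lambda(i,1), \row_\lambda(i,2), \dots$ lie consecutively just above the bounce point $B_i := \sum_{j=1}^i m_\lambda(\bar\lambda_j) \bar\lambda_j$, and each application of $A_{\row_\lambda(i,r)}$ raises by one the column height of the rightmost $c \le \row_\lambda(i,r)-1$ with $h_c = \row_\lambda(i,r)-1$. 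The monotonicity $f(i,1) \ge f(i,2) \ge \dots$ ensures, by induction within the $i$'th round, that such a column always lies in the last width-$\bar\lambda_i$ sub-block of $p_{n,\lambda}$ (columns $B_i - \bar\lambda_i + 1, \dots, B_i$), so no application returns $\bot$. The strict inequality $f(i,1) < \bar\lambda_i$ is precisely what guarantees that the leftmost column modified during the $i$'th round sits at column $B_i - f(i,1) + 1 > B_i - \bar\lambda_i + 1$; therefore $h_{B_i - \bar\lambda_i + 1}$ is untouched and remains $B_i$, preserving the bounce point at height $B_i$ together with all earlier ones. Since successive rounds affect disjoint column ranges (because $B_{i+1} - B_i \ge \bar\lambda_{i+1}$), iterating over $i$ gives $\bounce(p_{n,\lambda} \cdot A^f) = \bounce(p_{n,\lambda})$. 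The main obstacle I anticipate is the bookkeeping of column heights when $p_{n,\lambda}$ contains several sub-blocks of the same width (multiplicities $m_\lambda(\bar\lambda_j) > 1$), where one must verify that the rightmost-column convention of $A_j$ keeps modifications within a single sub-block throughout the $i$'th round.
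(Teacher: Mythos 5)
Your proof is correct and follows essentially the same route as the paper's: show that $A^f$ adds $N=\sum f(i,r)$ floating cells to $p_{n,\lambda}$ without disturbing its bounce path (using $f(i,1)<\bar\lambda_i$ and the monotonicity in $r$ to keep the raised columns strictly inside the last width-$\bar\lambda_i$ sub-block), show that $B_f$ preserves area and adds $N$ to bounce via \cref{prop:Si action}, and combine with \cref{prop:partition}. The paper's version is terser---it asserts the non-$\bot$-ness and bounce-invariance of $A^f$ in one sentence after first checking $\mathcal{I}_{\bnc}(\lambda')\subseteq\mathcal{I}_{\row}(\lambda)$, which is the one small point you should make explicit since your extension of $f$ by zero tacitly relies on it---but the substance is the same.
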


\begin{proof}
  First, note that $\mathcal{I}_{\bnc}(\lambda') \subseteq \mathcal{I}_{\row}(\lambda)$ since for all $1 \le i < \ell(\bar\lambda)$,
  \begin{align*}
    \max \{ r \mid (i, r) \in \mathcal{I}_{\bnc}(\lambda') \} =& m_{\lambda'}(\overline{\lambda'}_{\ell-i})
    = \bar\lambda_{i+1} - \bar\lambda_{i+2}\\
    \le & \bar\lambda_{i+1} = \max \{ r \mid (i, r) \in \mathcal{I}_{\row}(\lambda) \}.
  \end{align*}
  Second, the upper-bound condition on $f(i,1)$ ensures that $p_{n,\lambda}
  \cdot A_{\row_\lambda(i,1)}^{f(i,1)} \ne \bot$. And the weakly decreasing
  condition ensures that $p_{n,\lambda} \cdot A^f \ne \bot$. Let $\delta =
  \sum_{i,r} f(i,r)$. Then $p_{n,\lambda} \cdot A^f$ has area
  $\area(p_{n,\lambda}) + \delta$ and bounce $\bounce(p_{n,\lambda})$, while
  {$p_{n,\lambda'} \cdot B_f$} has area $\area(p_{n,\lambda'})=\bounce(p_{n,\lambda})$ and bounce
  $\bounce(p_{n,\lambda'}) + \delta = \area(p_{n,\lambda}) + \delta$.
\end{proof}

\begin{figure}[h!]
  \captionsetup[subfigure]{labelformat=empty}
  \centering
  \begin{subfigure}[t]{0.21\textwidth}
    \includegraphics[width=\textwidth]{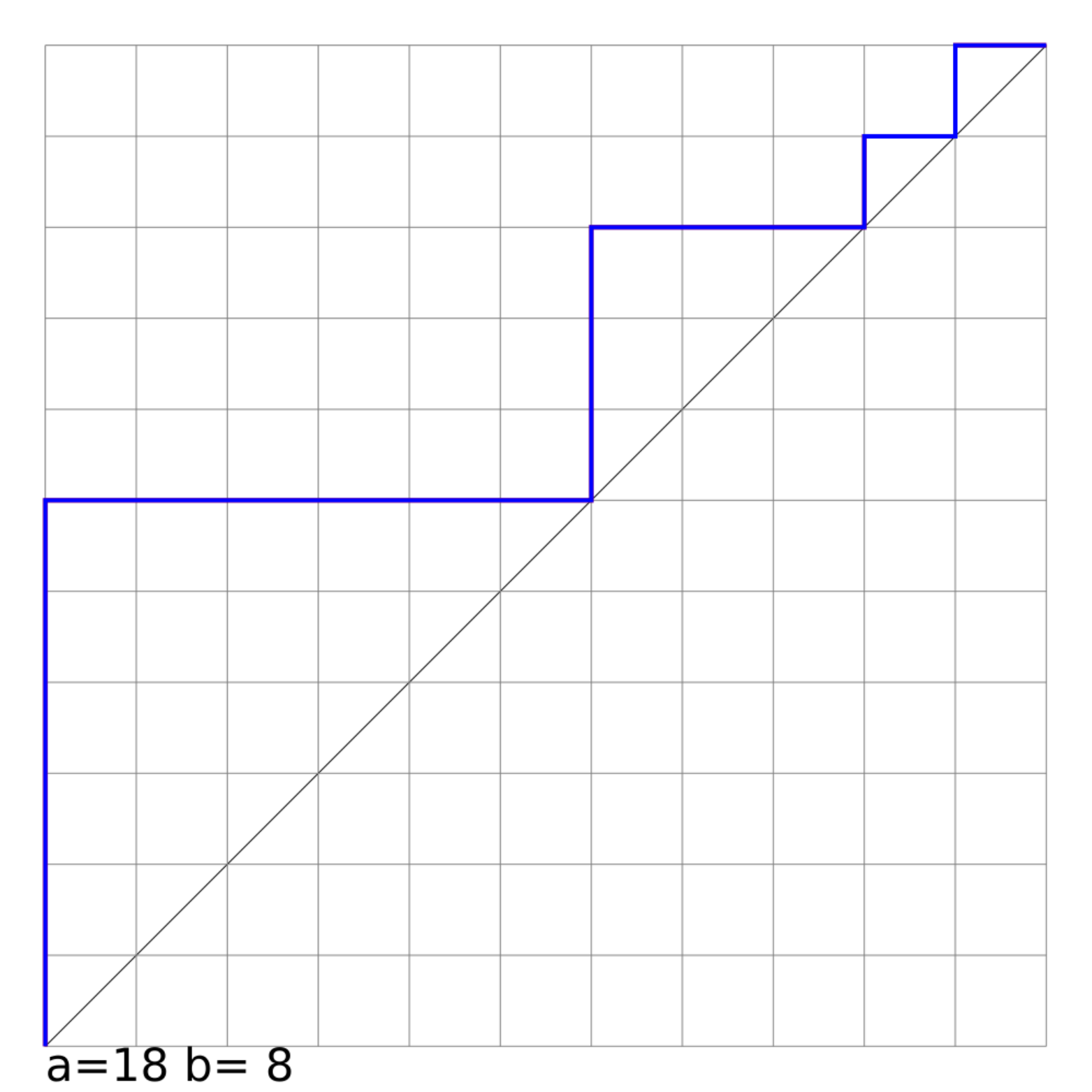}
    \caption{$p_{n,\lambda}$}
  \end{subfigure}
  \hspace{10pt}
  \begin{subfigure}[t]{0.21\textwidth}
    \includegraphics[width=\textwidth]{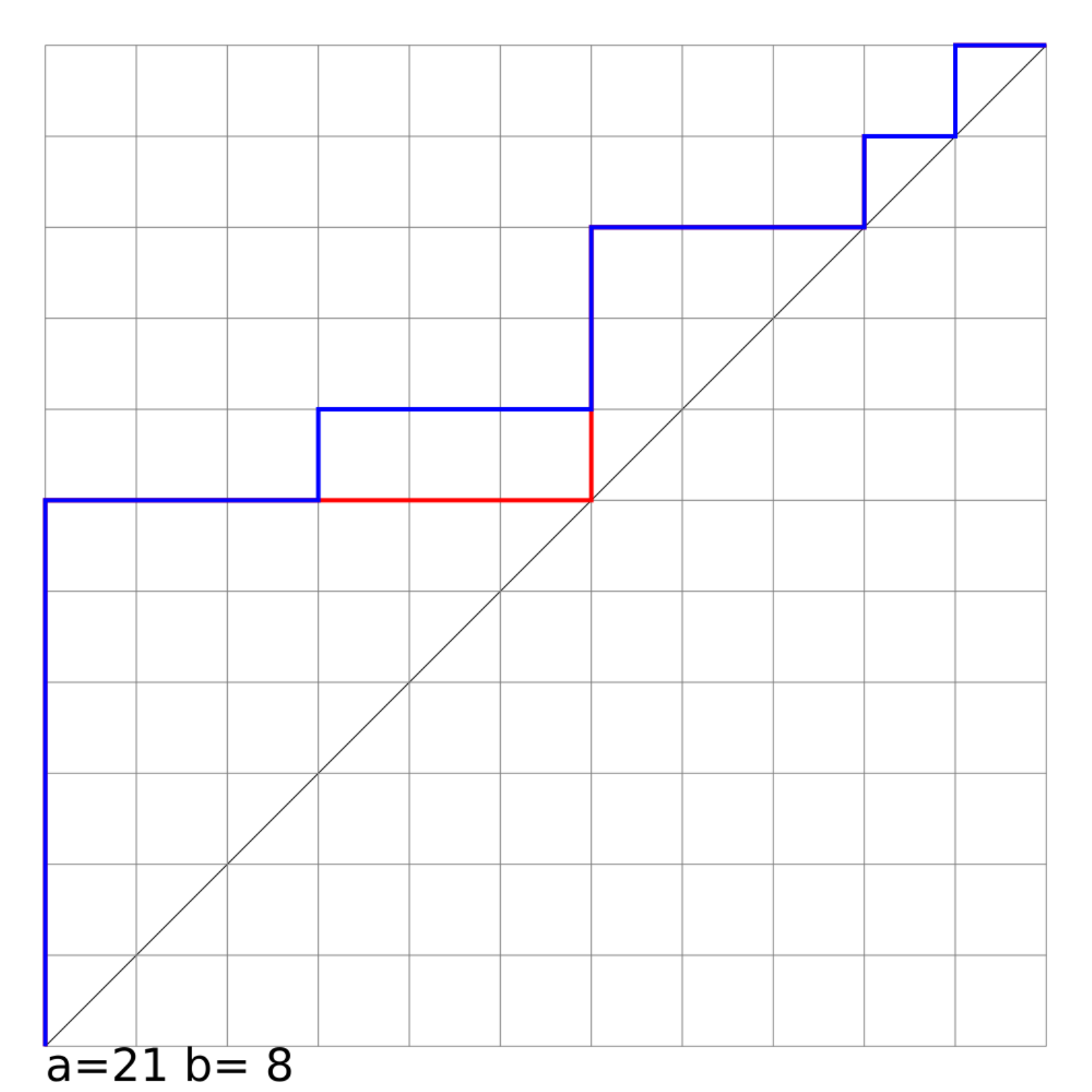}
    \caption{$p_{n,\lambda} \cdot A_7^3$}
  \end{subfigure}
  \hspace{10pt}
  \begin{subfigure}[t]{0.21\textwidth}
    \includegraphics[width=\textwidth]{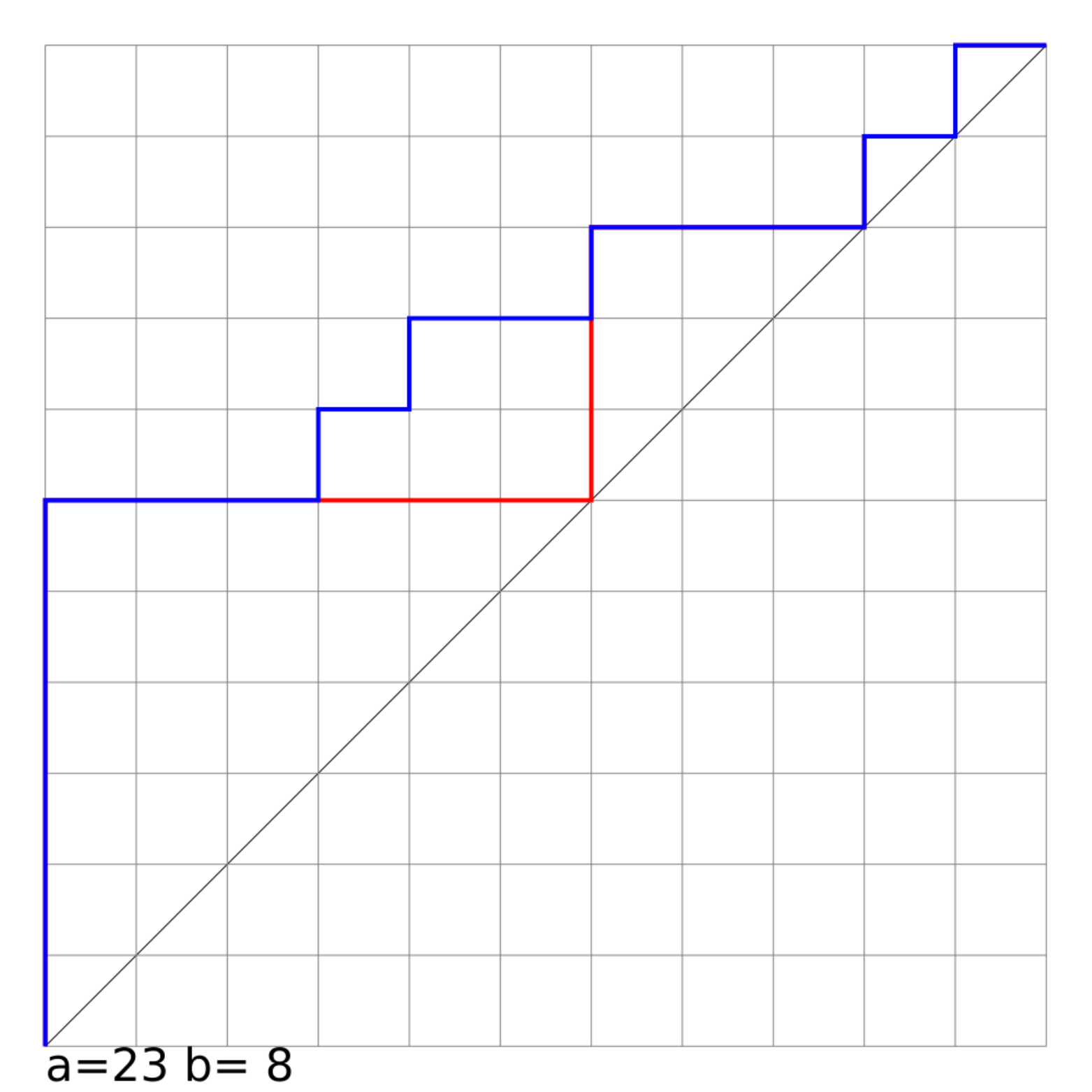}
    \caption{$p_{n,\lambda} \cdot A_7^3 A_8^2$}
  \end{subfigure}
  \hspace{10pt}
  \begin{subfigure}[t]{0.21\textwidth}
    \includegraphics[width=\textwidth]{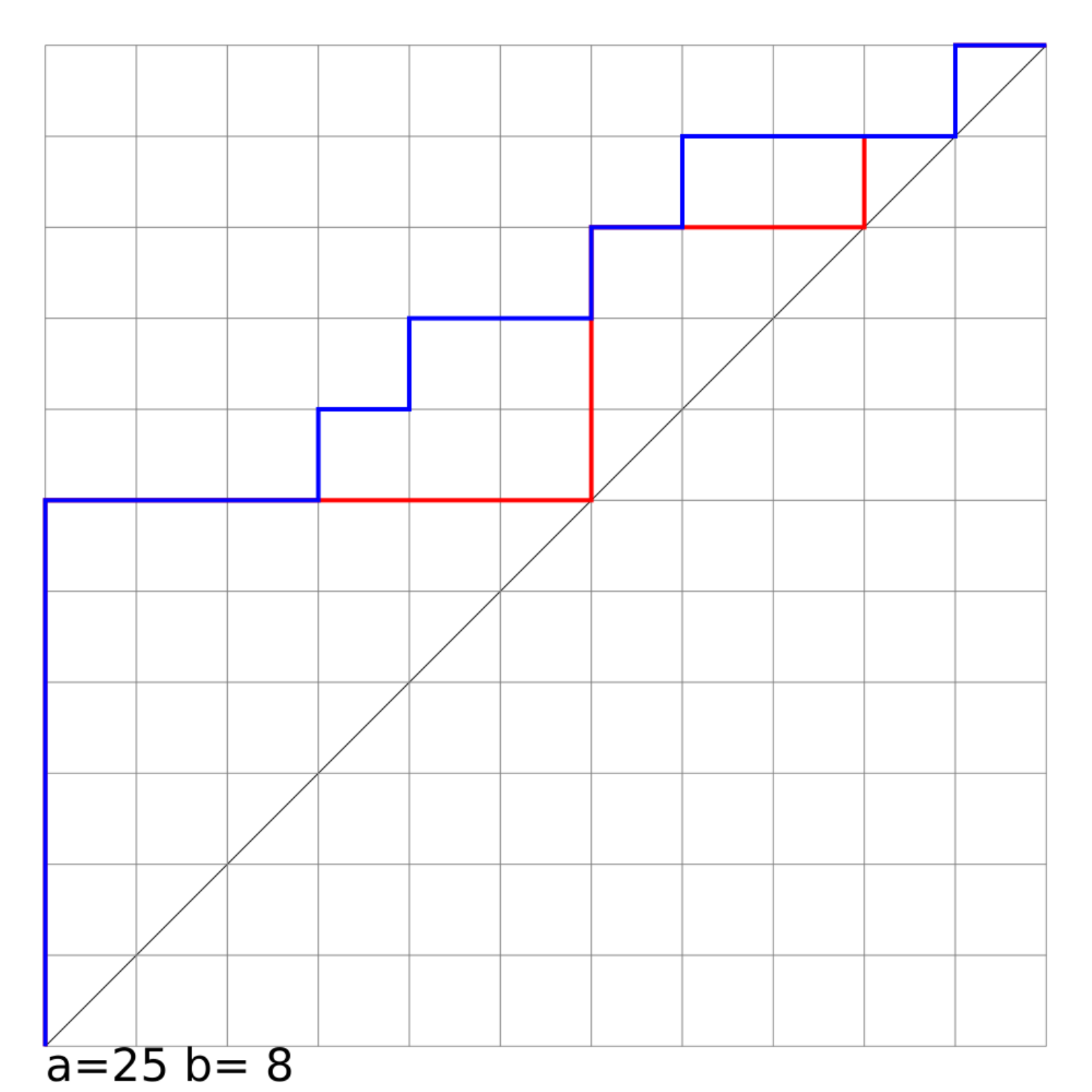}
    \caption{$p_{n,\lambda} \cdot A_7^3 A_8^2 A_{10}^2$}
  \end{subfigure}

  \begin{subfigure}[t]{0.21\textwidth}
    \includegraphics[width=\textwidth]{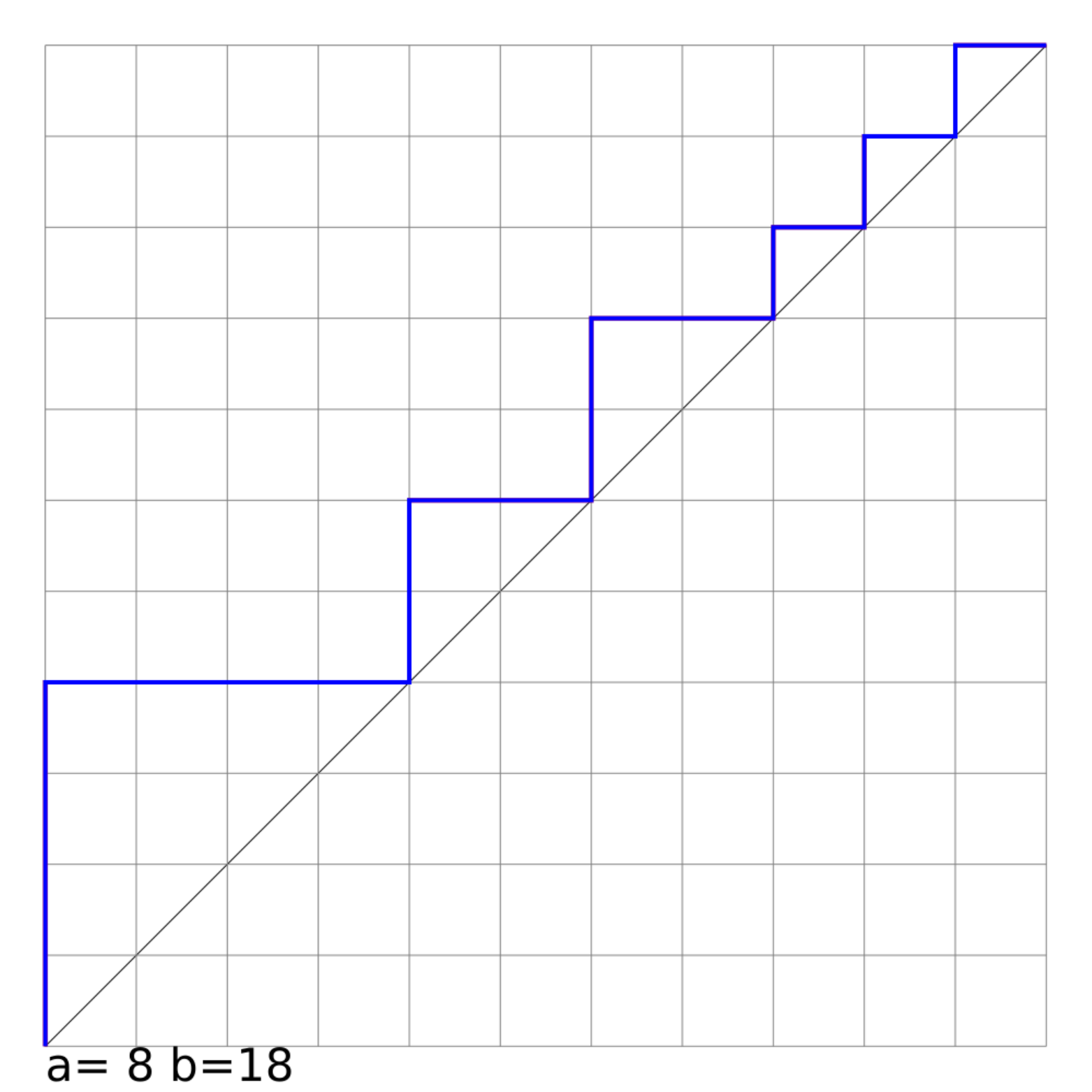}
    \caption{$p_{n,\lambda'}$}
  \end{subfigure}
  \hspace{10pt}
  \begin{subfigure}[t]{0.21\textwidth}
    \includegraphics[width=\textwidth]{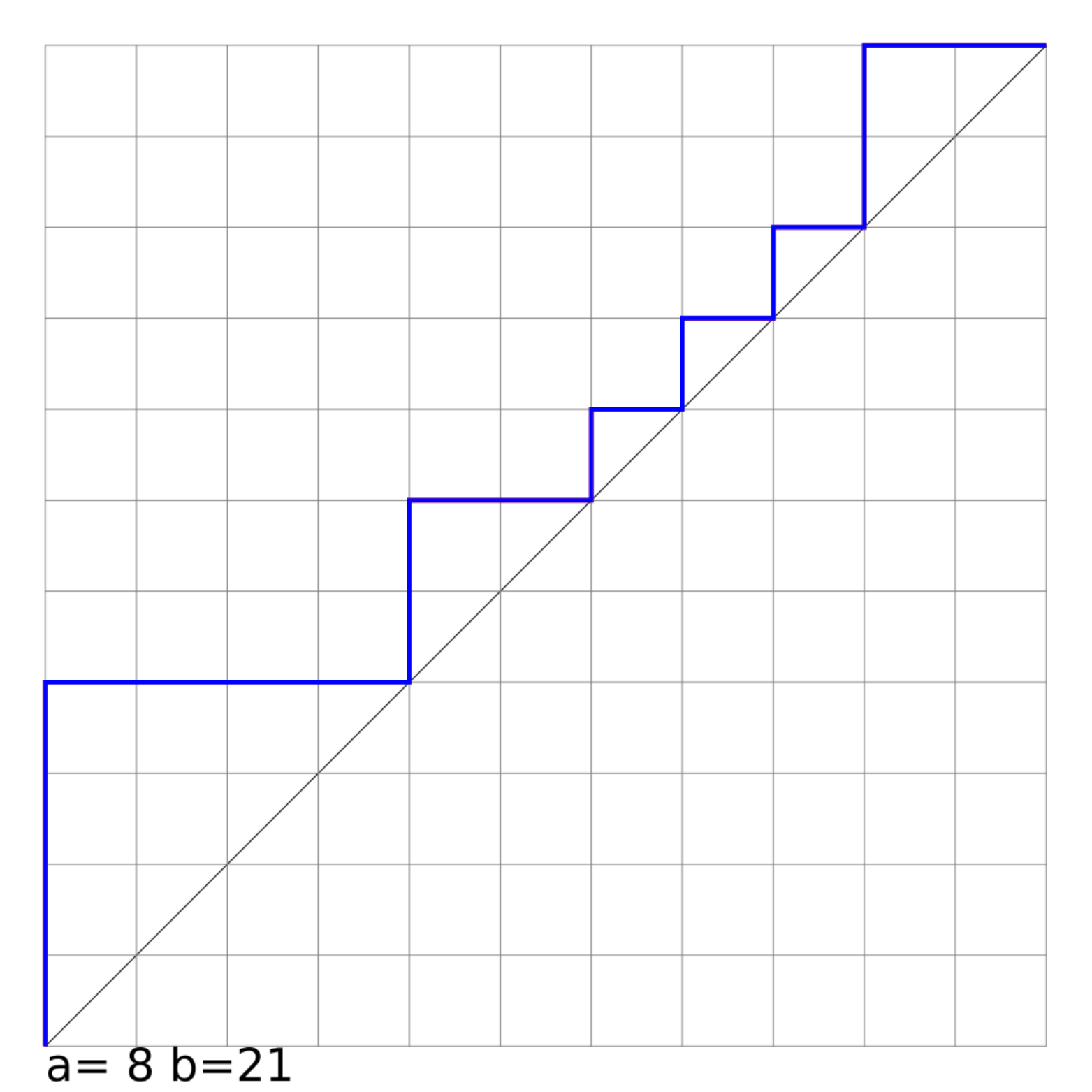}
    \caption{$p_{n,\lambda'} \cdot {B_{3,3}}$}
  \end{subfigure}
  \hspace{10pt}
  \begin{subfigure}[t]{0.21\textwidth}
    \includegraphics[width=\textwidth]{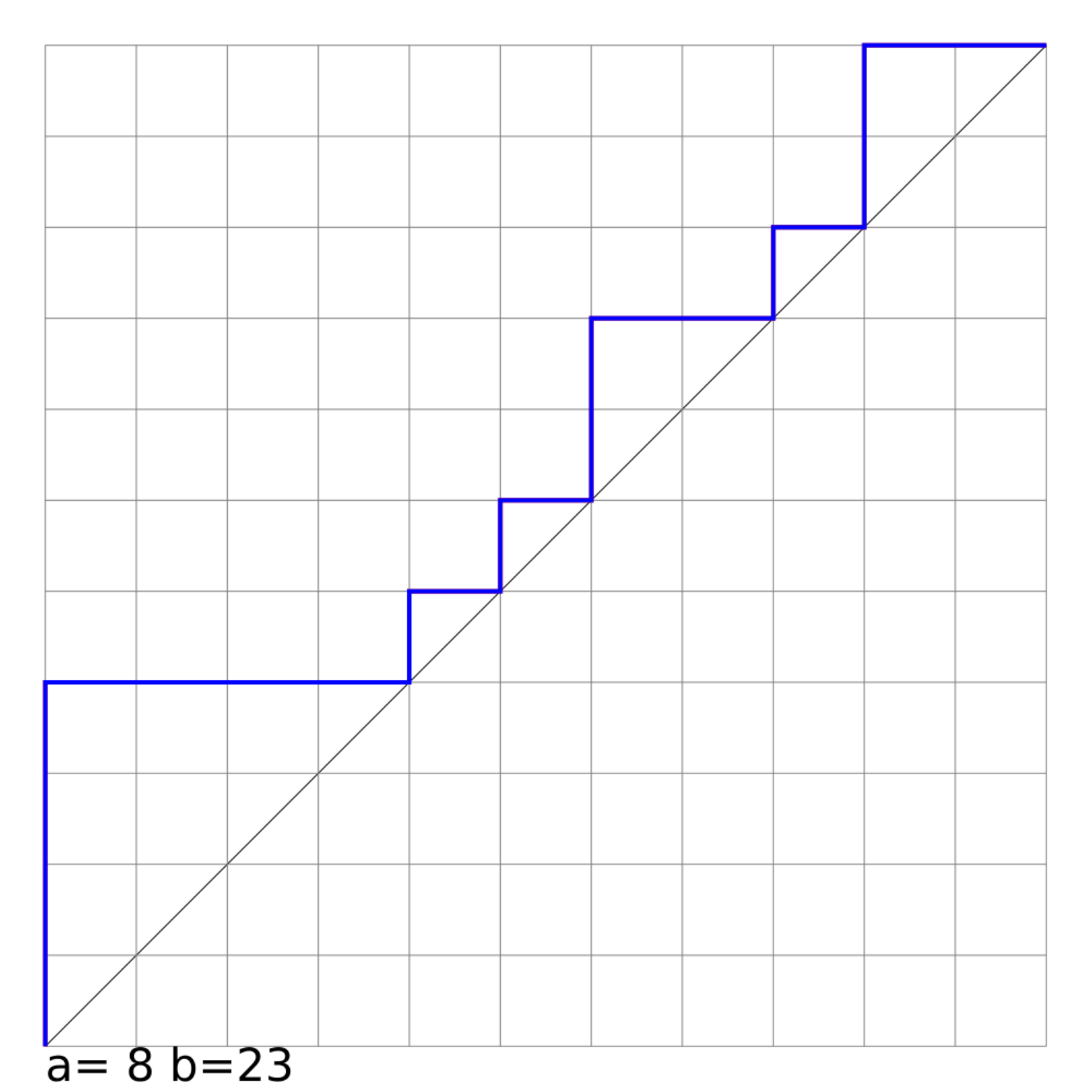}
    \caption{$p_{n,\lambda'} \cdot {B_{3,3} B_{2,2}}$}
  \end{subfigure}
  \hspace{10pt}
  \begin{subfigure}[t]{0.21\textwidth}
    \includegraphics[width=\textwidth]{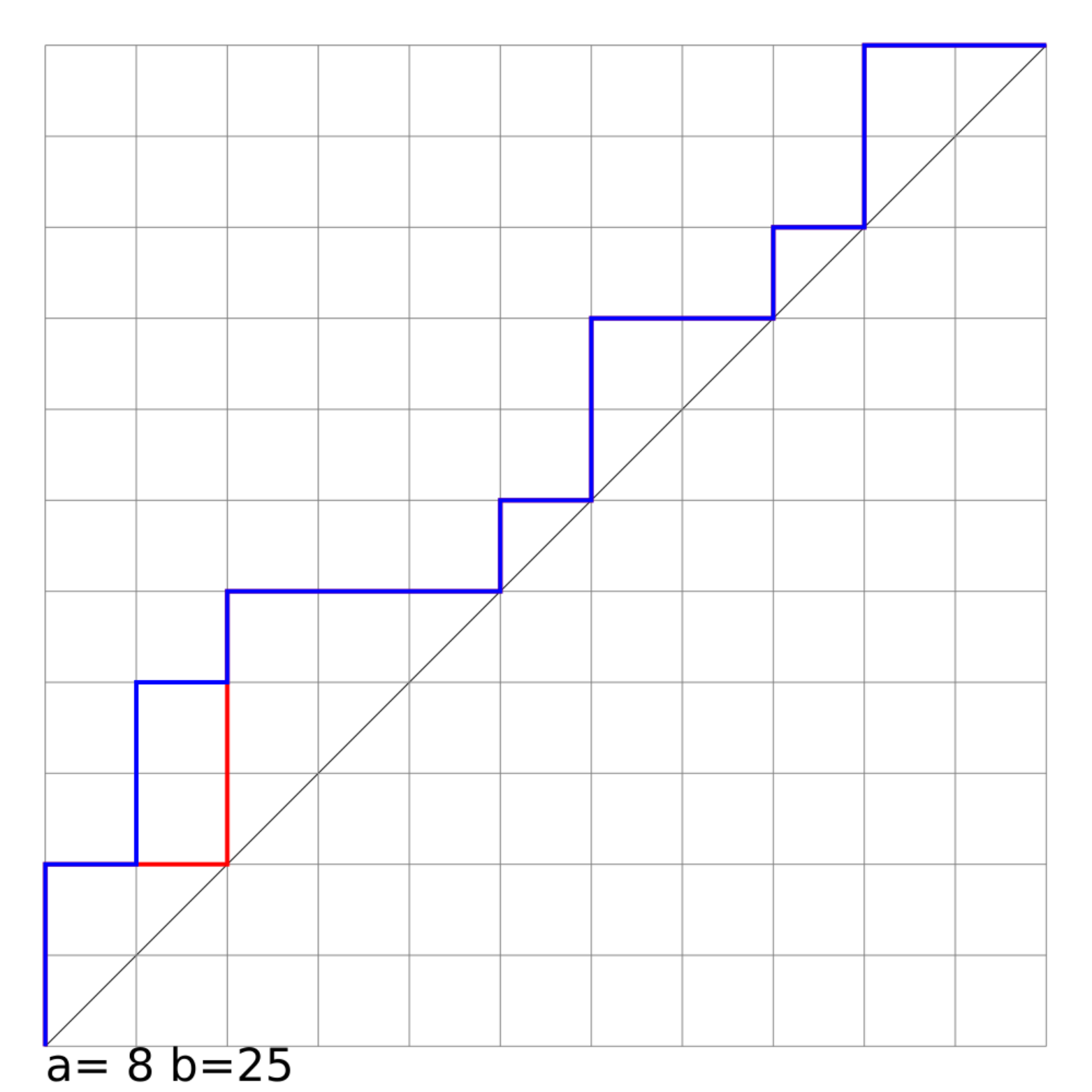}
    \caption{$p_{n,\lambda'} \cdot {B_{3,3} B_{2,2} B_{1,2}}$}
  \end{subfigure}
  \hspace{10pt}
  \caption{Each column forms an $\abp$-pair where $\lambda=(6,3,1,1)$.
    The count map for the last pair is $f(1,1)=3, f(1,2)=2, f(2,1)=2$.
    {The first three columns are pairs that live in $\Phi$ while the pair
    in the last column is a non-example for \cref{thm:Phi_bijection} as $p_{n,\lambda'} \cdot B_f
    \notin \mathcal{C}(n)$.}
    Note also that $(p_{11,\lambda} \cdot A_7^3 A_8^2 A_{10}^3, 
    p_{11,\lambda'} \cdot {B_{3,3} B_{2,2} B_{1,3}})$ do not form an $\ab$-pair
    although both are valid paths because the bounce changes in the former
    path. This is consistent with \cref{thm:ab_pairs}.}
  \label{fig:bijection_example}
\end{figure}

\cref{fig:bijection_example} gives some examples of $\abp$-pairs. Looking at a generic Dyck path,
it is not always easy to tell if it belongs to an $\abp$-pair; however,
we will now extract a subset of $\abp$-pairs which can be fully characterized.
Let $\mathcal{F}_n$ be the set of pairs $(f, \lambda)$, where $f$ is a count map
and $\lambda \in \Par_n$, that satisfies {the assumptions required in} \cref{thm:ab_pairs}, and such that $p_{n,\lambda'} \cdot B_f \in \cdycks(n)$. Define the sets
\begin{align*}
  \AF_n = \{ p_{n,\lambda} \cdot A^f \mid (f,\lambda) \in \mathcal{F}_n \}
  \quad \text{and} \quad 
  \BF_n = \{ p_{n,\lambda'} \cdot B_f \mid (f,\lambda) \in \mathcal{F}_n \}.
\end{align*}

\begin{thm}
  \label{thm:Phi_bijection}
  The map $\Phi : \AF_n \rightarrow \BF_n$ defined by
  \begin{equation}
    \Phi(p_{n,\lambda} \cdot A^f) = p_{n,\lambda'} \cdot B_f
  \end{equation}
  is an area and bounce flipping bijection.
\end{thm}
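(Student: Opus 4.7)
The plan is to reduce the bijectivity of $\Phi$ to two separate injectivity statements and then invoke \cref{thm:ab_pairs} for the area-bounce flipping. Concretely, I aim to show that both parametrizations $(\lambda, f) \mapsto p_{n,\lambda}\cdot A^f$ and $(\lambda, f) \mapsto p_{n,\lambda'}\cdot B_f$ from $\mathcal{F}_n$ into $\AF_n$ and $\BF_n$ respectively are injective. Surjectivity of each is built into the definitions of $\AF_n$ and $\BF_n$; the first injectivity yields well-definedness of $\Phi$, while the second yields injectivity of $\Phi$.

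For the first injectivity I would show that the bounce path of $\tau = p_{n,\lambda}\cdot A^f$ is exactly $p_{n,\lambda}$. This reduces to checking that every row $\row_\lambda(i,r)$ at which $A^f$ inserts cells lies strictly above the transitional bounce point of $p_{n,\lambda}$ where a block of equal parts of $\lambda$ ends, and strictly inside the following bounce segment. Each inserted cell is therefore a floating cell, leaving the bounce path untouched. Consequently, $\lambda$ is recovered from the bounce path of $\tau$, and the values $f(i,r)$ are recovered by counting added cells in each row $\row_\lambda(i,r)$.

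For the second injectivity, since the image lies in $\cdycks(n)$, write $\sigma = p_{n,\lambda'}\cdot B_f = p_{n,\mu}$ for some composition $\mu$. By \cref{prop:si_not_bottom} and the definition of $B_{i,k}$ in \eqref{eq:B}, each factor $B_{\bnc_{\lambda'}(i,r), f(i,r)}$ slides one occurrence of a particular distinct value of the current bounce composition rightward past smaller parts by an amount controlled by $f(i,r)$. In particular, $\mu$ is a rearrangement of the parts of $\lambda'$, so sorting its parts in weakly decreasing order recovers $\lambda'$ and hence $\lambda$. The weakly-decreasing condition on $f(i,\cdot)$ imposed by the hypothesis of \cref{thm:ab_pairs} then ensures that the final positions of the copies of each distinct part within $\mu$ reconstruct the values $f(i,r)$ unambiguously.

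The main obstacle is the second injectivity: one must carefully track the iterated $B$-operators and verify that the monotonicity condition on $f$ precludes two distinct count maps from producing the same composition $\mu$. Once both injectivities are in hand, $\Phi$ is a bijection between $\AF_n$ and $\BF_n$, and area and bounce are swapped on each pair by \cref{thm:ab_pairs}.
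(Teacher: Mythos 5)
Your proposal is correct and takes essentially the same route as the paper: the paper also proves bijectivity by inverting the two parametrizations, reading $\lambda$ off the bounce path and $f$ off the floating cells on the $A$-side, and on the $B$-side sorting the bounce composition to get $\lambda'$ and recovering $f$ from the rearranged parts via \cref{prop:si_not_bottom}. The only difference is that the step you flag as the "main obstacle" is handled in the paper by the explicit recovery formula \eqref{eq:inv_Phi}, which pins down $f(i,r)$ directly from the positions of the parts in the bounce composition.
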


Note that if $f$ is the zero map, $\Phi$ maps $p_{n, \lambda}$ to $p_{n, \lambda'}$ in accordance with \cref{prop:partition}.

\begin{proof}
 Let $(f,\lambda) \in \mathcal{F}_n$, $\tau = p_{n,\lambda'} \cdot B_f$ and $p_{n,\alpha}$ be its bounce path.
  We can recover $\lambda$ and $f$ by the following procedure.
  The partition obtained by sorting its bounce path $\alpha$
  is $\lambda'$ (as $\tau \in \cdycks(n)$), which we denote $\mu$ for notational convenience. 
  Let $\ell = \ell(\bar{\mu})$. For $1 \leq i \leq \ell - 1$,
  recall that $\bar{\mu}_{\ell-i}$ is the $(i+1)$'th smallest part in $\bar{\mu}$
  and there are $d_i \equiv m_\mu(\bar{\mu}_{\ell-i})$ parts of that size.
  Let $\alpha_{j_1},\dots,\alpha_{j_{d_i}}$ be all those parts in $\alpha$ where the $j_i$'s are in decreasing order.
  Construct the count map $f$ on $\mathcal{I}_{\bnc}(\mu)$, for
  $1 \leq r \leq d_i$, by
  \begin{equation}
  	\label{eq:inv_Phi}
  f(i,r) = \sum_{j < j_r} \max\{0, \bar{\mu}_{\ell-i} - \alpha_{j} \}.
  \end{equation}

One can check that $p_{n, \mu} \cdot B_f = \tau$. The key idea is that $B_f$ simply reorders $\mu$ to get $\alpha$ by \cref{prop:si_not_bottom}. To complete the proof, we need to show that $p_{n, \lambda} \cdot A^f$ is a valid path. But this holds because we have chosen $\lambda$ to satisfy \cref{thm:ab_pairs}.
\end{proof}

As a consequence of \cref{thm:Phi_bijection}, we obtain this identity.

\begin{cor}
  Define
  \[
    G_n(q,t) = \sum_{\pi \in \AF_n \cup \BF_n} q^{\area(\pi)}t^{\bounce(\pi)}.
  \]
  Then $G_n(q, t) = G_n(t, q)$.
\end{cor}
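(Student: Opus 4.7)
The plan is to combine inclusion--exclusion with \cref{thm:Phi_bijection}. Set $A(q,t) = \sum_{\pi \in \AF_n} q^{\area(\pi)}t^{\bounce(\pi)}$, $B(q,t) = \sum_{\pi \in \BF_n} q^{\area(\pi)}t^{\bounce(\pi)}$, and $C(q,t) = \sum_{\pi \in \AF_n \cap \BF_n} q^{\area(\pi)}t^{\bounce(\pi)}$, so that $G_n(q,t) = A(q,t) + B(q,t) - C(q,t)$. The change of variables $\tau = \Phi(\pi)$ immediately gives $A(q,t) = B(t,q)$, because $\Phi$ is a bijection $\AF_n \to \BF_n$ that swaps $\area$ and $\bounce$. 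Hence $A(q,t) + B(q,t) = B(t,q) + B(q,t)$ is manifestly $(q,t)$-symmetric, and the identity reduces to proving $C(q,t) = C(t,q)$.

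For this I would establish the sharp description $\AF_n \cap \BF_n = \{p_{n,\lambda} : \lambda \in \Par_n\}$. The containment $\supseteq$ is immediate: for each $\lambda \in \Par_n$, both $(0,\lambda)$ and $(0,\lambda')$ lie in $\mathcal{F}_n$, and with the zero count map they realise $p_{n,\lambda}$ as an element of $\AF_n$ and of $\BF_n$ respectively. For $\subseteq$, note that $\BF_n \subseteq \cdycks(n)$ by definition, so any path in the intersection equals its own bounce path. On the other hand, the computation in the proof of \cref{thm:ab_pairs} shows that $p_{n,\lambda} \cdot A^f$ has bounce path $p_{n,\lambda}$ and area $\area(p_{n,\lambda}) + \sum_{i,r} f(i,r)$. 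Equating the path with its bounce path $p_{n,\lambda}$ thus forces $\pi = p_{n,\lambda}$ and $f \equiv 0$.

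Granted the characterisation, $C(q,t) = \sum_{\lambda \in \Par_n} q^{n(\lambda')} t^{n(\lambda)}$, which is symmetric under the conjugation involution $\lambda \leftrightarrow \lambda'$, and the corollary follows. The main obstacle I anticipate is this last characterisation, and specifically the claim that $A^f$ preserves not only the bounce value but also the entire bounce path of $p_{n,\lambda}$; this refinement is implicit in the ab-pair construction but should be extracted explicitly before the inclusion--exclusion argument goes through.
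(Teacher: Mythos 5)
Your proposal is correct, and it follows the same route the paper intends: the corollary is stated as an immediate consequence of \cref{thm:Phi_bijection}, i.e.\ of the fact that $\Phi$ is an area--bounce swapping bijection from $\AF_n$ onto $\BF_n$. What you add beyond the paper is the explicit bookkeeping for the overlap: the paper gives no written proof and silently passes over the fact that $\AF_n$ and $\BF_n$ are not disjoint, whereas you reduce the identity to the symmetry of the generating function of $\AF_n \cap \BF_n$ and then identify this intersection as $\{p_{n,\lambda} : \lambda \in \Par_n\}$, whose generating function $\sum_{\lambda} q^{n(\lambda')}t^{n(\lambda)}$ is symmetric by conjugation (\cref{prop:partition}). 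The one ingredient you flag as outstanding does hold: under the hypotheses of \cref{thm:ab_pairs}, $A^f$ adds cells only in the rows $\row_\lambda(i,r)$ lying strictly above a bounce point of $p_{n,\lambda}$, and the strict bound $f(i,1) < \bar\lambda_i$ together with the weak decrease in $r$ guarantees that at every height $b_k$ of the original bounce path an east step starting at $x = b_{k-1}$ survives, while no east step of the modified path starts on the line $x=b_{k-1}$ at any intermediate height; hence the whole bounce path, not just the bounce value, is preserved, which forces $f \equiv 0$ for any element of $\AF_n$ lying in $\cdycks(n)$. An alternative to inclusion--exclusion, closer in spirit to the paper's one-line justification, is to observe that $\Phi$ maps the intersection onto itself (since $\Phi(p_{n,\lambda}) = p_{n,\lambda'}$), so it restricts to a statistic-swapping bijection from $\AF_n \setminus \BF_n$ onto $\BF_n \setminus \AF_n$, and the union decomposes into three $\Phi$-compatible pieces; either argument completes the proof the paper leaves implicit.
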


\begin{rem}\label{rem:determine_ab_pair}
  To determine if a Dyck path $\pi \in \dycks(n)$ with bounce path $p_{n,\alpha}$  belongs to $\AF_n \cup \BF_n$, we have the following algorithm.
  \begin{enumerate}
  \item If $\alpha$ is a partition $\lambda$, determine the count map $f$ on
    $\mathcal{I}_{\bnc}(\lambda')$ by setting $f(i,r)$ to be the number of floating
    cells in the row $\row_\lambda(i,r)$; i.e.,
    \[
    f(i,r) = a_{\row_\lambda(i,r)} - r + 1.
    \]
    If there are floating cells in rows other than these or if $p_{n,\lambda'} \cdot B_f \notin \cdycks(n)$, then $\pi$ is not a member of $\AF_n$. Otherwise, $\pi \in \AF_n$.

  \item If $\alpha$ is not a partition and $\pi \in \cdycks(n)$, determine the count map $f$ as explained in the proof of \cref{thm:Phi_bijection}. If $f$ satisfies the conditions in \cref{thm:ab_pairs}, then $\pi \in \BF_n$.

  \item Otherwise, $\alpha$ is not a partition and $\pi \notin \cdycks(n)$, in which case $\pi \notin \AF_n \cup \BF_n$.
  \end{enumerate}
See \cref{fig:determine_ab_pair} for an example.
\end{rem}

\begin{figure}
  \captionsetup[subfigure]{labelformat=empty}
  \centering
  \begin{subfigure}[t]{0.2\textwidth}
    \includegraphics[width=\textwidth]{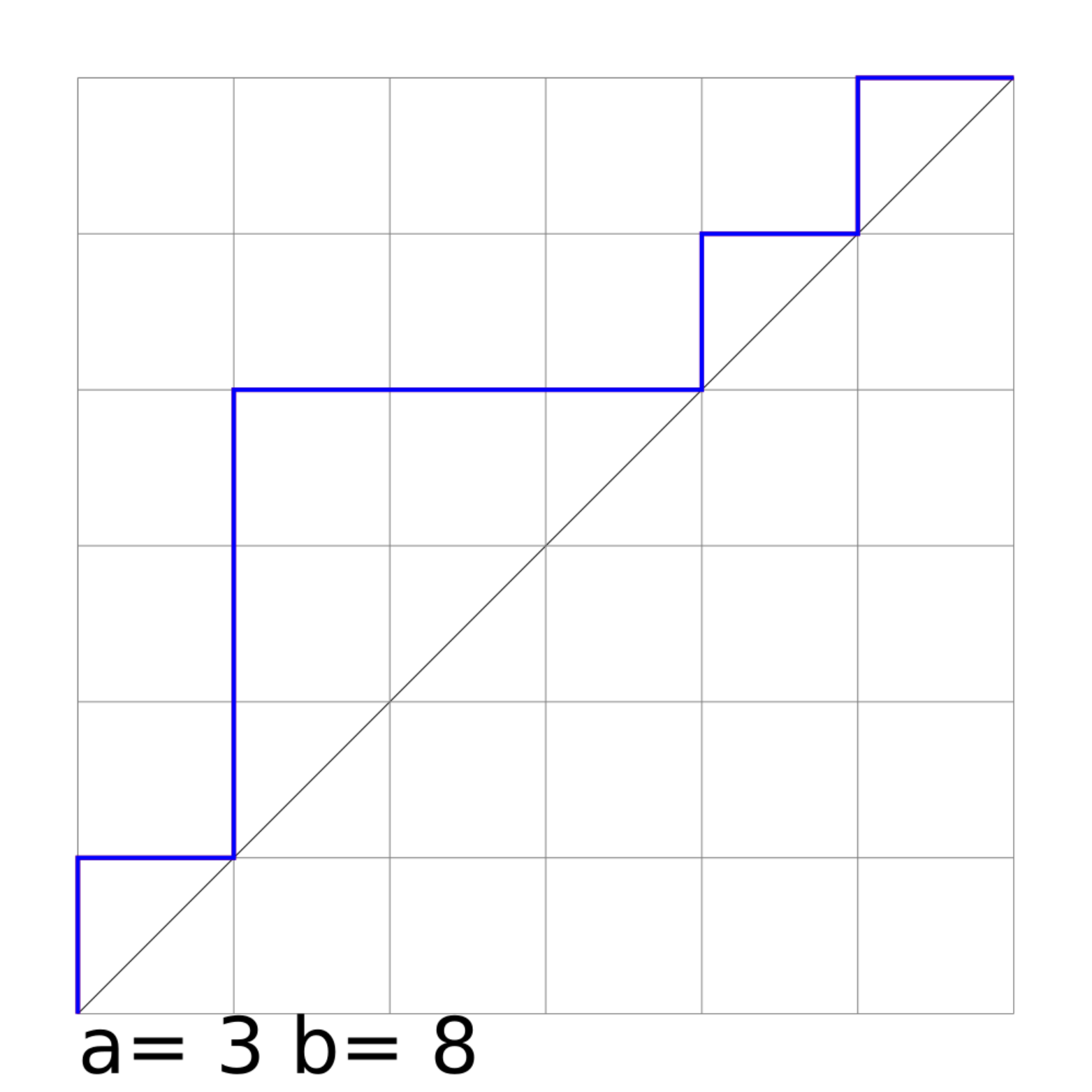}
    \caption{$\pi$ where $\alpha = (1, 3, 1, 1)$, $\lambda=(3,1,1,1)$, $f(1,1)=2 < \lambda'_1$. Hence, $\pi \in \BF_n$.}
  \end{subfigure}
  \hspace{30pt}
  \begin{subfigure}[t]{0.2\textwidth}
    \includegraphics[width=\textwidth]{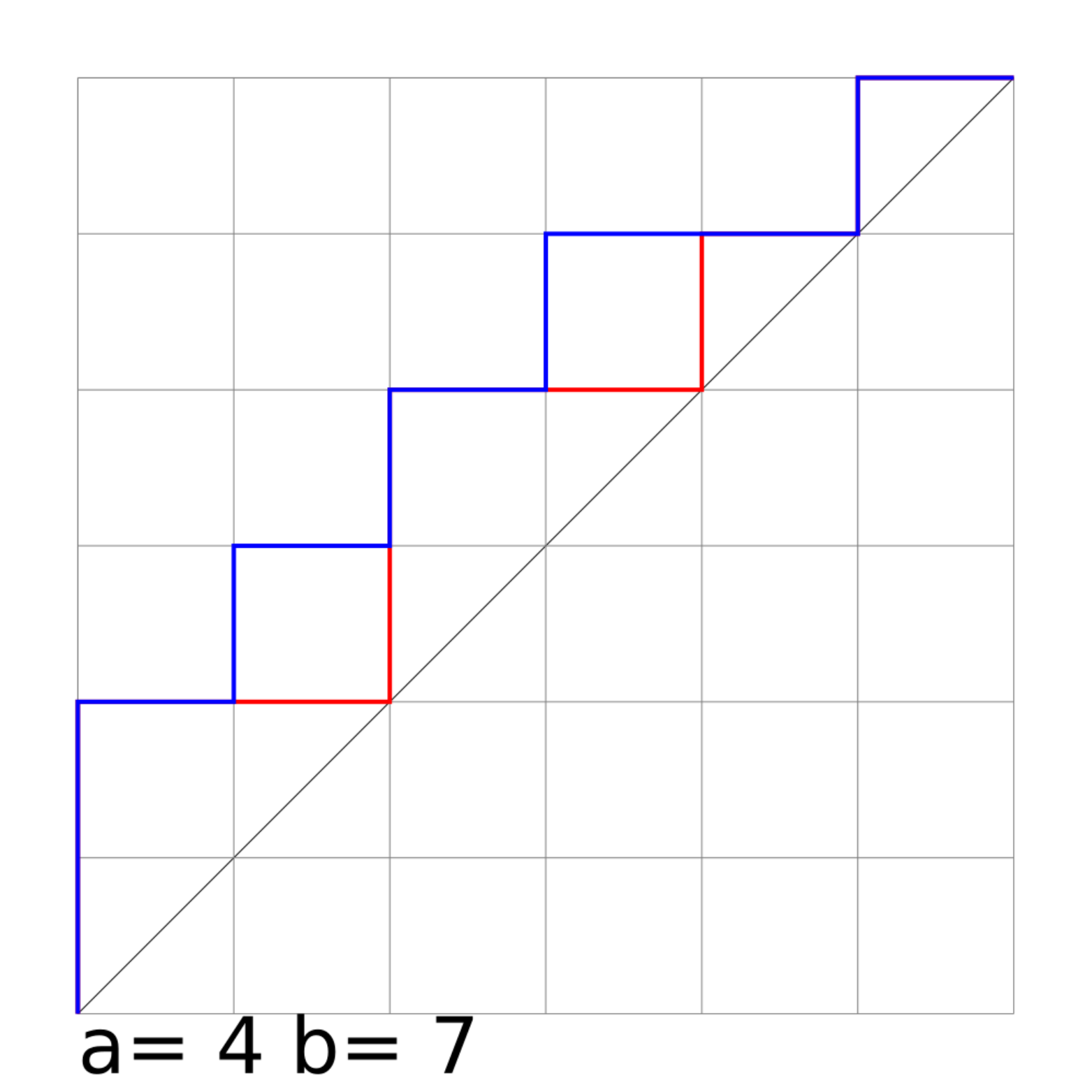}
    \caption{$\pi$ where $\alpha = \lambda=(2,2,1,1)$. $\pi \notin \AF_n$ as there is a floating cell in row $3$.}
  \end{subfigure}
  \hspace{30pt}
  \begin{subfigure}[t]{0.2\textwidth}
    \includegraphics[width=\textwidth]{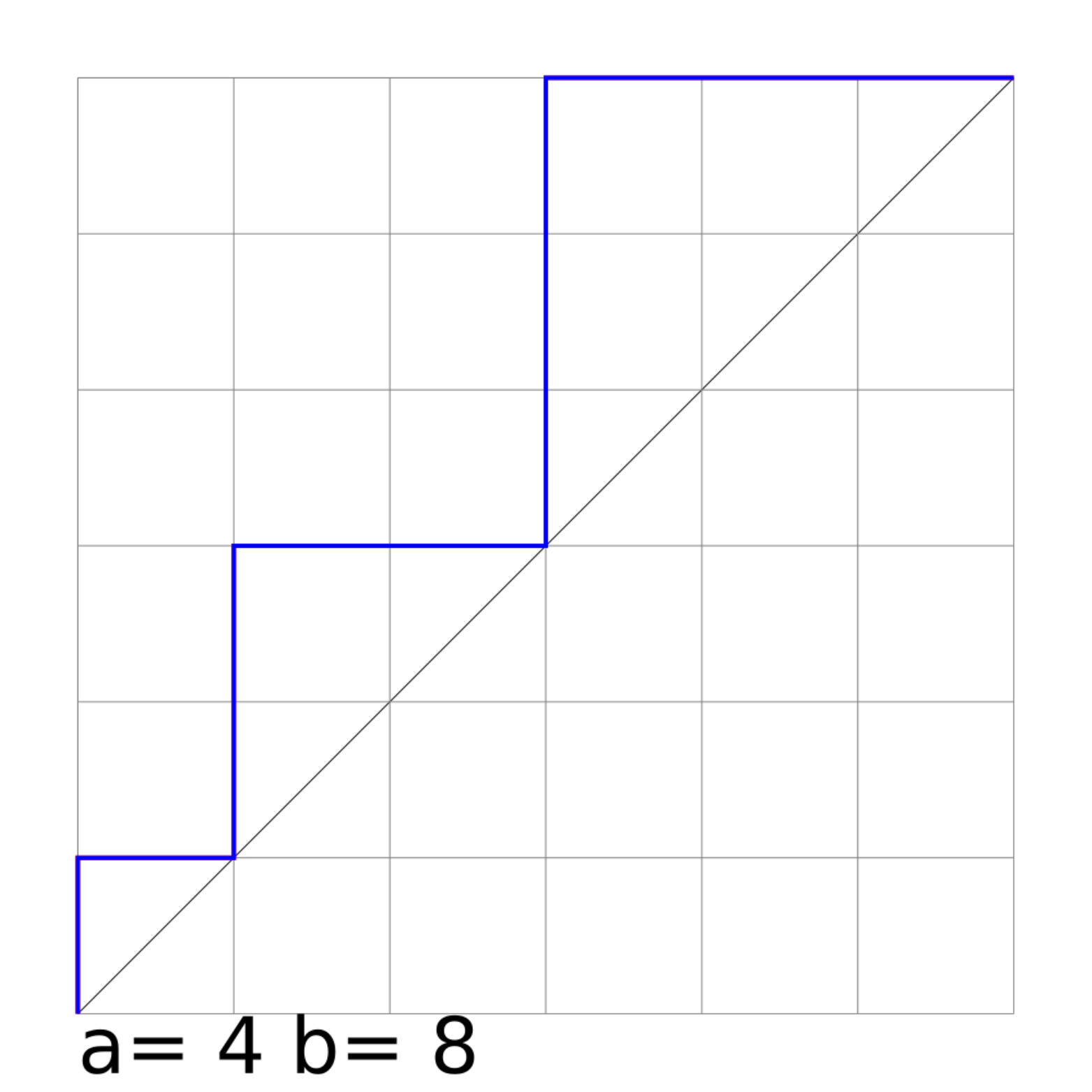}
    \caption{$\pi$ where $\alpha = (1, 2, 3)$ and $\lambda=(3,2,1)$. $\pi \notin \BF_n$ since $f(2,1)=3 > \lambda'_2$.}
  \end{subfigure}
  \caption{Determining if a path belongs to either $\AF_n$ or $\BF_n$.}
  \label{fig:determine_ab_pair}
\end{figure}

\begin{thm}
  \label{thm:num-bijection}
  Let $\phi$ be the golden ratio $(\sqrt{5}+1)/2$. For $n > 4$,
  \[
    1.23\phi^n \le | \AF_n \cup \BF_n| \le 2^n.
  \]
\end{thm}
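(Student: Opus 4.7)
The plan has two independent parts. For the upper bound I would use that $\BF_n \subseteq \cdycks(n)$ by the definition of $\mathcal{F}_n$, combined with $|\cdycks(n)| = 2^{n-1}$ (compositions of $n$ correspond bijectively to subsets of $\{1,\dots,n-1\}$) and $|\AF_n| = |\BF_n|$ from \cref{thm:Phi_bijection}. These give $|\AF_n \cup \BF_n| \leq 2|\BF_n| \leq 2^n$ immediately.

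For the lower bound, my strategy is to embed the $F_{n+1}$ compositions of $n$ with parts in $\{1,2\}$ into $\mathcal{F}_n$. Given such a composition $\alpha$ with $p$ twos and $q$ ones, I would take $\lambda = (p+q,p)$ so that $\lambda' = (2^p,1^q)$, and define $f$ on $\mathcal{I}_{\bnc}(\lambda')$ via \eqref{eq:inv_Phi}. A short calculation simplifies $f(1,r)$ to the number of $1$'s lying to the left of the $r$-th rightmost $2$ in $\alpha$. I would then check the hypotheses of \cref{thm:ab_pairs}: weakly decreasing of $f(1,\cdot)$ is automatic from the ordering $j_1 > j_2 > \cdots$ of positions, and the strict bound $\bar\lambda_1 = p+q > q \geq f(1,1)$ holds whenever $p \geq 1$, while $\mathcal{I}_{\bnc}(\lambda')$ is empty in the degenerate cases $p=0$ or $q=0$. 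By the argument in the proof of \cref{thm:Phi_bijection}, $p_{n,\lambda'} \cdot B_f = p_{n,\alpha} \in \cdycks(n)$, confirming $(f,\lambda) \in \mathcal{F}_n$. Write $S \subseteq \BF_n$ and $T \subseteq \AF_n$ for the images of this construction; since distinct $\alpha$ produce distinct pairs $(f,\lambda)$, we have $|S| = |T| = F_{n+1}$.

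The main obstacle I anticipate is showing $S \cap T = \emptyset$ once $n > 4$. Any path in the intersection lies in $\cdycks(n)$ (since $S$ does), which forces $f = 0$ on the $T$-side (otherwise floating cells survive), so the common path is $p_{n,\mu}$ for some $\mu = (p'+q',p')$; meanwhile, the $S$-side forces $\mu$ to be a 1-2 partition $(2^p,1^q)$. Matching these as partitions forces $p+q \leq 2$, which together with $2p+q=n$ restricts $n \leq 4$. Hence for $n > 4$ we obtain $|\AF_n \cup \BF_n| \geq 2F_{n+1}$. Finally, Binet's formula $F_{n+1} = (\phi^{n+1}-\psi^{n+1})/\sqrt{5}$ with $\psi = -1/\phi$ gives $2F_{n+1}/\phi^n = (2/\sqrt{5})\bigl(\phi + (-1)^n\phi^{-(2n+1)}\bigr) \geq 1.44$ for all $n > 4$, comfortably exceeding $1.23$.
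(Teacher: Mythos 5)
Your proof is correct and follows essentially the same route as the paper: the upper bound via $\BF_n \subseteq \cdycks(n)$ (compositions) together with $|\AF_n| = |\BF_n|$, and the lower bound by realizing the $F_{n+1}$ compositions of $n$ with parts in $\{1,2\}$ (equivalently the paper's partitions $\langle 1^b, 2^a\rangle$ with weakly decreasing count maps, which give the same $\sum\binom{a+b}{b}=F_{n+1}$ count) inside $\mathcal{F}_n$, then checking distinctness for $n>4$ and comparing Fibonacci growth with $\phi^n$. Your disjointness argument ($S \cap T = \emptyset$, forcing $n \le 4$) is in fact a slightly more complete version of the paper's brief remark that the only self-conjugate partitions involved are $(1)$, $(2,1)$, $(2,2)$, since it also rules out collisions between paths coming from different pairs.
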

\begin{proof}
  The upper bound is due to the fact that $|\BF_n| \le |\comp(n)|$
  and the number of compositions of $n$ is $2^{n-1}$. For the lower bound, consider partitions $\lambda$ of $n$ of the
  form $\langle 1^b, 2^a \rangle$ in frequency notation. If $a > 0$, then $b < \lambda'_1$. All assignments of the
  count map $f$ on $\mathcal{I}_{\bnc}(\lambda) = \{ (1,1), \dots, (1,a)\}$
  satisfying $b \ge f(1,1) \dots \ge f(1,b)$ lead to an $ab$-pair
  $(p_{n,\lambda'} \cdot A^f, p_{n,\lambda} \cdot B_f) \in (\AF_n,
  \BF_n)$ by \cref{thm:ab_pairs}. In particular, there are $\binom{a+b}{b}$ such pairs. When
  summed over all such partitions $\lambda$, it is a standard fact that the total number of such pairs is given by the $(n+1)$'th Fibonacci number (the number of sequences of $1$'s and $2$'s that sum to $n$).
  Using induction, one can show that the $n$'th Fibonacci number has a lower bound of $\phi^{n-2}$.
  The only such self-conjugate partitions are $(1), (2,1)$ and $(2,2)$. So, when $n > 4$, the two Dyck paths of each $\abp$-pair are distinct. Hence, there are a total of at least  $2\phi^{n-1} \approx 1.23\phi^n$ paths in $|\AF_n \cup \BF_n|$.
\end{proof}

\subsection{Including more $\abp$-pairs}
\label{sec:ext bij}

It does not seem very easy to extend the bijection $\Phi$ in \cref{thm:Phi_bijection} while still maintaining control on the size of the set in bijection. If we do not care for the latter, we can certainly extend the bijection. We illustrate the idea with an example.

For $n=11$ and $\lambda=(6,3,1,1)$, consider the count map $f$
defined by setting $f(1,1) = 3$ and zero everywhere else.
Then we have the following $\abp$-pair
in natural bijection:
\begin{center}
  \begin{tabular*}{0.5\textwidth}{c c}
    \includegraphics[width=0.2\textwidth]{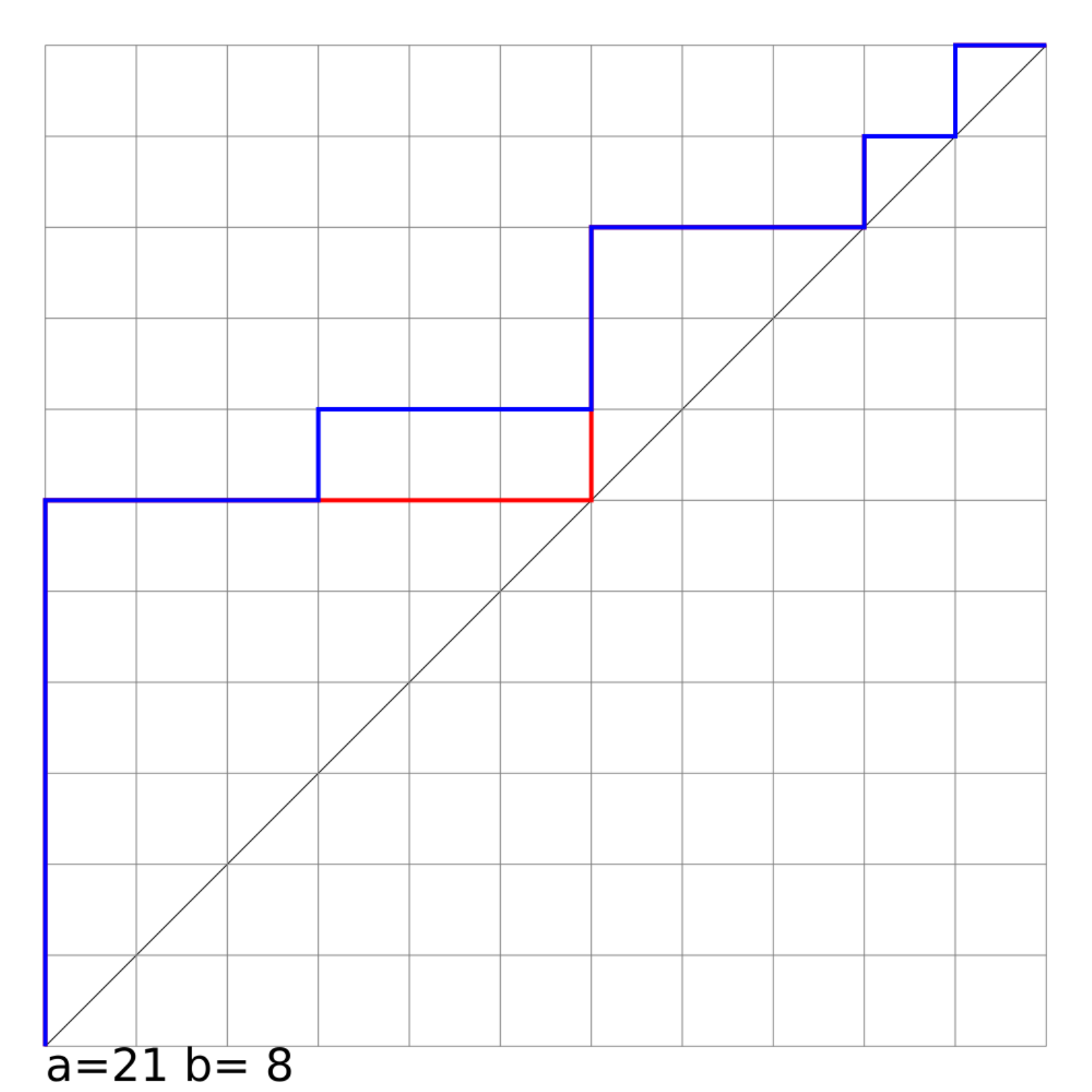} &
    \includegraphics[width=0.2\textwidth]{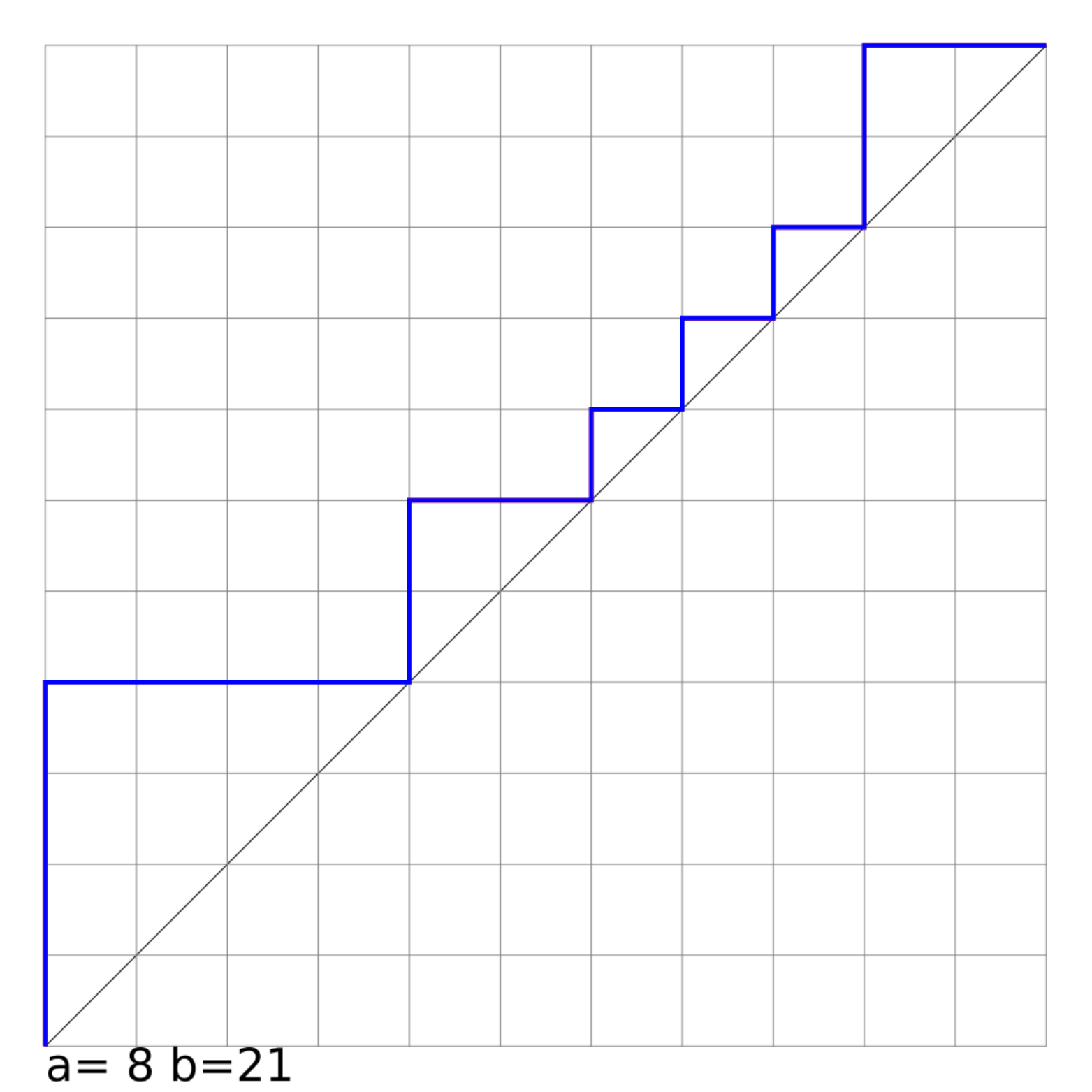} \\
    \CAP{$p_{11, (6,3,1,1)} \cdot A^f$} & \CAP{$p_{11, (4, 2, 2, 1, 1, 1)} \cdot B_f$}
  \end{tabular*}
\end{center}
The bounce points are unchanged and no floating cells are present above row $7$ in the left path, and the right path is unchanged below row $6$.
Because of this observation, we can apply 
another count map, $g$ say, such that $g(1,1)=2$ and zero everywhere else, to them.
Consider the paths $\sigma$ and $\tau$ below.
\begin{center}
  \begin{tabular*}{0.6\textwidth}{c c}
    \includegraphics[width=0.2\textwidth]{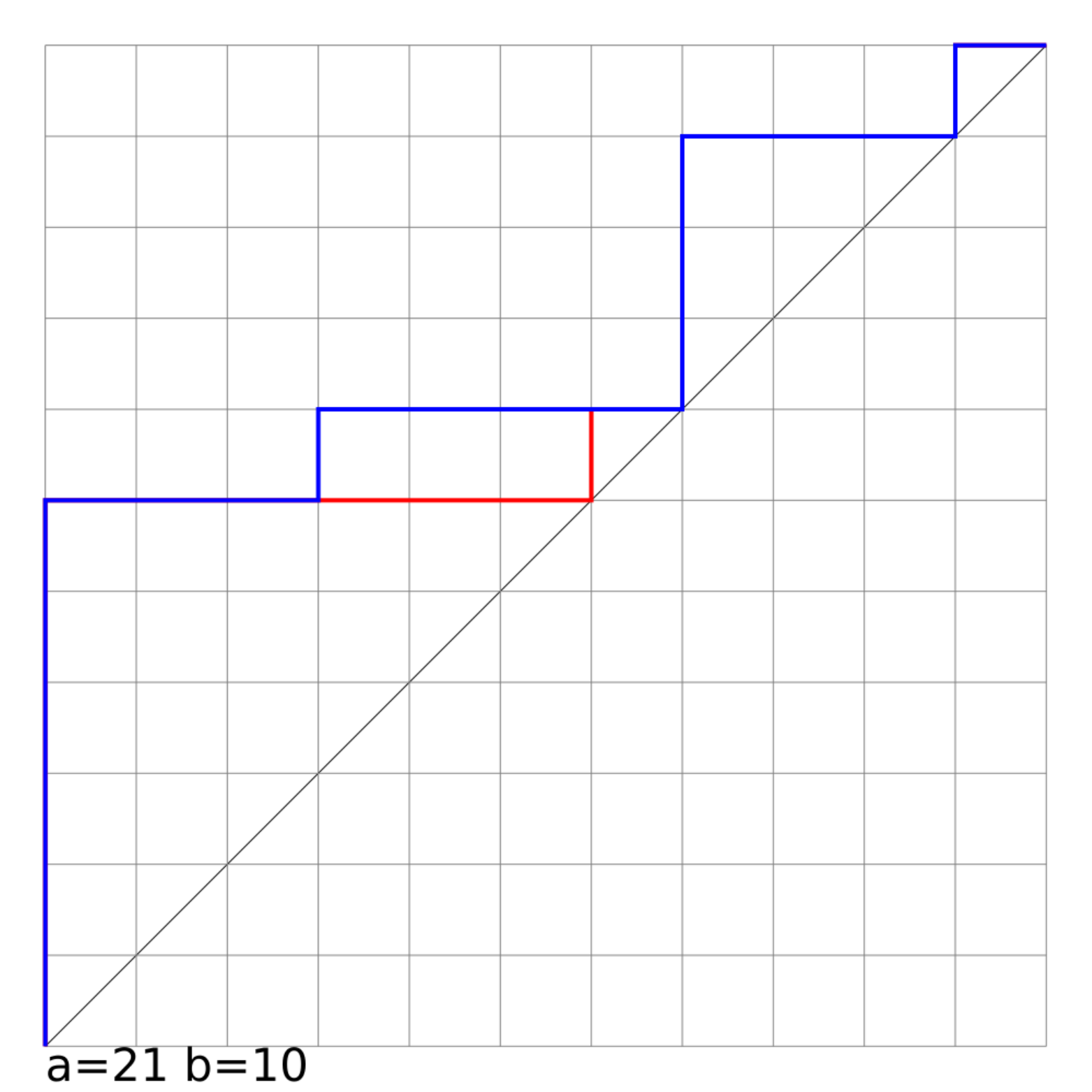} &
    \includegraphics[width=0.2\textwidth]{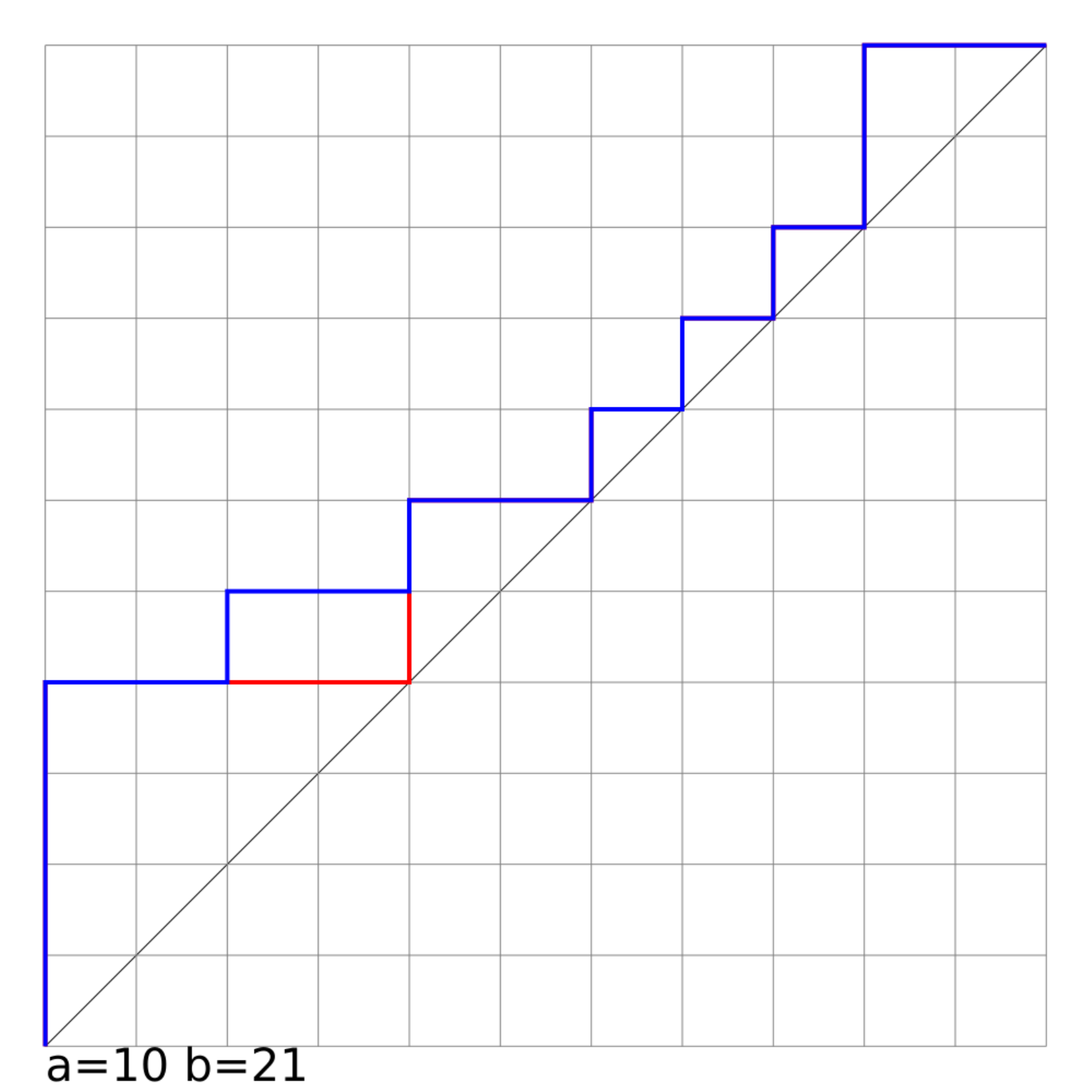} \\
    \CAP{$\sigma = p_{11, (6,3,1,1)} \cdot A^fB_g$} & \CAP{$\tau = p_{11,(4, 2, 2, 1, 1, 1)} \cdot B_fA^g$}
  \end{tabular*}
\end{center}
Then $(\sigma, \tau)$ also form an $\abp$-pair but they do not exist in $(\AF_n, \BF_n)$. 
The map $g$ in this example is chosen in such a way that $(p_{n, \lambda'} \cdot A^g, p_{n, \lambda} \cdot B_g)$ does belong to $(\AF_n, \BF_n)$, $p_{n,\lambda} \cdot A^f B_g = p_{n,\lambda} \cdot B_g A^f$ and $p_{n,\lambda'} \cdot B_fA^g
  = p_{n,\lambda'} \cdot A^gB_f$.
  Hence, we can recover $\lambda'$, $f$, and $g$ from $\tau$ as follows.
  \begin{enumerate}
  \item $\lambda'$ can be recovered from the bounce path $p_{n,\alpha}$ of $\tau$ by
    sorting $\alpha$,
  \item $p_{n,\lambda'} \cdot A^g$ is the path we get by modifying $\tau$ by
    changing its bounce path to $\lambda'$. Then $g$ can be recovered from this
    path by applying $(1)$ in \cref{rem:determine_ab_pair}, and
  \item $p_{n,\lambda'} \cdot B_f$ is the path we get by modifying $\tau$ by
    dropping all its floating cells. Thus $f$ is recovered from this path
    by applying $(2)$ in \cref{rem:determine_ab_pair}.
  \end{enumerate}
Similarly we can also recover $\lambda$, $f$, and $g$ from $\sigma$.
It's not hard to see that the above-mentioned commutativity is achieved only
if the count map $g$ takes on non-zero values over $(i,r)$ when
the row given by $\row_{\lambda'}(i,r)$ is below the rows changed by $f$ in
$p_{n,\lambda} \cdot B_f$.

\cref{thm:Gamma} gives the conditions on $f$ and $g$ to create such pairs.
The example here illustrates the salient points. Since the key ideas of the proof are similar to those in \cref{thm:Phi_bijection}, we simply state it here. 

\begin{thm}
\label{thm:Gamma}
  Let $\mathfrak{F}_n \subset \mathcal{F}_n^2$ be the set of pairs $((f,\lambda),(g,\mu))$ which satisfies
  \begin{enumerate}
  \item $\mu = \lambda'$,
  \item $|f| \ge |g|$,
  \item let $m = \ell(\bar\lambda)$ and $i = \max \{i : f(i,r) > 0\}$, then $g(j,r) = 0$ if $j > m-i$, and $g(m-i,r) = 0$ if $r > d$.
  \end{enumerate}
  Define the sets
  \begin{align*}
    \mathfrak{AF}_n = \{ p_{n,\lambda} \cdot A^f B_g : ((f,\lambda),(g,\lambda')) \in \mathfrak{F}_n\},\\
    \mathfrak{BF}_n = \{ p_{n,\lambda'} \cdot B_f A^g : ((f,\lambda),(g,\lambda')) \in \mathfrak{F}_n\}.
  \end{align*}
  Then the map $\Gamma : \mathfrak{AF}_n \rightarrow \mathfrak{BF}_n$ given by
  \begin{equation}
    \Gamma(p_{n,\lambda} \cdot A^f B_g) = p_{n,\lambda'} \cdot B_f A^g
  \end{equation}
  is an area and bounce flipping bijection.
\end{thm}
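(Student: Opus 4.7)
The plan is to mirror the three-part structure used for $\Phi$ in \cref{thm:Phi_bijection}: first establish that $\Gamma$ is well-defined, then verify that it exchanges area and bounce, and finally exhibit an explicit inverse procedure. Throughout, I fix $((f,\lambda),(g,\lambda')) \in \mathfrak{F}_n$ and write $\sigma = p_{n,\lambda} \cdot A^f B_g$ and $\tau = p_{n,\lambda'} \cdot B_f A^g$.

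The opening step — and where I expect the essential work to lie — is the pair of commutativity identities $p_{n,\lambda} \cdot A^f B_g = p_{n,\lambda} \cdot B_g A^f$ and $p_{n,\lambda'} \cdot B_f A^g = p_{n,\lambda'} \cdot A^g B_f$, as sketched in the paragraph preceding the theorem. The geometric point is that condition (3) of $\mathfrak{F}_n$ forces the support of $g$ to lie strictly below the rows affected by $f$ (and symmetrically for the second identity). Since each $A_i$ only inserts a floating cell in row $i$, while each $S_i$ comprising a factor $B_{i,k}$ only edits cells inside the rectangle determined by consecutive bounce points (see \eqref{eq:S} and \cref{fig:S}), the two families act on disjoint regions and therefore commute. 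The same observation shows $\sigma, \tau \neq \bot$, because validity of each individual factor is inherited from $(f,\lambda), (g,\lambda') \in \mathcal{F}_n$.

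Given the commutativity, the area-bounce exchange is immediate. By \cref{prop:Si action} each $S_i$ inside a $B_{\cdot,\cdot}$ preserves area and increases bounce by one, while each $A_i$ raises area by one and (as in the proof of \cref{thm:ab_pairs}) leaves the bounce path unchanged. Setting $|f| = \sum f(i,r)$ and $|g| = \sum g(i,r)$, this yields
\begin{align*}
  \area(\sigma) &= \area(p_{n,\lambda}) + |f|, & \bounce(\sigma) &= \bounce(p_{n,\lambda}) + |g|,\\
  \area(\tau)   &= \area(p_{n,\lambda'}) + |g|, & \bounce(\tau)   &= \bounce(p_{n,\lambda'}) + |f|.
\end{align*}
Combining with \cref{prop:partition}, which gives $\area(p_{n,\lambda}) = \bounce(p_{n,\lambda'})$ and $\bounce(p_{n,\lambda}) = \area(p_{n,\lambda'})$, we conclude $\area(\sigma) = \bounce(\tau)$ and $\bounce(\sigma) = \area(\tau)$.

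For bijectivity I would adapt the recovery scheme from the paragraph preceding the theorem. Starting from $\tau \in \mathfrak{BF}_n$: (i) extract $\lambda'$ by sorting the bounce path of $\tau$; (ii) obtain $p_{n,\lambda'} \cdot B_f$ from $\tau$ by stripping all floating cells, then read off $f$ via step (2) of \cref{rem:determine_ab_pair}; (iii) obtain $p_{n,\lambda'} \cdot A^g$ from $\tau$ by replacing its bounce path with $\lambda'$ while retaining the floating cells, then read off $g$ via step (1) of \cref{rem:determine_ab_pair}. An analogous procedure applied to the bounce path $\lambda$ and the floating cells of $\sigma$ recovers $(f,\lambda,g)$ from $\sigma$. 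The main obstacle here is verifying that these two deconstructions truly decouple, i.e., that stripping the floating cells yields exactly $p_{n,\lambda'} \cdot B_f$ rather than some entangled deformation; this is precisely what the commutativity established in step one provides. Once this is settled, the recovered triple $(f,\lambda,g)$ automatically satisfies the conditions of $\mathfrak{F}_n$ because $(f,\lambda), (g,\lambda') \in \mathcal{F}_n$ by construction, and condition (3) can be read off the rows at which the floating cells and the reordered bounce blocks of $\tau$ appear.
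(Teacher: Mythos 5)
Your proposal is correct and follows essentially the same route the paper takes: the paper omits a formal proof of \cref{thm:Gamma}, noting it parallels \cref{thm:Phi_bijection}, and its preceding discussion sketches exactly your three ingredients --- the commutativity $A^f B_g = B_g A^f$ and $B_f A^g = A^g B_f$ forced by condition (3) (disjoint rows of action), the area/bounce bookkeeping via \cref{prop:Si action} and \cref{prop:partition}, and the recovery of $(\lambda, f, g)$ by sorting the bounce path, dropping floating cells to get $p_{n,\lambda'} \cdot B_f$, and resetting the bounce path to get $p_{n,\lambda'} \cdot A^g$, using \cref{rem:determine_ab_pair}. No substantive divergence from the paper's intended argument.
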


\section{Area-minimal and bounce-minimal paths}
\label{sec:minimal}

\subsection{The operators $U_i$ and $D_i$}
\label{sec:UD}

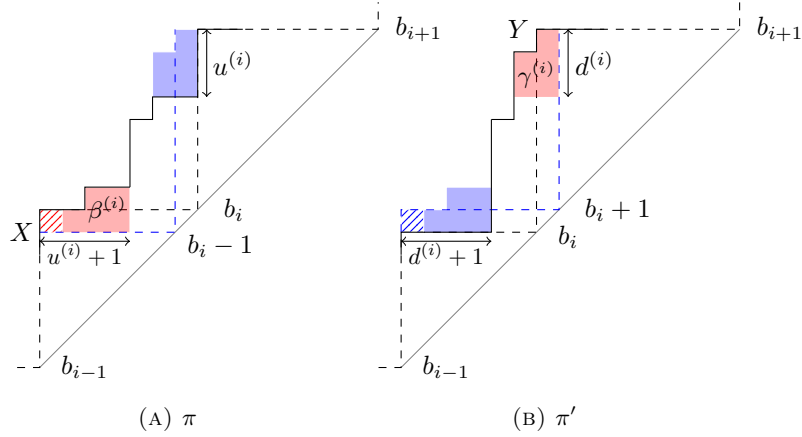
\begin{figure}[h!]
  \centering
  \begin{subfigure}{0.35\textwidth}
  \begin{tikzpicture}[scale=0.6]
    \draw[gray,very thin] (0.5,0.5) -- (8,8);

    \draw[dashed] (0, 0.5) --  (0.5,0.5) -- (0.5,4);
    \draw[dashed]  (0.5,4) -- (4,4) -- (4,8);
    \draw[dashed] (4,8) -- (8,8) -- (8, 8.6);

    \draw[color=white,pattern color=red,pattern=north east lines] (0.5,3.5) rectangle (1,4);

    \draw[dashed,color=blue] (0.5,3.5) -- (3.5,3.5) -- (3.5,8);

    \draw[color=white,fill=red,opacity=0.3] (1,3.5) -- (1,4) -- (1.5,4) -- (1.5,4.5) -- (2.5,4.5) -- (2.5,3.5);

    \draw[color=white,fill=blue,opacity=0.3] (3,6.5) -- (3,7.5) -- (3.5,7.5) -- (3.5,8) -- (4,8) -- (4,6.5);

    \draw (0.5,3) -- (0.5,4) -- (1.5,4) -- (1.5,4.5) -- (2,4.5) -- (2.5,4.5);
    \draw (2.5,4.5) -- (2.5,6) -- (3,6) -- (3,6.5) -- (4,6.5) -- (4,8) -- (5,8);

    \node at (4.8,4) {$b_i$};
    \node at (4.5,3.2) {$b_i - 1$};
    \node at (1.5,0.5) {$b_{i-1}$};
    \node at (8.9,8) {$b_{i+1}$};
    \node at (0.1,3.5) {$X$};
    \node at (2,4) {\small $\beta^{(i)}$};
    \draw[<->,thin] (4.2,6.5) -- (4.2,8) node[midway,xshift=10] {$u^{(i)}$};
    \draw[<->,thin] (0.5,3.3) -- (2.5,3.3) node[midway,yshift=-5] {\resizebox{1cm}{!}{$u^{(i)}+1$}};

  \end{tikzpicture}
  \caption{$\pi$}
  \label{fig:U_left}
  \end{subfigure}
  \hspace*{10pt}
  \begin{subfigure}{0.35\textwidth}
  \begin{tikzpicture}[scale=0.6]
    \draw[gray,very thin] (0.5,0.5) -- (8,8);

    \draw[dashed] (0, 0.5) --  (0.5,0.5) -- (0.5,3.5);
    \draw[dashed] (0.5,3.5) -- (3.5,3.5) -- (3.5,8);
    \draw[dashed] (3.5,8) -- (8,8) -- (8, 8.6);

    \draw[color=white,pattern color=blue,pattern=north east lines] (0.5,3.5) rectangle (1,4);

    \draw[dashed,color=blue] (0.5,3.5) -- (0.5,4) -- (4,4) -- (4,8);

    \draw[color=white,fill=blue,opacity=0.3] (1,3.5) -- (1,4) -- (1.5,4) -- (1.5,4.5) -- (2.5,4.5) -- (2.5,3.5);

    \draw[color=white,fill=red,opacity=0.3] (3,6.5) -- (3,7.5) -- (3.5,7.5) -- (3.5,8) -- (4,8) -- (4,6.5);

    \draw (0.5,3) -- (0.5,3.5) -- (2.5,3.5) -- (2.5,6);
    \draw (2.5,6) -- (3,6) -- (3,7.5) -- (3.5,7.5) -- (3.5,8) -- (5,8);

    \node at (5.3,4) {$b_i+1$};
    \node at (4.2,3.4) {$b_i$};
    \node at (1.5,0.5) {$b_{i-1}$};
    \node at (8.9,8) {$b_{i+1}$};
    \node at (3.1,8) {$Y$};
    \node at (3.5,7) {\small $\gamma^{(i)}$};
    \draw[<->,thin] (4.2,6.5) -- (4.2,8) node[midway,xshift=10] {$d^{(i)}$};
    \draw[<->,thin] (0.5,3.3) -- (2.5,3.3) node[midway,yshift=-5] {\resizebox{1cm}{!}{$d^{(i)}+1$}};

  \end{tikzpicture}
  \caption{$\pi'$}
  \label{fig:U_right}
  \end{subfigure}
  \caption{The diagram on the left is the relevant part of $\pi$ and that on the right is that of $\pi' = \pi \cdot U_i$.
On the diagram to the left, the solid black line is part of $\pi$, the dashed black line is its bounce path and the dashed blue line is the bounce path of $\pi'$. 
Note that $\pi$ and its bounce path must meet at ${X} = (b_{i-1}, b_i - 1)$. The red shaded region is the partition $\beta^{(i)}$ (read $(2, 2, 1)$ in this example) that is to be removed and the blue shaded region is the added partition after rotation and reflection. 
On the diagram to the right, the solid black line is part of $\pi'$, the dashed black line is its bounce path and the dashed blue line is the bounce path of $\pi$. 
Note that $\pi'$ must take at least two east steps after ${Y} = (b_i, b_{i+1})$ so that $\pi' \cdot D_i \neq \bot$. It is clear that $\pi' \cdot D_i = \pi$ in this example.}
  \label{fig:U}
\end{figure}

Given $\pi \in \dycks(n)$ with 
bounce points $(b_0 = 0, \dots, b_m)$, we now define $\pi \cdot U_i$ and $\pi \cdot D_i$ for $1 \leq i \leq m$, which are either new Dyck paths of the same semilength or the undefined path $\bot$. 
To define $\pi \cdot U_i$, we need some notation.
First, let $u^{(m)} = 0$ and $u^{(i)} = b_{i+1} - h_{b_i}$ be the height difference between the $(i+1)$'th bounce position and the height of the column at the $i$'th bounce position.
Now, let $\beta^{(i)}$ be the partition defined by $\beta^{(i)}_j = h_{b_{i-1} +
  u^{(i)} + 2-j} - b_i + 1$ for $1 \le j \le u^{(i)}$. {Informally, $U_i$
  will first remove a block of cells of shape $\beta^{(i)}$ in addition to removing an
  extra
  cell at column $b_{i-1} +1$. This step ensures that the bounce point at $i$
  shifts down by $1$. Then, the cells of shape $\beta^{(i)}$ are
  transposed (the height of the shape is now $u^{(i)}$) and added to the north
  of bounce point $i$. This step ensures that
  the next bounce point still appears at $b_{i+1}$ and that the area has
  decreased by one.
Formally,}
\begin{equation}\label{eq:U}
\pi \cdot  U_i = 
\begin{cases}
\bot &  h_{b_{i-1}} = b_i,\\
\ds \pi \cdot C^{-1}_{b_{i-1}+1} \left( \prod_{j\ge 1} C_{b_{i-1}+1+j}^{-\beta^{(i)}_{u^{(i)}-j+1}} \right) \left( \prod_{j\ge 1} A_{b_{i+1}-u^{(i)}+j}^{\beta_j} \right) & \text{otherwise}.
\end{cases}
\end{equation}
We can also have $\pi \cdot U_i = \bot$ in the second case and this can happen for two distinct reasons. To see this, it will help to look at \cref{fig:U}. Notice that if either $u^{(i)}$ is too high so that the row being removed (in red) crosses the diagonal, or $\beta^{(i)}_1$ is too high so that the row being added (in blue) overhangs the diagram, the operation will result in $\bot$.
\cref{fig:U_operator} shows a concrete example of $\pi \cdot U_i$.

\begin{figure}[h!]
  \centering
  \captionsetup[subfigure]{labelformat=empty}
  \begin{subfigure}[t]{0.25\textwidth}
    \includegraphics[width=\textwidth]{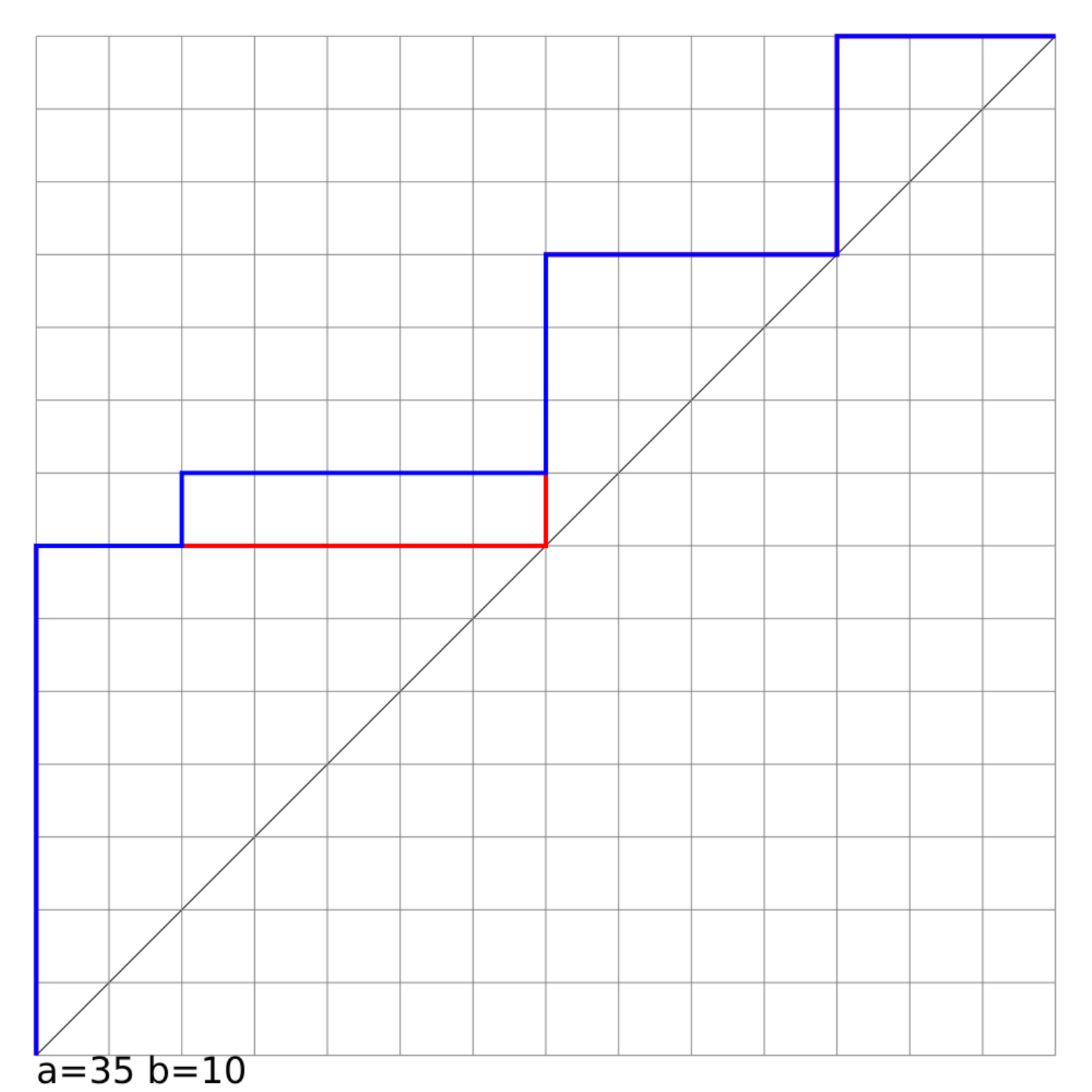}
  \end{subfigure}
  \hspace{10pt}
  \begin{subfigure}[t]{0.25\textwidth}
    \includegraphics[width=\textwidth]{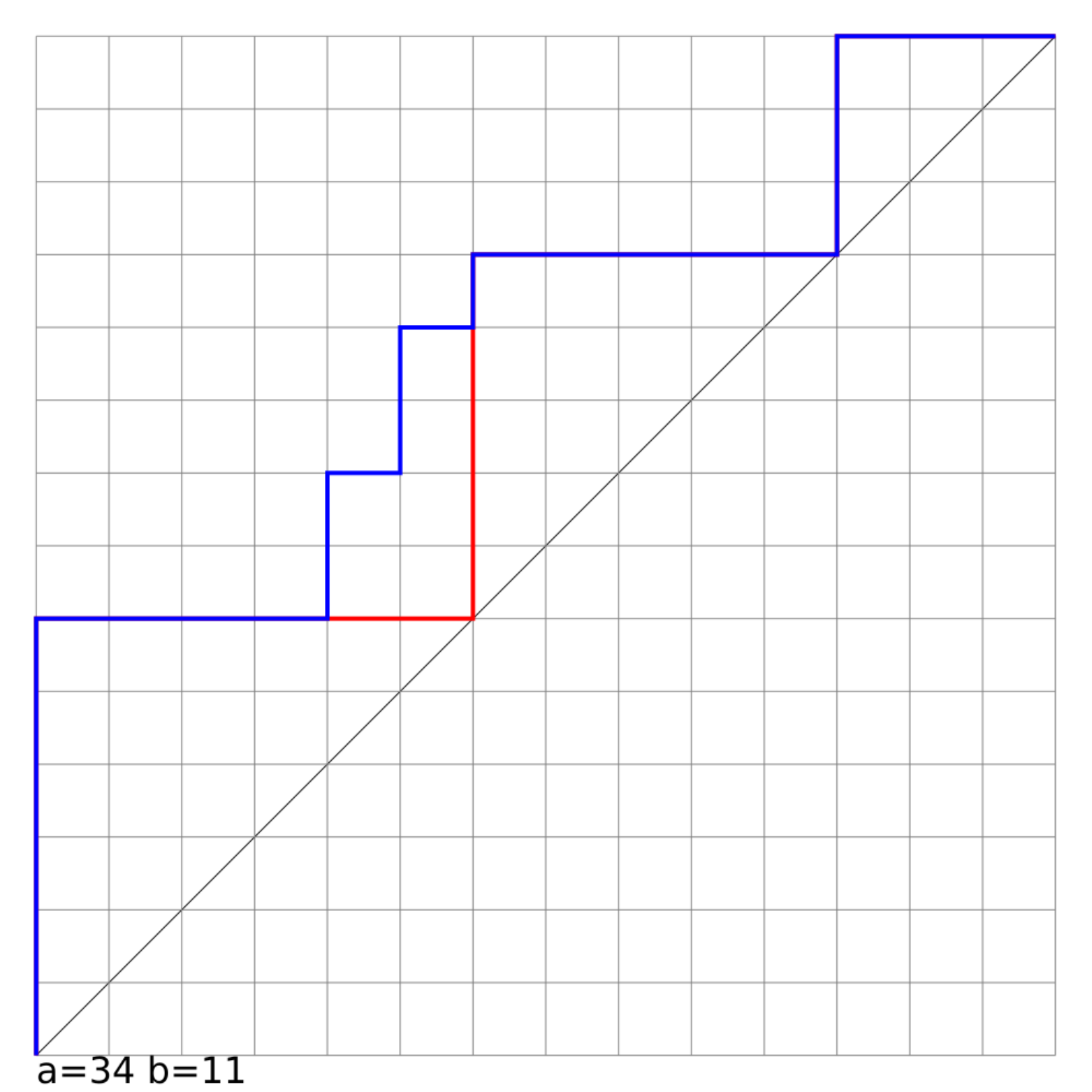}
  \end{subfigure}
  \caption{
    Example of $U_i$. 
    On the left is $\pi$ with bounce points $(0, 7, 11, 14)$, 
    $u^{(1)} = 3$, $\beta^{(1)} = (2,2,1)$, 
    $u^{(2)} = 3$, $\beta^{(2)} = (1,1,1)$.
    On the right is {$\pi' = \pi \cdot U_1$} where 
    $U_1 = C_1^{-1}C_2^{-1}C_3^{-2}C_4^{-2}A_9^2 A_{10}^2A_{11}^1$.
    Note that $\pi \cdot U_2 = \bot$ because $\pi \cdot C_{11}^{-1}= \bot$.
    }
  \label{fig:U_operator}
\end{figure}

\begin{prop}\label{prop:U_changes_ab}
If $\pi \cdot  U_i \neq \bot$, then $\area(\pi \cdot  U_i) = \area(\pi) - 1$ and $\bounce(\pi \cdot  U_i) = \bounce(\pi) + 1$.
\end{prop}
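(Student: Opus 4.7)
The plan is to prove the area and bounce changes separately, in the spirit of the argument for $S_i$ in \cref{prop:Si action}.

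For the area change, a direct count suffices. From \eqref{eq:U}, the operator $U_i$ removes $1 + |\beta^{(i)}|$ cells in total: one cell via $C^{-1}_{b_{i-1}+1}$, and $\beta^{(i)}_{u^{(i)}-j+1}$ cells from each column $b_{i-1}+1+j$ for $1 \le j \le u^{(i)}$, summing to $|\beta^{(i)}|$. It then adds $|\beta^{(i)}|$ cells via the $A$ operators, namely $\beta^{(i)}_j$ cells in row $b_{i+1}-u^{(i)}+j$ for $1 \le j \le u^{(i)}$. Hence $\area(\pi \cdot U_i) = \area(\pi) - 1$.

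For the bounce change, I would show that the bounce points of $\pi \cdot U_i$ are exactly $(0, b_1, \ldots, b_{i-1}, b_i - 1, b_{i+1}, \ldots, b_m)$, from which $\bounce(\pi \cdot U_i) = \bounce(\pi) + 1$ follows immediately. The argument has four parts: (a) the bounce points $b_1, \ldots, b_{i-1}$ are unaffected because $U_i$ acts only on columns strictly greater than $b_{i-1}$; (b) the $i$-th bounce point drops to $b_i - 1$ because $C^{-1}_{b_{i-1}+1}$ lowers the height of column $b_{i-1}+1$ from $b_i$ to $b_i - 1$, so heading north from $(b_{i-1}, b_{i-1})$ one now meets an east step at height $b_i - 1$; (c) the $(i+1)$-th bounce point is still $b_{i+1}$ because the $A$ operators fill column $b_i$ up to height exactly $b_{i+1}$; (d) the bounce points past index $i+1$ are unchanged because columns strictly greater than $b_i$ are left untouched.

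The main subtlety lies in (c): one must verify that the $A$ operators together fill column $b_i$ from height $h_{b_i}$ all the way up to $b_{i+1}$ without introducing any new east step at some lower intermediate height (which would prematurely terminate the $(i+1)$-th bounce). The definition $u^{(i)} = b_{i+1} - h_{b_i}$ guarantees that the counts match, and the identity $\beta^{(i)}_{u^{(i)}-j+1} = h_{b_{i-1}+j+1} - b_i + 1$ shows that the $C^{-1}$ removals flatten columns $b_{i-1}+1, \ldots, b_{i-1}+u^{(i)}+1$ to a common height $b_i - 1$, while the $A$ additions construct the rotated and reflected copy of $\beta^{(i)}$ sitting atop column $b_i$, exactly as depicted in \cref{fig:U}. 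Once this geometric correspondence is verified, assembling (a)--(d) yields $\bounce(\pi \cdot U_i) = \bounce(\pi) + 1$ and completes the proof.
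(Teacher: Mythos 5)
Your proposal is correct and follows essentially the same route as the paper: the area change is read off directly from the definition \eqref{eq:U}, and the bounce change is obtained by tracing the bounce path of $\pi \cdot U_i$, noting that the first $i-1$ bounce points persist, the removal of the cell in column $b_{i-1}+1$ drops the $i$'th bounce point to $b_i-1$, and the $A$-additions fill column $b_i$ to height $b_{i+1}$ so that the remaining bounce points $b_{i+1},\dots,b_m$ are unchanged. Your extra verification in step (c) — that the counts $u^{(i)} = b_{i+1} - h_{b_i}$ and the parts of $\beta^{(i)}$ make the filled column reach exactly $b_{i+1}$ — is precisely the point the paper asserts with ``$U_i$ always fills the $b_i$'th column of $\pi$ up to height $b_{i+1}$,'' so no genuinely different argument is involved.
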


\begin{proof}
  It's clear from \eqref{eq:U} that $\area(\pi \cdot U_i) = \area(\pi) - 1$.
  Let $b_1,\dots,b_m$ be the bounce points of $\pi$.
  Clearly, the first $i-1$ bounce points of $\pi$ are also present in $\pi \cdot U_i$.
  Heading north from $(b_{i-1},b_{i-1})$,
  the first east step occurs at $(b_{i-1},b_i-1)$
  due to the removed cell at $X$ (see \cref{fig:U_left}).
  Therefore, the $i$'th bounce point must be $b_i-1$.
  Then, heading north from $(b_i - 1, b_i-1)$, the first east step occurs at
  $(b_i-1,b_{i+1})$ because $U_i$ always fills the $b_i$'th column of $\pi$
  up to height $b_{i+1}$. Therefore the remaining bounce points must be $b_{i+1},\dots,b_m$ and $\bounce(\pi \cdot U_i) = \bounce(\pi)+1$.
\end{proof}

Let $d^{(i)} = a_{b_i} - a_{b_i+1}$ be the difference between the number of
whole cells in the rows at the $i$'th bounce point. Let $h'_i = a_i + n +1- i$
{be the number of cells in the row $i$ to the right of the path}.
Define the partition $\gamma^{(i)}$ by {$\gamma^{(i)}_j =
h'_{b_{i+1} - d^{(i)} + j} - (n - b_i-1)$ for $1 \le j \le d^{(i)}$}.
Then set
\begin{equation}\label{eq:Di}
  \pi \cdot D_i =
  \begin{cases}
    \bot & h_{b_i+2} = b_{i+1}, \\
    \ds \pi \cdot \left(\prod_{j\ge 1} A_{b_{i+1}-j+1}^{-\gamma_{d^{(i)}-j+1}} \right) \left(\prod_{j\ge 1} C_{b_{i-1}+d^{(i)}+2-j}^{\gamma_j}
    \right)C_{b_i+1} & \text{otherwise}.
  \end{cases}
\end{equation}
{For the example in \cref{fig:U_operator}, one can check that $\pi' \cdot
  D_1 = \pi$, where $d^{(1)} = a_6 - a_7 = 5 - 2 = 3$, $\gamma_1^{(1)} =
  h'_9 - 7 = 2$, $\gamma_2^{(1)} = h'_{10} - 7 = 2$, and $\gamma_3^{(1)} = h'_{11} - 7 = 1$. }
The following shows that $D_i$ and $U_i$ are inverses of each other whenever their actions are nontrivial.

\begin{prop}
  If $\pi \cdot  D_i \neq \bot$, then $\area(\pi \cdot  D_i) = \area(\pi) + 1$ and $\bounce(\pi \cdot  D_i) = \bounce(\pi) - 1$ and $\pi \cdot D_i  U_i = \pi$. Similarly, if $\pi \cdot  U_i \neq \bot$, then $\pi \cdot U_i  D_i = \pi$.
\end{prop}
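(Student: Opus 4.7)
The strategy is to mirror the argument used for \cref{prop:U_changes_ab}, since by design $D_i$ is the geometric reverse of $U_i$: $U_i$ removes the cell at $A=(b_{i-1},b_i-1)$ and slides the staircase $\beta^{(i)}$ from below the bounce path up to above it (rotated and reflected onto columns near $B=(b_i,b_{i+1})$), while $D_i$ slides the corresponding staircase $\gamma^{(i)}$ back down and reinserts a single cell via $C_{b_i+1}$.

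First I would verify the statistic changes for $D_i$. Reading \eqref{eq:Di} from left to right, the two products remove $|\gamma^{(i)}|$ cells from rows near $b_{i+1}$ and add back $|\gamma^{(i)}|$ cells to columns near $b_{i-1}$, after which $C_{b_i+1}$ places one extra cell. This gives $\area(\pi\cdot D_i)=\area(\pi)+1$ immediately. For the bounce count, I would argue exactly as in \cref{prop:U_changes_ab}: the first $i-1$ bounce points of $\pi$ survive unchanged; heading north from $(b_{i-1},b_{i-1})$ one now passes the added column $b_i+1$, so the $i$-th bounce point of $\pi\cdot D_i$ is $b_i+1$; and by construction the column at $b_i+1$ rises to exactly height $b_{i+1}$ (the $\gamma^{(i)}$-removal from rows $b_{i+1}-d^{(i)}+1,\dots,b_{i+1}$ is calibrated so the profile lines up with the original $\pi$ at $B$), so the remaining bounce points $b_{i+1},\dots,b_m$ are preserved. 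Summing, $\bounce(\pi\cdot D_i)=\bounce(\pi)-1$.

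Next I would establish the inverse identities. Let $\pi'=\pi\cdot U_i$ with its bounce points $b'_j$; from \cref{prop:U_changes_ab} we have $b'_j=b_j$ for $j\neq i$ and $b'_i=b_i-1$. I would compute the quantities $d^{(i)}$ and $\gamma^{(i)}$ attached to $\pi'$ and check that they agree with $u^{(i)}$ and $\beta^{(i)}$ attached to $\pi$ after reindexing: the column heights of $\pi'$ just past $b'_i$ encode, row by row, exactly the partition $\beta^{(i)}$ that $U_i$ moved upward, so $d^{(i)}(\pi')=u^{(i)}(\pi)$ and $\gamma^{(i)}(\pi')$ coincides with $\beta^{(i)}(\pi)$ under the rotation-reflection implicit in \eqref{eq:U}. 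Unwinding \eqref{eq:Di} with these identifications, every $A^{-\gamma}$ factor of $D_i$ cancels against an $A^{\beta}$ factor of $U_i$, every $C^{\gamma}$ factor cancels a $C^{-\beta}$ factor, and the trailing $C_{b_i+1}$ restores the single cell that the leading $C^{-1}_{b_{i-1}+1}$ of $U_i$ removed at $A$. This yields $\pi\cdot U_i D_i=\pi$. The reverse identity $\pi\cdot D_i U_i=\pi$ follows by the symmetric argument, comparing $u^{(i)},\beta^{(i)}$ of $\pi\cdot D_i$ with $d^{(i)},\gamma^{(i)}$ of $\pi$.

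The main obstacle I expect is purely bookkeeping: because $U_i$ and $D_i$ shift the $i$-th bounce point by $\mp 1$, the numerical data $(u^{(i)},\beta^{(i)})$ in one path corresponds to $(d^{(i)},\gamma^{(i)})$ in its image only after a careful index translation, and the staircase $\beta^{(i)}$ must be matched against the rotated-and-reflected $\gamma^{(i)}$. Once this dictionary is written down cleanly (a table comparing \cref{fig:U_left} and \cref{fig:U_right}), the cancellation of the $A^{\pm\beta}$, $C^{\mp\beta}$, and cell-adjusting $C$ factors is mechanical, so the verification reduces to confirming the index match in that dictionary.
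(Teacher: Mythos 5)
Your plan follows essentially the same route as the paper's proof: the statistic changes are obtained by tracing the bounce path of $\pi \cdot D_i$ exactly as in \cref{prop:U_changes_ab} (first $i-1$ bounce points unchanged, the $i$-th pushed to $b_i+1$ by the added corner cell, the rest preserved), and the inverse identities come from matching $(d^{(i)},\gamma^{(i)})$ of one path with $(u^{(i)},\beta^{(i)})$ of the other so that the $A$- and $C$-factors undo each other in net effect, which is precisely the paper's argument. The only caveat is a minor indexing looseness in your bounce trace (the new $i$-th bounce point comes from the cell added atop column $b_{i-1}+1$, and the preservation of $b_{i+1}$ is governed by the height of column $b_i+2$), details on which the paper's own terse proof is equally brief.
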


\begin{proof}
  Like in \cref{prop:U_changes_ab}, let $b_1,\dots,b_m$ be the
  bounce points of $\pi$. Then $\pi \cdot D_i$ shares the first $i-1$ bounce points of $\pi$.
  We then trace the bounce path of $\pi \cdot D_i$ starting from $(b_{i-1},b_{i-1})$.
  Heading north, the first east step occurs at $(b_{i-1},b_i+1)$ due to the cell being added at column $b_{i-1}+1$
  (see \cref{fig:U_right}). So, the $i$'th bounce point is $b_i+1$. Finally, heading north from $(b_i+1,b_i+1)$,
  the first east step occurs at $(b_i+1,b_{i+1})$ due to the condition in $\pi$ that $h_{b_i+2}= b_{i+1}$.
  Thus, the remaining bounce points must be $b_{i+1},\dots,b_m$ and $\bounce(\pi \cdot D_i) = \bounce(\pi) -1$.

  Suppose $\tau = \pi \cdot D_i \ne \bot$.
  Then $\tau$ satisfies $h_{b_{i-1}} < b_i+1$.
  Additionally, $d^{(i)}$ of $\pi$ and $u^{(i)}$ of $\tau$ are equal
  since the height of column $b_i$ of $\tau$ is $b_{i+1} - d^{(i)}$.
  Thus, the partition $\beta^{(i)}$ of $\tau$ is equal to $\gamma^{(i)}$ of $\pi$ and $\tau \cdot U_i = \pi$.

Suppose $\sigma = \pi \cdot U_i \ne \bot$. Then $\sigma$ satisfies $h_{b_i+2} = b_{i+1}$. In
addition, $d^{(i)}$ of $\sigma$ and $u^{(i)}$ of $\pi$ are equal since the removal of $\beta^{(i)}$ ensures that
$\sigma$ has $u^{(i)}+1$ east steps followed by one north step starting from
$(b_{i-1},b_i-1)$. Consequently, $\gamma^{(i)}$ of $\sigma$ and $\beta^{(i)}$ of $\pi$ are equal
and $\sigma \cdot D_i = \pi$.
\end{proof}

The following remark illustrates two simple cases where the action of $U_i$ is simple.

\begin{rem}\label{rem:simple_U}
  For any Dyck path $\pi$ with bounce path $p_{n,\alpha}$,
  the following two special conditions ensure that $\pi \cdot U_i \ne \bot$.
  
  \begin{enumerate}
  \item Suppose $u^{(i)} = 0$, $h_{b_{i-1}} < b_i$, and $a_{b_i} > 1$. In this case, removing the cell $X$ as shown in the left diagram of \cref{fig:U} will do the job.
  
  \item Suppose $\alpha_i > \alpha_{i+1}+1$ and $a_{b_i+1} = 0$. In this case, there are no floating cells in the region between the bounce points $b_i$ and $b_{i+1}$. Thus, $\beta^{(i)}$ is a single row of length $u^{(i)} = \alpha_{i+1}$ and this can be moved as shown in \cref{fig:U}.
  \end{enumerate}
\end{rem}

The following remark gives the inverse cases of \cref{rem:simple_U}.

\begin{rem}\label{rem:simple_D}
  For any Dyck path $\pi$ with bounce path $p_{n,\alpha}$,
  the following two special conditions ensure that $\pi \cdot D_i \ne \bot$.
  
  \begin{enumerate}
  \item Suppose $i < \ell(\alpha)$, $d^{(i)} = 0$ and either $b_i = n-1$ or $h_{b_i+2} = b_i+1$. In this case, adding the cell $A$ as shown in the left diagram of \cref{fig:U} will do the job.
  
  \item Suppose $\alpha_i \le \alpha_{i+1}$, $a_{b_i+1}=0$, and $h_{b_i+2} = b_i+1$. In this case, there are no floating cells in the region between the bounce points $b_i$ and $b_{i+1}$. Thus, $\gamma^{(i)}$ is a single column of height $u^{(i)} = \alpha_{i+1}$ and this can be moved as shown in \cref{fig:U}.
  \end{enumerate}
\end{rem}

We now have the existence of certain $U_i$ and $D_i$ operators as a
consequence of the remarks above.

\begin{lem}
  \label{lem:existence_of_D}
Let $\pi \in \dycks(n)$ with bounce points $0=b_0,b_1,\dots,b_m=n$
  such that row $b_i+1$ has $b_i-b_{i-1}-1$ cells
  for some $1 \le i \le m-1$. Then, there exists $i \le j \le m-1$ such that
  $\pi \cdot D_j \ne \bot$.
\end{lem}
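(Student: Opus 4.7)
The plan is to proceed by a forward sweep: try $D_j$ in the order $j = i, i+1, \ldots, m-1$, and show that at least one of these must succeed. The starting observation is that the hypothesis forces a simple local configuration at bounce point $i$. Indeed, the bounce construction requires the path to visit $(b_{i-1}, b_i)$ (since the first $\E$-step at $x = b_{i-1}$ occurs at $y = b_i$), so the $\N$-step in row $b_i$ sits at $x = b_{i-1}$, giving $a_{b_i} = \alpha_i - 1$. Together with the hypothesis $a_{b_i+1} = \alpha_i - 1$, this yields $d^{(i)} = 0$, which collapses $\gamma^{(i)}$ and makes the formula for $D_i$ essentially a single $C$-swap at the bounce point $b_i$. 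In particular, if $b_i = n-1$ we are instantly done by \cref{rem:simple_D}(1); otherwise $D_i$ succeeds provided $h_{b_i+2} \ne b_{i+1}$, in which case we set $j = i$.

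If instead $h_{b_i+2} = b_{i+1}$, then by the definition of $h$ the path takes two consecutive $\E$-steps at height $y = b_{i+1}$ starting at $x = b_i$, forcing $\alpha_{i+1} \ge 2$ and pinning down the beginning of the segment between the $i$-th and $(i+1)$-th bounce events as $\E\E$. The plan is to show that this "obstruction" forces information at index $i+1$: the path now reaches $(b_i+2, b_{i+1})$ with a specific local shape, which one can combine with the bounce data to compute $d^{(i+1)}$ and $h_{b_{i+1}+2}$. Either $D_{i+1}$ is then applicable (take $j = i+1$), or we again have $h_{b_{i+1}+2} = b_{i+2}$ and the obstruction propagates to block $i+2$, and so on.

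To finish, I would argue that this propagation cannot continue past $j = m-1$. Indeed, if $D_j = \bot$ for every $i \le j \le m-1$ via the first clause of \eqref{eq:Di}, then $h_{b_j+2} = b_{j+1}$ for all such $j$; in particular, for $j = m-1$, the path would have to carry the height $n$ across column $b_{m-1}+2$, which is only possible when $b_{m-1}+2 \le n$, i.e.\ $\alpha_m \ge 2$. Chasing this back, each block $j+1$ for $i \le j \le m-1$ must begin with $\E\E$. I would then pair this "all $\E\E$-starts" conclusion with the original hypothesis (which fixes the beginning of segment $S_i$ as $\E\N$ rather than $\E\E$) to produce the contradiction, or alternatively invoke \cref{rem:simple_D}(1) at $j = m-1$ when $b_{m-1} = n-1$ (which is where the chain terminates when $\alpha_m = 1$) to exhibit the desired $j$.

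The main obstacle is the middle step: verifying that the recursive analysis actually produces a valid $D_j$ when the first-clause $\bot$-condition is avoided, rather than the "otherwise" branch failing at one of its $A$- or $C$-factors. I expect this to be handled by showing that the structural data inherited from the failure of all previous $D_k$'s (a nested sequence of $\E\E$-tops) guarantees enough room at block $j+1$ for the partition $\gamma^{(j)}$ to be moved — essentially, the failure of $D_k$ for all $k < j$ is exactly what feeds the hypotheses needed for $D_j$ to go through, mirroring the conditions identified in \cref{rem:simple_D}.
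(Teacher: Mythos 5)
Your overall strategy --- sweep $j = i, i+1, \dots, m-1$, use the hypothesis to get $d^{(i)}=0$, and terminate at $j=m-1$ --- is the same as the paper's, and your opening computation ($a_{b_i}=\alpha_i-1=a_{b_i+1}$, hence $d^{(i)}=0$) is correct. The gap is that your case split on $h_{b_i+2}$ is oriented the wrong way, and this is exactly where the step you flag as ``the main obstacle'' breaks down. You have taken the first clause of \eqref{eq:Di} at face value, but that clause is inconsistent with the rest of the paper: the caption of \cref{fig:U} (``$\pi'$ must take at least two east steps after $B$''), \cref{rem:simple_D}, and the proof that $U_i$ and $D_i$ are mutually inverse (``the first east step occurs at $(b_i+1,b_{i+1})$ due to the condition in $\pi$ that $h_{b_i+2}=b_{i+1}$'') all require $h_{b_i+2}=b_{i+1}$ in order for $D_i$ to return a path whose later bounce points are unchanged; the displayed $\bot$-condition should be read as $h_{b_i+2}\ne b_{i+1}$. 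So in the case where you declare victory ($h_{b_i+2}\ne b_{i+1}$), $D_i$ is in fact $\bot$, while in the case where you propagate ($h_{b_i+2}=b_{i+1}$), $D_i$ succeeds outright by \cref{rem:simple_D}(1) and no propagation is needed.

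More importantly, even granting your convention, the propagation fails in the direction you run it. If $h_{b_i+2}=b_{i+1}$, the path takes two east steps at height $b_{i+1}$, so the north step of row $b_{i+1}+1$ sits at $x\ge b_i+2$ and hence $d^{(i+1)}=a_{b_{i+1}}-a_{b_{i+1}+1}\ge 1$: the hypothesis does \emph{not} reproduce itself at index $i+1$, $\gamma^{(i+1)}$ does not collapse, and nothing guarantees that the $A$- and $C$-factors of $D_{i+1}$ succeed. The mechanism that actually makes the sweep work is the opposite one: when $h_{b_j+2}>b_{j+1}$, the path must turn north immediately after the single east step out of $(b_j,b_{j+1})$, which forces row $b_{j+1}+1$ to have exactly $b_{j+1}-b_j-1$ cells, i.e.\ $d^{(j+1)}=0$, so the lemma's hypothesis is inherited at index $j+1$; and the sweep cannot fail at $j=m-1$, because $h_{b_{m-1}+1}=n=b_m$ forces $h_{b_{m-1}+2}=b_m$ whenever $b_{m-1}\le n-2$, while $b_{m-1}=n-1$ is covered by the $b_i=n-1$ alternative in \cref{rem:simple_D}(1). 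Your proposed endgame (a contradiction between ``every block ends with $\E\E$ at its top'' and the hypothesis, which constrains the path near $(b_{i-1},b_i)$, a different location) does not close the argument either.
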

\begin{proof}
Suppose $h_{b_i+2} = b_{i+1}$. Since row $b_i+1$ has $b_i-b_{i-1}-1$ cells, $d^{(i)} = 0$ and by \cref{rem:simple_D}(1), $\pi \cdot D_i \ne
\bot$.
If $h_{b_i+2} > b_{i+1}$, consider $\pi \cdot D_{i+1}$ instead. 
If $h_{b_{i+1} + 2} = b_{i+2}$, then $\pi \cdot D_{i+1} \ne \bot$. 
Otherwise, check $b_{i+2}$.
If $b_{m-1}$ is reached without success, 
then again by \cref{rem:simple_D}(1), $\pi \cdot D_{m-1} \ne \bot$.
This is illustrated below where $\pi \cdot D_1 = \bot$ since $h_4 > 4$ and
  $\pi \cdot D_2 = \bot$ since $h_6 > 5$; 
  whereas, $\pi \cdot D_3 \ne \bot$.
  \begin{center}
    \begin{tabular*}{0.5\textwidth}{c c}
      \includegraphics[width=0.2\textwidth]{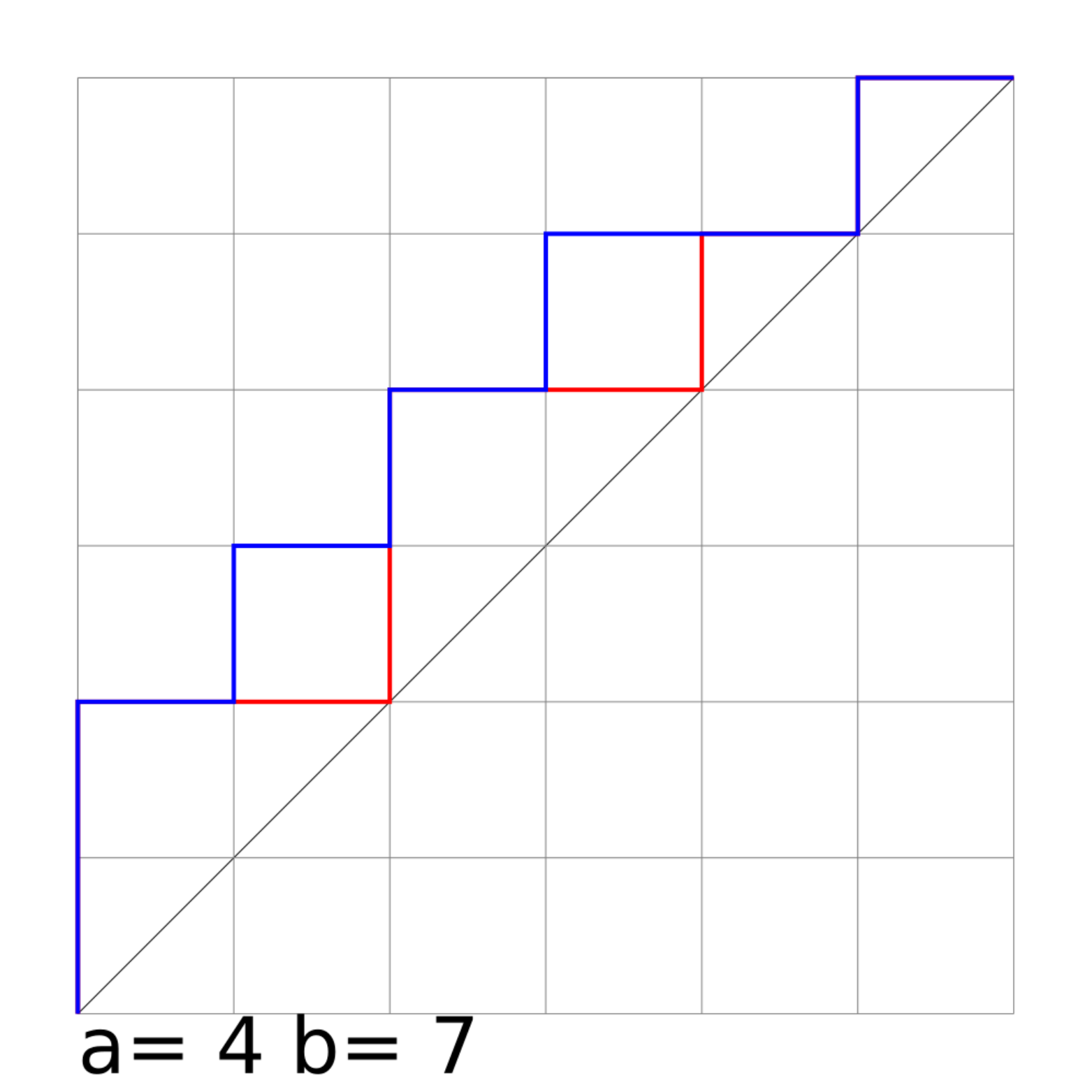} &
                                                                           \includegraphics[width=0.2\textwidth]{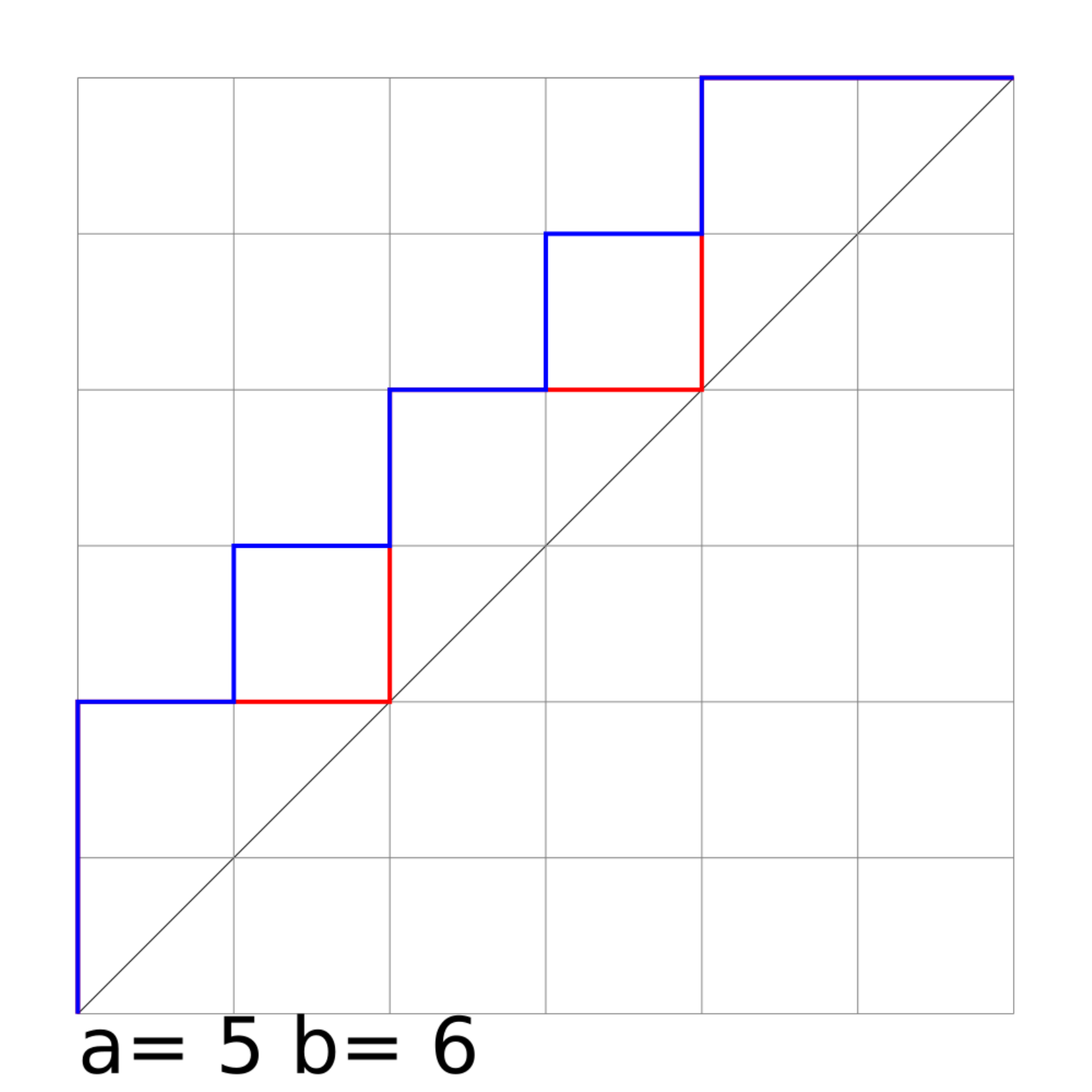} \\
      \CAP{$\pi$} & \CAP{$\pi \cdot D_3$}
    \end{tabular*}
  \end{center}
\end{proof}

\begin{lem}
  \label{lem:existence_of_U}
Let $\pi \in \mathcal{D}(n)$ with bounce points $0=b_0,b_1,\dots,b_m=n$ such
  that column $b_i$ has height $b_{i+1}$ for some $1 \le i \le m-1$. Then there
  exists $1 \le j \le i$ such that $\pi \cdot U_j \ne \bot$.
\end{lem}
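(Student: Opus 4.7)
The plan is to mirror the proof of \cref{lem:existence_of_D}, iterating downward from $i$ to $1$ instead of upward. The hypothesis $h_{b_i} = b_{i+1}$ is precisely $u^{(i)} = 0$ in the notation of \eqref{eq:U}. I would define
\[
  j^* \;=\; \max\{\, j : 1 \le j \le i \text{ and } h_{b_{j-1}} < b_j \,\},
\]
which is well-defined since $h_{b_0} = 0 < b_1$ puts $j = 1$ in the set, and then claim that $\pi \cdot U_{j^*} \ne \bot$.

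The central intermediate fact is that $u^{(j^*)} = 0$. Indeed, the maximality of $j^*$ forces $h_{b_{j-1}} = b_j$ for every index $j$ with $j^* < j \le i$; specialising to $j = j^* + 1$ gives $h_{b_{j^*}} = b_{j^*+1}$, which is $u^{(j^*)} = 0$. (The boundary case $j^* = i$ is just the hypothesis itself.) With $u^{(j^*)} = 0$ the partition $\beta^{(j^*)}$ in \eqref{eq:U} is empty, and the formula collapses to
\[
  \pi \cdot U_{j^*} \;=\; \pi \cdot C_{b_{j^*-1}+1}^{-1}.
\]

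It then remains to verify that removing the top cell of column $b_{j^*-1}+1$ yields a valid Dyck path. By the definition of the bounce path, the exit height of column $b_{j^*-1}+1$ equals $b_{j^*}$, so legality of the removal reduces to (i) $b_{j^*} > h_{b_{j^*-1}}$, so monotonicity survives the drop by one, and (ii) $b_{j^*} \ge b_{j^*-1}+2$, so the column (now of height $b_{j^*}-1$) still lies weakly above the diagonal. Condition (i) is exactly the defining inequality of $j^*$. Condition (ii) follows by contradiction: if $\alpha_{j^*} = 1$ then $b_{j^*} = b_{j^*-1}+1$ forces $h_{b_{j^*}} = h_{b_{j^*-1}+1} = b_{j^*}$, which combined with $u^{(j^*)} = 0$ would give $b_{j^*} = b_{j^*+1}$, contradicting the strict monotonicity of the bounce points. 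The only mildly delicate part of the write-up is keeping the bounce-point identities $h_{b_{j-1}+1} = b_j$ straight throughout the iteration; once that is in hand, the argument is a direct dual of \cref{lem:existence_of_D}, and an example in the spirit of the figure accompanying that lemma would make the propagation of $u^{(j)} = 0$ down from $i$ to $j^*$ transparent.
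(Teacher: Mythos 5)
Your proof is correct and follows essentially the same route as the paper: descend from $i$ to the largest index $j^*\le i$ with $h_{b_{j^*-1}}<b_{j^*}$ (which exists since $h_{b_0}=0<b_1$), observe that maximality propagates $u^{(j)}=0$ down so that $u^{(j^*)}=0$, and conclude that $U_{j^*}$ reduces to removing a single cell, i.e.\ the situation of \cref{rem:simple_U}(1). The only difference is cosmetic: the paper cites \cref{rem:simple_U}(1) at the found index, whereas you verify the legality of $C_{b_{j^*-1}+1}^{-1}$ directly (including the check $\alpha_{j^*}\ge 2$, which the paper leaves implicit in that remark).
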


\begin{proof}
  Suppose $h_{b_{i-1}} < b_i$. Since column $b_i$ has height $b_{i+1}$, $u^{(i)} = 0$ and by \cref{rem:simple_U}(1), $\pi \cdot U_i \ne
  \bot$. If $h_{b_{i-1}} = b_i$, consider $\pi \cdot U_{i-1}$
  instead. If $h_{b_{i-2}} < b_{i-1}$, then $\pi \cdot U_{i-1} \ne \bot$.
  Otherwise, check $b_{i-2}$. If $b_{1}$ is reached without success then, 
  again by \cref{rem:simple_U}(1), $\pi \cdot U_{1} \ne \bot$.
\end{proof}

The following two lemmas restrict the shape of the bounce path of $\pi$ if
  no $U_i$ or $D_i$ operations on $[\pi]$ lead to a valid path.
Recall that a partition is said to be \emph{strict} if all its parts are distinct.

\begin{lem}
  \label{lem:no_down_conditions}
  Let $\pi \in \dycks(n)$ with bounce path $p_{n,\alpha}$. If for all $\tau \in
  [\pi]$, $\tau \cdot D_i = \bot$ for all $i$, then $\alpha$ is a
  strict partition.
\end{lem}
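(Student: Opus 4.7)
I argue the contrapositive: assuming $\alpha=(\alpha_1,\dots,\alpha_m)$ is not a strict partition, I will produce some $\tau\in[\pi]$ and some index $j$ with $\tau\cdot D_j\neq\bot$. Non-strictness furnishes an index $i$ with $\alpha_i\leq\alpha_{i+1}$, which I fix throughout.

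The target configuration is the hypothesis of \cref{lem:existence_of_D}, namely $a_{b_i+1}(\tau)=\alpha_i-1$. Using the identity $a_r=r-1-p_r$, where $p_r$ is the $x$-coordinate of the north step into row $r$, this becomes $p_{b_i+1}(\tau)=b_{i-1}+1$, which in turn is equivalent to $h_{b_{i-1}+2}(\tau)\geq b_i+1$: in $\tau$, each of the $\alpha_i-1$ non-forced columns of block $i$ must carry at least one floating cell. Once this is arranged, \cref{lem:existence_of_D} supplies some $j\geq i$ with $\tau\cdot D_j\neq\bot$.

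To construct such a $\tau$ I exploit the characterization of $[\pi]$: two paths are $\sim$-equivalent exactly when they share both the bounce sequence and the total area. Consequently, within each block $k$ the floating cells may be reshaped into any partition inside an $(\alpha_k-1)\times\alpha_{k+1}$ rectangle, and cells may even be migrated between blocks as long as those rectangle capacities are respected. If $\pi$ has at least $\alpha_i-1$ floating cells in total, I build $\tau$ by placing one cell in each column $b_{i-1}+2,\dots,b_i$ of block $i$ (always feasible since $\alpha_{i+1}\geq 1$) and packing any excess into a neighbouring block; the hypothesis $\alpha_i\leq\alpha_{i+1}$ ensures block $i+1$ has the spare capacity needed to absorb any overflow beyond the capacity of block $i$.

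The main obstacle is the residual case in which $\pi$ carries strictly fewer than $\alpha_i-1$ floating cells, most acutely when $\pi=p_{n,\alpha}$ itself. I plan to dispatch this by a further case split. If some part $\alpha_{i'}=1$ with $i'<m$, then for \emph{every} $\tau\in[\pi]$ one has $a_{b_{i'}+1}(\tau)=0=\alpha_{i'}-1$ automatically, so \cref{lem:existence_of_D} applies to $\pi$ directly. Otherwise all $\alpha_{i'}\geq 2$; in particular $\alpha_{i+1}\geq 2$, so block $i+1$ offers the vertical room required to apply~\eqref{eq:Di} to $\pi$ with the fixed $i$, and I verify by direct inspection that the prescribed additions and removals produce a genuine Dyck path. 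The area/bounce shift $\area(\pi\cdot D_i)=\area(\pi)+1$, $\bounce(\pi\cdot D_i)=\bounce(\pi)-1$ then follows from the definition, mirroring \cref{prop:U_changes_ab}. The heaviest bookkeeping sits in this edge case, where one must track by hand that the resulting partition shape in block $i+1$ is legitimate.
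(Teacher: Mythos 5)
Your overall strategy (argue the contrapositive, choose a good representative of $[\pi]$ by redistributing floating cells, then invoke \cref{lem:existence_of_D}) is close in spirit to the paper's proof, and your main case is essentially sound: when $\pi$ has at least $\alpha_i-1$ floating cells in total one can indeed reach a $\tau\in[\pi]$ whose row $b_i+1$ carries $\alpha_i-1$ cells, since cells only need to be moved \emph{into} the bottom row of the box above $b_i$ (your ``overflow into a neighbouring block'' discussion is beside the point -- the excess cells can simply stay where they are -- and the capacity count you base on $\alpha_i\le\alpha_{i+1}$ is not what makes this step work). The genuine gap is the residual case with fewer than $\alpha_i-1$ floating cells. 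Your sub-case with some part equal to $1$ is fine, but in the remaining sub-case you assert that \eqref{eq:Di} can be applied to $\pi$ itself at the fixed index $i$, justified only by ``$\alpha_{i+1}\ge 2$ gives vertical room'' and a promised ``direct inspection'' that is never carried out. That inspection is precisely the mathematical content of the lemma. The operator $D_i$ transplants the shape $\gamma^{(i)}$ determined by the \emph{current} arrangement of floating cells in the block between $b_i$ and $b_{i+1}$, and for an arbitrary arrangement of those (up to $\alpha_i-2$) cells there is no reason the prescribed removals and additions yield a Dyck path with the required bounce points; vertical room in the block is only one of the constraints.

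This is exactly why the paper, in its corresponding case ($h_{b_i+2}=b_{i+1}$, which as you can check holds automatically in your residual case), does \emph{not} apply $D_i$ to $\pi$ directly: it first passes to the representative $\tau\in[\pi]$ obtained by bottom-justifying the floating cells of the block between $b_i$ and $b_{i+1}$ (filling row $b_i+1$ as much as possible, then row $b_i+2$, and so on), so that the shape to be transplanted becomes a single column $(1)^{d^{(i)}}$, and then uses the inequality $d^{(i)}\le b_{i+1}-h_{b_i}$ -- this is where the hypothesis $\alpha_i\le\alpha_{i+1}$ actually enters -- to see that this column fits, with \cref{rem:simple_D}(2) covering the case of no floating cells at all. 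Your proposal contains neither the choice of a suitable representative in this regime nor the $d^{(i)}$ estimate, so the hardest case of the lemma is left unproven. To repair it, replace ``apply \eqref{eq:Di} to $\pi$'' by the bottom-justification argument above (or supply the deferred verification in full generality, which amounts to the same computation).
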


\begin{proof}
  We show the contrapositive. Assume that $\alpha$ is not a strict partition;
  i.e., there exists $1 \le i < \ell(\alpha)$ such that $\alpha_i \le
  \alpha_{i+1}$. We show that there exists a $\tau \in [\pi]$ where $\tau \cdot D_i
  \ne \bot$. There are two cases.
  If $h_{b_i+2} > b_{i+1}$, then there are at least two more bounce
   points following $b_i$. In particular, row $b_{i+1}+1$ has 
   $b_{i+1}-b_i-1$ cells, and so $\pi$ satisfies
  \cref{lem:existence_of_D}. We can therefore
  conclude that $\pi \cdot D_j \ne \bot$ for some $j \ge i+1$.
  Now suppose $h_{b_i+2} = b_{i+1}$.
  Construct $\tau \in [\pi]$ by rearranging the floating cells between
  $b_i$ and $b_{i+1}$ by first filling row $b_i+1$ maximally then row
  $b_i+2$ and so on.
  This ensures that the partition to delete in $\tau$ is $\gamma^{(i)} = (1)^{d^{(i)}}$
  because $d^{(i)} \le b_{i+1} - h_{b_i}$. A special case of this
  occurs when there are no floating cells to rearrange, in
  which case we can apply \cref{rem:simple_D}(2).
  Thus $\tau \cdot D_i \ne \bot$.
  The figure below shows an example.

  \begin{center}
    \begin{tabular*}{0.7\textwidth}{c c c}
      \includegraphics[width=0.2\textwidth]{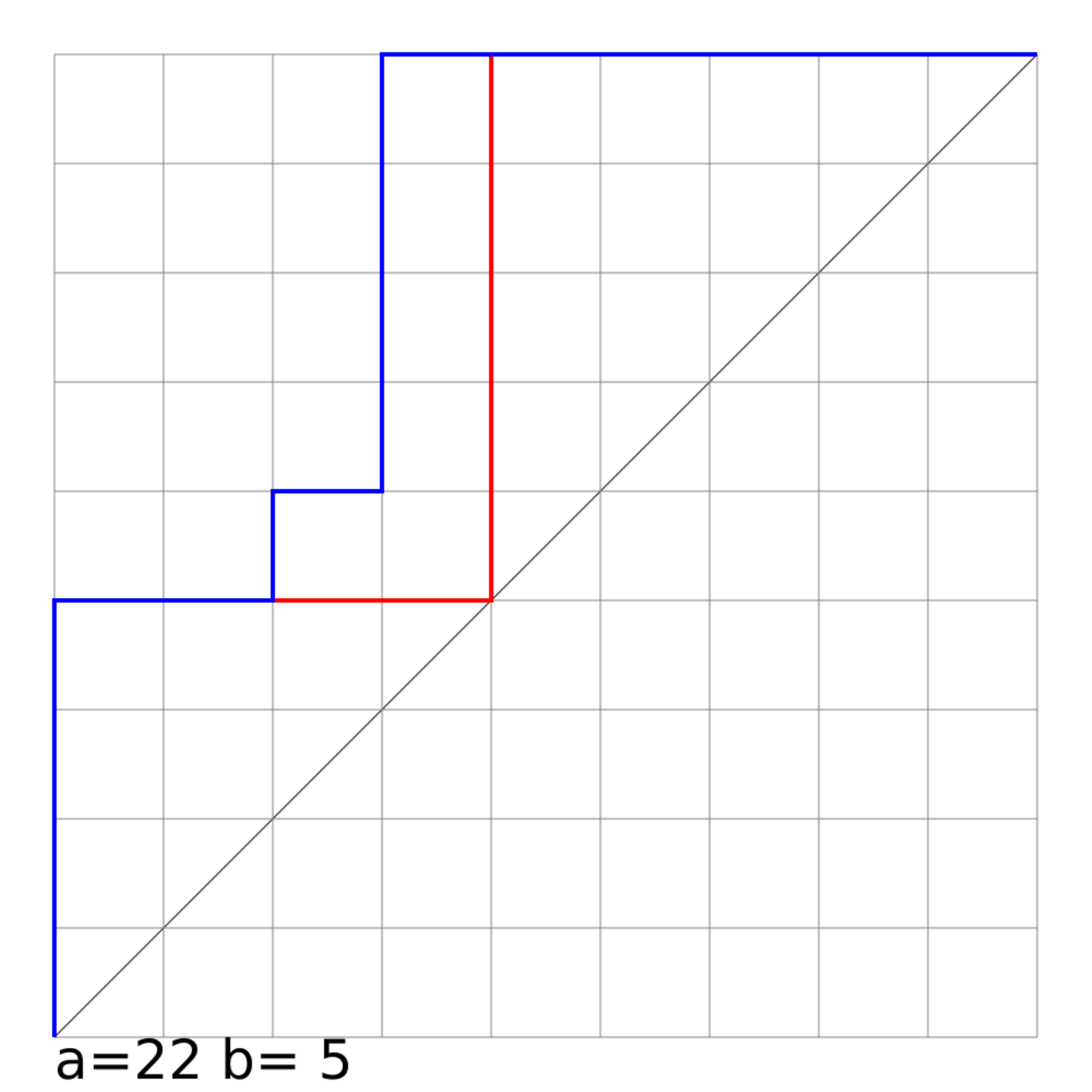} &
      \includegraphics[width=0.2\textwidth]{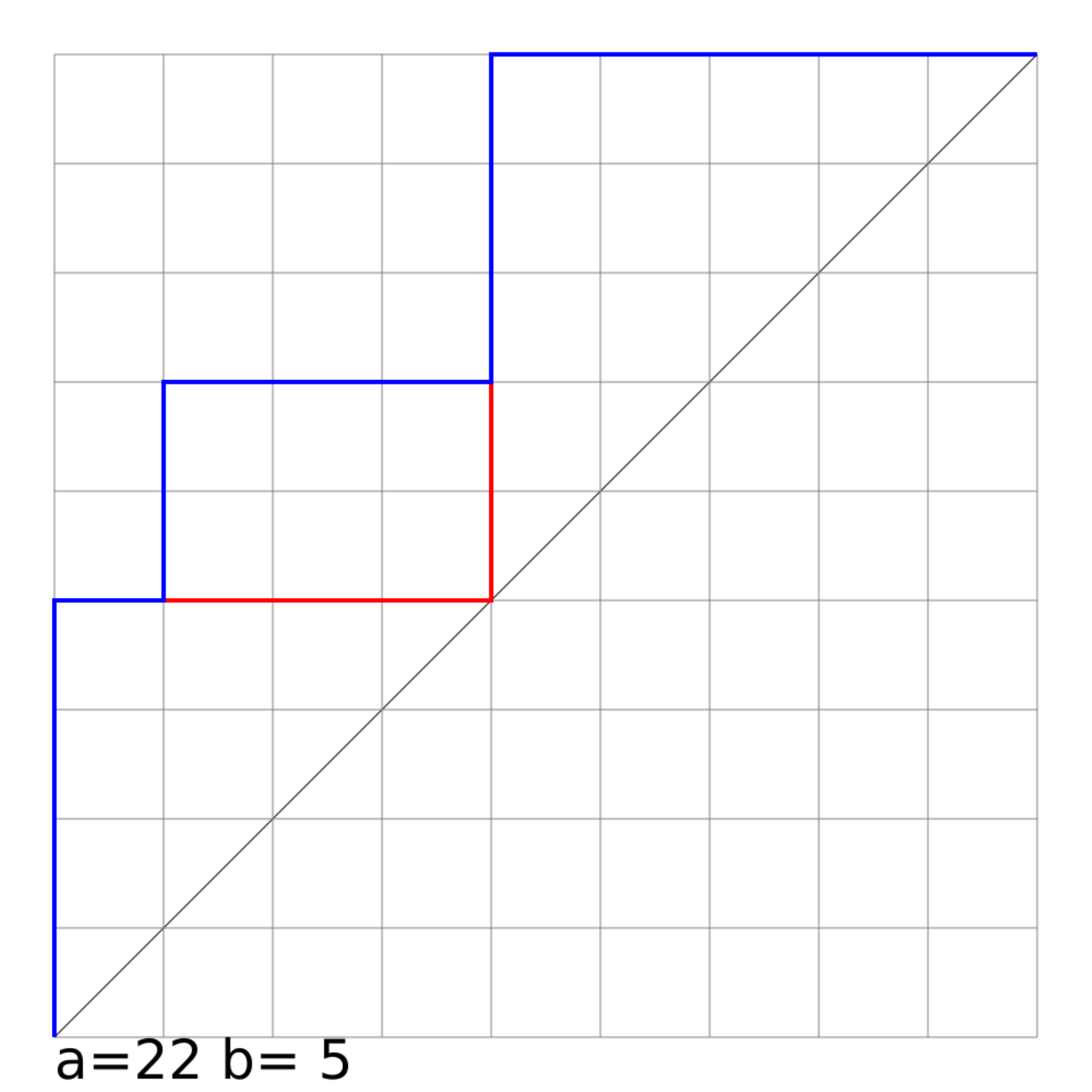} &
      \includegraphics[width=0.2\textwidth]{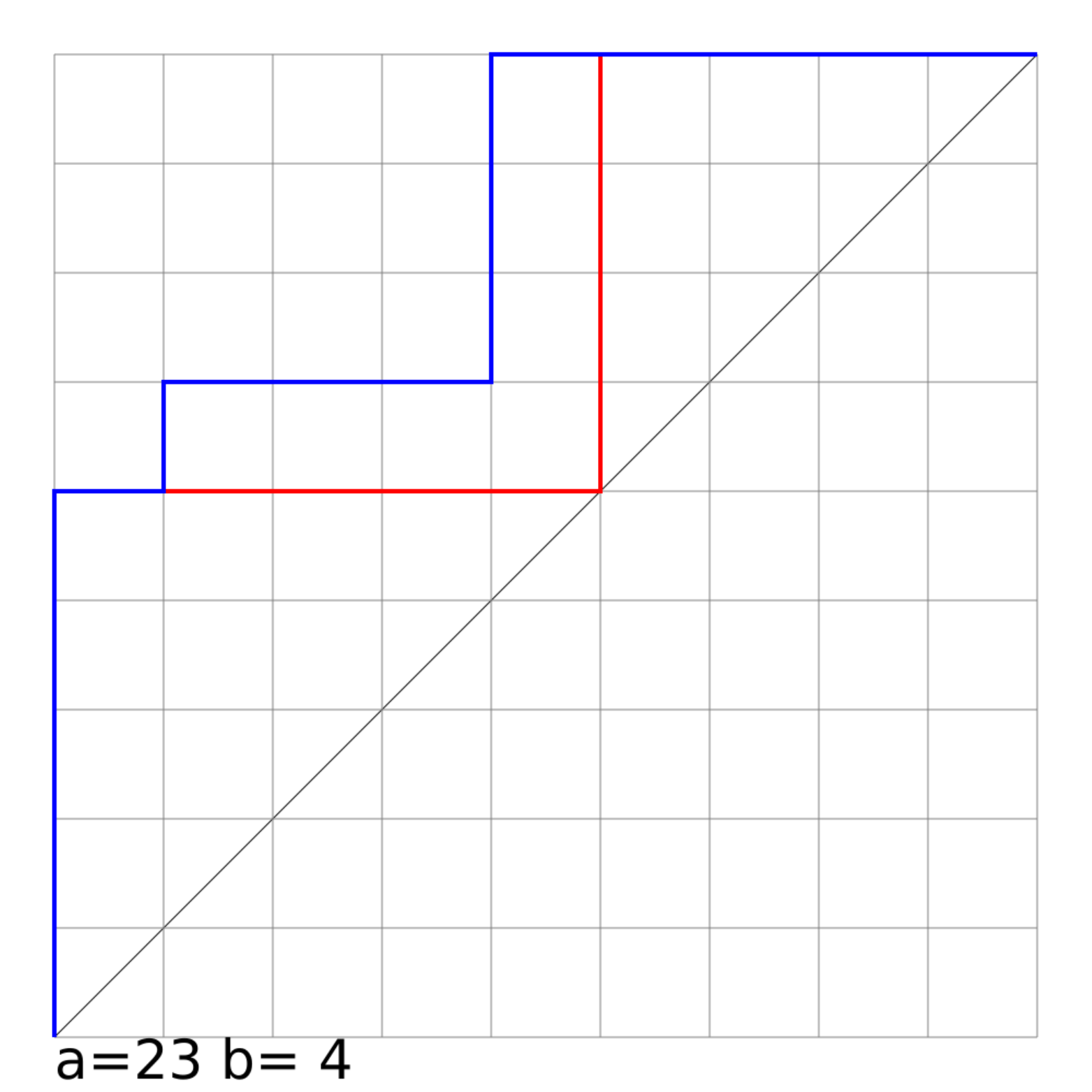} \\
      \CAP{$\pi$} & \CAP{$\tau$} & \CAP{$\tau \cdot D_1$}
    \end{tabular*}
  \end{center}
  Thus $\alpha$ must be a strict partition.
\end{proof}

\begin{lem}
  \label{lem:no_up_conditions}
  Let $\pi \in \dycks(n)$ with bounce path $p_{n,\alpha}$ with $\ell(\alpha) = \ell$. If for all $\tau \in
  [\pi]$, $\tau \cdot U_i = \bot$ for all $i$, then $\pi \in \cdycks(n)$, $\alpha_{\ell}=1$, and
  $\alpha_i - \alpha_{i+1} \le 1$ for all $1 \le i \le \ell(\alpha)-1$.
\end{lem}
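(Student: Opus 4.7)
The plan is to prove the contrapositive: if at least one of the three stated properties fails, then there exist $\tau \in [\pi]$ and $j$ with $\tau \cdot U_j \ne \bot$. I would split into three (possibly overlapping) cases based on which property fails.

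If there exists some $i$ with $\alpha_i - \alpha_{i+1} \ge 2$, I would invoke \cref{rem:simple_U}(2), which requires $a_{b_i+1}(\tau) = 0$. Because the floating cell count in each row of segment $i+1$ is non-increasing from bottom to top (a consequence of the monotonicity of $h_j$), the condition $a_{b_i+1}(\tau) = 0$ forces all of segment $i+1$ to be empty of floating cells. I would therefore obtain $\tau \in [\pi]$ by pushing all floating cells of segment $i+1$ into other segments; the bound $\alpha_i \ge \alpha_{i+1}+2$ together with the per-segment capacity (at most $\alpha_j(\alpha_{j-1}-1)$ for each $j \ge 2$) guarantees enough room.

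If the previous case does not apply but $\alpha_\ell \ge 2$, I would apply $U_\ell$: by convention $u^{(\ell)} = 0$, so $U_\ell$ reduces to $C_{b_{\ell-1}+1}^{-1}$, which removes the topmost area cell at column $b_{\ell-1}+1$. The hypothesis $\alpha_\ell \ge 2$ guarantees such a cell exists (the row-$n$ bounce cell), and the only obstruction $h_{b_{\ell-1}} = b_\ell$ can be removed by relocating floating cells out of segment $\ell$ within $[\pi]$.

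In the remaining case, $\pi \notin \cdycks(n)$ while $\alpha$ is the staircase $(\ell,\ell-1,\dots,1)$, so some segment contains a floating cell; let $j$ be the smallest such index ($j \ge 2$). I would rearrange segment $j$'s floating cells within $[\pi]$ so that the partition $\beta^{(j-1)}$ appearing in $U_{j-1}$ has first part at most $\alpha_{j-1}-1$ (so that it does not overhang), then apply $U_{j-1}$: the cell $A$ at $(b_{j-2}, b_{j-1}-1)$ is present because $\alpha_{j-1} \ge 2$. The main technical obstacle throughout is the capacity analysis certifying that the required floating-cell rearrangements in fact lie in $[\pi]$; in each case the precise hypothesis supplies the slack needed for redistribution under the monotonicity of $h_j$, the bounce boundary conditions at each $b_i$, and the diagonal restriction.
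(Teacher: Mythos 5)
There is a genuine gap in your Cases 1 and 2: both rest on floating-cell redistribution claims that are false in general. The class $[\pi]$ fixes the bounce path \emph{and} the total number of floating cells, and the segments other than the one you want to empty can have zero spare capacity, so ``pushing all floating cells of segment $i+1$ into other segments'' need not be possible; the bound $\alpha_j(\alpha_{j-1}-1)$ you quote is an \emph{upper} bound on capacity, whereas your argument needs lower bounds on the other segments, which can vanish. Concretely, for $n=6$ take $\pi=\N^4\E\N\E^4\N\E$ with bounce path $p_{6,(4,1,1)}$ and three floating cells in row $5$: here $[\pi]=\{\pi\}$ because segments $1$ and $3$ have capacity zero, so no $\tau\in[\pi]$ has $a_{b_1+1}=0$, and your Case 1 produces no valid move even though $\alpha_1-\alpha_2=3\ge 2$; the lemma is instead rescued by $U_1$ via \cref{rem:simple_U}(1), which removes the top cell of column $1$. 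Similarly for Case 2: for $n=5$ take $\pi=\N^3\E\N^2\E^4$ with bounce path $p_{5,(3,2)}$ and four floating cells; the obstruction $h_{b_1}=n$ cannot be cleared within $[\pi]$ (segment $1$ has capacity zero and row $4$ can hold at most two cells, so row $5$ always retains a floating cell), and again the valid move is $U_1$ at an \emph{earlier} bounce point, which your proposal never looks for.

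This is exactly where your route diverges from the paper's and loses the needed mechanism. The paper proves the three conclusions sequentially rather than by a case split on which conclusion fails: it first shows $\alpha_\ell=1$, then uses that to show $\pi\in\cdycks(n)$, and in both steps, whenever the local obstruction $h_{b_{i-1}}=b_i$ holds, it falls back on \cref{lem:existence_of_U} to locate a usable $U_j$ with $j$ smaller, instead of trying to rearrange cells across segments. Only after $\pi\in\cdycks(n)$ is established does it treat the gap condition, at which point there are no floating cells at all and \cref{rem:simple_U}(2) applies directly, so no redistribution is ever required. A secondary error: your ``remaining case'' does not force $\alpha$ to be the staircase $(\ell,\ell-1,\dots,1)$ --- e.g.\ $\alpha=(1,2,2,1)$ satisfies $\alpha_i-\alpha_{i+1}\le 1$ and $\alpha_\ell=1$ --- so the inference $\alpha_{j-1}\ge 2$ must come from the capacity fact that segment $j$ can contain a floating cell only if $\alpha_{j-1}\ge 2$, not from a staircase shape.
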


\begin{proof}
  We first show that $\alpha_{\ell} = 1$. Suppose that $\alpha_{\ell} > 1$.
  If $h_{b_{\ell-1}} = n$, then by \cref{lem:existence_of_U} there exists a $j \le
  \ell-1$ such that $\pi \cdot U_j \ne \bot$; otherwise,
  $\pi \cdot U_{\ell(\alpha)} \ne \bot$ by \cref{rem:simple_U}(1).

  To show $\pi \in \cdycks(n)$, suppose that $\pi$ contains floating cells.
  We know $\alpha_\ell = 1$.
  So there must be an $\alpha_i > 1$
  such that $\alpha_j = 1$ for all $j > i$. 
  Let $\tau \in [\pi]$ be the path
  containing floating cells at row $b_i+1$.
  If $h_{b_{i-1}} = b_i$, then by \cref{lem:existence_of_U} there exists a
    $j \le i-1$ such that $\tau \cdot U_j \ne \bot$. Otherwise,
  by \cref{rem:simple_U}(1), $\tau
  \cdot U_i \ne \bot$.

Finally, given $\alpha_l = 1$ and $\pi \in \cdycks(n)$, suppose there exists an
$i$ such that $\alpha_i - \alpha_{i+1} > 1$. Then, by \cref{rem:simple_U}(2),
$\pi \cdot U_i \ne \bot$.
\end{proof}

\subsection{Area-minimal and bounce-minimal paths}
\label{sec:area-bounce-minimal}
For $\pi \in \dycks(n)$, set $\ab(\pi) := \area(\pi) + \bounce(\pi)$.
Let $P_n(a, b)$ be the set of Dyck paths $\pi \in \dycks(n)$ such that $\area(\pi) = a$ and $\bounce(\pi) = b$.
Here, we focus on all Dycks paths $\pi$ having a fixed value of $\ab(\pi)$. 

\begin{defn}
  Let $\mathcal{B}(n) \subseteq \dycks(n)$ be the set of \emph{bounce-minimal} Dyck paths; i.e., for every $\pi \in \mathcal{B}(n)$ and $\tau \in \dycks(n)$ such that $\ab(\tau)=\ab(\pi)$, $\bounce(\tau) \ge \bounce(\pi)$.
  Similarly,  let $\mathcal{A}(n) \subseteq \dycks(n)$ be the set of \emph{area-minimal}
  Dyck paths; i.e, for every $\pi \in \mathcal{A}(n)$ and
  $\tau \in \dycks(n)$ such that $\ab(\tau) = \ab(\pi)$, $\area(\tau) \ge \area(\pi)$.
\end{defn}

The next two lemmas describe properties of bounce-minimal and area-minimal Dyck paths.

\begin{lem}
  \label{lem:bounce_minimal_conditions}
  Let $\pi \in \mathcal{B}(n)$ and $p_{n,\alpha}$ be its bounce path.
  Then $\alpha$ is a strict partition and $\area(\pi) \le \area(p_{n,\alpha}) + \min \{ \alpha_i - \alpha_{i+1} - 1 : 1 \le i < n\}$.
\end{lem}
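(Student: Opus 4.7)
I will separately handle the two assertions. For the first, the strictness of $\alpha$, I invoke \cref{lem:no_down_conditions} contrapositively. If $\alpha$ were not a strict partition, that lemma yields some $\tau \in [\pi]$ and an index $j$ with $\tau \cdot D_j \neq \bot$. Since $\tau \sim \pi$ preserves both area and bounce sequence, $\ab(\tau) = \ab(\pi)$, and by the inverse of \cref{prop:U_changes_ab}, the path $\tau \cdot D_j$ has the same $\ab$ as $\pi$ but bounce strictly smaller, contradicting bounce-minimality.

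For the second, I argue by contradiction. Assume $\alpha$ is strict (from the first part) and that $f := \area(\pi) - \area(p_{n,\alpha}) \ge \alpha_{i^*} - \alpha_{i^*+1}$ for some $1 \le i^* < \ell(\alpha)$. I will construct a Dyck path $\pi^*$ with $\ab(\pi^*) = \ab(\pi)$ and $\bounce(\pi^*) < \bounce(\pi)$, contradicting bounce-minimality. Let $a = \alpha_{i^*}$, $b = \alpha_{i^*+1}$, and form the composition $\alpha'$ from $\alpha$ by replacing the adjacent pair $(a, b)$ with $(a+1, b-1)$, suppressing the zero part (merging the two segments) when $b = 1$. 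A direct computation from \eqref{eq:ab_wrt_composition} yields
\[
\bounce(p_{n, \alpha'}) = \bounce(p_{n, \alpha}) - 1, \qquad \area(p_{n, \alpha'}) = \area(p_{n, \alpha}) + (a - b + 1).
\]
Setting $f^* := f - (a-b) \ge 0$, any Dyck path $\pi^*$ with bounce path $\alpha'$ and exactly $f^*$ floating cells satisfies $\area(\pi^*) = \area(\pi) + 1$ and $\bounce(\pi^*) = \bounce(\pi) - 1$, so $\ab(\pi^*) = \ab(\pi)$ while $\bounce(\pi^*) < \bounce(\pi)$, as desired.

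The main obstacle is the explicit construction of such a $\pi^*$. I would do this by a local modification of $\pi$ itself: remove one $\N$ step at the start of segment $i^*+1$ and reinsert it at the end of segment $i^*$'s initial north-burst, then shift the floating cells in the region above the corner $(b_{i^*-1}, b_{i^*})$ accordingly. This is morally the action of the operator $D_{i^*}$ from \eqref{eq:Di}, but the degenerate case $b = 1$ requires a separate treatment, because \eqref{eq:Di} is undefined when $h_{b_{i^*}+2}$ lies outside the path (as happens, for instance, with $\pi = \N^5 \E \N \E^5$ on bounce path $(5, 1)$, where no $D_j$ is applicable even though $\pi$ is not bounce-minimal). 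The verification that the modification produces a valid Dyck path with bounce path $\alpha'$ and exactly $f - (a - b)$ floating cells crucially uses the hypothesis $f \ge a - b$: it provides the cells needed to absorb the local reshuffling between segments $i^*$ and $i^*+1$.
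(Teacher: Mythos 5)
Your treatment of the first assertion is fine and coincides with the paper's: bounce-minimality forces $\tau \cdot D_j = \bot$ for every $\tau \in [\pi]$ and every $j$, so \cref{lem:no_down_conditions} gives strictness of $\alpha$.

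The second half has a genuine gap. Your contradiction rests on the existence of a Dyck path $\pi^*$ whose bounce path is $p_{n,\alpha'}$, with $\alpha'$ obtained from $\alpha$ by the move $(a,b)\mapsto(a+1,b-1)$ at your chosen index $i^*$, and which carries exactly $f^* = f-(a-b)$ floating cells. This does not follow from the premises you actually use ($\alpha$ strict and $f \ge a-b$): the number of floating cells a path with bounce path $p_{n,\beta}$ can carry is bounded, region by region, by $\beta_j(\beta_{j-1}-1)$, and your move can shrink this capacity below $f^*$. Concretely, take $n=6$ and $\pi = \N^3\E\N^2\E^3\N\E^2$, which has bounce path $p_{6,(3,2,1)}$ and $f=5$ floating cells; with $i^*=1$ you need a path with bounce path $p_{6,(4,1,1)}$ and $4$ floating cells, but the maximum over that bounce path is $1\cdot 3 + 1\cdot 0 = 3$, so no such $\pi^*$ exists and no amount of ``shifting the floating cells'' in your local modification can produce one. (This $\pi$ is of course not bounce-minimal, but your existence step never invokes bounce-minimality, so the implication you rely on is simply false as stated.) Note also that the hypothesis $f \ge a-b$ does not address the real obstruction: it controls the cells between $b_{i^*}$ and $b_{i^*+1}$, whereas the problem is the cells above $b_{i^*+1}$, which must fit into a strictly narrower region once $\alpha_{i^*+1}$ shrinks --- in the paper's language, the case $h_{b_{i^*}+2} > b_{i^*+1}$. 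The paper's proof is built around exactly this case split: when $h_{b_i+2} > b_{i+1}$ it does not perform the move at $i$ at all, but uses \cref{lem:existence_of_D} to find some $j \ge i+1$ with $\pi \cdot D_j \ne \bot$ (any such $D_j$ already yields the contradiction, since it trades one unit of bounce for one unit of area); only when $h_{b_i+2} = b_{i+1}$ does it rearrange the floating cells inside the region so that $D_i$ itself applies. Your argument needs this routing step, and merely choosing the minimizing index does not repair it: in the example above both gaps equal $1$, and $i^*=1$ fails while $i^*=2$ works.

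A small side correction: your example $\N^5\E\N\E^5$ is not a case where every $D_j$ fails; there $d^{(1)}=0$ and $b_1=n-1$, so \cref{rem:simple_D}(1) applies and $\pi \cdot D_1$ is the path $\N^6\E^6$.
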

\begin{proof}
Any path $\pi \in \mathcal{B}(n)$ must satisfy $\tau \cdot D_i = \bot$ for all
  $\tau \in [\pi]$ for all $i$; otherwise, $\tau \cdot D_i$ would have a smaller
  bounce than $\pi$. By \cref{lem:no_down_conditions}, $\alpha$ must be a strict
  partition. To show the second condition holds, assume that
  $\area(\pi) - \area(p_{n,\alpha}) \ge \alpha_i - \alpha_{i+1}$ for
  the $1 \le i < \ell(\alpha)$ such that
  $\alpha_i - \alpha_{i+1}$ has the smallest value. 
  If $h_{b_i+2} > b_{i+1}$, then row $b_{i+1}+1$ has $b_{i+1} - b_i - 1$ cells. Therefore, we can apply \cref{lem:existence_of_D} at row $i+1$ to guarantee the existence of some $D_j$ for $j \geq i+1$ such that $\pi \cdot D_j \neq \bot$.
  So assume that $h_{b_i+2} = b_{i+1}$.
  Now construct $\tau \in [\pi]$ by rearranging the floating cells between $b_i$ and
  $b_{i+1}$ by first filling row $b_i+1$ maximally, then row $b_i+2$ and so on. If row $b_i+1$ is full, then by \cref{rem:simple_D}(1), $D_i$ does the job by adding one cell. Otherwise, row $b_i+1$ will have at least $\alpha_i-\alpha_{i+1}$ cells, $d^{(i)}(\tau) \le \alpha_{i+1} - 1$ and $b_i+1 - h_{b_i} = \alpha_{i+1} - 1$. Therefore $\gamma^{i}(\tau) = (1)^{d^{(i)}}$ 
  and $\tau \cdot D_i \ne \bot$, completing the proof.
\end{proof}

\begin{lem}
  \label{lem:area_minimal_conditions}
  Let $\pi \in \mathcal{A}(n)$ with bounce path $p_{n,\alpha}$. Then
  \begin{enumerate}
  \item $\pi \in \cdycks(n)$,
  \item $|\alpha_i - \alpha_{i+1}| \le 1$ for all $1 \le i < \ell(\alpha)$ and $\alpha_{\ell(\alpha)} = 1$, and
  \item for all $i < j < k$, we cannot have $\alpha_i < \alpha_j < \alpha_k$.
  \end{enumerate}
\end{lem}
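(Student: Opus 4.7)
The plan is to derive the three conditions in turn: (1) and the descending half of (2) will follow directly from area-minimality combined with \cref{lem:no_up_conditions}, while the reverse half of (2) and condition (3) will require exhibiting, in each violating case, an explicit competing Dyck path that has the same $\ab$-value but strictly smaller area, contradicting minimality.

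First, if some $\tau \in [\pi]$ admitted $\tau \cdot U_i \ne \bot$, then by \cref{prop:U_changes_ab} the path $\tau \cdot U_i$ would satisfy $\area(\tau \cdot U_i) = \area(\pi) - 1$ and $\bounce(\tau \cdot U_i) = \bounce(\pi) + 1$, so it would share $\ab$ with $\pi$ but have strictly smaller area, a contradiction. Hence $\tau \cdot U_i = \bot$ for every $\tau \in [\pi]$ and every $i$, and I apply \cref{lem:no_up_conditions} to conclude (1), $\alpha_\ell = 1$, and $\alpha_i - \alpha_{i+1} \le 1$ for all $1 \le i < \ell(\alpha)$; in particular, $\pi = p_{n,\alpha}$.

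For the reverse inequality in (2), I would suppose $\alpha_{i+1} \ge \alpha_i + 2$ for some $i$ and then construct a composition $\alpha'$ of $n$ whose path has $\ab = \ab(\pi)$ and strictly smaller area as follows: split $\alpha_{i+1}$ as $u+v = \alpha_{i+1}$ with $u,v \ge 1$ (which drops the area contribution by $uv \ge 1$ since $\binom{u}{2} + \binom{v}{2} = \binom{u+v}{2} - uv$), and compensate the resulting shift in bounce by redistributing mass between neighbouring parts and, if needed, inserting additional unit parts in the tail of $\alpha$. The hypothesis $\alpha_{i+1} - \alpha_i \ge 2$ ensures such a positive split exists, and $\alpha_\ell = 1$ from the first step gives the necessary flexibility in the tail. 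For (3), if $\alpha_i < \alpha_j < \alpha_k$ for some $i < j < k$, an analogous move shifts a unit of mass from $\alpha_k$ to an earlier part and corrects the bounce by tail modifications, again producing a witness with smaller area and equal $\ab$.

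The main obstacle is the bookkeeping in the previous paragraph: unlike the bounce-minimal analogue (\cref{lem:bounce_minimal_conditions}), where a single $D_i$-application on a suitable rearrangement of floating cells witnesses non-minimality, here no single operator application on any path in $[\pi]$ detects the failure of area-minimality, so the competing composition $\alpha'$ must be constructed globally and the equality $\ab(p_{n,\alpha'}) = \ab(\pi)$ has to be verified directly from \eqref{eq:ab_wrt_composition}.
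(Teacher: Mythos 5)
Your first paragraph is correct and coincides with the paper's argument: area-minimality forces $\tau \cdot U_i = \bot$ for every $\tau \in [\pi]$ and every $i$ (by \cref{prop:U_changes_ab}), and \cref{lem:no_up_conditions} then yields (1), $\alpha_{\ell(\alpha)} = 1$ and $\alpha_i - \alpha_{i+1} \le 1$, so $\pi = p_{n,\alpha}$.

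For the remaining claims, however, there is a genuine gap, and you flag it yourself: the reverse inequality in (2) and condition (3) are only a plan, not a proof. Splitting $\alpha_{i+1} = u+v$ does drop the area of the bounce path by $uv$, but it also changes the bounce by $v$ plus the sum of all later parts (every later part is pushed one index further in \eqref{eq:ab_wrt_composition}), and nothing in your sketch shows that this can always be compensated exactly; ``redistributing mass between neighbouring parts and, if needed, inserting additional unit parts in the tail'' is precisely the step that needs a construction and a verification, and without it the contradiction with area-minimality is not established. Your closing remark that no operator application detects the failure is also not accurate: the paper stays entirely inside the $U/D$ machinery. For (2) it notes that $\alpha_i \le \alpha_{i+1}$ together with the absence of floating cells makes $\pi \cdot D_i \ne \bot$ by \cref{rem:simple_D}; the resulting path $\pi'$ has two extra floating cells, which are then relocated (within the class $[\pi']$, and once more at an intermediate stage) so that two $U$-operators apply, producing a path with the same $\ab$ and area one less than $\pi$ --- a contradiction. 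For (3) the paper permutes parts of $\alpha$ by two explicit transpositions chosen so that the area of the bounce path is unchanged and the bounce corrections cancel, landing in $P_n(\area(\pi), \bounce(\pi))$, and then applies a single $U_s$ (or $U_{j+1}$), valid by \cref{rem:simple_U}(2), to again contradict minimality. These explicit constructions are the combinatorial core of \cref{lem:area_minimal_conditions}, and they are absent from your proposal.
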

\begin{proof}
Any path $\pi \in \mathcal{A}_n$ must satisfy $\tau \cdot U_i = \bot$ for all
  $\tau \in [\pi]$ for all $i$; otherwise $\tau \cdot U_i$ would have a smaller area than $\pi$ for some $\tau \in [\pi]$. By \cref{lem:no_up_conditions}, $\pi \in \cdycks(n)$, 
  $\alpha_i - \alpha_{i+1} \le 1$ for all $1 \le i \le \ell(\alpha)-1$ and $\alpha_{\ell(\alpha)} = 1$.

  Suppose $\alpha_i - \alpha_{i+1} < -1$. Then, since there are no floating cells, we can apply \cref{rem:simple_D} to conclude that $\pi \cdot D_i \ne \bot$.
  Let $\pi' = \pi \cdot D_i$.
  Then, $\pi'$
  will have at least two floating cells at column $b_i$.
  If $\alpha' = (\dots,2,1)$,
  we can move one floating cell to row $n$ to create $\tau' \in [\pi']$
  which guarantees
  $\tau' \cdot U_{\ell(\alpha')-1} \cdot U_{\ell(\alpha')} \ne \bot$.
  The following diagram illustrates this case.
  \begin{center}
    \begin{tabular*}{1\textwidth}{c c c c c}
      \includegraphics[width=0.15\textwidth]{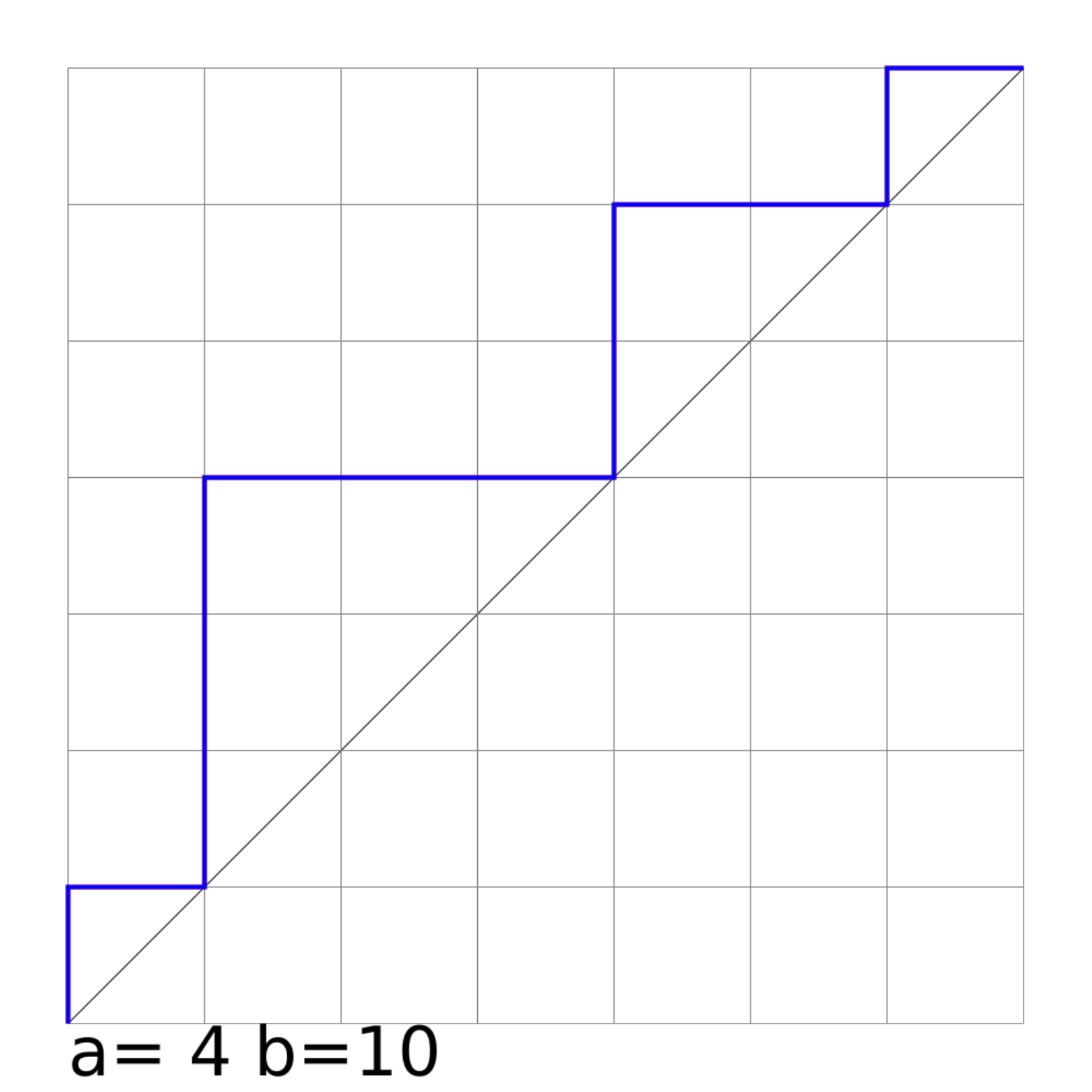} &
      \includegraphics[width=0.15\textwidth]{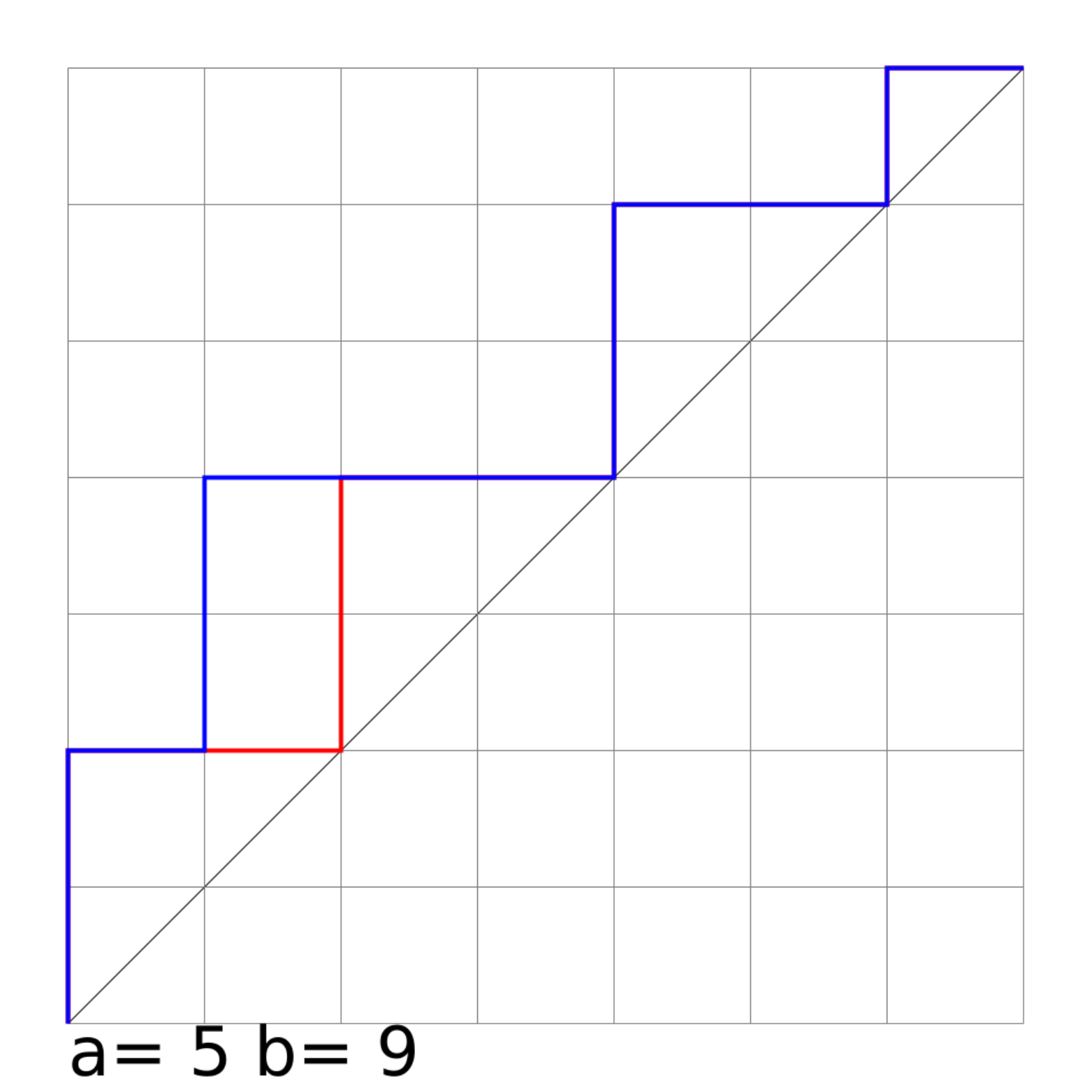} &
      \includegraphics[width=0.15\textwidth]{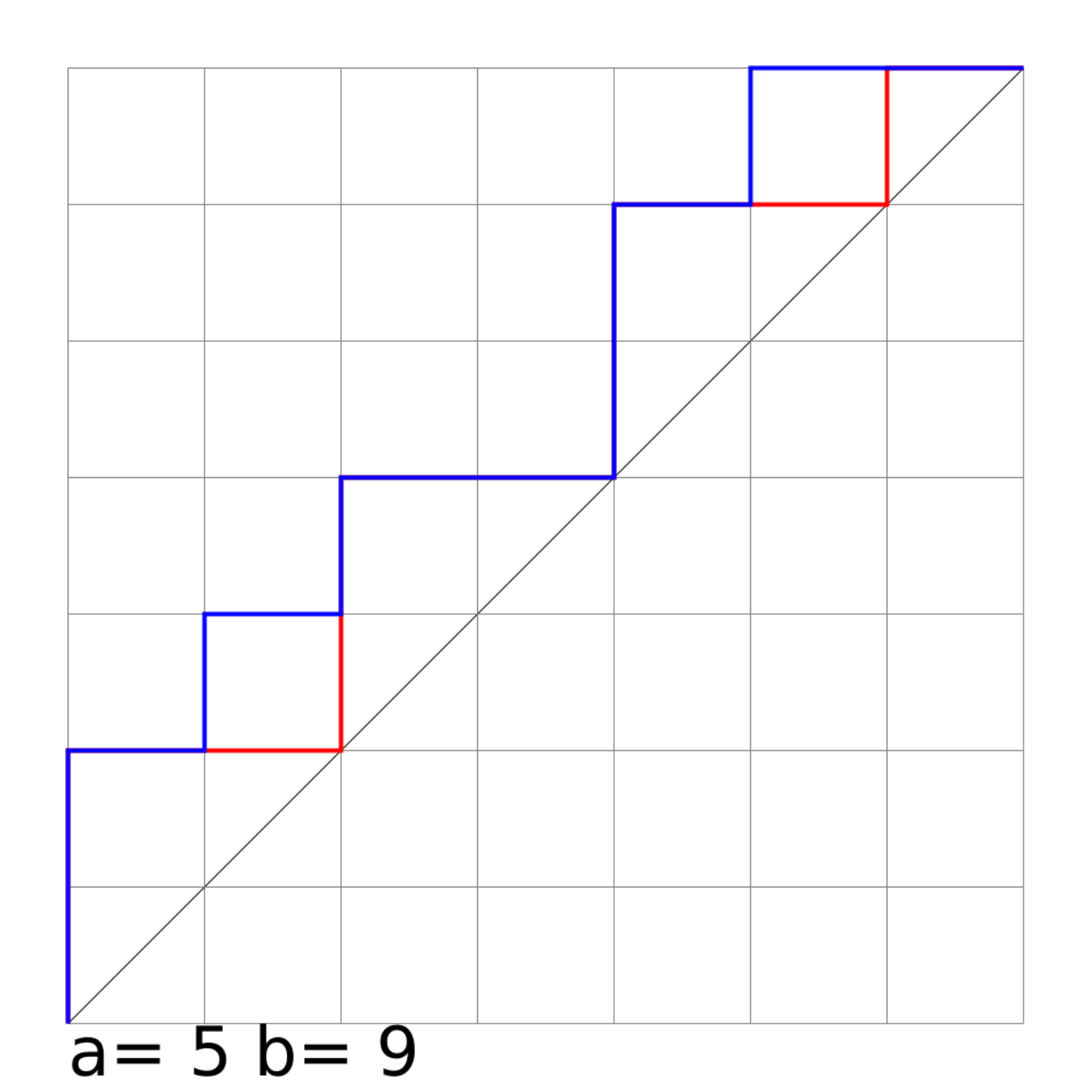} &
      \includegraphics[width=0.15\textwidth]{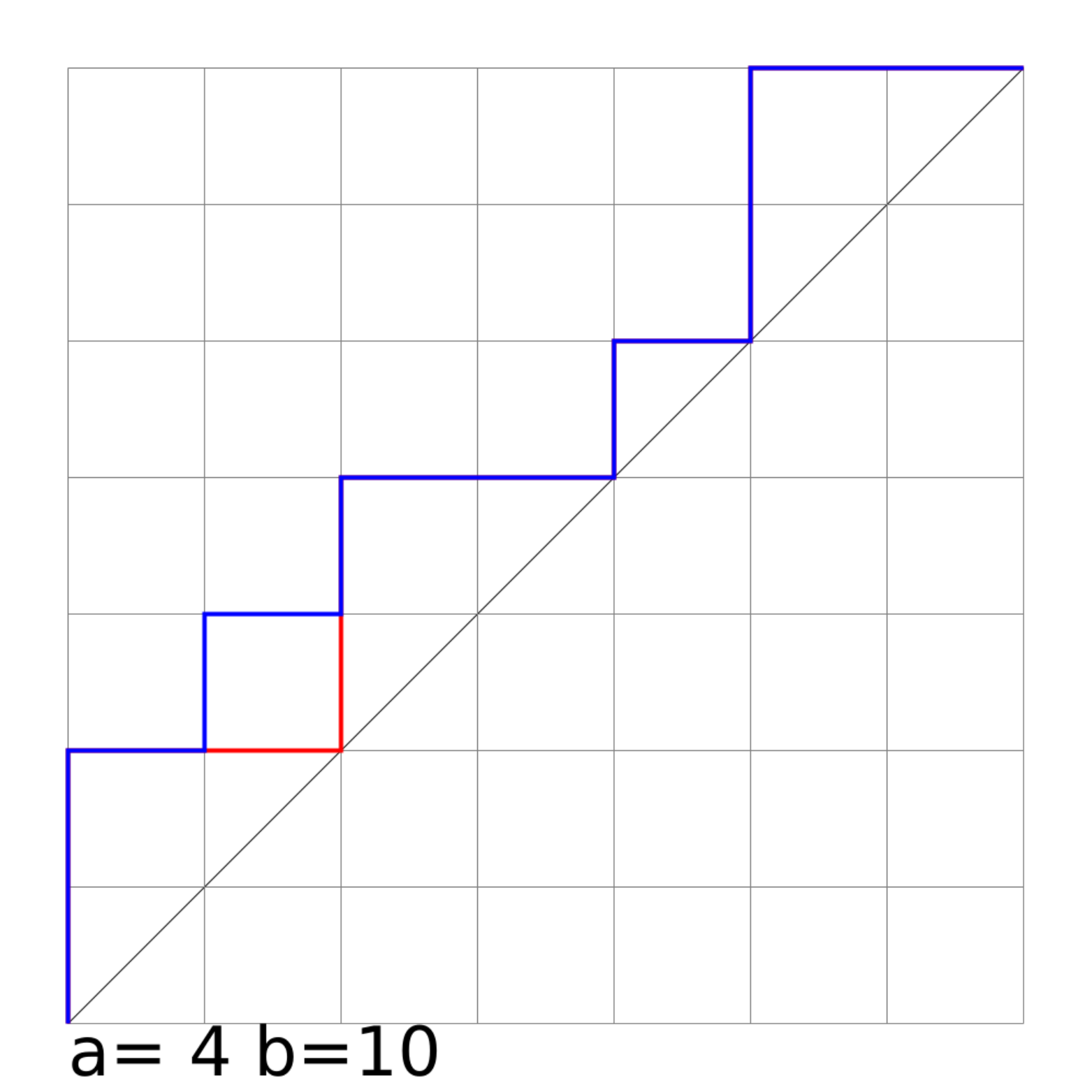} &
      \includegraphics[width=0.15\textwidth]{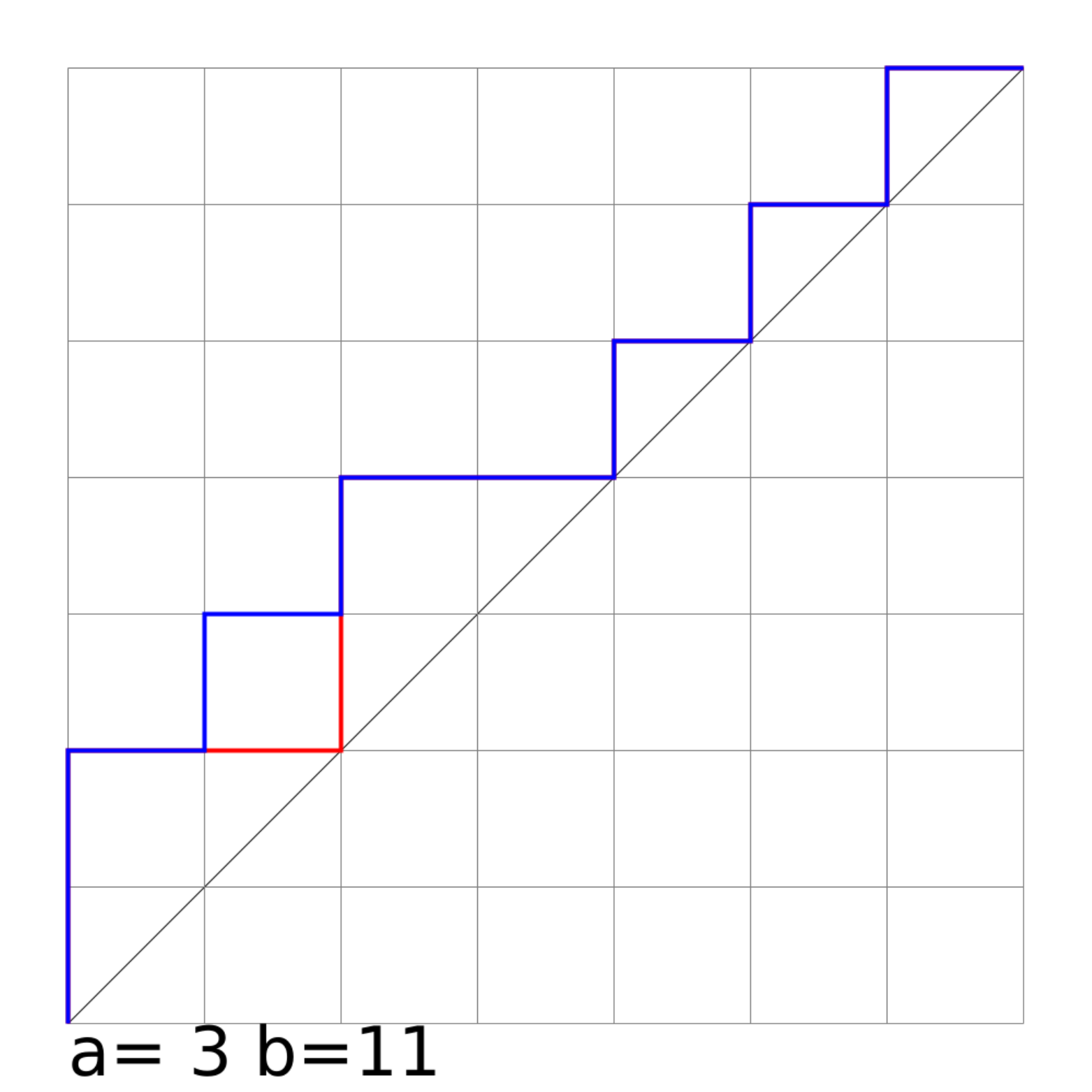} \\
      \CAP{$\pi$} & \CAP{$\pi' = \pi \cdot D_1$} & \CAP{$\tau'$} & \CAP{$\tau' \cdot U_3$} & \CAP{$\tau' \cdot U_3 U_4$}
    \end{tabular*}
  \end{center}
  Otherwise, pick the maximal $i$
  such that $\alpha'_i = 2$ and move a floating cell to row $b_i + 1$ to create
  $\tau' \in [\pi']$.
  Then $\pi'' = \tau' \cdot U_i \ne \bot$.
  We create $\tau'' \in [\pi'']$ by moving another floating cell to $b_{i+1}+1$
  and apply $U_{i+1}$.
  See the following figure.
  Therefore, $(2)$ holds.
  \begin{center}
    \begin{tabular*}{1\textwidth}{c c c c c c}
      \includegraphics[width=0.14\textwidth]{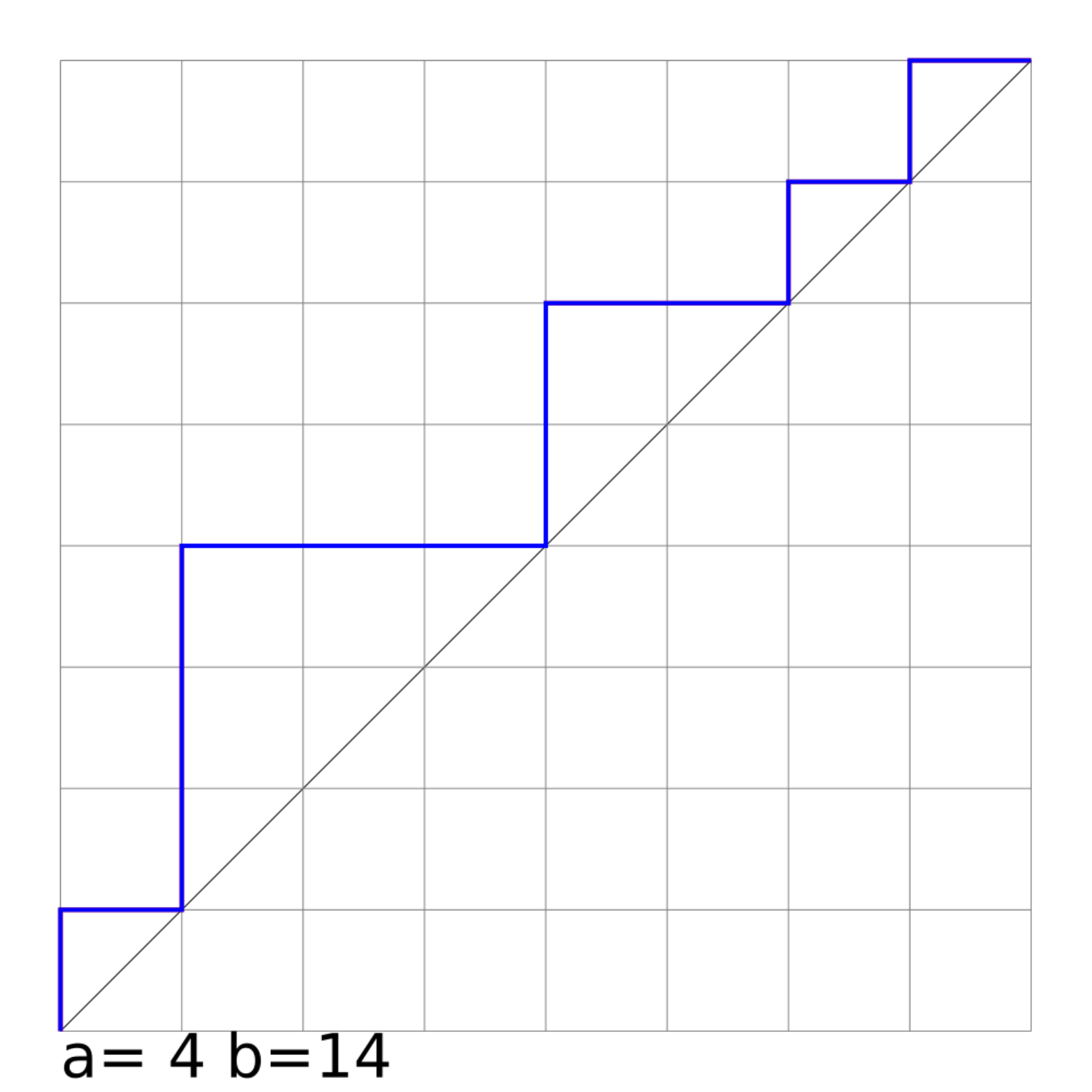} &
      \includegraphics[width=0.14\textwidth]{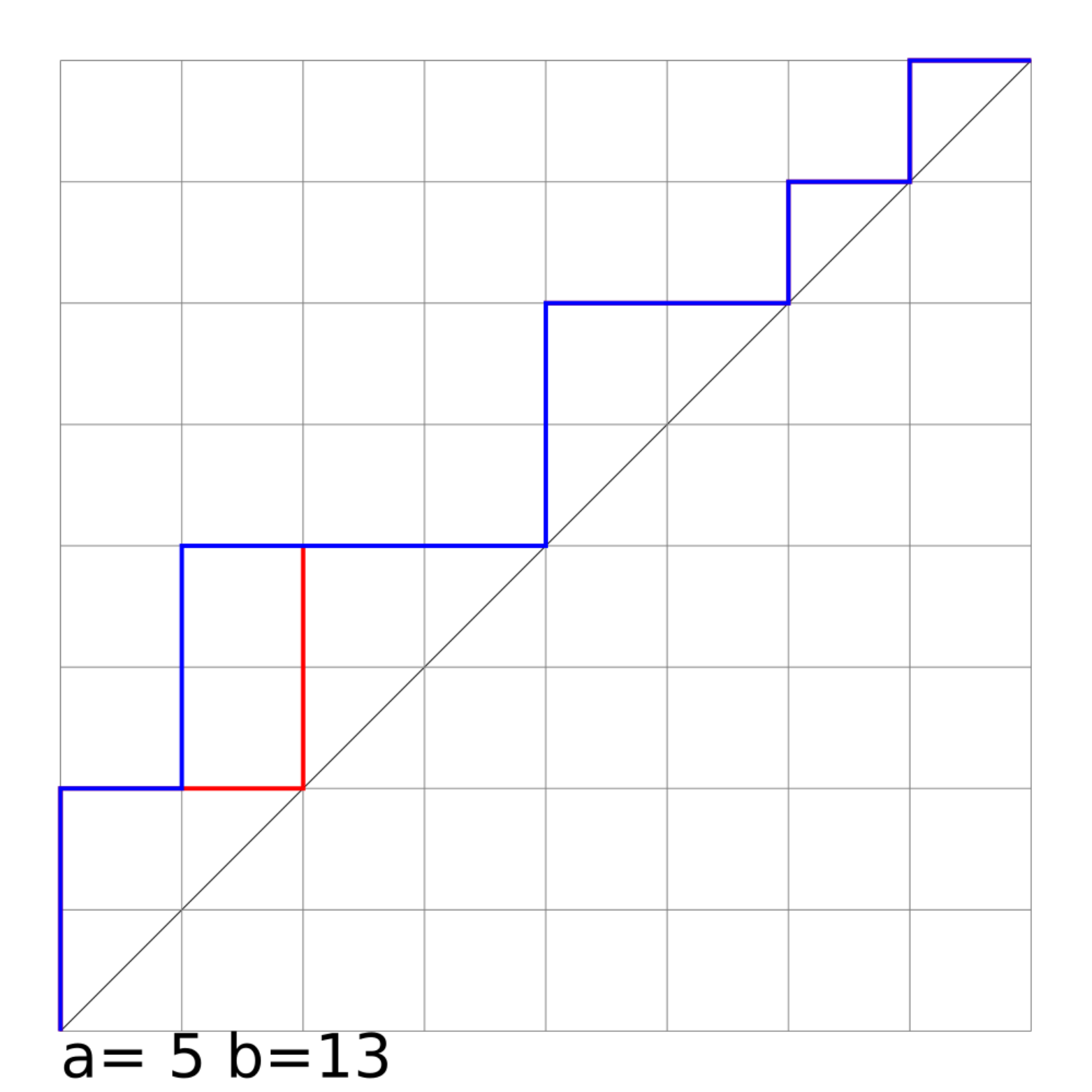} &
      \includegraphics[width=0.14\textwidth]{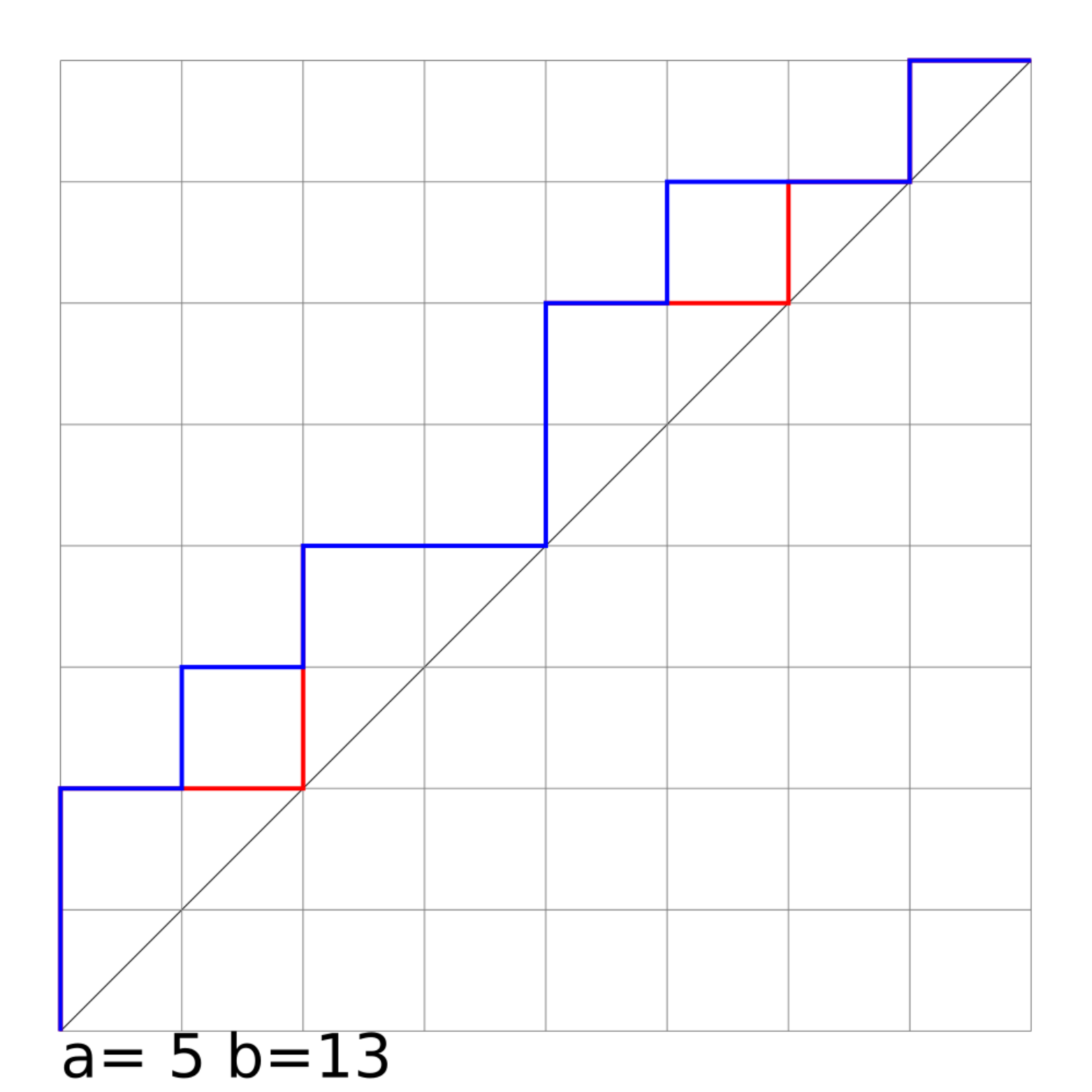} &
      \includegraphics[width=0.14\textwidth]{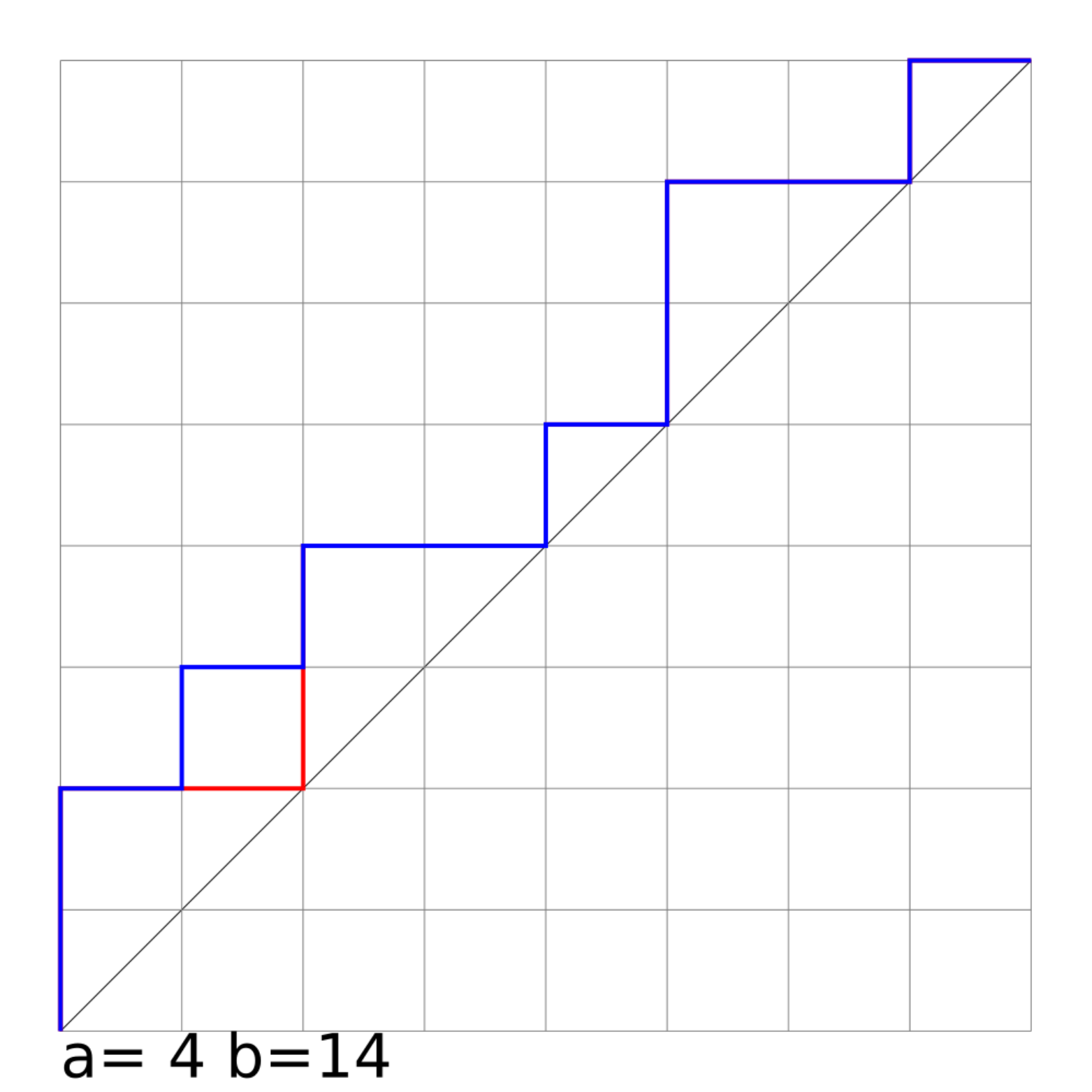} &
      \includegraphics[width=0.14\textwidth]{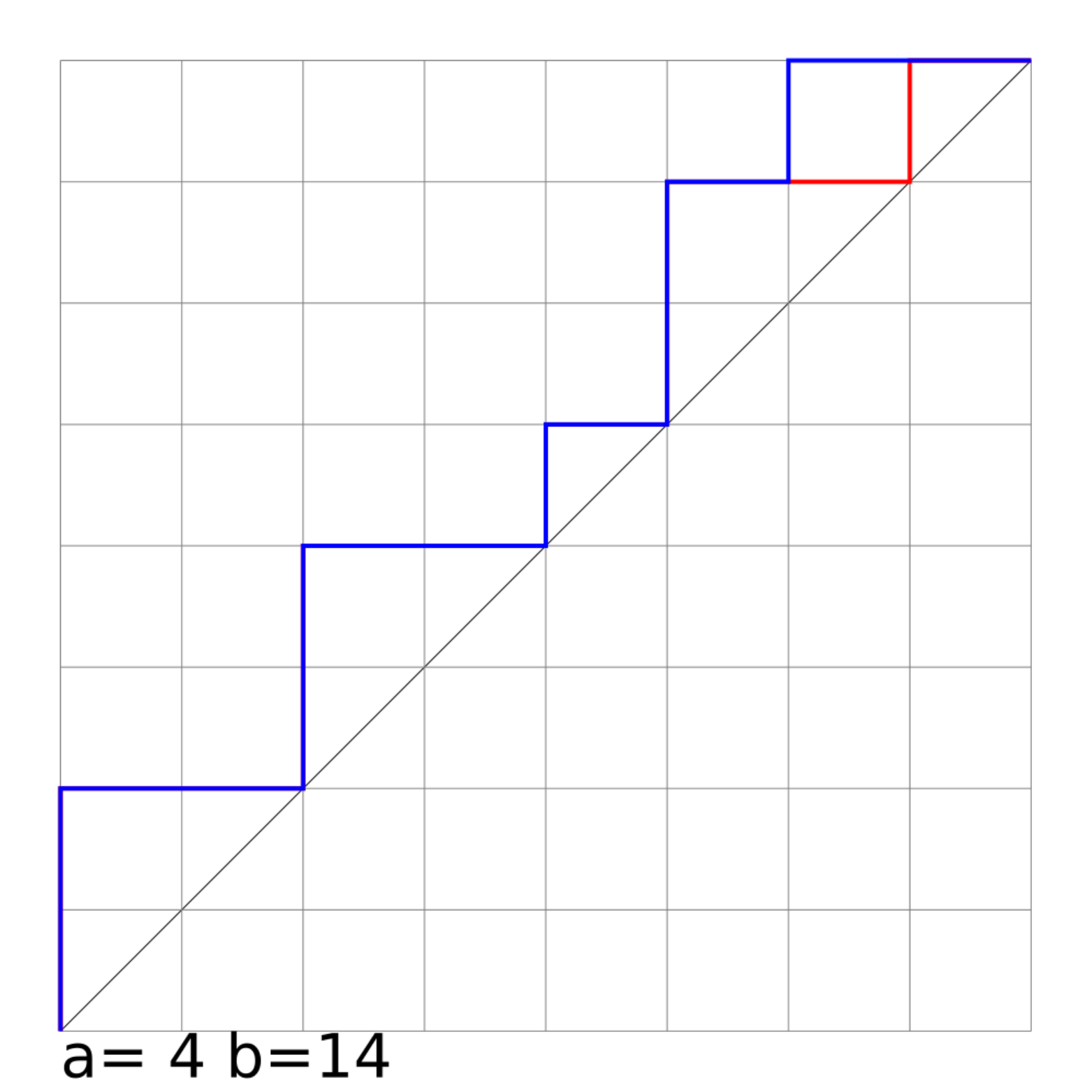} &
      \includegraphics[width=0.14\textwidth]{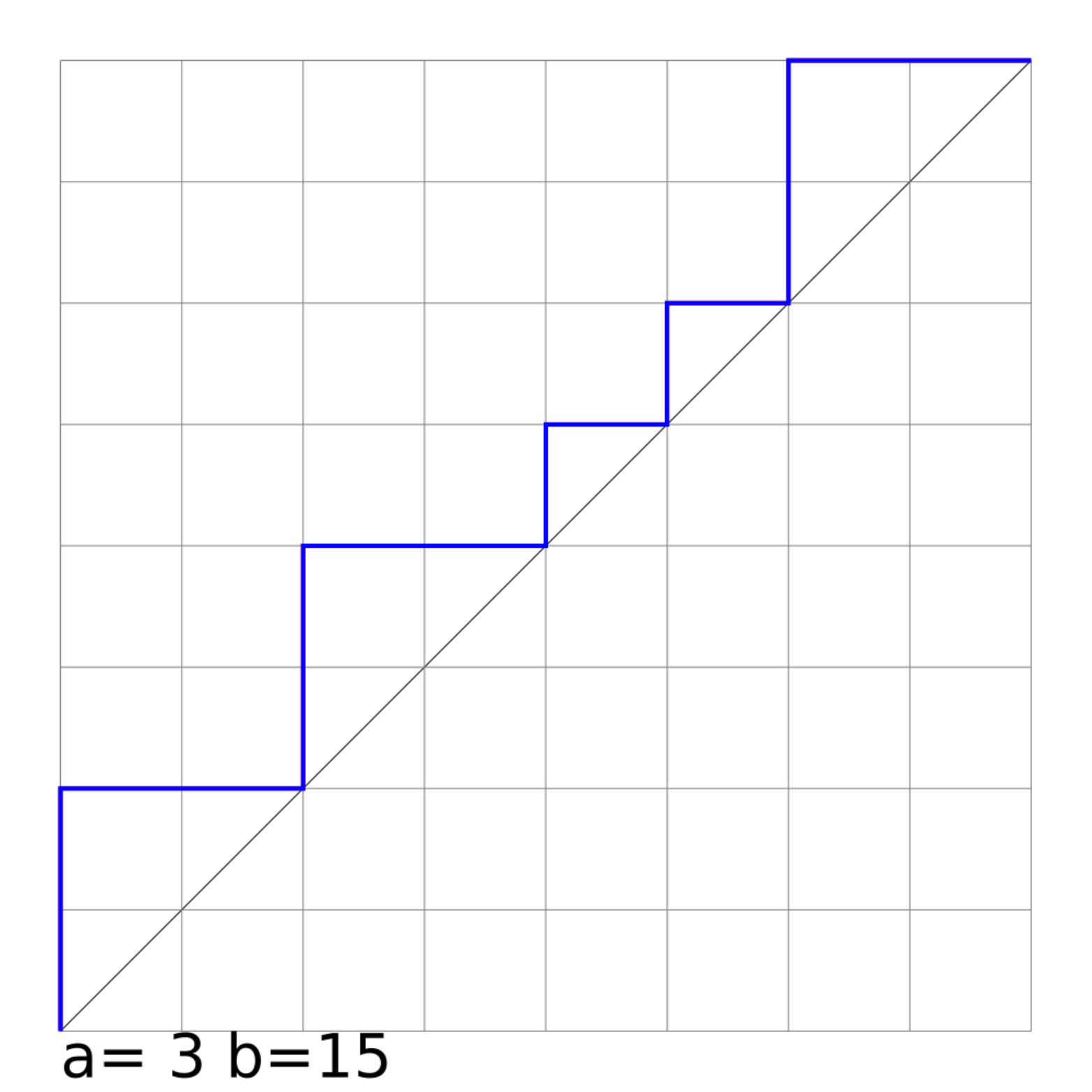} \\
      \CAP{$\pi$} & \CAP{$\pi' = \pi \cdot D_1$} & \CAP{$\tau'$} & \CAP{$\pi'' = \tau' \cdot U_3$} & \CAP{$\tau''$} & \CAP{$\tau'' \cdot U_4$}
    \end{tabular*}
  \end{center}

Finally, suppose $\alpha_i < \alpha_j < \alpha_k$ and $\alpha_i \le \dots \le
  \alpha_k$ for a
  maximal choice of indices $i < j < k$.
  Then, by (2), there exist maximal indices $r < s < t$ for $k \le r$ where
  $\alpha_r = 3, \alpha_s=2$, $\alpha_t=1$, and $\alpha_r \le \dots \le \alpha_t$.
  Construct $\alpha'$ by permuting the indices of $\alpha$ according to the transpositions
\[
    \alpha' =
    \begin{cases}
      \alpha \cdot (r,\ s)(i,\ i+s-r), &\text{if } \alpha_k = 3 \text{ or } s-r \le j-i+2,\\
      \alpha \cdot (r,\ r+j-i+2)(i,\ j+1) &\text{otherwise}.
    \end{cases}
\]

Clearly, $p_{n,\alpha'}$ has the same area as $p_{n,\alpha}$.
It is a somewhat tedious case exercise to check that the first transposition increases the bounce by $s-r$ (resp. $j-i+2$) and the second decreases the bounce by the same amount in the first (resp. second) case.
Therefore, $p_{n,\alpha'} \in P_n(\area(\pi),\bounce(\pi))$.
For example, if
\[
  \alpha = (2,3,2,3,3,4,4,3,4,3,3,2,2,1,1),
\]
we have $i,j,k = 3,5,7$ and $r,s,t=11,13,15$. Since $s-r = 2 \le j-i+2 = 4$, by the first case we apply the transpositions $(11,\ 13)(3,\ 5)$ to get
\[
\alpha' = (2,3,\underline{3},3,\underline{2},4,4,3,4,3,\underline{2},2,\underline{3},1,1),
\]
where we have underlined the elements being swapped.
We now set $\tau = p_{n,\alpha'} \cdot U_s$ in the first case and $\tau = p_{n,\alpha'} \cdot U_{j+1}$ in the second case. Both lead to a valid path by \cref{rem:simple_U}(2).
\end{proof}

\begin{thm}
\label{thm:area-bounce-minimal}
The map $\Phi$ is a bijection from $\mathcal{B}(n)$ and $\mathcal{A}(n)$.
\end{thm}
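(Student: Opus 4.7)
My plan is to first prove the two inclusions $\mathcal{B}(n) \subseteq \AF_n$ and $\mathcal{A}(n) \subseteq \BF_n$, after which the theorem follows almost formally. Indeed, $\Phi$ preserves $\ab$ and swaps area and bounce, so for $\pi \in \mathcal{B}(n) \subseteq \AF_n$ with $\ab(\pi)=c$, the image satisfies $\area(\Phi(\pi))=\bounce(\pi)$, which is the minimum bounce over $\dycks(n)\cap\{\ab=c\}$. By the known identity $F_n(q,t)=F_n(t,q)$, the minimum bounce and minimum area over any fixed $\ab$-level coincide, so $\Phi(\pi)\in\mathcal{A}(n)$. Surjectivity onto $\mathcal{A}(n)$ is symmetric, using $\mathcal{A}(n)\subseteq\BF_n$.

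For $\mathcal{B}(n)\subseteq\AF_n$, fix $\pi\in\mathcal{B}(n)$. By \cref{lem:bounce_minimal_conditions}, the bounce path is $p_{n,\lambda}$ with $\lambda$ strict, so $\bar\lambda=\lambda$ and the rows $\row_\lambda(i,r)=b_i+r$ with $r\le\lambda_{i+1}$ cover every row between $b_1+1$ and $n$, i.e.\ every row that can hold a floating cell. Extract the count map $f(i,r)=a_{\row_\lambda(i,r)}-r+1$ as in \cref{rem:determine_ab_pair}. I would verify the hypotheses of \cref{thm:ab_pairs}, and the condition $p_{n,\lambda'}\cdot B_f\in\cdycks(n)$, by contraposition: any violation can be converted into a rearrangement of floating cells producing some $\tau\in[\pi]$ with $\tau\cdot D_j\ne\bot$, which would strictly decrease the bounce and contradict bounce-minimality. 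These rearrangement arguments parallel the proof of \cref{lem:bounce_minimal_conditions} itself.

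Dually, for $\mathcal{A}(n)\subseteq\BF_n$, take $\tau\in\mathcal{A}(n)$. By \cref{lem:area_minimal_conditions}, $\tau=p_{n,\alpha}\in\cdycks(n)$ and $\alpha$ satisfies the three listed conditions. Sorting $\alpha$ yields a partition $\mu$ whose conjugate $\lambda=\mu'$ is strict by the no-gap condition together with $\alpha_{\ell(\alpha)}=1$. Define $f$ from $\tau$ via \eqref{eq:inv_Phi}; the no-increasing-triple condition translates into the weakly decreasing inequality $f(i,1)\ge f(i,2)\ge\cdots$ required by \cref{thm:ab_pairs}, and the small-gap condition $|\alpha_i-\alpha_{i+1}|\le 1$ gives the upper bound $f(i,1)<\bar\mu_i$. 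This yields $(f,\lambda)\in\mathcal{F}_n$ and $\tau\in\BF_n$.

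The main obstacle will be the bookkeeping in both inclusion arguments, especially translating the \emph{qualitative} no-increasing-triple condition in \cref{lem:area_minimal_conditions} into the \emph{quantitative} weakly decreasing property of $f(i,\cdot)$ required by $\mathcal{F}_n$. This translation rests on a careful analysis of how $B_f$ reorders the parts of $\mu$ through repeated adjacent swaps via \cref{prop:si_not_bottom}, and a matched analysis on the $\mathcal{B}(n)$ side tracking which rearrangements within $[\pi]$ unlock a $D_j$ operator.
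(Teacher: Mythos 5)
Your skeleton (prove $\mathcal{B}(n)\subseteq\AF_n$ and $\mathcal{A}(n)\subseteq\BF_n$, then conclude formally) matches the paper's, but the two inclusions are the actual content of the theorem and your treatment of them has genuine gaps. On the $\mathcal{B}(n)$ side you quote only the qualitative half of \cref{lem:bounce_minimal_conditions} (strictness of $\lambda$) and propose to verify the hypotheses of \cref{thm:ab_pairs} and the condition $p_{n,\lambda'}\cdot B_f\in\cdycks(n)$ ``by contraposition,'' converting any violation into some $\tau\in[\pi]$ with $\tau\cdot D_j\ne\bot$. No mechanism is given for this, and it is precisely the hard step: the failure of $B_f$ happens on a \emph{different} path built from the conjugate partition, and turning that into a bounce-decreasing move on $\pi$ is not a routine rearrangement. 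The paper instead uses the quantitative half of \cref{lem:bounce_minimal_conditions}, namely $\area(\pi)-\area(p_{n,\lambda})\le\min_i(\lambda_i-\lambda_{i+1}-1)$, to bound the extracted $f$ (this is what forces $f$ to be supported on $\mathcal{I}_{\bnc}(\lambda')$ and to satisfy $f(i,1)<\bar\lambda_i$), and then runs an induction over the individual steps of $B_f$ showing each intermediate path stays in $\cdycks(n)$ because only adjacent part sizes get permuted. Without that bound your contraposition plan has no foothold.

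On the $\mathcal{A}(n)$ side your division of labour between conditions (2) and (3) of \cref{lem:area_minimal_conditions} is wrong. The weak decrease $f(i,1)\ge f(i,2)\ge\cdots$ is automatic from \eqref{eq:inv_Phi} (the sums are over nested index sets), so it does not come from the no-increasing-triple condition; conversely, the small-gap condition alone does \emph{not} give the upper bound on $f(i,1)$. For example $\alpha=(1,2,2,3,2,1)$ (sorted partition $\mu=(3,2,2,2,1,1)$, conjugate $\lambda=\mu'=(6,4,1)$) satisfies $|\alpha_i-\alpha_{i+1}|\le1$ and $\alpha_\ell=1$ but violates (3), and \eqref{eq:inv_Phi} gives $f(2,1)=4=\bar\lambda_2$, so the required inequality $f(i,1)<\bar\lambda_i$ fails; also note the bound must be against the strict conjugate's part $\bar\lambda_i$, not $\bar\mu_i$ as you wrote. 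The correct route (the paper's) is that (2) and (3) \emph{together} force $\alpha$ to arise from $\mu$ by swapping parts only past parts of adjacent size, and that a part of size $i+1$ can never pass the last part of size $i$, which is what yields the bound; one must also check that $B_f$ with the extracted $f$ actually reproduces $\tau$. Finally, your appeal to the known identity $F_n(q,t)=F_n(t,q)$ in the concluding step is logically admissible but unnecessary and against the bijective spirit: once both inclusions are in hand, the paper's two-line argument (a hypothetical $\tau\in\mathcal{A}(n)$ with smaller area would give $\Phi^{-1}(\tau)\in\AF_n$ with smaller bounce, contradicting bounce-minimality) closes the proof self-containedly, and you already have all the ingredients for it.
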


\begin{figure}[h!]
  \centering
  \includegraphics[width=0.9\textwidth]{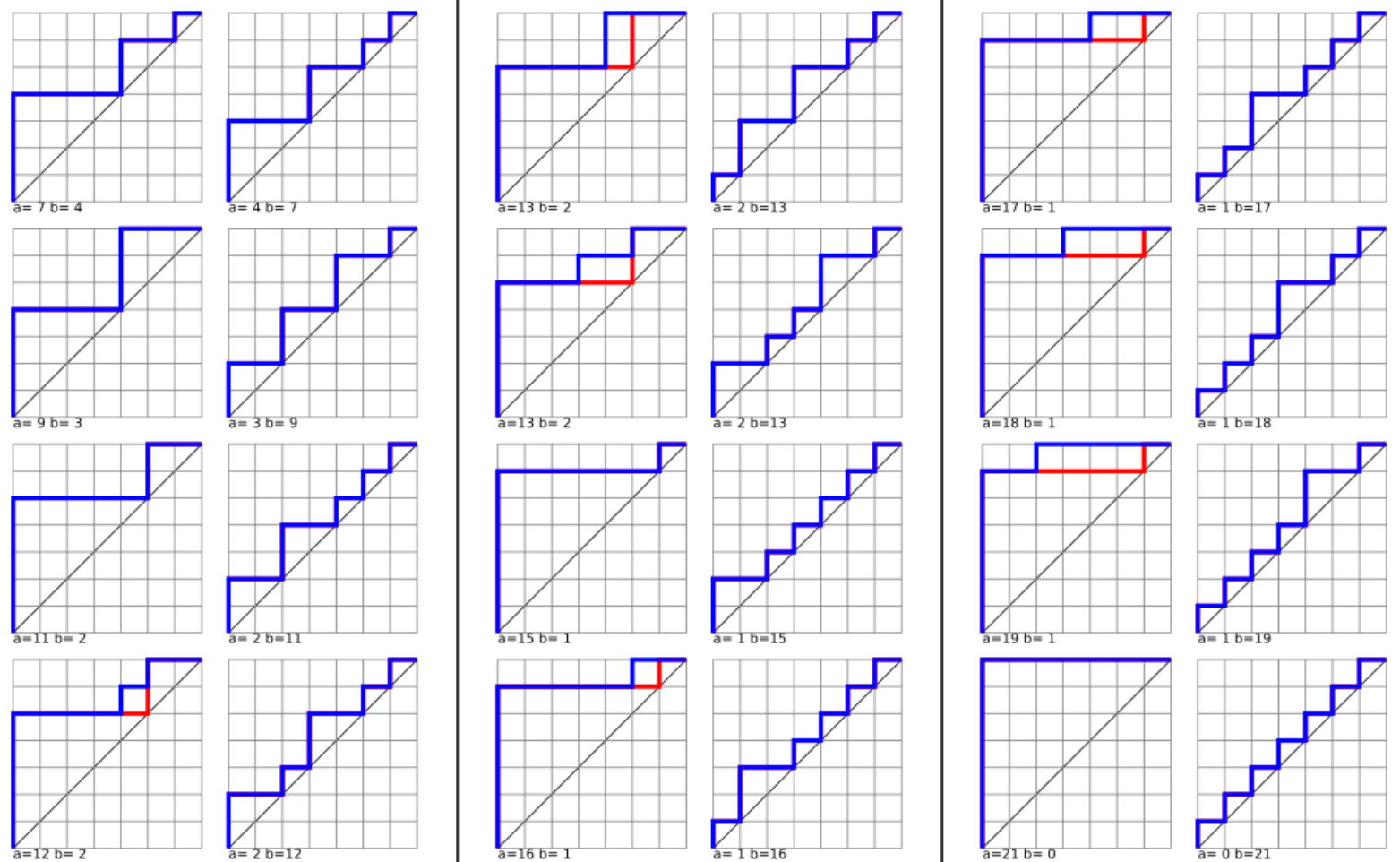}
  \caption{The bijection from $\mathcal{B}(7)$ and $\mathcal{A}(7)$ shown in three columns. In each column, the element in $\mathcal{B}(7)$ is on the left and that in $\mathcal{A}(7)$ is on the right. Note that there are only 11 distinct values of $\ab$; there are two paths each in $P_7(2, 13)$ and $P_7(13, 2)$.}
  \label{fig:minimal_bijection}
\end{figure}

\begin{proof}
  We first show that $\pi \in \mathcal{B}(n)$ implies $\pi \in \AF_n$.
  By \cref{lem:bounce_minimal_conditions}, {the bounce path of $\pi$
  is $p_{n,\lambda}$ for some strict partition $\lambda$}.
  Thus, we have that $\bar{\lambda} = \lambda$ and $\lambda'$ is a
  gapless partition; i.e., $0 \le \lambda'_i - \lambda'_{i+1} \le 1$
  for all $1 \le i < \ell(\lambda')$ and $\lambda'_{\lambda_1} = 1$.
  In addition, $\area(\pi) - \area(p_{n,\lambda}) \le m$ where
  $m = \min \{ \lambda_i - \lambda_{i+1} - 1 \mid 1 \le i < \ell(\lambda)\}$.
  Let $f$ such that $\pi = p_{n,\lambda} \cdot A^f$.
  Then, it is easily seen that $f$ satisfies the conditions in \cref{thm:ab_pairs}; i.e.,
  \begin{enumerate}
  \item $f(i,r) =0$ for $r \ge \lambda_{i+1} - \lambda_{i+2} = d_{\lambda'}(i+1)$ since $f(i,r) = 0$ when $r > m$,
  \item $f(i,1) < \lambda_i - \lambda_{i+1} \le \bar{\lambda}'_i$ since $f(i,1) \le m$.
  \end{enumerate}
  We now show that $p_{n,\lambda'} \cdot B_f \in \cdycks(n)$ by showing that every step of this operation
  leads to a bounce path whose composition only permutes adjacent part sizes in
  $\lambda'$.
  First consider, $\pi^{(1,1)} = p_{n,\lambda'} \cdot B_{\bnc_{\lambda'}(1,1),f(1,1)}$.
  If $f(1,1) > 0$, this is clearly a bounce path since this swaps the last part of size $2$ in $\lambda'$
  with a part of size $1$.
  Assume that $\pi^{(i,r)} \in \cdycks(n)$.
  The next valid operation is either $\pi^{(i,r)} \cdot B_{\bnc_{\lambda'}(i,r+1),f(i,r+1)}$ or
  $\pi^{(i,r)} \cdot B_{\bnc_{\lambda'}(i+1,1),f(i+1,1)}$. 
The former has to be a bounce path since the application of
  $B_{\bnc_{\lambda'}(i,r),f(i,r)}$ was successful and $f(i,r+1) \le f(i,r)$.
  For the latter, we have to show that there are at least $f(i+1,1)$ parts of size $i+1$
  in $\alpha^{(i,r)}$ immediately following the last part of size $i+2$.
  This holds since only a maximum of $m - f(i+1,1)$ parts of size $i+1$ could have
  been moved which is $\le \lambda_{i+1}-\lambda_{i+1}-1-f(i+1,1)$.
  Thus, $p_{n,\lambda'} \cdot B_f \in \BF_n$ and so $\pi \in \AF_n$.

  Now we show that $\pi \in \mathcal{A}(n)$ implies $\pi \in \BF_n$.
  Let $p_{n,\alpha}$ be the bounce path of $\pi$.
  By \cref{lem:area_minimal_conditions} (2) and (3), $\alpha$ is formed from its
  underlying gapless partition, {$\lambda$ (by sorting the parts of $\alpha$
    in decreasing order)},
  by only permuting adjacent part sizes.
  In particular, starting from $\lambda$, a part of size $i+1$ for $1 \le i < \ell(\bar{\lambda})$
  can be swapped with any part of size $i$ except for the last one
  (since that would violate condition $(2)$ in \cref{lem:area_minimal_conditions}).
  Therefore, if $\pi = p_{n,\lambda} \cdot B_f$ for some $f$, then
  $f(i,1) < \bar{\lambda}'_i - \bar{\lambda}'_{i+1} \le \bar{\lambda}'_i$.
  Thus, $p_{n,\lambda'} \cdot A^f \in \AF_n$.

  We can now show that $\Phi$ sends bounce-minimal paths to area-minimal paths.
  Let $\pi \in \mathcal{B}(n)$ satisfy $\ab(\pi) = s$ and $\bounce(\pi) = b$.
  We showed that $\pi \in \AF_n$ and so $\Phi(\pi) \in \BF_n$.
  Then $\ab(\Phi(\pi)) = s$ and $\area(\Phi(\pi)) = b$.
  Suppose there exists $\tau \in \mathcal{A}(n)$ with $\ab(\tau)=s$ and
  $\area(\tau) = a$ such that $a < b$.
  We have shown that $\tau \in \BF_n$ and so $\Phi^{-1}(\tau) \in \AF_n$ with
  $\ab(\Phi^{-1}(\tau)) = s$ and $\bounce(\Phi^{-1}(\tau)) = a$.
  However, this contradicts the bounce-minimality of $\pi$.
\end{proof}

\cref{fig:minimal_bijection} gives an example of this bijection.

\subsection{Paths with intermediate values of area and bounce}
\label{sec:intermediate}

\begin{lem}\label{lem:ab_inequality}
  For all $\tau \in [\pi]$ with bounce path $p_{n,\alpha}$ and $1 \le i \le \ell(\alpha)$,
  \begin{enumerate}
  \item if $\tau \cdot D_i = \bot$ then $\area(\pi) \ge \bounce(\pi)$, and
  \item if $\tau \cdot U_i = \bot$ then $\area(\pi) \le \bounce(\pi)$.
  \end{enumerate}
\end{lem}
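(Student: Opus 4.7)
The plan is to combine the structural characterizations in \cref{lem:no_down_conditions} and \cref{lem:no_up_conditions} with a single elementary inequality on partition statistics, namely $n(\mu')\ge n(\mu)$ for every strict partition $\mu$.

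For (1), the hypothesis is exactly that of \cref{lem:no_down_conditions}, so the bounce composition $\alpha$ of $\pi$ is a strict partition. Since $\bounce(\pi)=\bounce(p_{n,\alpha})$ and $\area(\pi)\ge\area(p_{n,\alpha})$, it suffices to prove
\[
\sum_{i=1}^{\ell}\binom{\alpha_i}{2}\ge\sum_{i=1}^{\ell}(i-1)\alpha_i
\]
for a strict partition $\alpha=(\alpha_1>\cdots>\alpha_{\ell}\ge 1)$. I would decompose $\alpha$ relative to the staircase by writing $\alpha_i=(\ell-i+1)+e_i$; strictness forces $e=(e_1,\dots,e_{\ell})$ to be a weakly decreasing sequence of nonnegative integers. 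Expanding and using that the pure staircase contributes zero (i.e., $\sum_i(\ell-i+1)(\ell-3i+2)=0$, easily checked via the substitution $j=\ell-i+1$), one obtains
\[
2\bigl(\area(p_{n,\alpha})-\bounce(p_{n,\alpha})\bigr)=\sum_{i=1}^{\ell}e_i(2\ell-4i+3)+\sum_{i=1}^{\ell}e_i^2.
\]
The second sum is clearly nonnegative. For the first, both $(e_i)_i$ and $(2\ell-4i+3)_i$ are weakly decreasing in $i$ and $\sum_i(2\ell-4i+3)=\ell$, so Chebyshev's sum inequality yields $\sum_i e_i(2\ell-4i+3)\ge |e|\ge 0$.

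For (2), the hypothesis combined with \cref{lem:no_up_conditions} gives $\pi\in\cdycks(n)$, $\alpha_{\ell}=1$, and $\alpha_i-\alpha_{i+1}\le 1$ for all $i$. Iterating this last inequality from the final index shows that the multiset of parts of $\alpha$ contains every integer between $1$ and $\max_i\alpha_i$; hence the decreasing rearrangement $\tilde\alpha$ is a gapless partition with smallest part $1$, and its conjugate $\tilde\alpha'$ is a strict partition because $\tilde\alpha'_j-\tilde\alpha'_{j+1}=m_\alpha(j)\ge 1$. Because the area statistic depends only on the multiset of parts, $\area(\pi)=\sum_i\binom{\tilde\alpha_i}{2}=n(\tilde\alpha')$. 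The rearrangement inequality applied to the increasing weights $(0,1,\dots,\ell-1)$ yields $\bounce(\pi)=\sum_i(i-1)\alpha_i\ge\sum_i(i-1)\tilde\alpha_i=n(\tilde\alpha)$. It remains to note $n(\tilde\alpha)\ge n(\tilde\alpha')$, which is precisely the inequality from (1) applied to the strict partition $\mu=\tilde\alpha'$, giving $\bounce(\pi)\ge n(\tilde\alpha)=n(\mu')\ge n(\mu)=n(\tilde\alpha')=\area(\pi)$.

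The main obstacle I anticipate is the Chebyshev step in (1): the inequality is sharp on the staircase (where $e=0$), so no purely term-by-term argument can work, and one must exploit the global comonotonicity of $(e_i)$ and $(2\ell-4i+3)$ together with the identity $\sum_i(2\ell-4i+3)=\ell$, which supplies exactly the right normalization. Everything else — reducing a path inequality to its bounce path, sorting a composition to a partition, and exchanging gapless with strict via conjugation — is routine bookkeeping.
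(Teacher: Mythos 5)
Your proof is correct, and while it rests on the same structural reductions as the paper (via \cref{lem:no_down_conditions} for (1) and \cref{lem:no_up_conditions} for (2)), the way you establish the resulting inequalities is genuinely different. For (1), the paper proves $\area(p_{n,\alpha})\ge\bounce(p_{n,\alpha})$ for strict $\alpha$ by induction on $\ell(\alpha)$, peeling off the largest part $k$ and using $\binom{k}{2}\ge\binom{\alpha_1+1}{2}\ge n-k$; your staircase decomposition $\alpha_i=(\ell-i+1)+e_i$ with the identity $2(\area-\bounce)=\sum_i e_i(2\ell-4i+3)+\sum_i e_i^2$ and Chebyshev's sum inequality is a valid alternative (I checked the algebra: the staircase term vanishes, $\sum_i(2\ell-4i+3)=\ell$, and strictness makes $(e_i)$ weakly decreasing and nonnegative, so Chebyshev applies even though some weights are negative), and it has the virtue of exhibiting the exact defect from equality. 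For (2), your argument is actually more complete than the paper's: the paper disposes of this case in two sentences by appealing to the extremal (reverse-staircase) configuration, whereas you sort $\alpha$ to its gapless rearrangement $\tilde\alpha$, use the rearrangement inequality to get $\bounce(\pi)\ge n(\tilde\alpha)$, observe $\tilde\alpha'$ is strict, and close the loop with the inequality from (1) under conjugation, $n(\tilde\alpha)\ge n(\tilde\alpha')=\area(\pi)$ (using $\pi\in\cdycks(n)$ so there are no floating cells). The intermediate-value argument that gaplessness follows from $\alpha_{\ell}=1$ and $\alpha_i-\alpha_{i+1}\le 1$ is also correct. In short: same skeleton, different and self-contained proofs of the two key inequalities, with your treatment of (2) filling in detail the paper leaves implicit.
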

\begin{proof}
  Consider the first statement. By \cref{lem:no_down_conditions},
  $\alpha$ must be a strict partition. This condition forces $\area(\pi) \ge \bounce(\pi)$.
  This is seen by induction on the length of $\alpha$ for a bounce path
  $p_{n,\alpha}$.
  When $\ell(\alpha)=1$, $\alpha = (1)$ and $\area(p_{1,(1)}) = \bounce(p_{1,(1)})$.
  Given $\ell(\alpha)=l$, let $\alpha' = (k,\alpha_1,\dots)$ and $n = k+|\alpha|$.
  We have
  \begin{align*}
    \area(p_{n,\alpha'})  = \binom{k}{2} + \area(p_{n-k,\alpha})
    &\ge \binom{\alpha_1+1}{2} + \area(p_{n-k,\alpha})\\
    &\ge \binom{\alpha_1+1}{2} + \bounce(p_{n-k,\alpha})\\
    &\ge n-k + \bounce(p_{n-k,\alpha}) = \bounce(p_{n,\alpha'}).
  \end{align*}
In general, for a path $\pi$ having bounce path $p_{n,\alpha}$, the extra
  floating cells only increase the area, proving the first part.
  
For the second part, \cref{lem:no_up_conditions} implies that $\pi
  \in \cdycks(n)$, $\alpha_i - \alpha_{i+1} \le 1$ for all $i < \ell(\alpha)$ and $\alpha_{\ell(\alpha)} = 1$. 
Thus, the maximum area is attained when $n$ is a triangular number and the reverse of $\alpha$ is of the form $(1, 2, 3, \dots)$. In that case, we know that
$\area(\pi) = \bounce(\pi)$.
\end{proof}

\begin{figure}[h]
  \centering
  \captionsetup[subfigure]{labelformat=empty}
  \begin{subfigure}[t]{0.25\textwidth}
    \includegraphics[width=\textwidth]{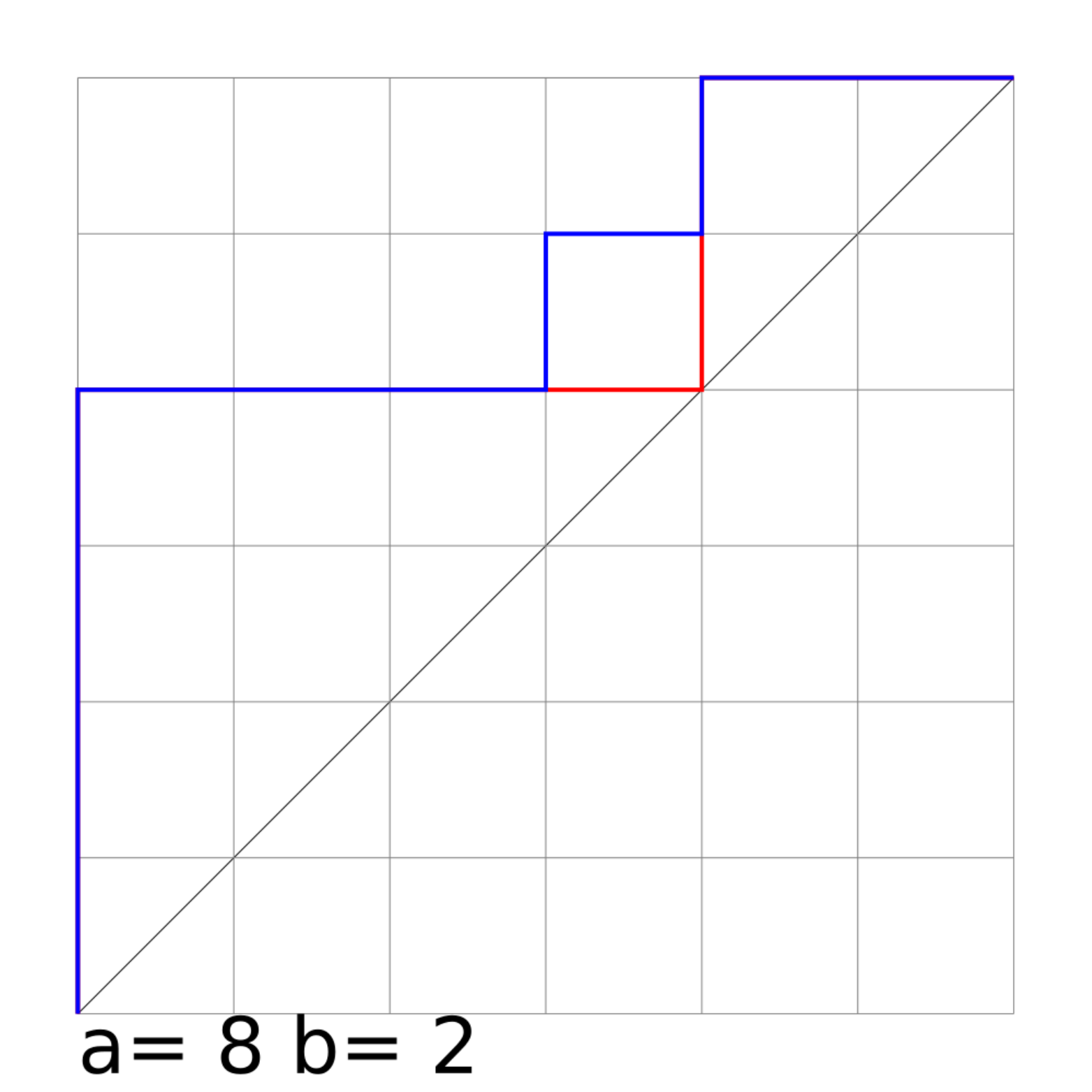}
    \caption{$\pi$}
  \end{subfigure}
  \hspace{10pt}
  \begin{subfigure}[t]{0.25\textwidth}
    \includegraphics[width=\textwidth]{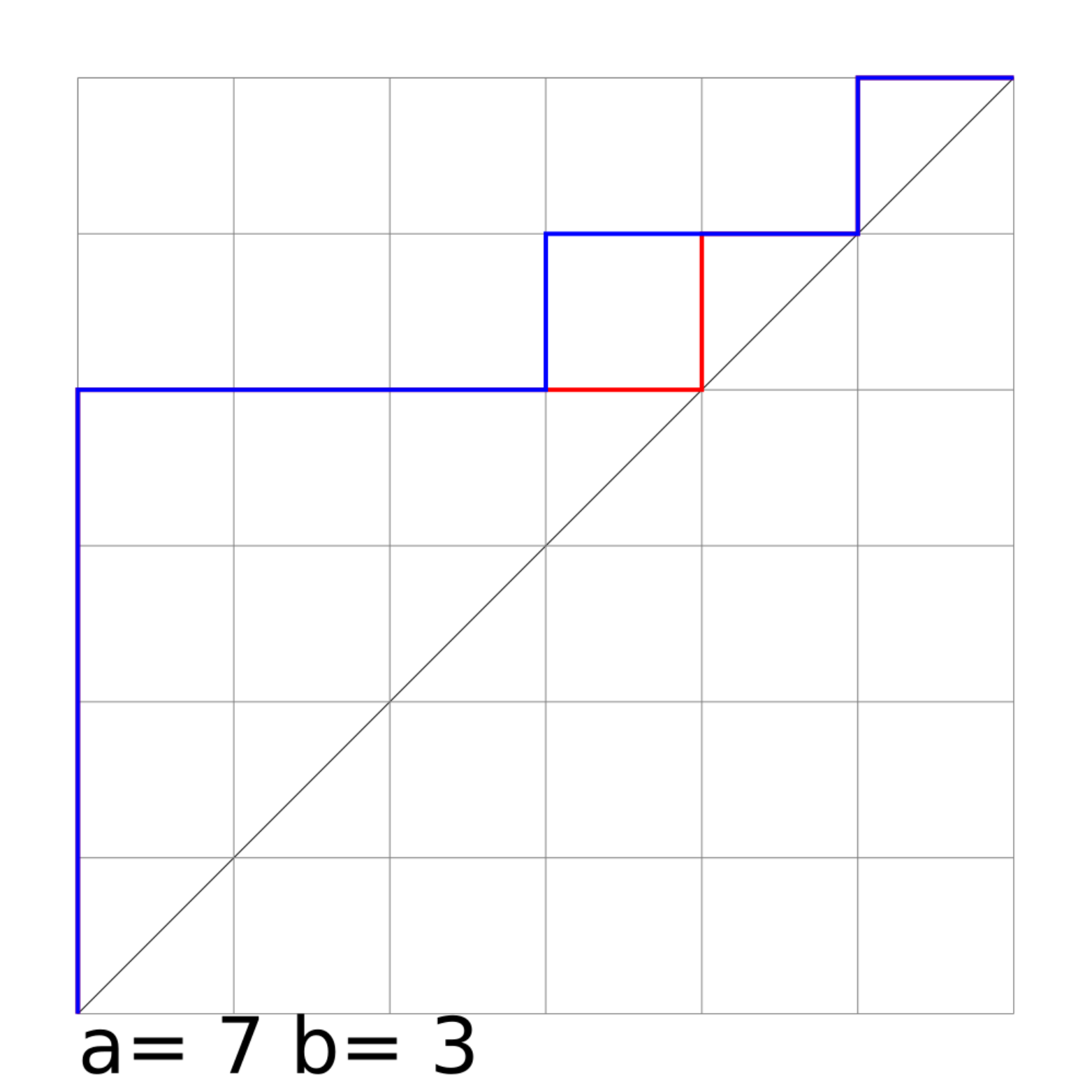}
    \caption{$\tau^{(1)} = \pi \cdot U_1$}
  \end{subfigure}
  \hspace{10pt}
  \begin{subfigure}[t]{0.25\textwidth}
    \includegraphics[width=\textwidth]{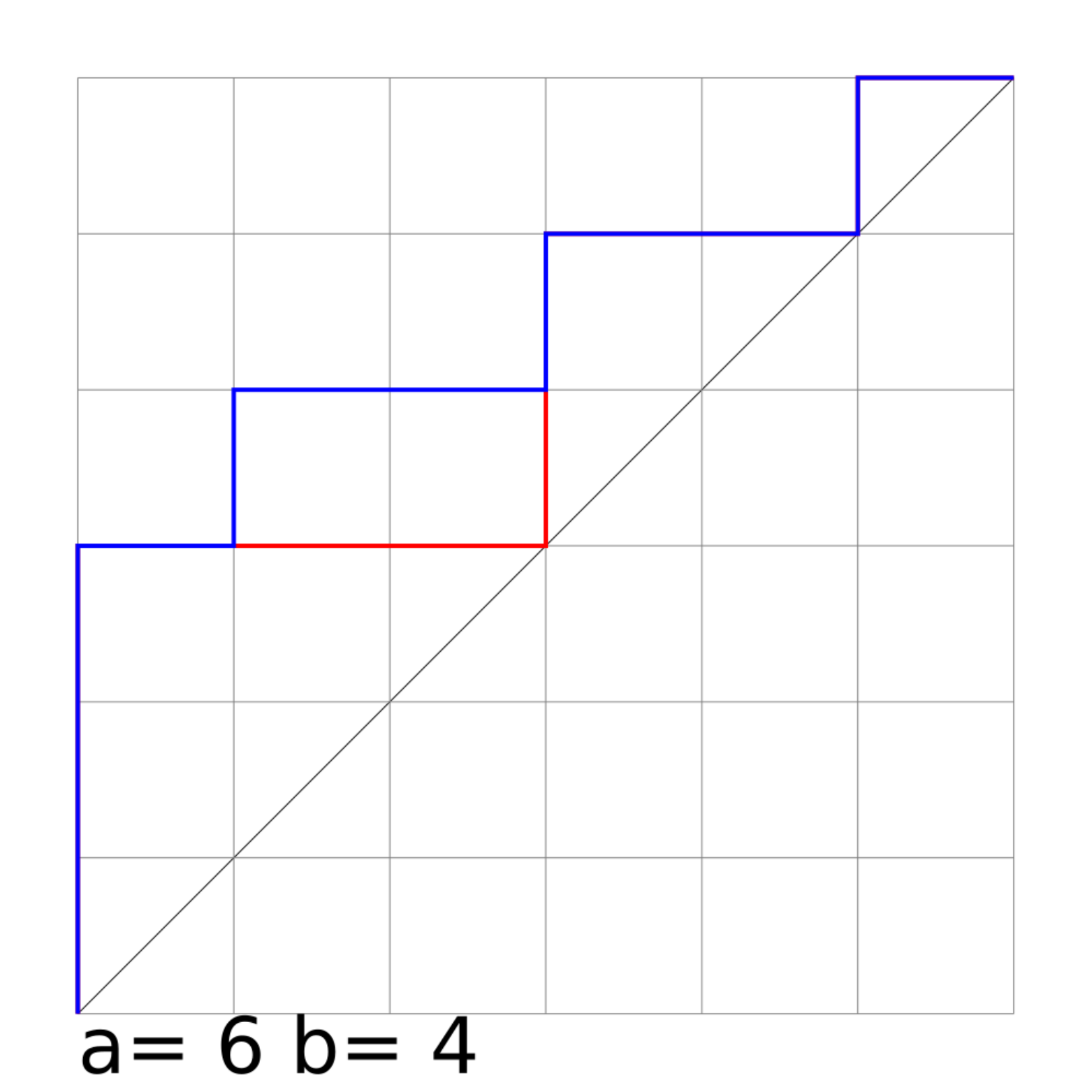}
    \caption{$\tau^{(2)} = \tau^{(1)} \cdot U_1$}
  \end{subfigure}

  \begin{subfigure}[t]{0.25\textwidth}
    \includegraphics[width=\textwidth]{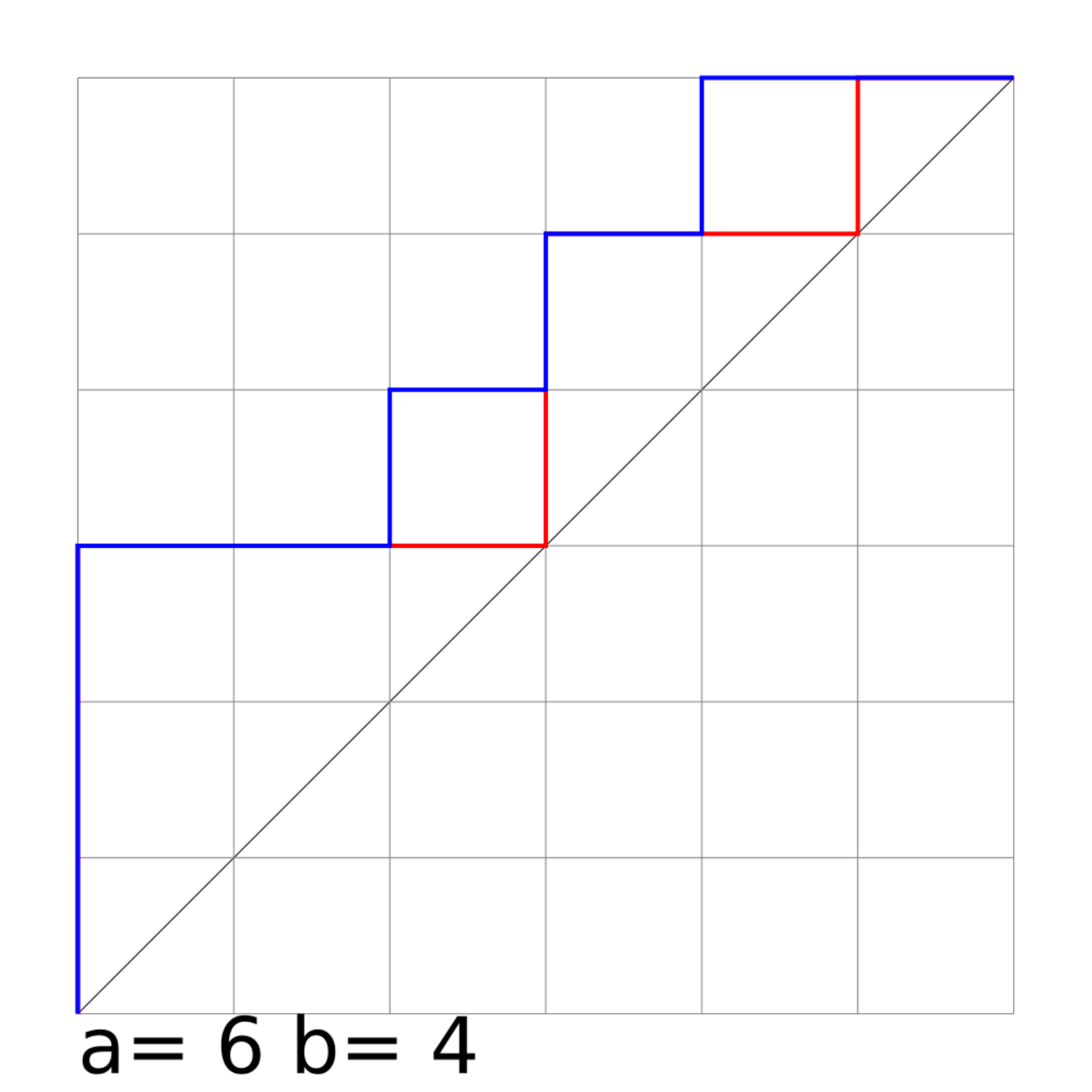}
    \caption{$\tau^{(2)} \cdot A_4^{-1}A_6$}
  \end{subfigure}
  \hspace{10pt}
  \begin{subfigure}[t]{0.25\textwidth}
    \includegraphics[width=\textwidth]{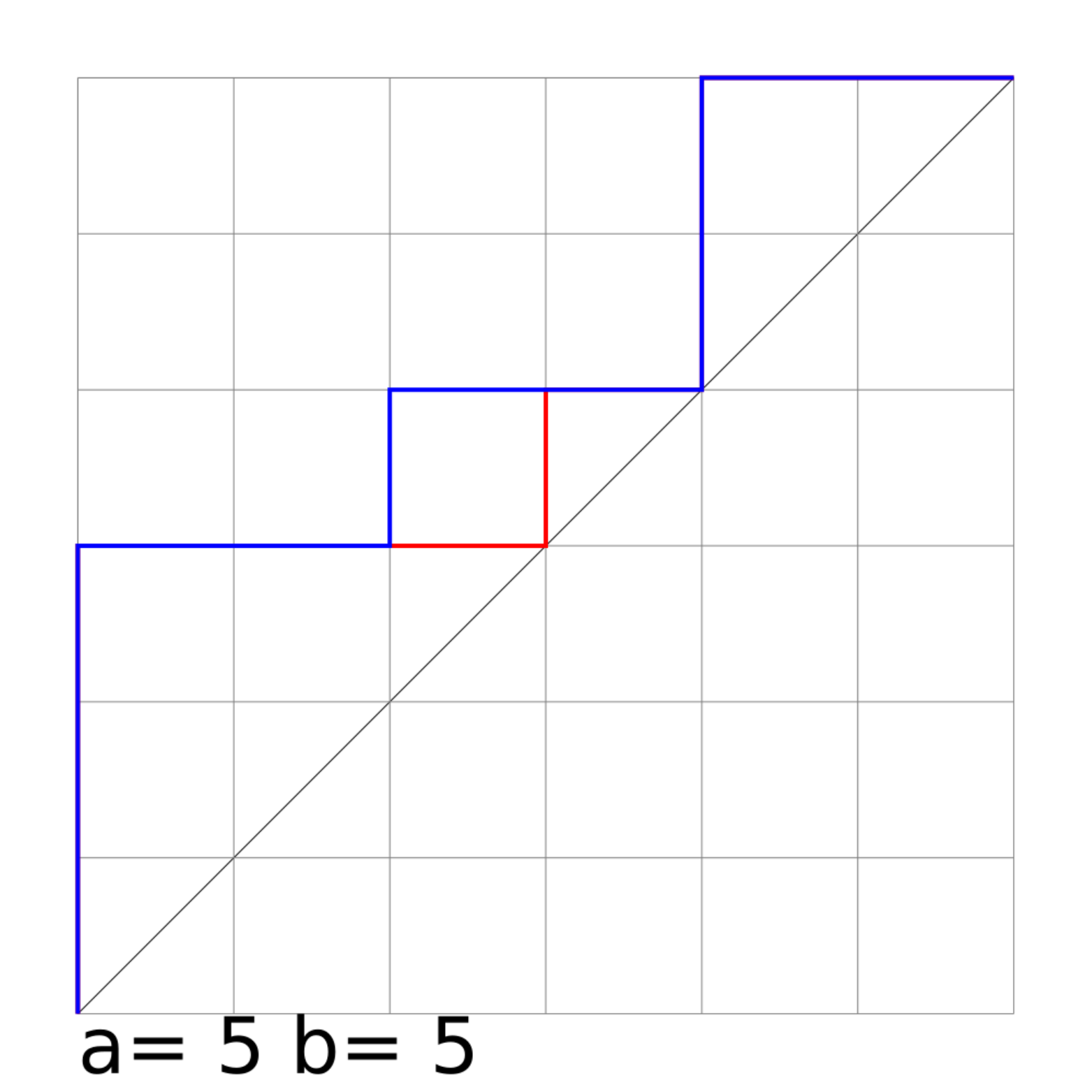}
    \caption{$\tau^{(3)} = \tau^{(2)} \cdot A_4^{-1}A_6U_2$}
  \end{subfigure}
  \caption{Construction of $\pi' \in P_6(5,5)$. We start with the bounce-minimal path $\pi$ to construct the sequence $\tau^{(1)}$, $\tau^{(2)}$, and $\tau^{(3)}=\pi'$.}
  \label{fig:arbitraryPathConstruction}
\end{figure}

\begin{thm}
  \label{thm:degree_symmetry}
  For all $n \ge 1$, $P_n(a,b) \ne \emptyset$ if and only if $P_n(b,a) \ne \emptyset$.
\end{thm}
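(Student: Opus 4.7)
The plan is to use the bijection $\Phi$ of \cref{thm:area-bounce-minimal} to locate matched bounce-minimal and area-minimal paths with the same $\ab$-value as $\pi$, and then walk via $D_i$ and $U_i$ to realize every intermediate area value. We may reduce to the case $a<b$: if $a=b$, then $\pi$ itself witnesses $P_n(b,a)\ne\emptyset$, and the case $a>b$ is symmetric under relabeling. Set $s=a+b$. Since $P_n(a,b)\ne\emptyset$, the set of paths with $\ab$-sum $s$ is non-empty, hence by finiteness contains a bounce-minimal element $\pi_B\in\mathcal{B}(n)$ with $\bounce(\pi_B)=B_{\min}$ and $\area(\pi_B)=s-B_{\min}=:A_{\max}$. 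By \cref{thm:area-bounce-minimal}, $\pi_A:=\Phi(\pi_B)\in\mathcal{A}(n)$ satisfies $\area(\pi_A)=B_{\min}$ and $\bounce(\pi_A)=A_{\max}$, so the minimum area $A_{\min}$ in this class equals $B_{\min}$ and symmetrically $B_{\max}=A_{\max}$. The existence of $\pi$ then forces $A_{\min}\le a$ and $b\le B_{\max}=A_{\max}$.

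The key claim is that every integer $a'\in[A_{\min},A_{\max}]$ is realized as the area of some path with $\ab$-sum $s$; specializing to $a'=b$ yields the required element of $P_n(b,a)$. I prove the claim by two complementary walks that meet in the middle. Starting from $\pi_A$ with $(\area,\bounce)=(A_{\min},A_{\max})$, each application of $D_i$ (after possibly passing to a $\sim$-equivalent representative, as justified by \cref{lem:ab_inequality}(1) whenever the current area is strictly less than the current bounce) shifts the pair to $(A_{\min}+k,A_{\max}-k)$. The lemma's hypothesis remains valid as long as $k<(A_{\max}-A_{\min})/2$, and since $A_{\max}-A_{\min}=s-2A_{\min}$ has the same parity as $s$, a short parity check shows this walk realizes every area in $[A_{\min},\lceil s/2\rceil]$. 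Dually, starting from $\pi_B$ and repeatedly invoking \cref{lem:ab_inequality}(2) to apply $U_i$ while area exceeds bounce, we realize every area in $[\lfloor s/2\rfloor,A_{\max}]$. The two intervals overlap (in both parities of $s$), so their union is $[A_{\min},A_{\max}]$.

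The main obstacle is the bookkeeping at the midpoint, where the strict inequality in \cref{lem:ab_inequality} is only barely satisfied: one must verify separately for $s$ even and $s$ odd that the last $D_i$-step from $\pi_A$ lands at area $\lceil s/2\rceil$ and the last $U_i$-step from $\pi_B$ lands at area $\lfloor s/2\rfloor$, so that the two intervals overlap rather than leaving a gap. A secondary subtlety is that a direct application of $D_i$ may return $\bot$, which is why each step must first pass to a suitable $\sim$-representative of the current class; however, \cref{lem:ab_inequality}(1) and (2) are exactly the statements guaranteeing that such a representative exists whenever we need it.
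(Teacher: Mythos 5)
Your proposal is correct and follows essentially the same route as the paper's proof: anchor at the $\Phi$-matched pair of bounce-minimal and area-minimal paths from \cref{thm:area-bounce-minimal}, then step through intermediate area values with $D_i$ (resp.\ $U_i$), using the contrapositive of \cref{lem:ab_inequality} to pass to a suitable $\sim$-representative at each step. Your meet-in-the-middle bookkeeping at $\lfloor s/2\rfloor$, $\lceil s/2\rceil$ is just a more explicit rendering of the paper's case split $a<b$ versus $a\ge b$, so no substantive difference.
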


\begin{proof}
  Suppose we take $a + b = s$ where $\area(\pi) \le a \le s-\area(\pi)$ for
  some $\pi \in \mathcal{A}(n)$ with bounce path $p_{n,\alpha}$.
  We can construct a path $\pi' \in P_n(a,b)$ as follows.
  If $a < b$, by \cref{lem:ab_inequality} $(1)$, we can construct a sequence
  $\pi = \tau^{(0)}, \tau^{(1)}, \dots, \tau^{a-\area(\pi)} = \pi'$
  where $\tau^{(i)} \in [\tau^{(i-1)} \cdot D_j]$ for some $j$.
  To construct $\tau^{(1)}$ for starters, first check if there exists an $i$
    such that $\pi \cdot D_i \ne \bot$. If so, let $\tau^{(1)} = \pi \cdot D_i$.
    Otherwise, since $a < b$, the contrapositive of \cref{lem:ab_inequality}
    says that there exists a path $\pi' \in [\pi]$
    such that $\pi' \cdot D_i$ for some $i$. We let $\tau^{(1)} = \pi' \cdot D_i$.
  Similarly, when $a \ge b$, statement $(2)$ guarantees a sequence
  to a $\pi' \in P_n(a,b)$ starting from $\Phi^{-1}(\pi)$.
  \cref{fig:arbitraryPathConstruction} shows an example construction.
\end{proof}

From the proof of \cref{thm:degree_symmetry}, we have the following corollary.

\begin{cor}
\label{cor:Pn nonempty}
Suppose $\pi \in \mathcal{B}(n)$ has area $a$ and bounce $b$. Then
$P_n(a-i,b+i) \ne \emptyset$ for all $0 \le i \le b-a$. 
\end{cor}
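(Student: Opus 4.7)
The plan is to observe that under the hypothesis of the corollary, the index range $0 \le i \le b-a$ collapses to something degenerate, so the claim is essentially immediate from the analysis leading up to \cref{thm:degree_symmetry}. The key preliminary fact is that bounce-minimality of $\pi$ forces $a \ge b$. To see this, first note that if some $\tau \in [\pi]$ admitted $\tau \cdot D_j \ne \bot$, then $\tau \cdot D_j$ would satisfy $\ab(\tau \cdot D_j) = \ab(\tau) = \ab(\pi)$ and $\bounce(\tau \cdot D_j) = \bounce(\pi) - 1$, contradicting $\pi \in \mathcal{B}(n)$. Hence $\tau \cdot D_j = \bot$ for every $\tau \in [\pi]$ and every $j$, which is exactly the hypothesis of \cref{lem:ab_inequality}(1). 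That lemma yields $\area(\pi) \ge \bounce(\pi)$, i.e.\ $a \ge b$.

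Consequently $b - a \le 0$, and the universally quantified index set $\{i \in \mathbb{Z} : 0 \le i \le b-a\}$ is either empty (when $a > b$) or equals $\{0\}$ (when $a = b$). In the empty case the conclusion holds vacuously. In the case $i = 0$ we have $\pi \in P_n(a,b) = P_n(a-0,\, b+0)$, so this set is nonempty directly. Either way the corollary holds.

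There is no real obstacle: the only substantive step is the appeal to \cref{lem:ab_inequality}(1) via bounce-minimality, and the remainder is a range check. I would present the argument in exactly the order above—first the $\tau \cdot D_j = \bot$ observation, then the invocation of \cref{lem:ab_inequality}(1) to get $a \ge b$, then the case split $i = 0$ versus the vacuous range—so that the reader sees the link to the proof of \cref{thm:degree_symmetry} that the corollary advertises. (The corollary's content is genuinely the endpoint case of the iteration used inside the proof of \cref{thm:degree_symmetry}: starting from $\pi \in \mathcal{B}(n)$ and repeatedly applying the $U_j$-construction would produce the sequence of intermediate pairs, but under the stated index range only the starting point $\pi$ itself is required.)
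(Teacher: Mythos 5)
Your preliminary observation is correct, and it is in fact the same argument the paper itself uses at the start of the proof of \cref{lem:bounce_minimal_conditions}: bounce-minimality forces $\tau \cdot D_j = \bot$ for every $\tau \in [\pi]$ and every $j$ (else $\tau\cdot D_j$ would have the same $\ab$ and strictly smaller bounce), and then \cref{lem:ab_inequality}(1) gives $a = \area(\pi) \ge \bounce(\pi) = b$. So you have correctly shown that, read literally, the index range $0 \le i \le b-a$ contains at most $i=0$ and the printed statement is vacuous. That is a legitimate observation about the text --- effectively an erratum --- but it cannot be what the corollary is meant to assert: it is introduced with ``From the proof of \cref{thm:degree_symmetry}, we have the following corollary,'' and the remark immediately following treats it as covering a genuine interval of intermediate (area, bounce) pairs that a stronger conjecture would extend. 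The bound $b-a$ is a typo for $a-b$: the intended claim is that every pair on the segment from $(a,b)$ to $(b,a)$ is realized.

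For that intended statement your argument establishes nothing, and the missing content is precisely the iteration inside the proof of \cref{thm:degree_symmetry}, which you mention in your closing parenthetical but deliberately decline to carry out. Concretely: by the proof of \cref{thm:area-bounce-minimal}, $\Phi(\pi)$ is area-minimal with area $b$ and bounce $a$, so the $D$-iteration of \cref{thm:degree_symmetry} starting from $\Phi(\pi)$ realizes $P_n(b+j,a-j)\ne\emptyset$ for all $j$ with $b+j \le a-j$, while the symmetric $U$-iteration starting from $\pi$ realizes $P_n(a-i,b+i)\ne\emptyset$ for all $i$ with $a-i \ge b+i$; at each step the contrapositive of the relevant part of \cref{lem:ab_inequality} guarantees that some representative of the current equivalence class admits the required $D_j$ (resp.\ $U_j$), and each application trades exactly one unit of area for one unit of bounce while preserving $\ab$. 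The two halves together cover all $0 \le i \le a-b$. If you intend to defend the literal statement, you should at least flag the degenerate index range explicitly as a misprint; if you intend to prove what the corollary is actually for, you must supply this iteration argument rather than the range check.
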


\begin{rem}
We think something stronger than \cref{cor:Pn nonempty} is true. For a fixed value of area plus bounce, say $m$, let $b_{\min}(m)$ be the minimal value of bounce over all paths $\pi$ such that $\area(\pi) + \bounce(\pi) = m$. Then we believe that $P_n(m - b, b) \ne \emptyset$ for $b_{\min}(m) \leq b \leq m - b_{\min}(m)$. See \cref{lem:continuously_increase_ab} in the next section for a result in this direction.
\end{rem}

The following proposition fully characterizes paths in $P_n(a,b)$ where $a+b$
takes on the two largest possible values. Moreover, by \cref{thm:degree_symmetry}
we can construct $P_n(b,a)$.

\begin{prop}\label{prop:top_two_ab}
  For all  $n \ge 3$, let $s=\binom{n}{2}$ and $t=\binom{n}{2}-1$. Then $|P_n(s-i,i)|=1$ for
  all $0 \le i \le s$ and $|P_n(t-j,1+j)|=1$ for all $0 \le j \le t-1$.
\end{prop}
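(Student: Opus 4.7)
The plan is to combine an explicit construction (for existence) with an induction on $n$ and a case split on the first part of the bounce composition (for uniqueness), relying on a key structural lemma.

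For existence with $a+b=s$ and $\bounce(\pi)=i$, define $\pi_{n,i}$ recursively by $\pi_{n,0}=\N^n\E^n$, $\pi_{n,i}=\N^{n-i}\E\N^i\E^{n-1}$ for $1\le i\le n-1$, and $\pi_{n,i}=\N\E\cdot\pi_{n-1,i-(n-1)}$ for $n\le i\le s$. Using the identity $\area(\pi)=\binom{n}{2}-\sum_k L_k$, where $L_k$ is the $x$-coordinate at which $\pi$ enters row $k$, a direct computation yields $\area(\pi_{n,i})=s-i$ and $\bounce(\pi_{n,i})=i$. The analogous candidate for $a+b=t$ is obtained either by dropping a single floating cell from $\pi_{n,i}$ (possible whenever the associated bounce composition admits more than one path) or by using a distinct composition whose maximum $\ab$ equals $t$ exactly.

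For uniqueness I induct on $n$ and split on $\alpha_1$, where $p_{n,\alpha}$ is the bounce path of $\pi$. If $\alpha_1=1$, then $\pi=\N\E\cdot\pi'$ for some Dyck path $\pi'$ of semilength $n-1$; direct computation gives $\bounce(\pi')=\bounce(\pi)-(n-1)$ and $\area(\pi')=\area(\pi)$, so $\ab(\pi')=\binom{n-1}{2}$, and the inductive hypothesis pins down $\pi'$ and hence $\pi$. If $\alpha_1\ge 2$, the key lemma below forces $\ell(\alpha)\le 2$, so $\alpha\in\{(n),(n-i,i)\}$; the max-area path with this bounce, uniquely characterized as the one minimizing each $L_k$ subject to the bounce constraint, is precisely $\N^{n-i}\E\N^i\E^{n-1}$ (or $\N^n\E^n$ when $i=0$). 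An entirely parallel argument settles the $a+b=t$ case, this time allowing $\alpha$ with ``deficit one''.

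The main obstacle is the key lemma: if $\alpha_1\ge 2$ and $\ell(\alpha)\ge 3$, then $\max\{\ab(\pi):\pi\text{ has bounce path }p_{n,\alpha}\}<s$. I would prove this by writing the max-area path explicitly as $\N^{\alpha_1}\E(\N^{\alpha_2}\E^{\alpha_1})(\N^{\alpha_3}\E^{\alpha_2})\cdots(\N^{\alpha_m}\E^{\alpha_{m-1}})\E^{\alpha_m-1}$, reading off $L_k=b_{j-2}+1$ for $k\in[b_{j-1}+1,b_j]$ (with $b_0=0$), and computing the deficit $\sum_k L_k-\bounce(p_{n,\alpha})=\sum_{j\ge 3}(b_{j-2}-(j-2))\alpha_j$ where $b_j=\alpha_1+\cdots+\alpha_j$. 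Each summand is non-negative (since $\alpha_i\ge 1$) and vanishes iff $\alpha_1=\cdots=\alpha_{j-2}=1$; hence the deficit is strictly positive whenever $\alpha_1\ge 2$ and $m\ge 3$, proving the lemma.
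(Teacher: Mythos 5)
Your treatment of the first family ($\area+\bounce=\binom{n}{2}$) is correct and takes a genuinely different route from the paper: the paper argues globally that any path with bounce points $b_1,\dots,b_{m-1}$ is missing at least $\sum_i (n-b_i)$ cells and then identifies the unique extremal configuration, whereas you fix the bounce composition $\alpha$, write down the area-maximal path with bounce path $p_{n,\alpha}$, and compute its deficit $\sum_{j\ge 3}\alpha_j\bigl(b_{j-2}-(j-2)\bigr)$ (equivalently $\sum_{j\ge 2}(\alpha_{j-1}-1)(n-b_j)$), combining this with peeling off an initial $\N\E$ when $\alpha_1=1$. I checked the key lemma, the deficit computation, and the explicit paths $\pi_{n,i}$ (they coincide with the paper's description $\alpha=(1,\dots,1,k,k')$ with maximal floating cells); you should still justify that your displayed word really is the area-maximal path for $p_{n,\alpha}$ (immediate from the fact that the weakly increasing column heights are constrained only at columns $b_i+1$, where $h_{b_i+1}=b_{i+1}$ is forced) and note the base case $n=3$, but these are routine.

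The genuine gap is the second family, $\area+\bounce=\binom{n}{2}-1$. ``An entirely parallel argument'' is not accurate: once the deficit may equal one, two new phenomena appear that your sketch does not address. First, for compositions of deficit zero with $\alpha_1\ge 2$ (hence $\ell(\alpha)\le 2$) the relevant path is no longer the area-maximal one but the path of area exactly one less, and you must prove there is exactly one such path per composition; this requires an argument, not an analogy — by weak monotonicity of the column heights, the only column of the maximal path that can be lowered by one is column $2$, and one must also check the bounce composition is unchanged. Second, a new composition type enters: deficit exactly one with $\alpha_1\ge 2$ forces $\alpha=(2,n-3,1)$, whose unique area-maximal path is the only member of the family with bounce $n-1$; nothing of this sort occurs in the first family, and your existence recipe (``drop a floating cell or use a distinct composition'') neither identifies this composition nor verifies that every bounce value in the claimed range is attained exactly once (bounce $\le n-2$ from length-two compositions, bounce $=n-1$ from $(2,n-3,1)$, bounce $\ge n$ by induction through $\alpha_1=1$). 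All of this is true and fillable inside your framework, but as written the $t$-half is asserted rather than proved. (Incidentally, like the paper's own proof you must read the second claim as concerning paths with $\area+\bounce=t$; as literally printed, $P_n(t-j,1+j)$ has $\area+\bounce=\binom{n}{2}$ again.)
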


\begin{proof}
  For any $\pi \in \dycks(n)$ with bounce points $0=b_0,b_1,\dots,b_m=n$, the
  number of cells outside the path is at least $\sum_{i=1}^{m-1} (n-b_i)$ because
  the height of column $b_{i-1}+1$ for $1 \le i \le m-1$ must be $n-b_i$.
  Therefore, if $\pi \in P_n(s-i,i)$ it must have exactly $n-m+1$ columns of
  full height. 
  There is only one such path and it has bounce path $p_{n,\alpha}$ such that
  $\alpha = (1,1,\dots,1,k,k')$ and has maximal floating cells.
  For instance, the fifth path in the top row of \cref{fig:top_two_ab} has
  $\alpha=(1,2,1)$, where $k=2$ and $k'=1$, 
  and contains all possible floating cells.

If $\pi \in P_n(t-j,1+j)$, it must have exactly $n-m-1$ columns of full height
  and exactly one column of height $n-1$. There is only such path here as well.
  We can get this path in exactly one of two ways. Start with the sole path
    $\tau \in P_n(\binom{n}{2}-j,j)$ for $1 \le j \le \binom{n}{2}-2$.
  If there are floating cells in this path, there is only one
  way to remove exactly one floating cell: $\pi = \tau \cdot A_n^{-1}$. Otherwise,
  there are no floating cells, $b_1=1$ and $b_{m-1} \le n-3$. In this case,
   $\pi = \tau \cdot A_n^{-2}A_{b_{m-1}+1}$. For instance, the first
    path in the bottom row of \cref{fig:top_two_ab} is constructed from the
    second path in the top row by removing a floating cell. The third path in
    the bottom row is constructed by from the path above it by
    deleting two cells in the top row and adding one cell to the second row.
\end{proof}

\begin{figure}[h!]
  \centering
  \begin{subfigure}[t]{0.13\textwidth}
    \includegraphics[width=\textwidth]{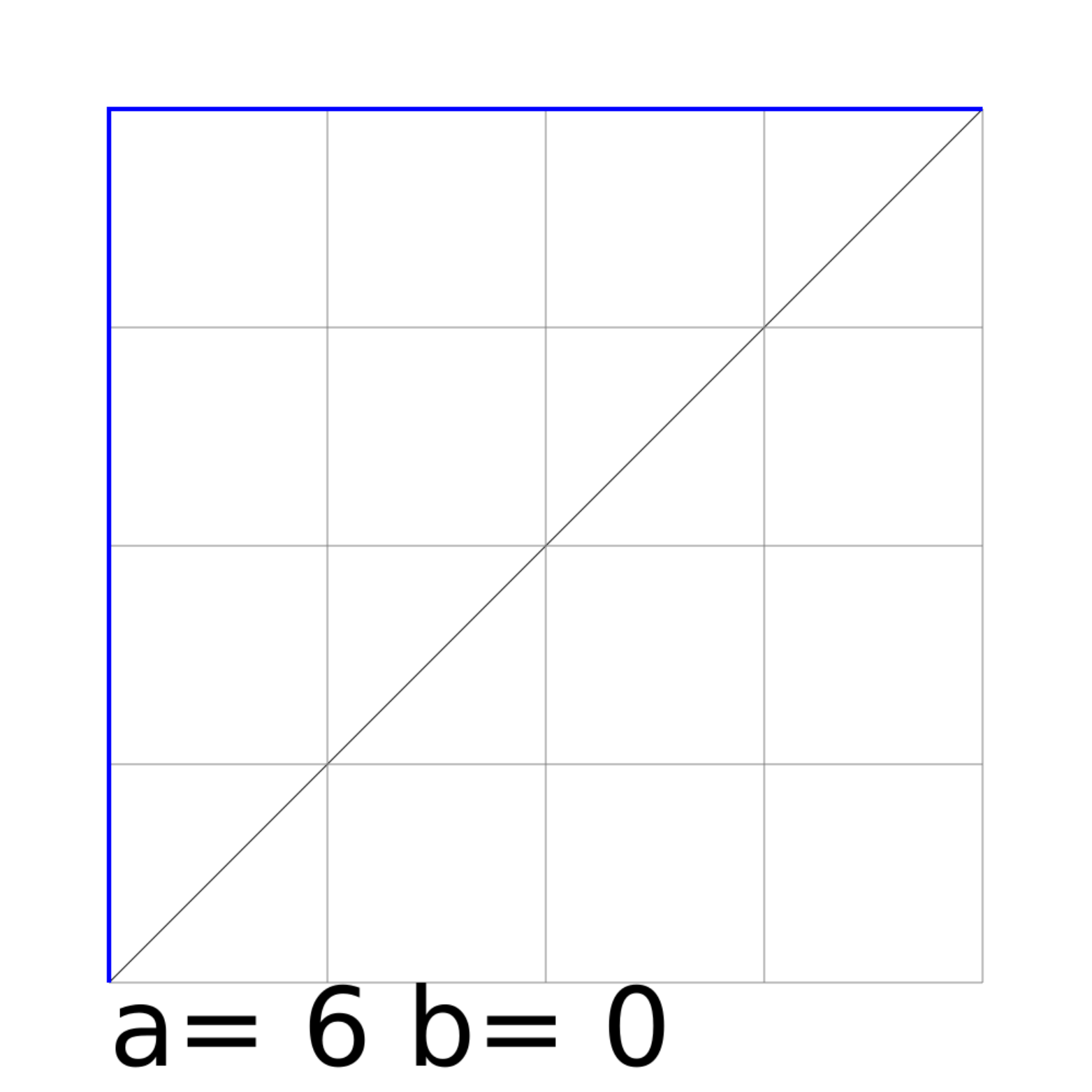}
  \end{subfigure}
  \begin{subfigure}[t]{0.13\textwidth}
    \includegraphics[width=\textwidth]{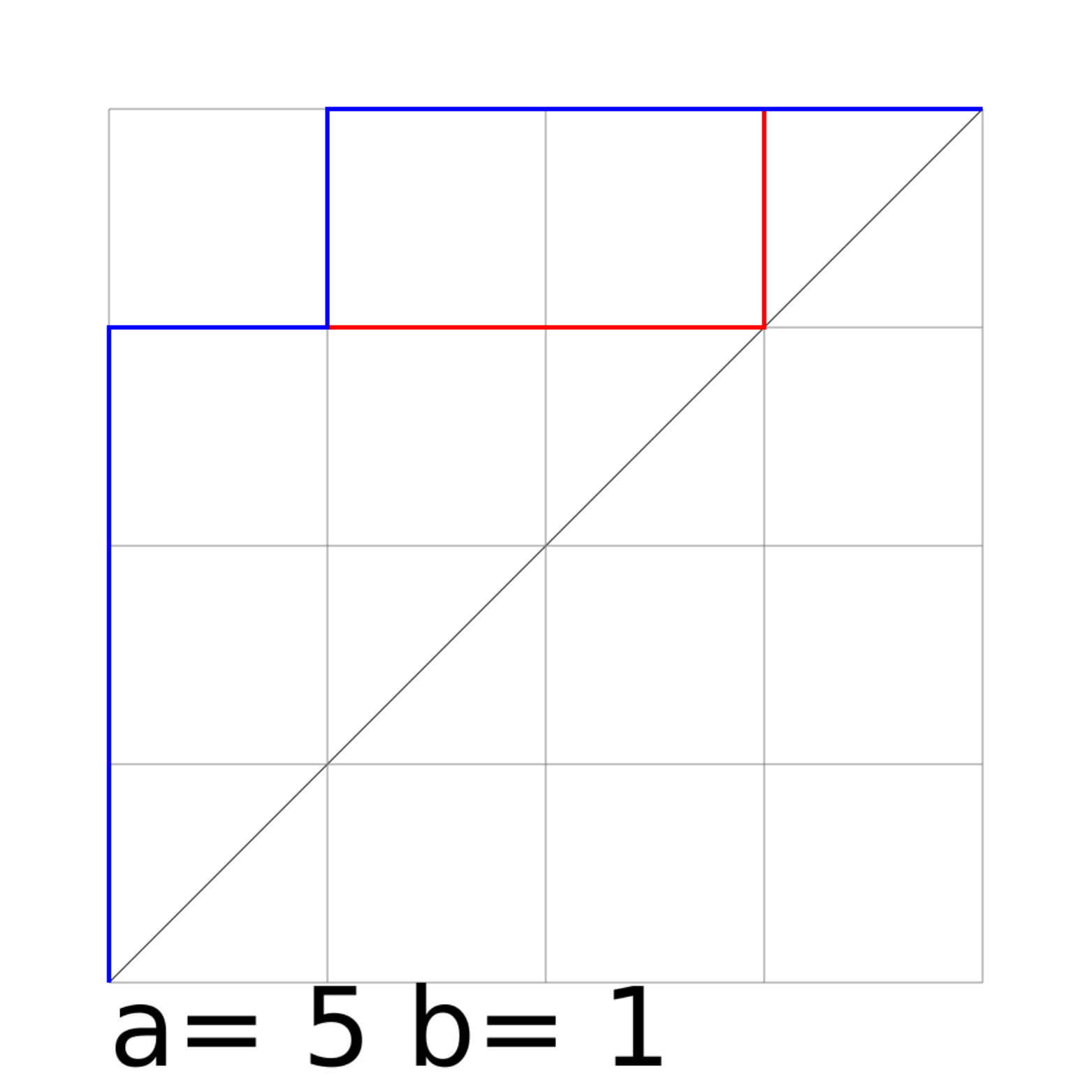}
  \end{subfigure}
  \begin{subfigure}[t]{0.13\textwidth}
    \includegraphics[width=\textwidth]{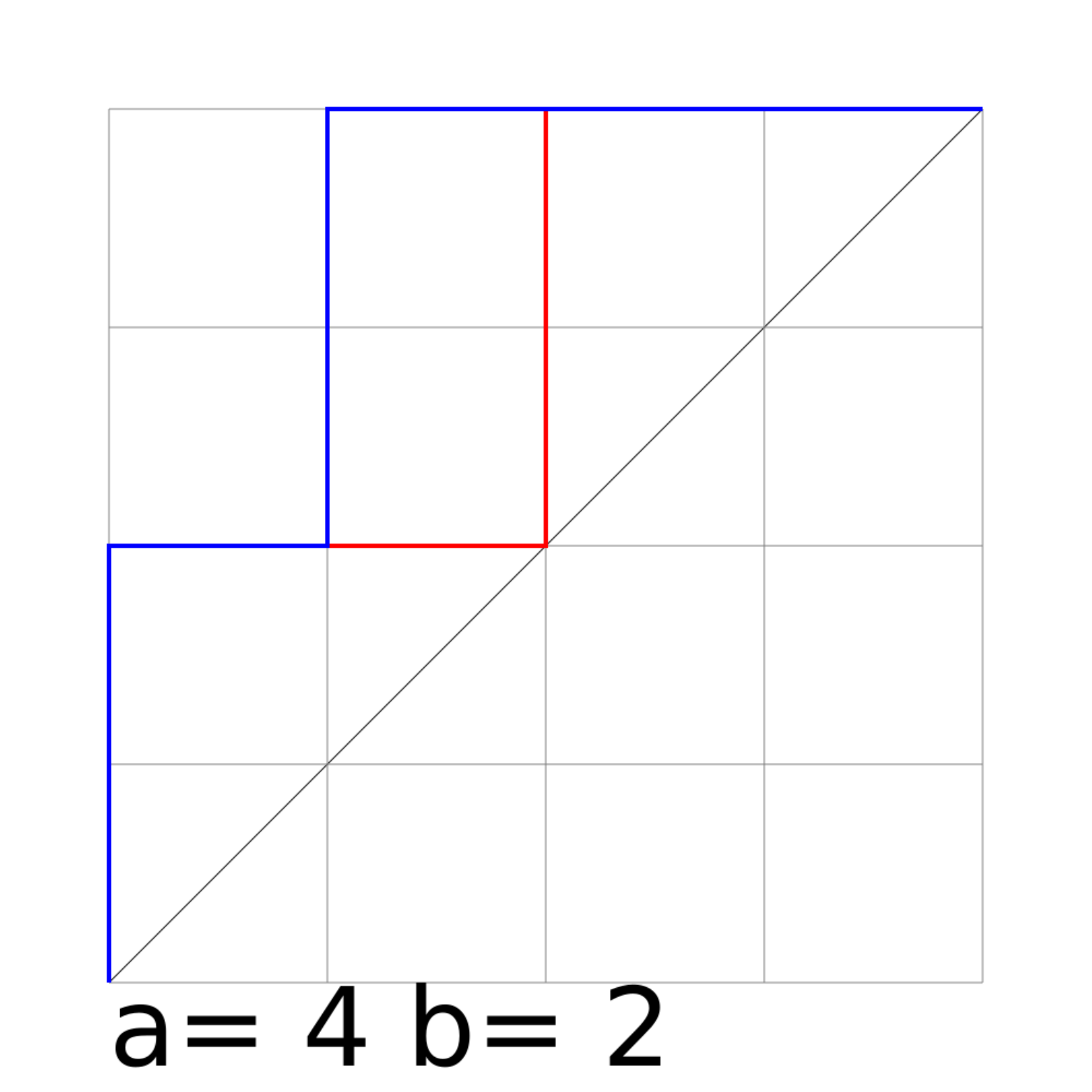}
  \end{subfigure}
  \begin{subfigure}[t]{0.13\textwidth}
    \includegraphics[width=\textwidth]{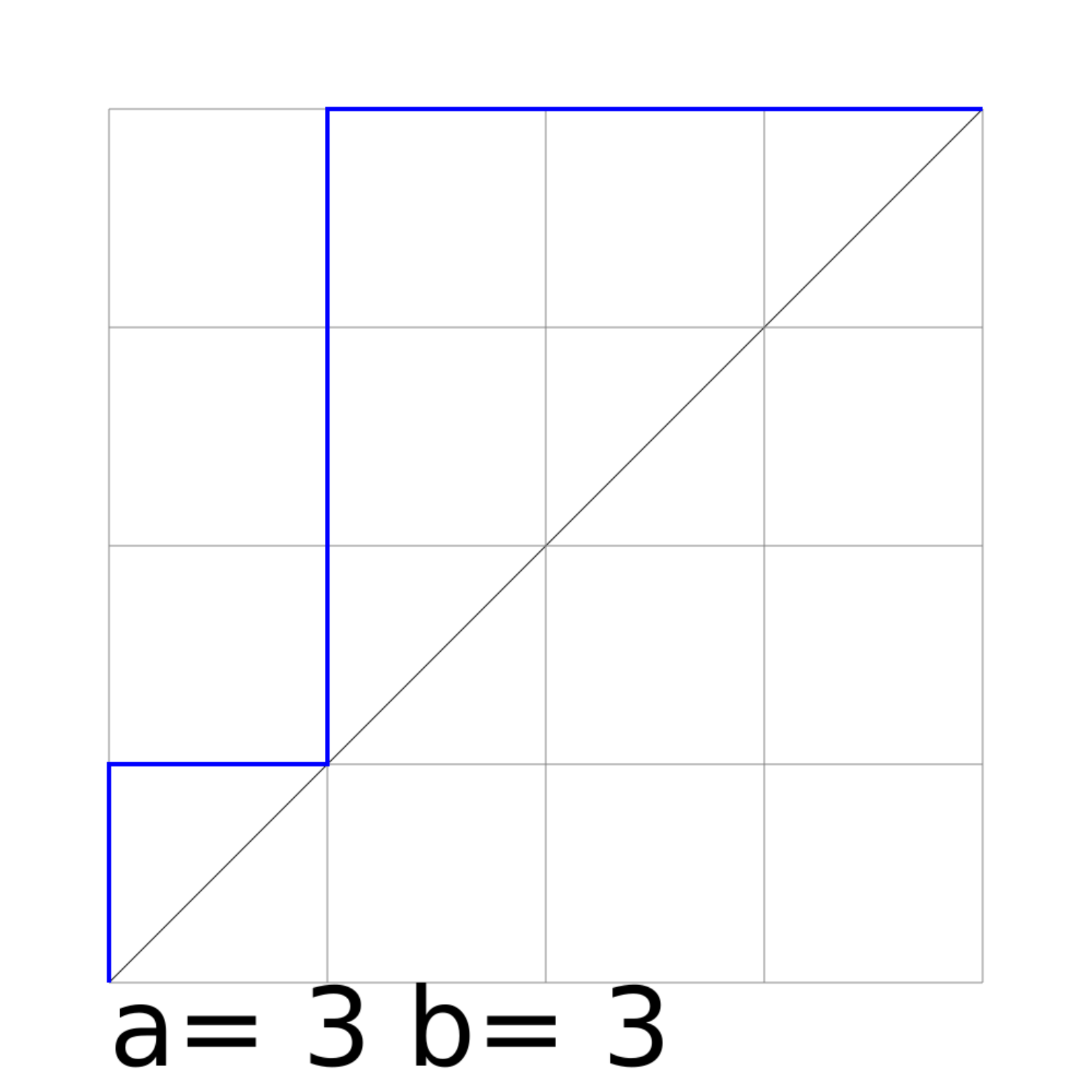}
  \end{subfigure}
  \begin{subfigure}[t]{0.13\textwidth}
    \includegraphics[width=\textwidth]{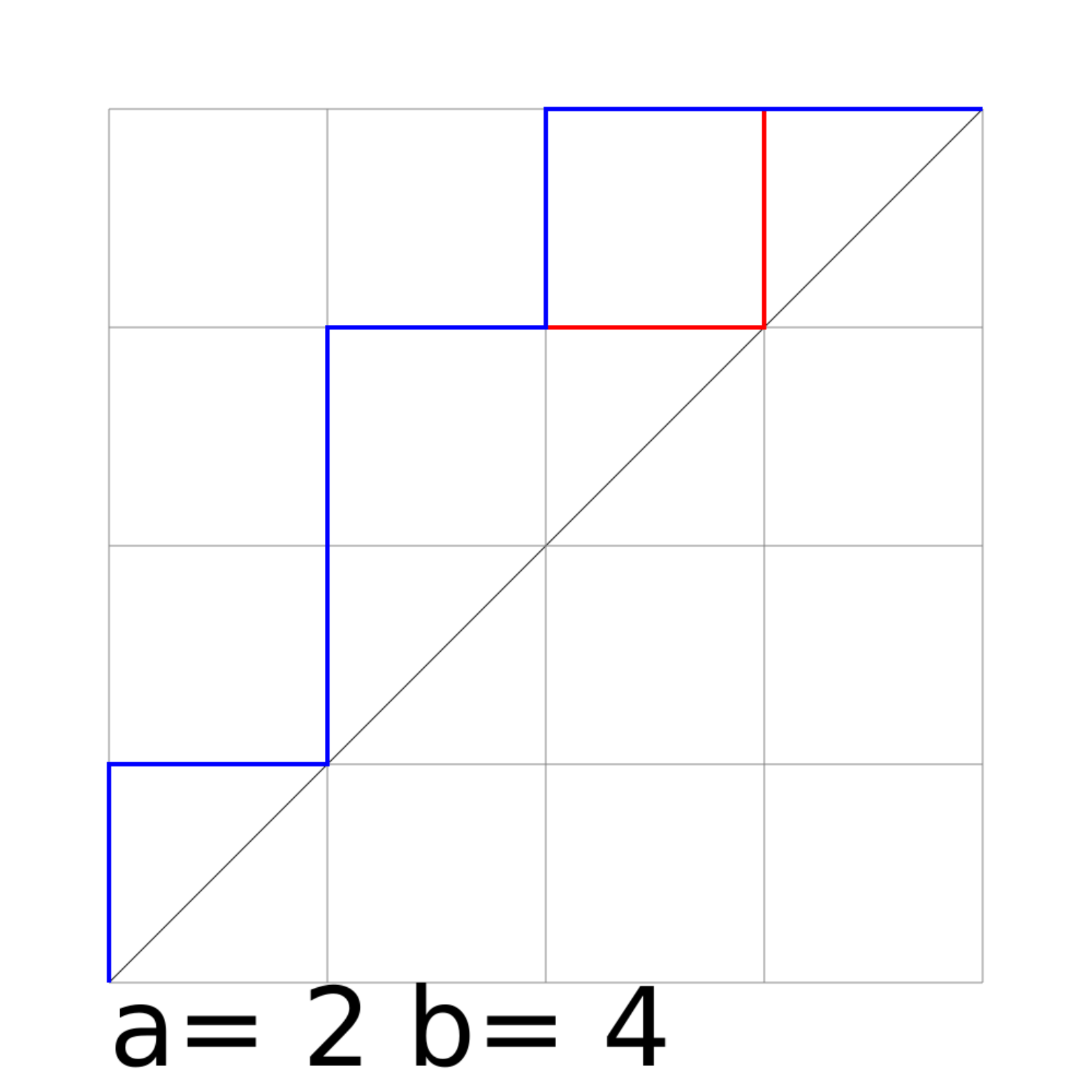}
  \end{subfigure}
  \begin{subfigure}[t]{0.13\textwidth}
    \includegraphics[width=\textwidth]{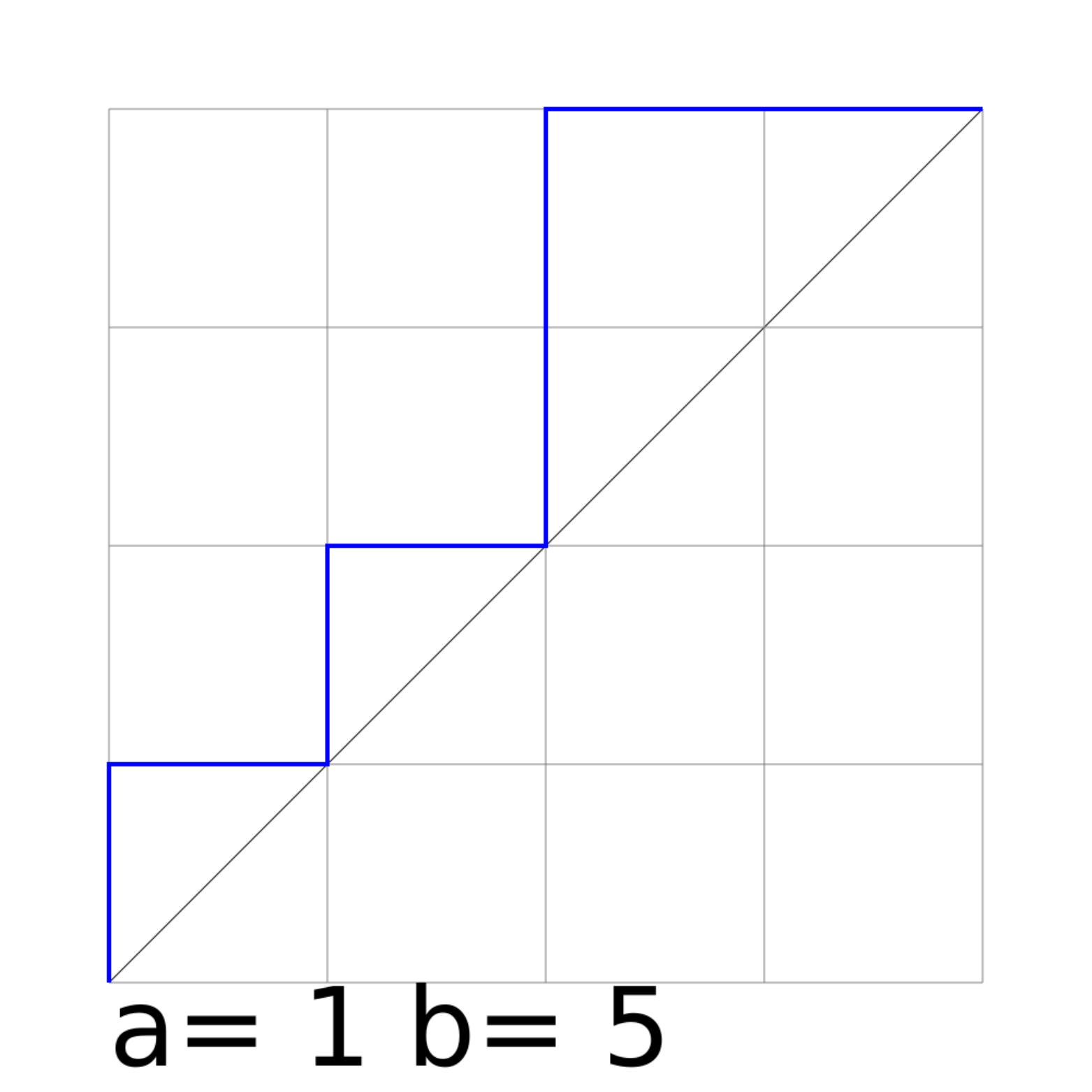}
  \end{subfigure}
  \begin{subfigure}[t]{0.13\textwidth}
    \includegraphics[width=\textwidth]{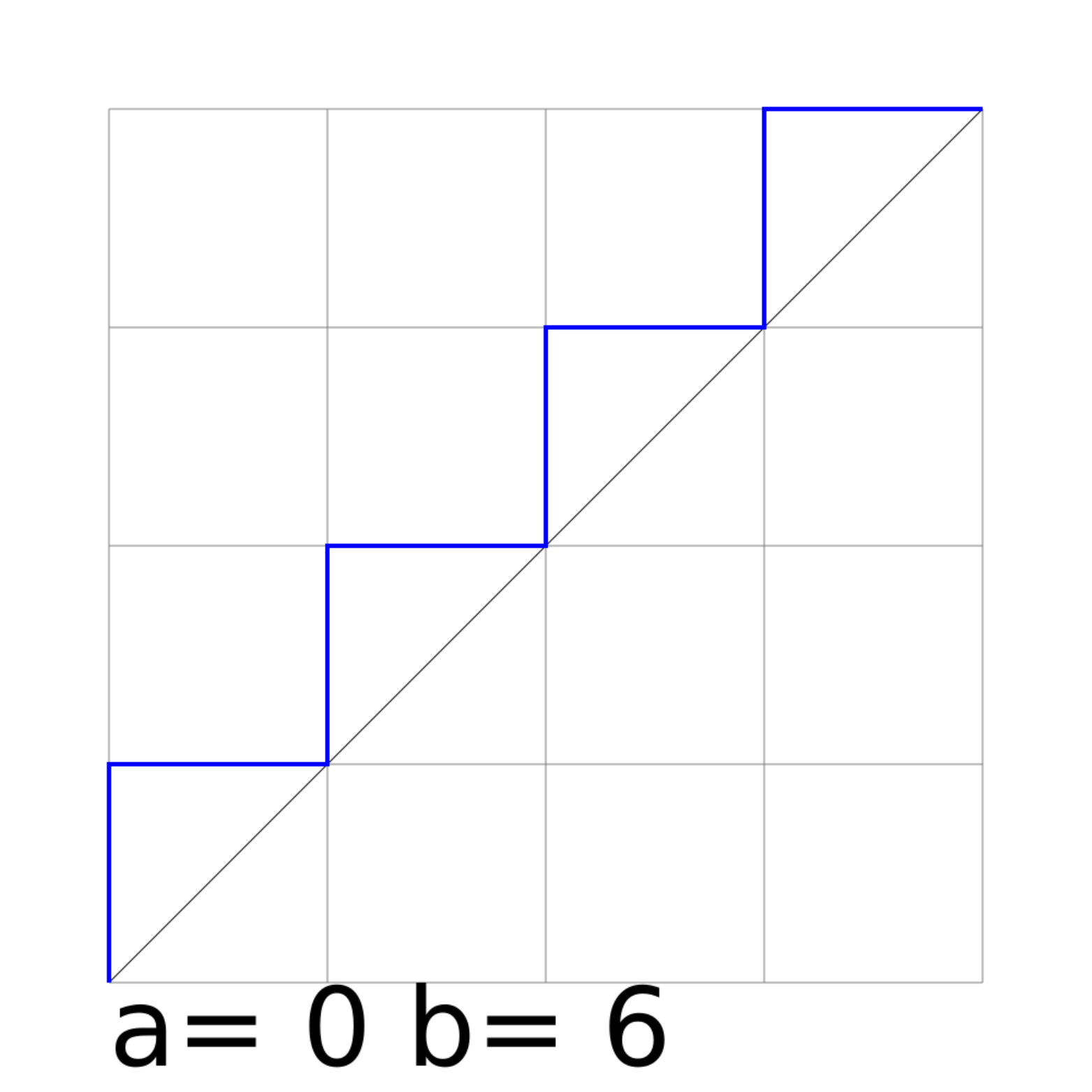}
  \end{subfigure}
  \vspace{0.5cm}

  \hspace{-2cm}
  \begin{subfigure}[t]{0.13\textwidth}
    \includegraphics[width=\textwidth]{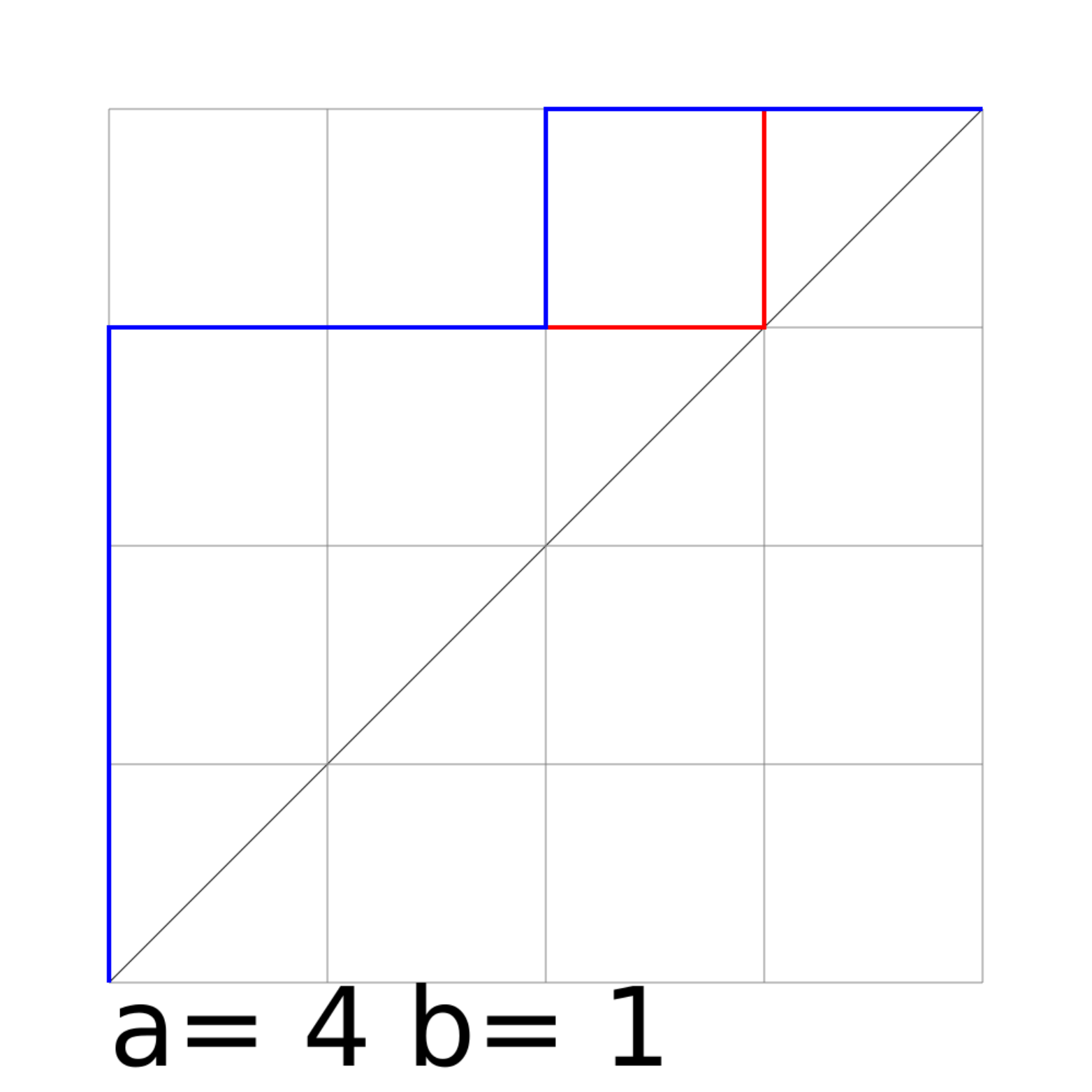}
  \end{subfigure}
  \begin{subfigure}[t]{0.13\textwidth}
    \includegraphics[width=\textwidth]{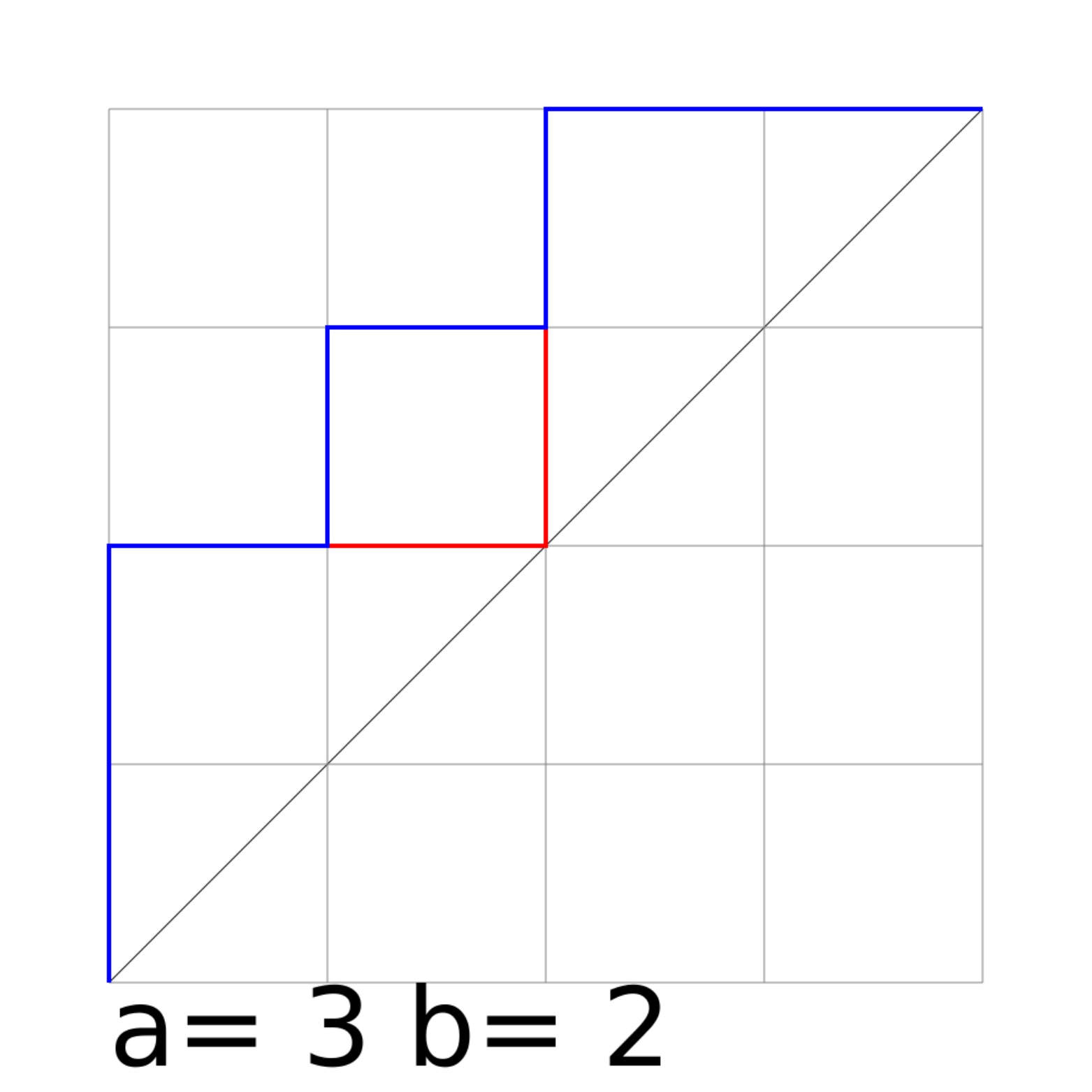}
  \end{subfigure}
  \begin{subfigure}[t]{0.13\textwidth}
    \includegraphics[width=\textwidth]{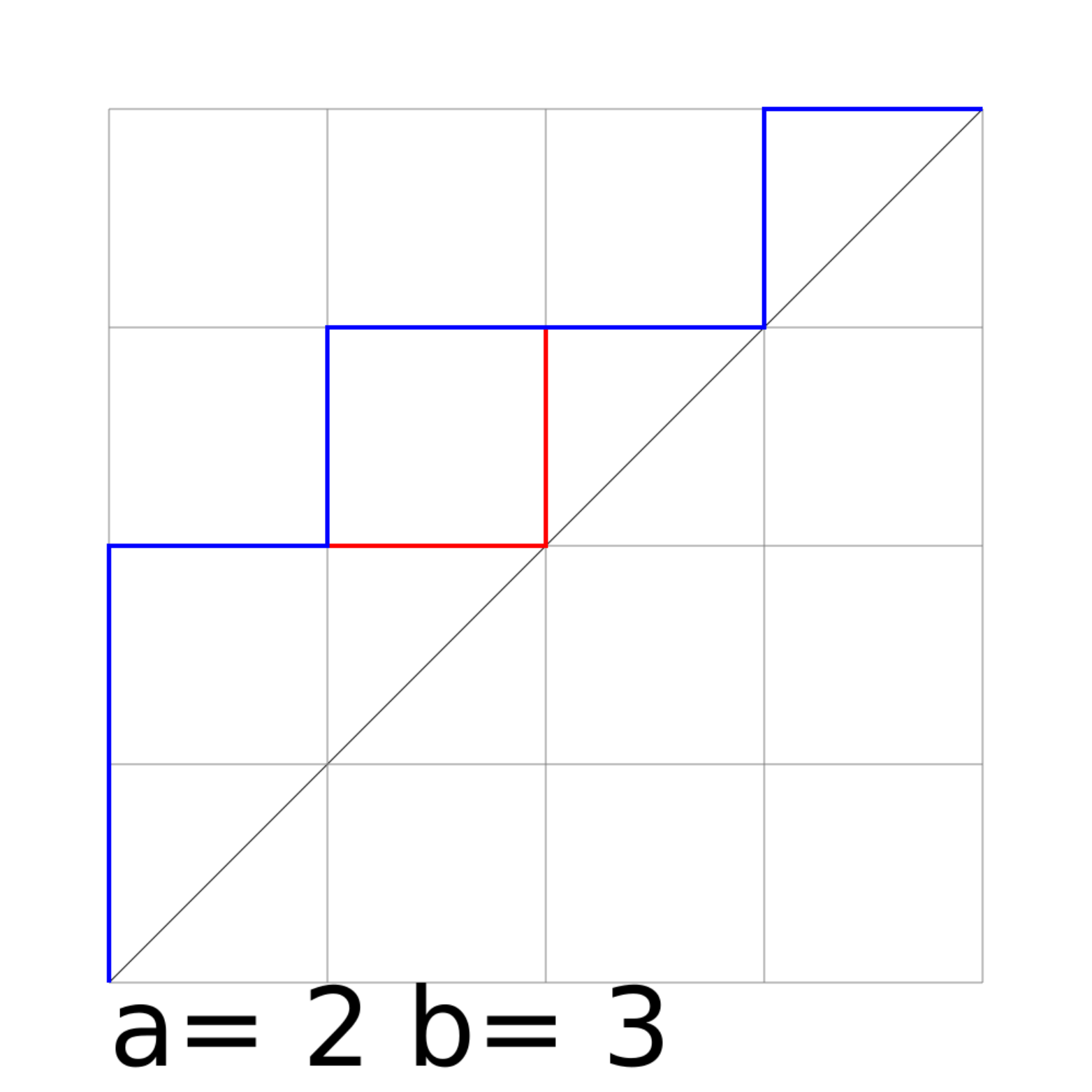}
  \end{subfigure}
  \begin{subfigure}[t]{0.13\textwidth}
    \includegraphics[width=\textwidth]{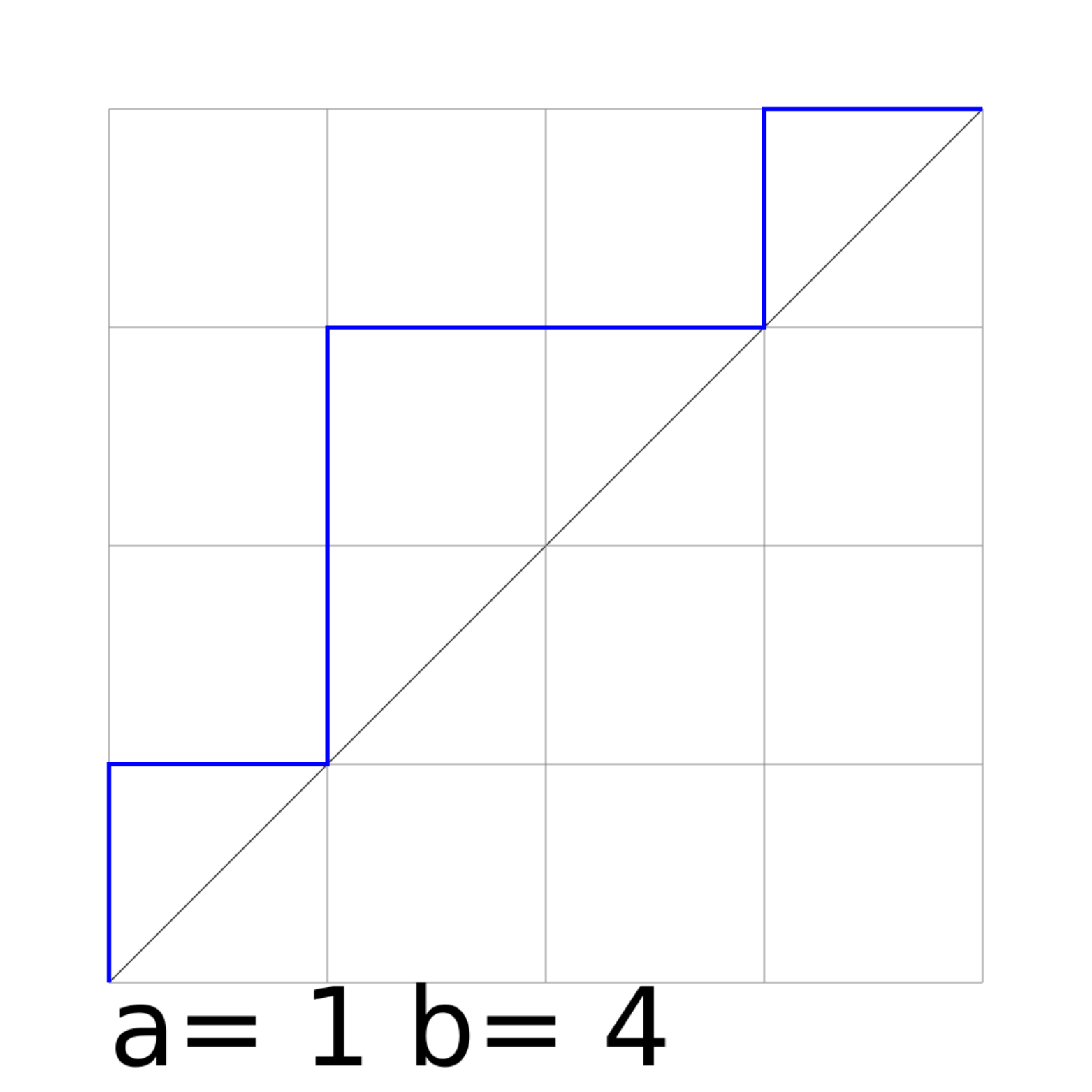}
  \end{subfigure}
  \caption{An illustration of \cref{prop:top_two_ab} for $n = 4$. 
  The top row shows paths in
    $\dycks(4)$ with $\ab = 6$ and the bottom row shows paths with $\ab = 5$.
    The paths in the bottom row are aligned with the path in the top row from
    which it is derived.}
  \label{fig:top_two_ab}
\end{figure}

\begin{figure}
  \centering
  \begin{tabular}{c | c | c}
    \includegraphics[width=0.2\textwidth]{n_4_6_7.pdf} & &\\
    \includegraphics[width=0.2\textwidth]{n_4_6_6.pdf} & 
    \includegraphics[width=0.2\textwidth]{n_4_5_4.pdf}&
    \includegraphics[width=0.2\textwidth]{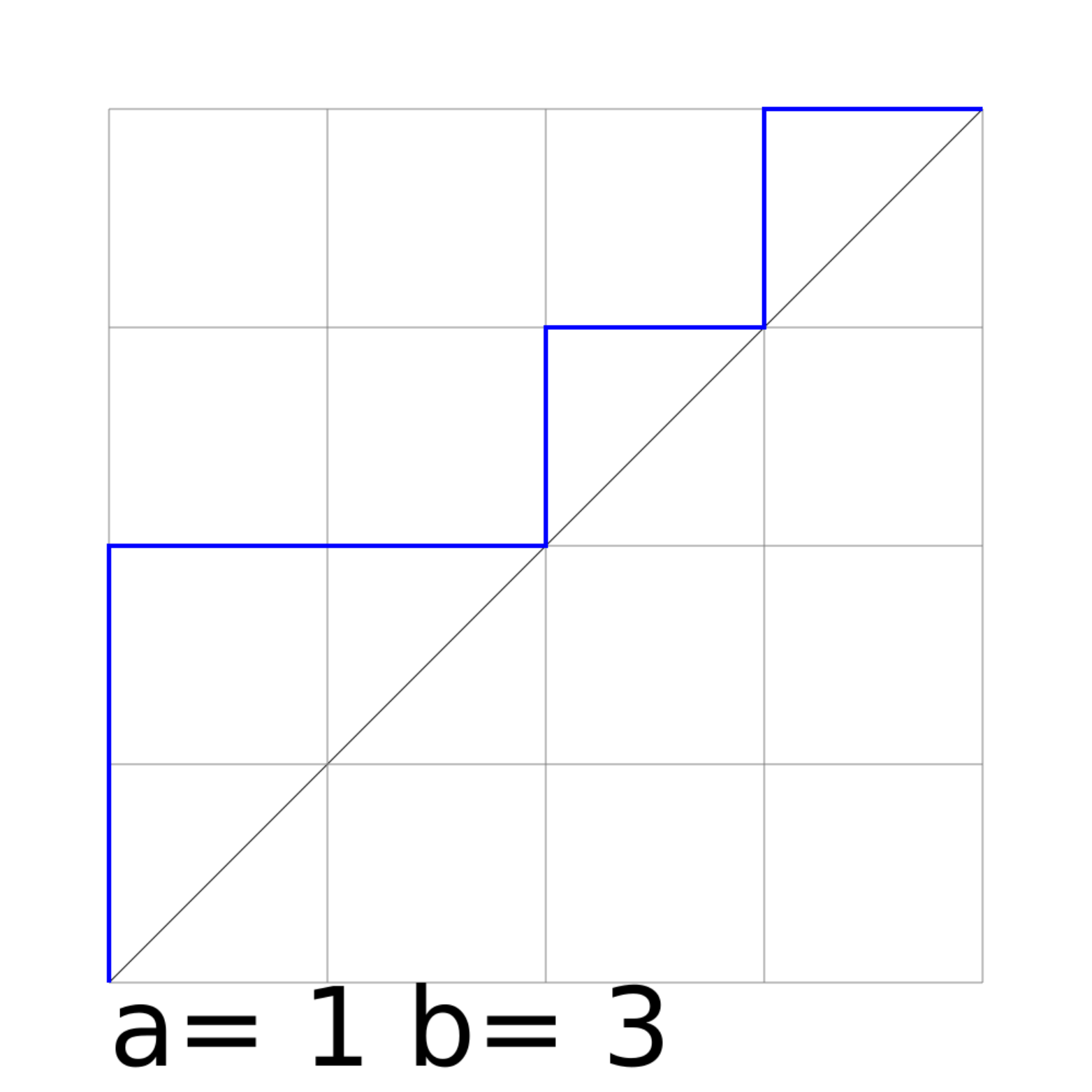}\\
    \includegraphics[width=0.2\textwidth]{n_4_6_5.pdf} &
    \includegraphics[width=0.2\textwidth]{n_4_5_3.pdf}&
    \includegraphics[width=0.2\textwidth]{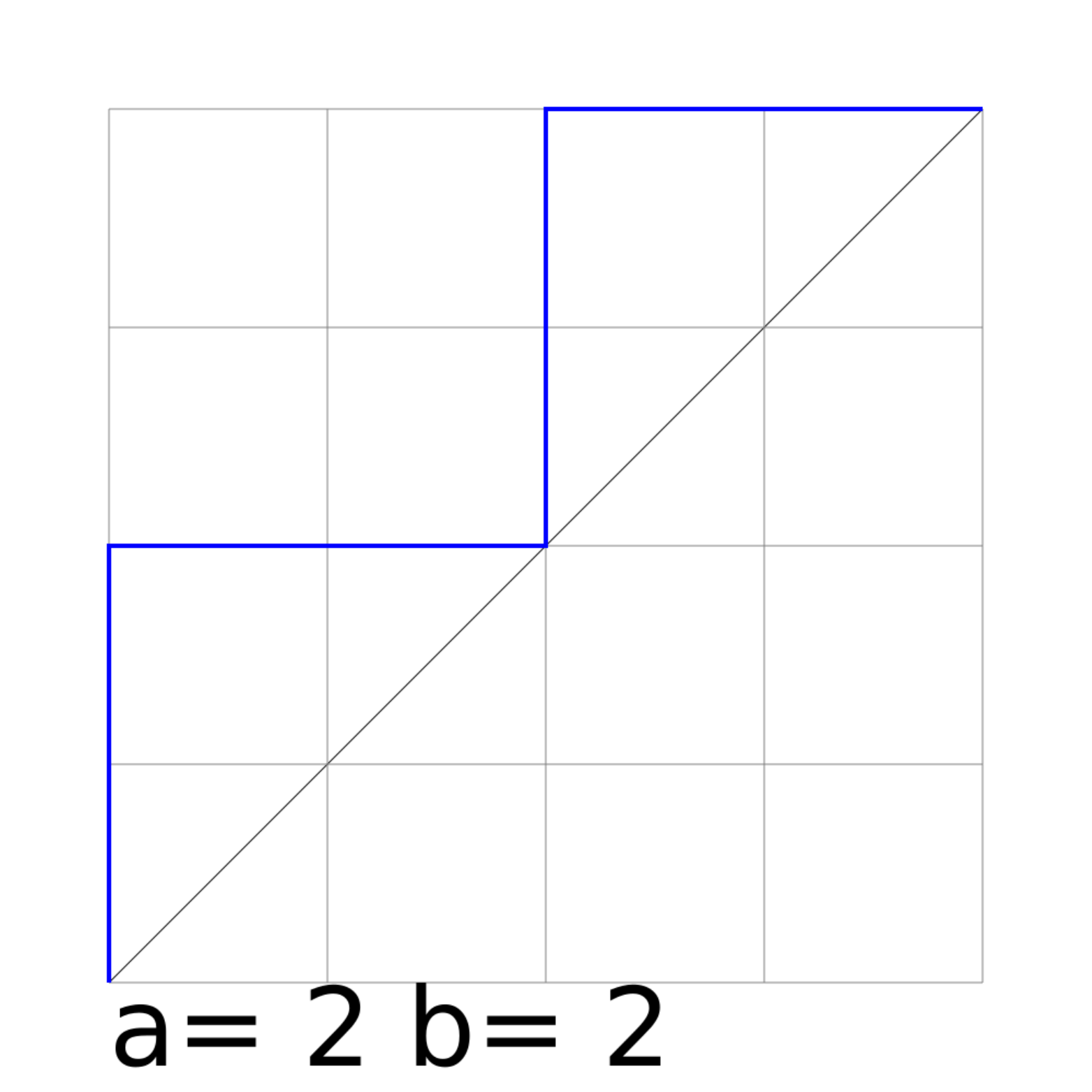}\\
    \includegraphics[width=0.2\textwidth]{n_4_6_4.pdf} &
    \includegraphics[width=0.2\textwidth]{n_4_5_2.pdf}&
    \includegraphics[width=0.2\textwidth]{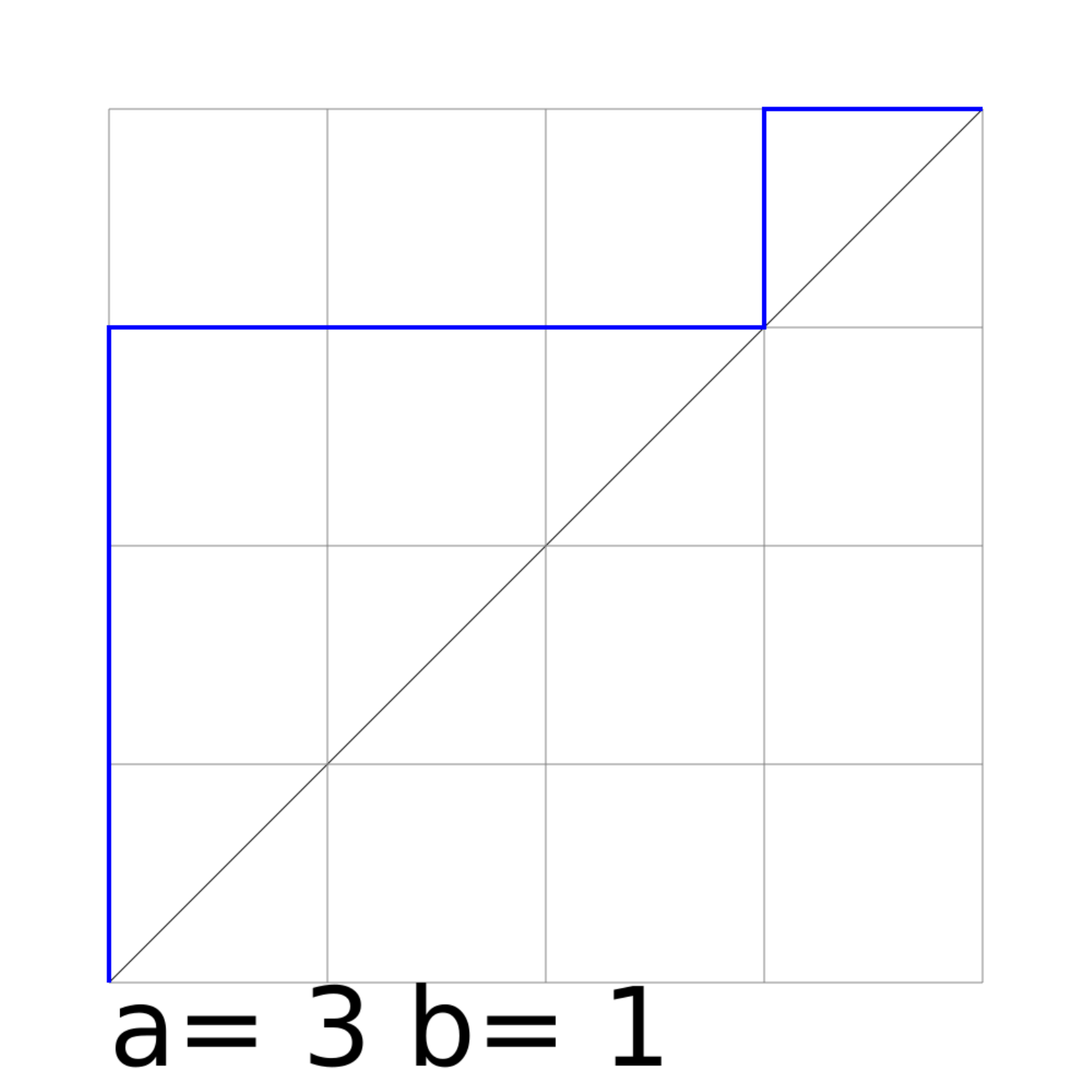} \\
    \includegraphics[width=0.2\textwidth]{n_4_6_3.pdf} &
    \includegraphics[width=0.2\textwidth]{n_4_5_1.pdf}& \\
    \includegraphics[width=0.2\textwidth]{n_4_6_2.pdf} & & \\
    \includegraphics[width=0.2\textwidth]{n_4_6_1.pdf} & &
  \end{tabular}
  \caption{All paths in $\dycks(4)$. Each column contains paths with constant area plus bounce, and each row contains paths with fixed area. 
}
  \label{fig:all_paths_4}
\end{figure}

\section{Number of distinct sums of area and bounce}
\label{sec:sums}

For any $\pi \in \dycks(n)$, let $\ab(\pi) = \area(\pi) + \bounce(\pi)$.
In this section, we will prove a formula for the number of distinct values that $\ab(\pi)$ can take. Surprisingly, this is related to one particular $q$-generalization of the Bell numbers in a tangential way. 

Recall that the Bell number $B_n$ counts set partitions of the set $[n]$. A standard recursion~\cite[Equation (1.6.13)]{wilf-1994} for the Bell numbers is
\[
B_n = \sum_{k = 0}^{n-1} \binom{n-1}{k} B_k
\]
for $n \geq 1$, with $B_0 = 1$.
One natural $q$-generalization of the Bell number due to Johnson~\cite[Equation (3.2)]{johnson-1996} is
\begin{equation}
\label{qbell}
  B_n(q) = \sum_{k=0}^{n-1} \qbinom{n-1}{k} B_k(q)
\end{equation}
for $n \geq 1$, with $B_0(q) = 1$, where $\qbinom nk$ is the \emph{$q$-binomial coefficient} or \emph{Gaussian polynomial}.
Johnson~\cite[Equation (3.1)]{johnson-1996} (see also \cite{deodhar-srinivasan-2003}) showed that $B_n(q)$ is the weight generating function of the number of inversions of a set partition, where he defined inversions as follows. Write a set partition $\pi = \pi_1 | \cdots | \pi_k$, where the parts are ordered in increasing order of the largest part. Then the inversions of $\pi$ are the pairs $(i, j)$ where $i > j$, $i \in \pi_a, j \in \pi_b$ and $a < b$. For example, $2|467|138|59$  has {10} inversions.
See \cite[Sequence $A125810$]{OEIS} for the triangle of coefficients listing the number of set partitions of $[n]$ with $k$ inversions.
We are interested in the number of nonzero coefficients in $B_n(q)$.
  Let $g: \mathbb{N} \to \mathbb{N}$ be defined by $g(0) = 0$ and
\[
    g(n) = \max \{(k-1)(n-k) + g(n-k) \mid 1 \le k \le n\}, \quad n \geq 1.
\]

\begin{thm}
\label{thm:distinct_ab}
The sequence of sizes $d(n) = |\{ \ab(\pi) | \pi \in \dycks(n) \}|$, i.e. the number of terms in $F_n(q,q)$, for $n \ge 0$, is the number of nonzero coefficients in $B_n(q)$.
\end{thm}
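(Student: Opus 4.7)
The plan is to prove both $d(n)$ and the number of nonzero coefficients of $B_n(q)$ equal $g(n)+1$.

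For the polynomial side, I would prove by induction on $n$ that $B_n(q)$ has degree exactly $g(n)$ with strictly positive coefficient at every $q^j$ for $0 \le j \le g(n)$. The standard fact that $\qbinom{n-1}{k}$ has strictly positive coefficient in every exponent from $0$ to $k(n-1-k)$ (which can be extracted from the Pascal-like recursion $\qbinom{m}{k} = \qbinom{m-1}{k-1} + q^k\qbinom{m-1}{k}$), combined with the inductive hypothesis on $B_k(q)$, shows that each product $\qbinom{n-1}{k} B_k(q)$ has strictly positive coefficients throughout $[0, k(n-1-k)+g(k)]$. The substitution $k \mapsto n-k$ in the definition of $g$ gives the identity $g(n) = \max_{0 \le k \le n-1}\{k(n-1-k)+g(k)\}$, so choosing $k^*$ to realize this maximum, the single summand at $k^*$ already carries positive coefficients throughout $[0, g(n)]$; since all other summands of \eqref{qbell} have non-negative coefficients, $B_n(q)$ has exactly $g(n)+1$ nonzero coefficients.

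For the Dyck path side, I first bound $\ab(\pi)$. The maximum is $\binom{n}{2}$, attained at $\N^n\E^n$ and implicit in \cref{prop:top_two_ab}. Since $A_i$ adds a floating cell (raising area by one without changing bounce), the minimum of $\ab$ over $\dycks(n)$ is attained at a composition path $p_{n, \alpha}$. The decomposition $\alpha = (k, \alpha')$ yields
\[
\ab(p_{n, (k, \alpha')}) = \binom{k}{2} + (n-k) + \ab(p_{n-k, \alpha'}),
\]
so $h(n) := \min_\pi \ab(\pi)$ satisfies $h(n) = \min_k\{\binom{k}{2}+(n-k)+h(n-k)\}$. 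Setting $f(n) = \binom{n}{2} - h(n)$ and using $\binom{n}{2} - \binom{k}{2} - \binom{n-k}{2} = k(n-k)$ converts this into $f(n) = \max_k\{(k-1)(n-k)+f(n-k)\}$, precisely the recursion defining $g(n)$. Hence $h(n) = \binom{n}{2} - g(n)$.

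The remaining, and hardest, step is to show every integer $m \in [\binom{n}{2}-g(n), \binom{n}{2}]$ equals $\ab(\pi)$ for some $\pi \in \dycks(n)$. Two elementary moves on composition paths suffice: transposing adjacent parts $\alpha_i, \alpha_{i+1}$ shifts $\ab(p_{n, \alpha})$ by $\alpha_i - \alpha_{i+1}$ (the area $\sum \binom{\alpha_j}{2}$ is symmetric in the parts, only the bounce changes), while $A_i^{\pm 1}$ shifts $\ab$ by $\pm 1$. Starting from the minimizing composition and alternating single-difference swaps with floating-cell additions, one constructs at least one path for each consecutive $\ab$-value in the required interval; when the current composition's floating-cell capacity is exhausted, one pivots to a neighboring composition whose range of reachable $\ab$-values overlaps. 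This continuity argument is in the spirit of \cref{lem:continuously_increase_ab}. Together with the polynomial analysis, $d(n) = g(n)+1$ equals the number of nonzero coefficients of $B_n(q)$, completing the proof.
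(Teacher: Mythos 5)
Your polynomial analysis and your computation of the extreme values of $\ab$ are correct and essentially match the paper: your induction showing that $B_n(q)$ has full support $[0,g(n)]$ is a cleaner rendering of \cref{lem:q_bell_numbers}, and your recursion $h(n)=\min_k\{\binom{k}{2}+(n-k)+h(n-k)\}$ together with the change of variables $f(n)=\binom{n}{2}-h(n)$ is exactly the content of \cref{lem:smallest_ab}. One slip worth flagging: $A_i$ does \emph{not} in general preserve bounce --- adding a cell can move the bounce path and even lower $\ab$ (for instance $\N\E\N\E\N\E\cdot A_2=\N\N\E\E\N\E$ has $\ab$ dropping from $3$ to $2$). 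The conclusion you want there, that the minimum of $\ab$ is attained at some $p_{n,\alpha}$, is still immediate, since every path lies weakly above its bounce path, which has the same bounce and no larger area.

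The genuine gap is the step you yourself call the hardest: showing that every integer in $[\binom{n}{2}-g(n),\binom{n}{2}]$ is realized as an $\ab$-value. What you give is a strategy, not a proof. Adjacent-part transpositions shift $\ab$ by $\alpha_i-\alpha_{i+1}$, which need not be $\pm1$, and the assertion that whenever the ``floating-cell capacity is exhausted'' one can ``pivot to a neighboring composition whose range of reachable $\ab$-values overlaps'' is precisely the statement that has to be proved; you give no argument that such a pivot exists or that the ranges overlap, and the blanket claim that $A_i^{\pm1}$ shifts $\ab$ by $\pm1$ is false in general (see the example above), so it cannot be used to supply the missing continuity. The paper closes this step (\cref{lem:continuously_increase_ab}) with a much simpler device that sidesteps compositions entirely: starting from \emph{any} path, add one cell at a time in the rightmost column where this is possible until the full path of area $\binom{n}{2}$ is reached; each addition raises the area by one and leaves the bounce unchanged, except when the cell lands in column $b_i+1$, where the bounce drops by exactly one (\cref{rem:simple_D}), so $\ab$ changes by $0$ or $+1$ at every step and sweeps out all intermediate values from $\ab(\pi)$ up to $\binom{n}{2}$. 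Applied to the $\ab$-minimizing path, this yields $d(n)=g(n)+1$. Until you either prove your pivoting claim or replace it by an argument of this kind, your proof is incomplete at its central combinatorial step.
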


The first twenty terms of this sequence~\cite[Sequence $A125811$]{OEIS} are
\begin{equation}
  1, 1, 1, 2, 3, 5, 8, 11, 15, 20, 26, 32, 39, 47, 56, 66, 76, 87, 99, 112.
\end{equation}

\begin{lem}
  \label{lem:smallest_ab}
The maximal value of $\ab(\pi)$ for $\pi \in \dycks(n)$ is $\binom{n}{2}$.
  The minimal value of $\ab(\pi)$ over all $\pi \in \dycks(n)$ is $\binom{n}{2} - g(n)$.
\end{lem}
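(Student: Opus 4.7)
The plan is to treat the two assertions separately. For the maximum, the path $\N^n\E^n$ clearly has $\area = \binom{n}{2}$ and $\bounce = 0$, so it achieves $\binom{n}{2}$. For the matching upper bound, let $e_c$ denote the $y$-coordinate of the $c$th east step of $\pi$ (so that step runs from $(c-1,e_c)$ to $(c,e_c)$) and $c_r$ the $x$-coordinate of the $r$th north step. A row-by-row count of cells between $\pi$ and the diagonal gives $\area(\pi) = \sum_{r=1}^n (r-1-c_r) = \binom{n}{2} - \sum_r c_r$, while double-counting ordered (north step, east step) pairs by which comes first in the word gives $\sum_r c_r + \sum_c e_c = n^2$, and combining these yields
\[
\sum_{c=1}^n (n - e_c) = \binom{n}{2} - \area(\pi).
\]
By the construction of the bounce path, the $(b_{i-1}+1)$th east step is the first east step in column $b_{i-1}$ and occurs at height $b_i$, so $e_{b_{i-1}+1} = b_i$ and
\[
\bounce(\pi) = \sum_{i=1}^m (n - b_i) = \sum_{c \in B}(n - e_c),
\]
where $B = \{b_{i-1}+1 : 1 \le i \le m\} \subseteq \{1,\ldots,n\}$. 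Since each summand $n-e_c$ is nonnegative, the bounce sum is bounded above by the full sum, giving $\ab(\pi) \le \binom{n}{2}$.

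For the minimum, the key observation is that inserting a floating cell into a Dyck path increases the area by one but leaves the bounce composition (and hence the bounce) unchanged; therefore the minimum of $\ab$ over $\dycks(n)$ is attained on $\cdycks(n)$, and it suffices to minimize $\ab(p_{n,\alpha})$ over compositions $\alpha$ of $n$. Writing $\alpha_1 = k$ and $\alpha' = (\alpha_2, \ldots, \alpha_m)$, a direct computation using \eqref{eq:ab_wrt_composition} yields
\[
\ab(p_{n,\alpha}) = \binom{k}{2} + (n-k) + \ab(p_{n-k,\alpha'}),
\]
so $M(n) := \min_\alpha \ab(p_{n,\alpha})$ satisfies $M(0) = 0$ and $M(n) = \min_{1 \le k \le n}\{\binom{k}{2} + (n-k) + M(n-k)\}$.

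Finally, set $h(n) := \binom{n}{2} - M(n)$. Using the identity $\binom{n}{2} - \binom{k}{2} - \binom{n-k}{2} = k(n-k)$, the recursion for $M$ becomes
\[
h(n) = \max_{1 \le k \le n}\{(k-1)(n-k) + h(n-k)\}, \qquad h(0) = 0,
\]
which is precisely the defining recursion for $g$, so $h = g$ and the minimum of $\ab$ equals $\binom{n}{2} - g(n)$. The only step that requires any bookkeeping is the upper bound in the maximum, where the identities linking $\area(\pi)$, the column sequence $(c_r)$, and the east-step heights $(e_c)$ have to be lined up; the rest is either a direct structural observation or a one-line recursion manipulation.
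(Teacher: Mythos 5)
Your proof is correct and follows essentially the same route as the paper: the maximum is obtained by observing that each bounce contribution $n-b_i$ corresponds to cells missing from the area relative to the full staircase (your double-counting identity $\sum_c (n-e_c)=\binom{n}{2}-\area(\pi)$ with $b_i=e_{b_{i-1}+1}$ is a formalized version of this), and the minimum is reduced to bounce paths $p_{n,\alpha}$ and then computed by the same first-part recursion over compositions that defines $g$. One phrasing caveat: the reduction to $\cdycks(n)$ is better justified as ``replacing $\pi$ by its bounce path preserves bounce and weakly decreases area'' rather than ``inserting a floating cell never changes the bounce'' (adding a cell to an arbitrary path can alter its bounce path), but since the former is exactly the fact you use, the argument stands.
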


\begin{proof}
  We first show that the maximal value of $\ab(\pi)$ is $\binom{n}{2}$. For a bounce point at height $j$ which contributes $n-j$ to $\bounce(\pi)$, $\pi$ must have some at least one $\E$ step at row $j$. Therefore, it does not include the $n-j$ cells above it at that column. Thus, comparing with $p_{n, (n)}$, we see that $\bounce(\pi) =  \sum_i (n- b_i)$ and $\area(\pi) \leq \binom{n}{2} - \sum_i (n - b_i)$. Summing both of these, we get that $\ab(\pi) \leq \binom{n}{2}$.

  Let $\pi \in \dycks(n)$ have the minimal value of $\ab(\pi)$ with the smallest possible area.
  Then, $\pi$ must be equal to some $p_{n,\alpha}$; otherwise, we can remove a floating cell to decrease only the area.
  The path $\pi$ must maximize
  $\binom{n}{2} - \ab(\pi)$, which is the difference of the
  sum of the heights of the non-bounce points and its area.
  This difference, say $G_n(\alpha)$, is
  \begin{align*}
    G_n(\alpha) = &\sum_{i=1}^{\ell(\alpha)} \left( \sum_{j=1}^{\alpha_i-1} (n - b_i + j) - \binom{\alpha_i}{2} \right)\\
    = &\sum_{i=1}^{\ell(\alpha)} (\alpha_i-1)(n-b_i) \\
    = &\sum_{i=1}^{\ell(\alpha)} (\alpha_i-1)(n-\alpha_1 - \cdots - \alpha_i)\\
    = &(\alpha_1-1)(n-\alpha_1) + G_{n-\alpha_1}((\alpha_2, \alpha_3, \dots)).
  \end{align*}
  Thus, $g(n) = \underset{\beta \in \textbf{Comp}(n)}{\max}\ G_n(\beta)$,
  $\alpha = \underset{\beta \in \textbf{Comp}(n)}{\arg \max}\ G_n(\beta)$,
  and the path with the minimal $\ab$ is $p_{n,\alpha}$.
\end{proof}

\begin{lem}
  \label{lem:continuously_increase_ab}
  Let $\pi \in \dycks(n)$. Then for every $x$ such that
  $\ab(\pi) \le x \le \binom{n}{2}$ there exists a $\tau \in \dycks(n)$ with $\ab(\tau) = x$.
\end{lem}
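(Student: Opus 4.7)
My plan is to proceed by induction on $\binom{n}{2} - \ab(\pi) \ge 0$. The base case $\ab(\pi) = \binom{n}{2}$ is trivial: the only $x$ in $[\ab(\pi), \binom{n}{2}]$ is $\binom{n}{2}$, witnessed by $\pi$ itself. For the inductive step, assuming the conclusion for every path with strictly larger $\ab$, it suffices to produce a single $\tau \in \dycks(n)$ with $\ab(\tau) = \ab(\pi) + 1$; the hypothesis applied to $\tau$ then covers $[\ab(\pi)+1, \binom{n}{2}]$, while $\pi$ itself covers $x = \ab(\pi)$.

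The principal tool is the operator $S_i$ from \cref{prop:Si action}, which preserves area and increases bounce by exactly one, so $\ab(\pi \cdot S_i) = \ab(\pi) + 1$ whenever $\pi \cdot S_i \ne \bot$. Writing $\pi$'s bounce path as $p_{n,\alpha}$, the assumption $\ab(\pi) < \binom{n}{2}$ rules out $\alpha = (n)$ (which would force $\pi = \N^n \E^n$ and hence $\ab(\pi) = \binom{n}{2}$ by \cref{lem:smallest_ab}). Hence $\ell(\alpha) \ge 2$ and both bounce points $b_1 < b_2$ exist, and I would first attempt $S_1$: the primary diagonal obstruction $h_{b_0} = b_1$ is automatically avoided since $h_0 = 0 < b_1$.

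If the auxiliary conditions for $S_1$ (namely $s^{(1)} \le a_{b_1}$ and $h_j \le b_1$ for $1 \le j \le s^{(1)}$) fail on $\pi$ itself, I would pass to a representative $\tau' \in [\pi]$ on which they succeed, typically obtained by sliding floating cells into row $b_1$. Since $[\pi]$-equivalence preserves area and bounce sequence, any valid $\tau' \cdot S_i$ still yields $\ab + 1$. If no $S_i$ succeeds on any $\tau' \in [\pi]$, then the structure of $\pi$ is severely constrained in the spirit of \cref{lem:no_down_conditions,lem:no_up_conditions}, and I would argue that such rigidity forces $\ab(\pi) = \binom{n}{2}$, contradicting our assumption.

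The hard part will be exactly this last step: ruling out configurations where $\ab(\pi) < \binom{n}{2}$ yet every $S_i$ fails on every $[\pi]$-representative. I expect this to proceed by case analysis on $\alpha$ and the arrangement of floating cells, exhibiting in each residual case either an $S_i$-move made available after an $\ab$-preserving $D_j$ or $U_j$ maneuver, or a direct $A_j$ that adds a single cell in a row $j$ without disturbing the bounce path (giving area $+1$, bounce unchanged, and hence $\ab + 1$ directly).
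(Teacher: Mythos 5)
Your reduction of the lemma to the claim ``for every $\pi$ with $\ab(\pi)<\binom{n}{2}$ there is a path with $\ab$ exactly one larger'' is logically sound, but that claim \emph{is} the entire content of the lemma, and the mechanism you propose for establishing it has a genuine hole. The operators $S_i$ cannot serve as the main engine: take $\pi = p_{5,(1,2,2)} = \N\E\N^2\E^2\N^2\E^2$, which has $\area(\pi)=2$, $\bounce(\pi)=6$, hence $\ab(\pi)=8<\binom{5}{2}=10$. Its bounce points are $(0,1,3,5)$ and it has no floating cells, so $[\pi]=\{\pi\}$ and there are no representatives to pass to; moreover $s^{(1)}=b_2-h_{b_1}=2>a_{b_1}=0$ and $s^{(2)}=b_3-h_{b_2}=2>a_{b_2}=1$, so $\pi\cdot S_1=\pi\cdot S_2=\bot$. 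This refutes your fallback assertion that such rigidity ``forces $\ab(\pi)=\binom{n}{2}$,'' and it throws the whole burden onto the one step you explicitly defer (finding a suitable $A_j$, or an $\ab$-preserving detour followed by an $S_i$). As written, the proof does not close.

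For comparison, the paper's proof avoids the inverse-search entirely by running a monotone chain in the other direction: starting from $\pi$, add one cell at a time (filling toward the unique maximal path $\N^n\E^n$, working on the rightmost available column). Each single-cell addition increases the area by one and either leaves the bounce path untouched, so $\ab$ goes up by $1$, or lands in column $b_i+1$, in which case it is exactly the move of \cref{rem:simple_D}(1): the bounce drops by one and $\ab$ is unchanged. Since the chain terminates at $\ab=\binom{n}{2}$ and each step changes $\ab$ by $0$ or $+1$, a discrete intermediate-value argument produces a path for every $x\in[\ab(\pi),\binom{n}{2}]$. That single-cell bookkeeping is precisely the construction your inductive step is missing; in the example above, $p_{5,(1,2,2)}\cdot A_4$ realizes $\ab=9$, whereas no $S_i$ does.
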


\begin{proof}
  Create a sequence of Dyck paths ending with the unique Dyck path having area
  $\binom{n}{2}$ by adding one cell at a time to the rightmost column. Each cell
  addition causes an increase in just the area except when a cell is added to
  the column $b_i+1$ in which case the area increases by one and the bounce
  decreases by one. The latter follows from \cref{rem:simple_D} (1).
  \cref{fig:continuously_increasing_ab} shows an example of this process.
\end{proof}

The list of all paths of size $4$ arranged according to their values of $\ab$ is given in \cref{fig:all_paths_4}.

\begin{figure}
  \centering
  \captionsetup[subfigure]{labelformat=empty}
  \begin{subfigure}[t]{0.15\textwidth}
    \includegraphics[width=\textwidth]{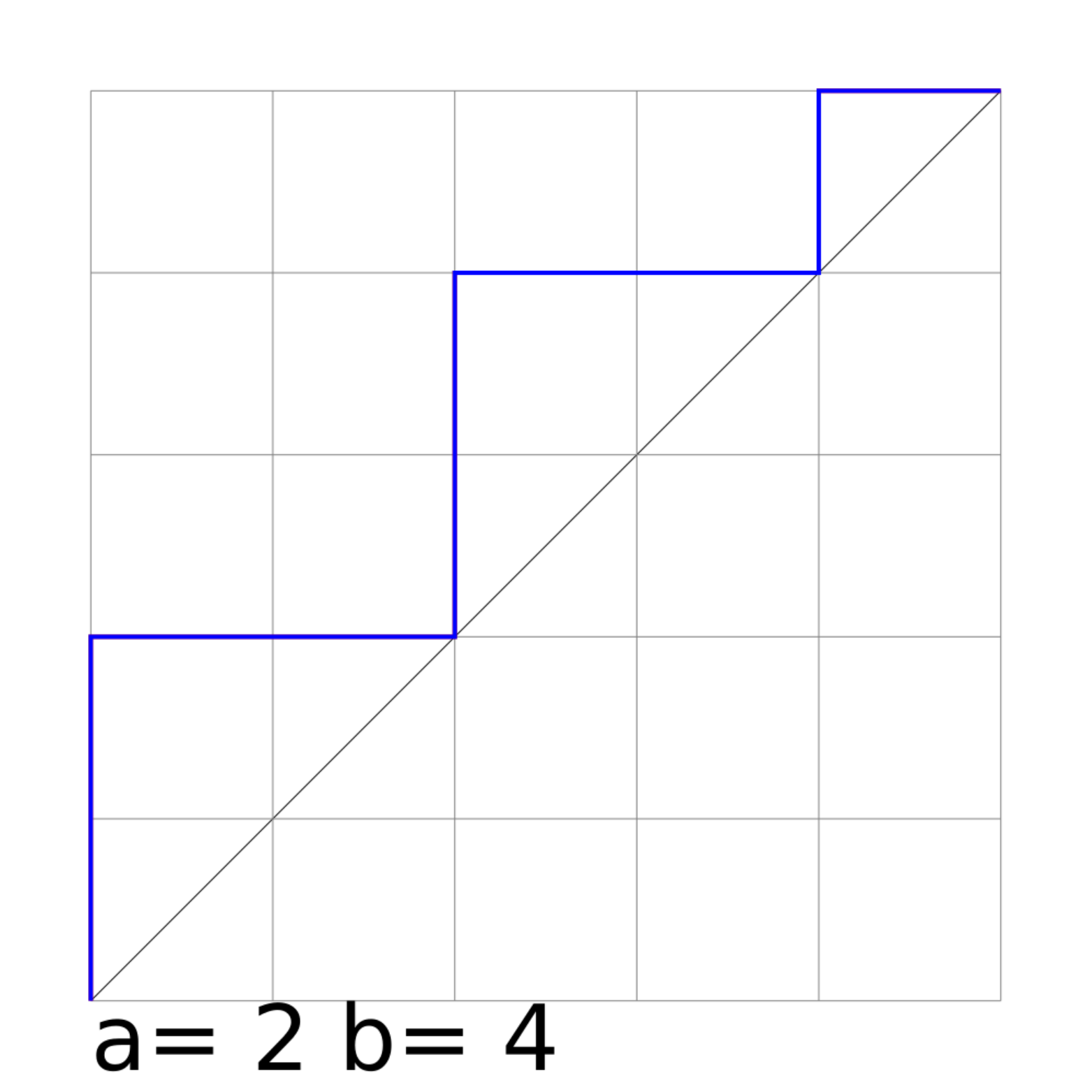}
  \end{subfigure}
  \begin{subfigure}[t]{0.15\textwidth}
    \includegraphics[width=\textwidth]{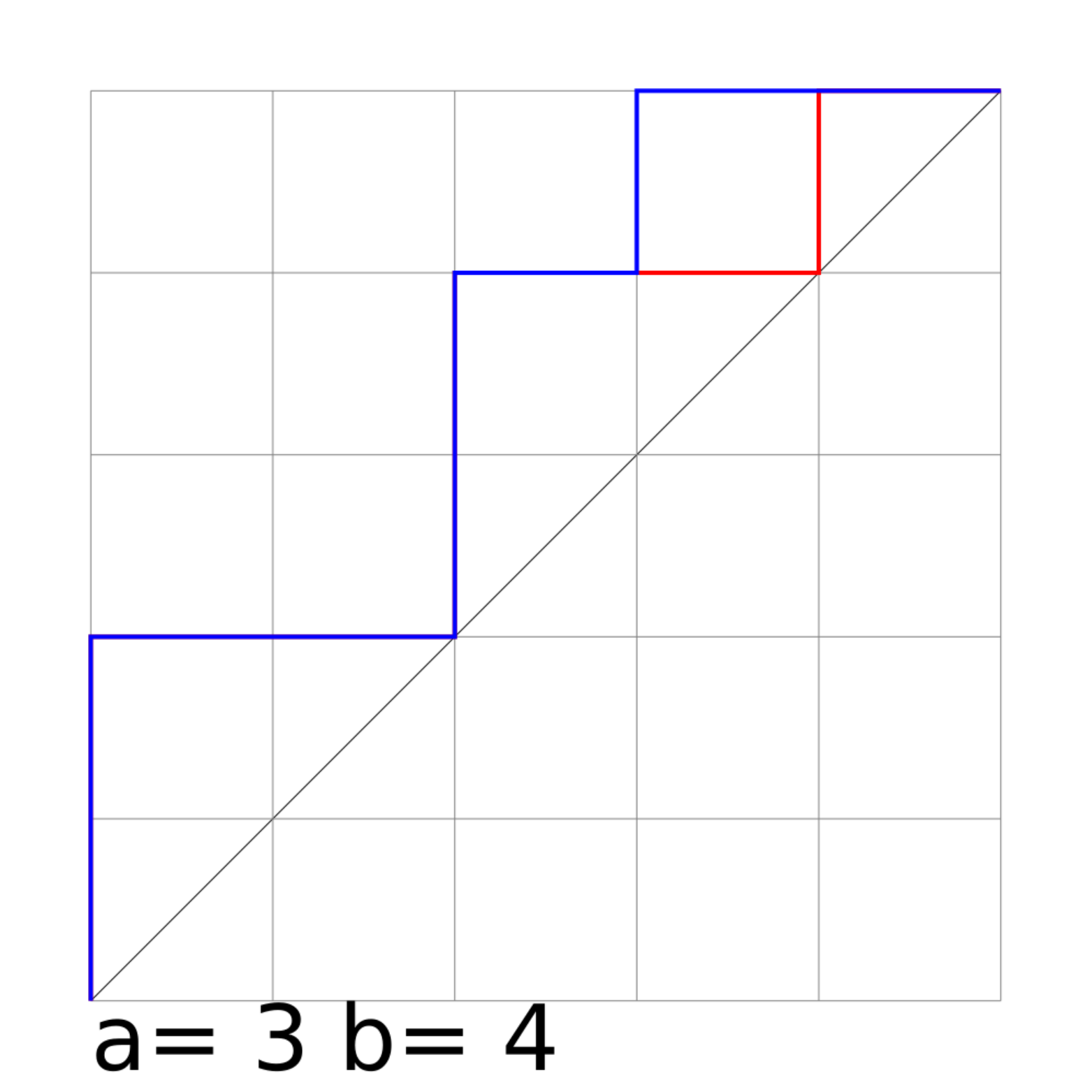}
  \end{subfigure}
  \begin{subfigure}[t]{0.15\textwidth}
    \includegraphics[width=\textwidth]{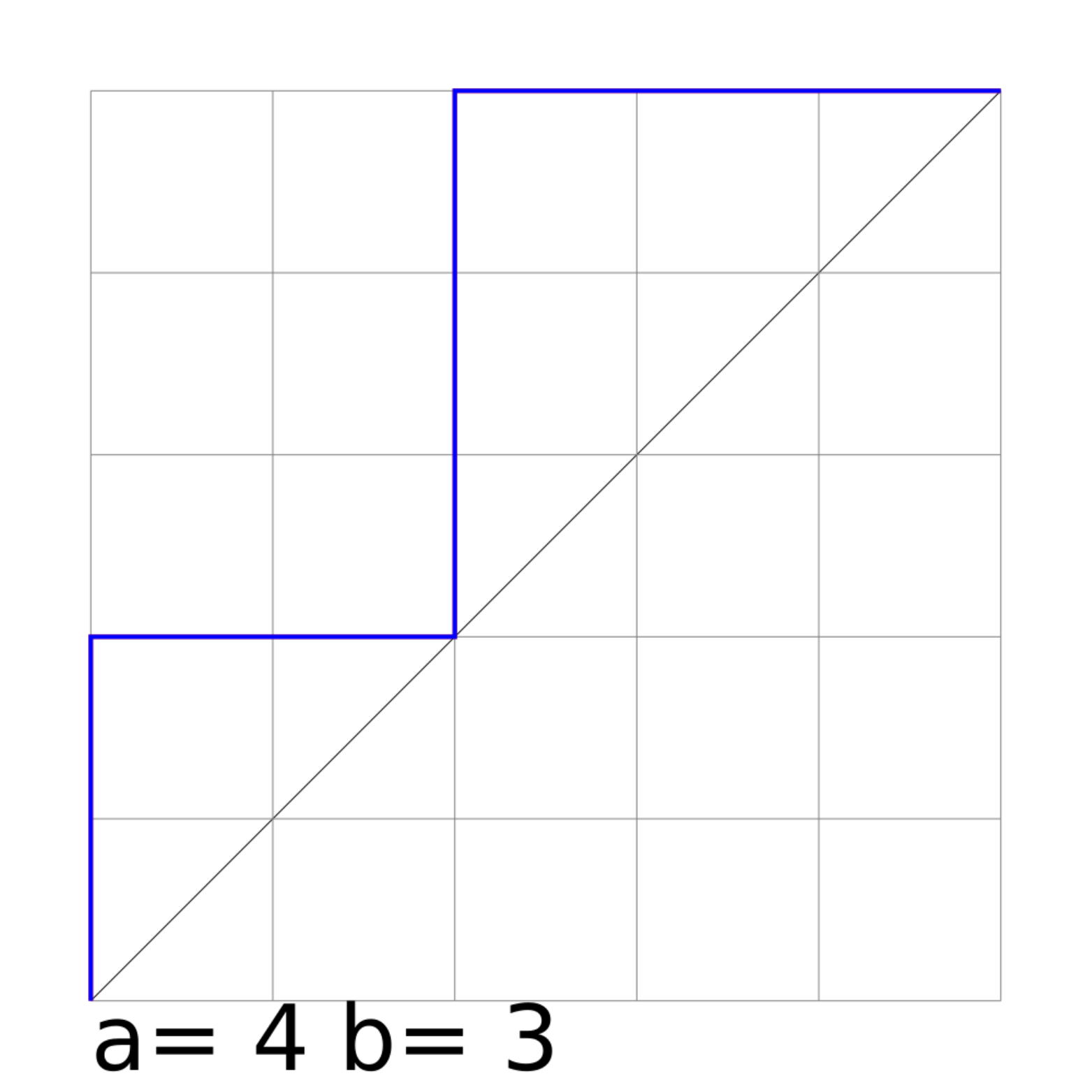}
  \end{subfigure}
  \begin{subfigure}[t]{0.15\textwidth}
    \includegraphics[width=\textwidth]{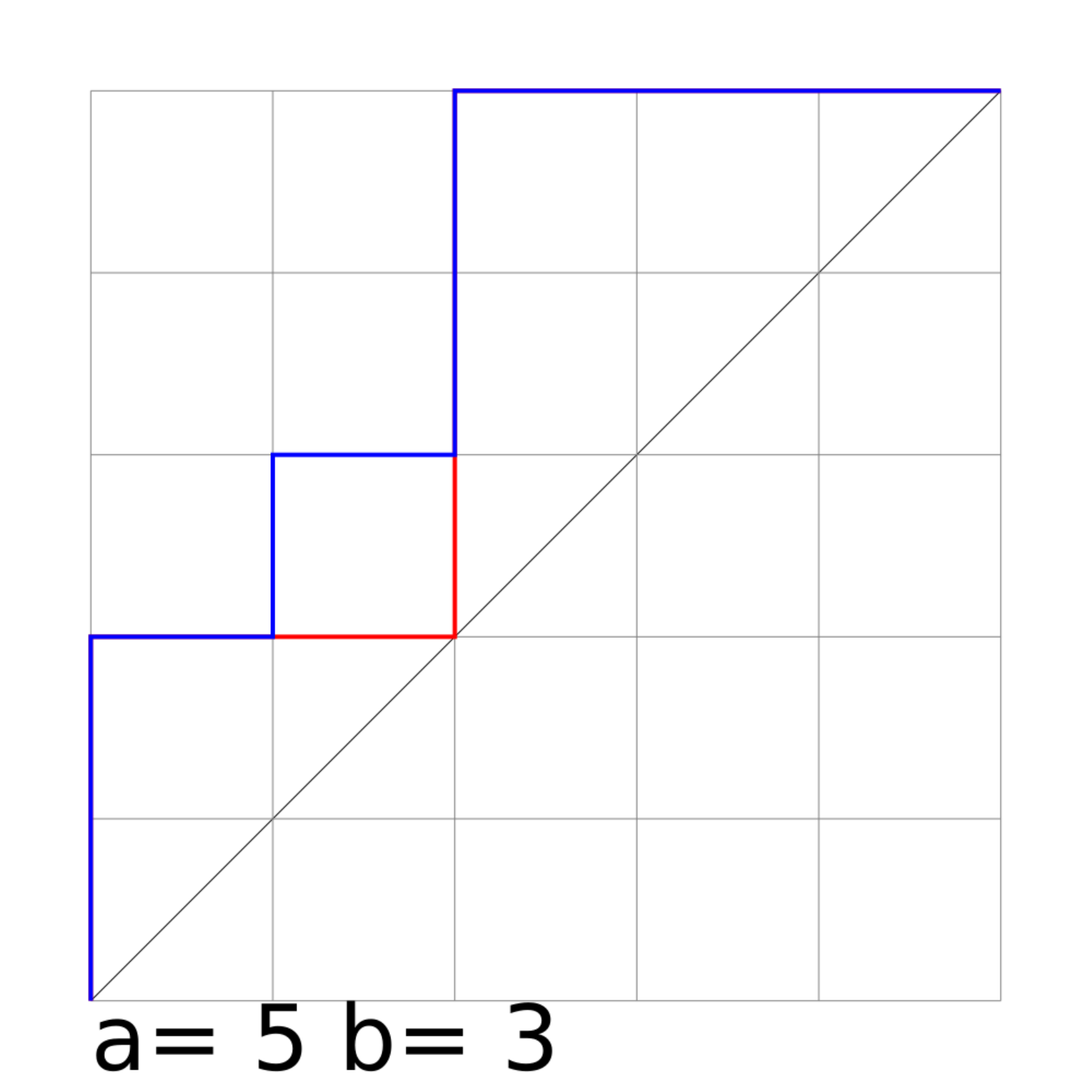}
  \end{subfigure}
  \begin{subfigure}[t]{0.15\textwidth}
    \includegraphics[width=\textwidth]{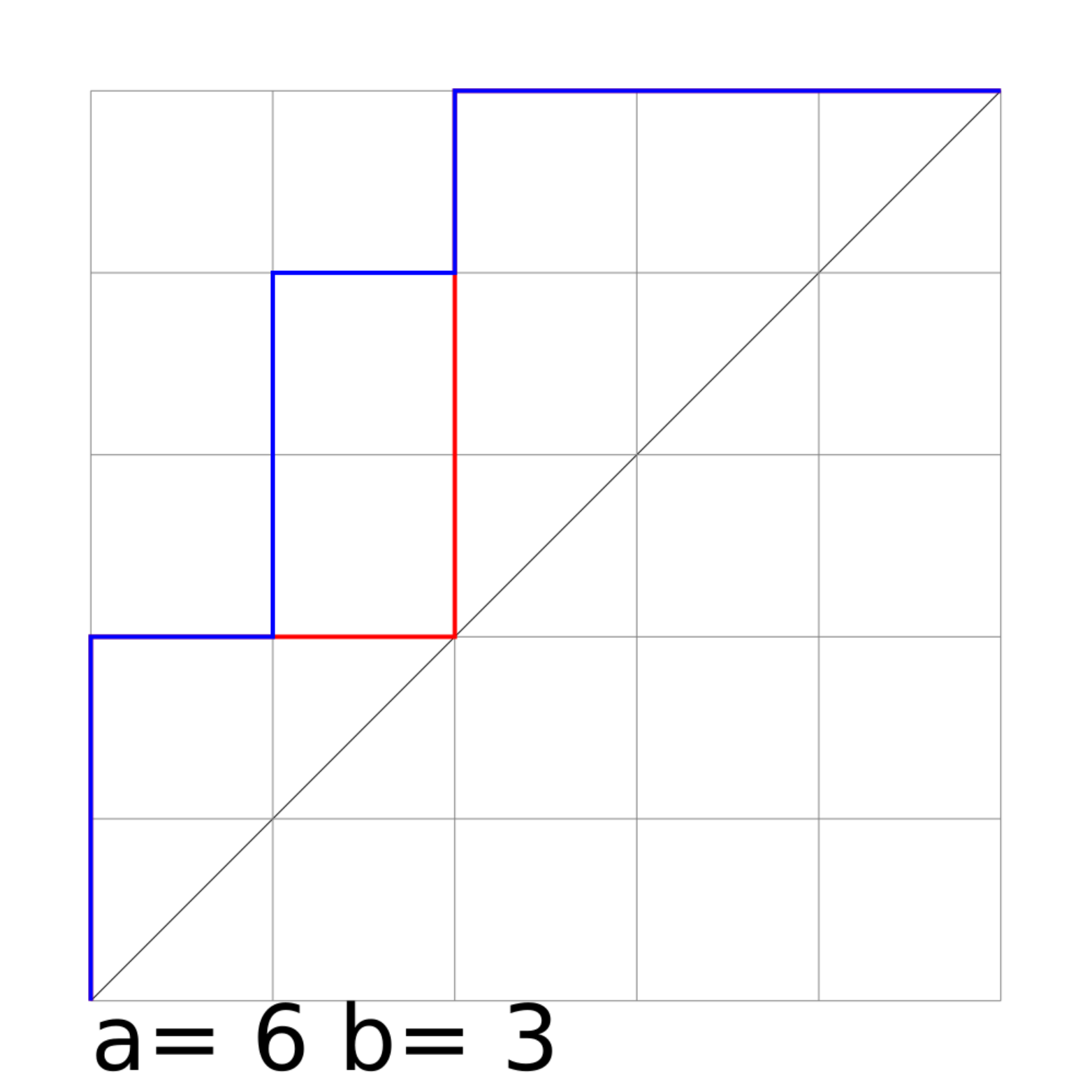}
  \end{subfigure}

  \begin{subfigure}[t]{0.15\textwidth}
    \includegraphics[width=\textwidth]{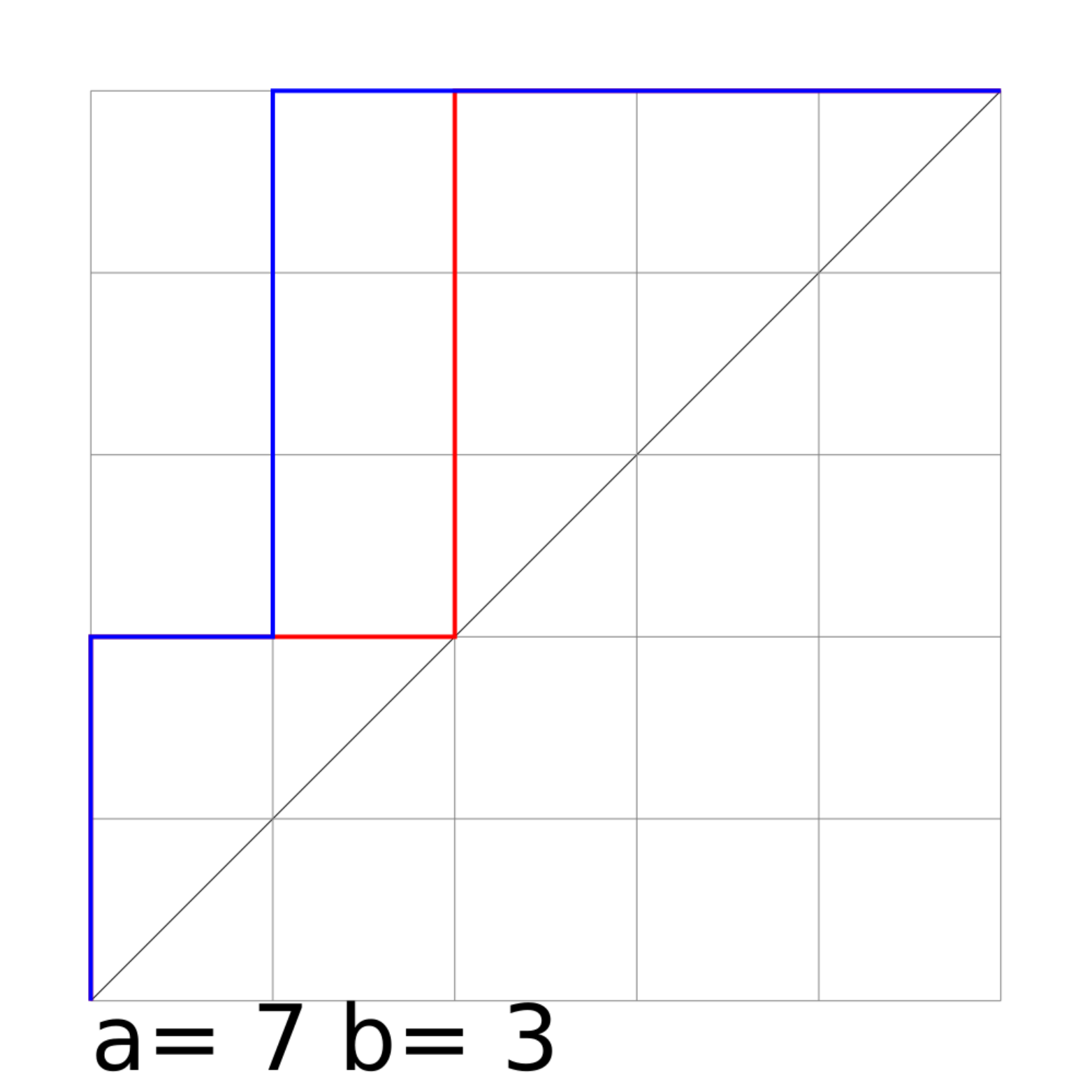}
  \end{subfigure}
  \begin{subfigure}[t]{0.15\textwidth}
    \includegraphics[width=\textwidth]{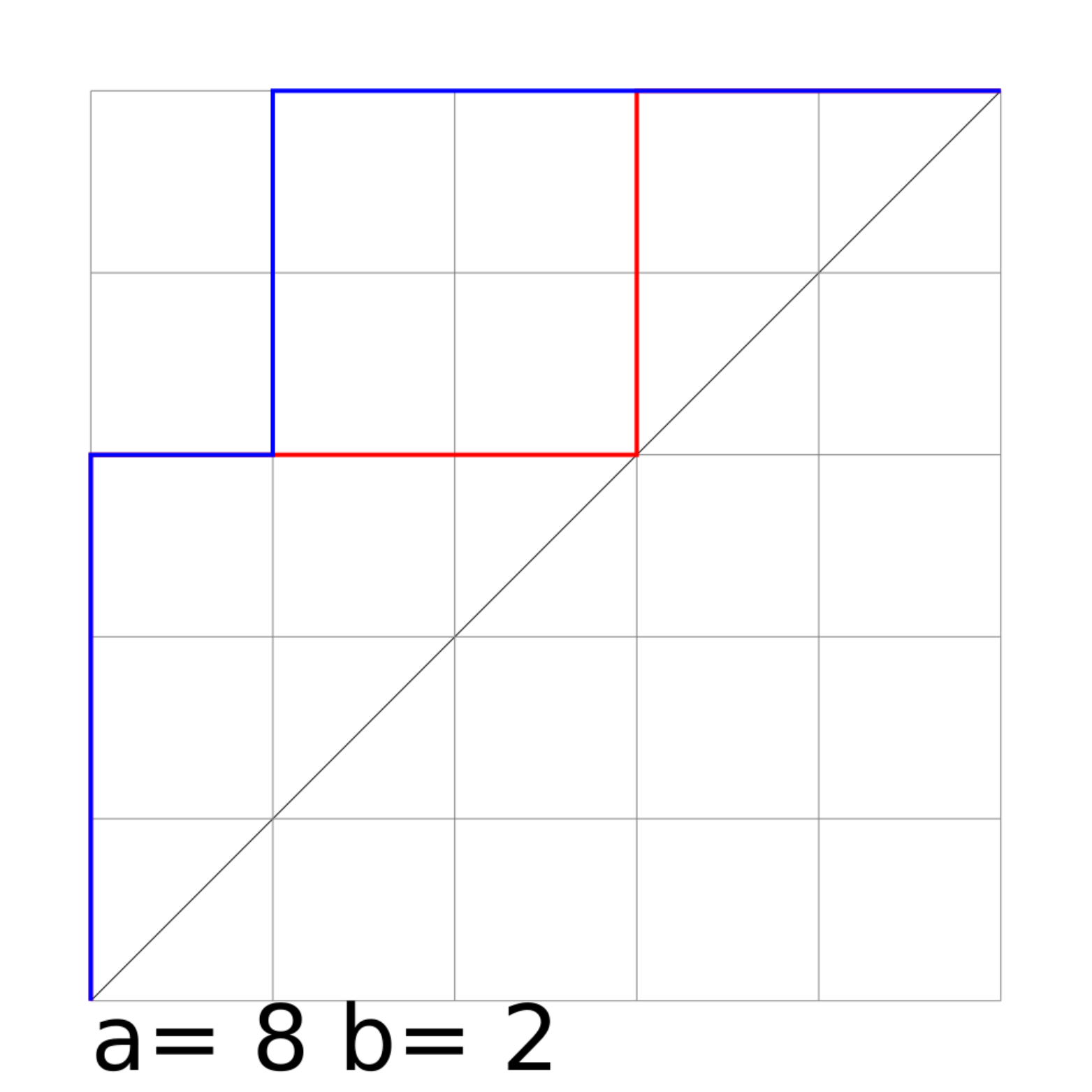}
  \end{subfigure}
  \begin{subfigure}[t]{0.15\textwidth}
    \includegraphics[width=\textwidth]{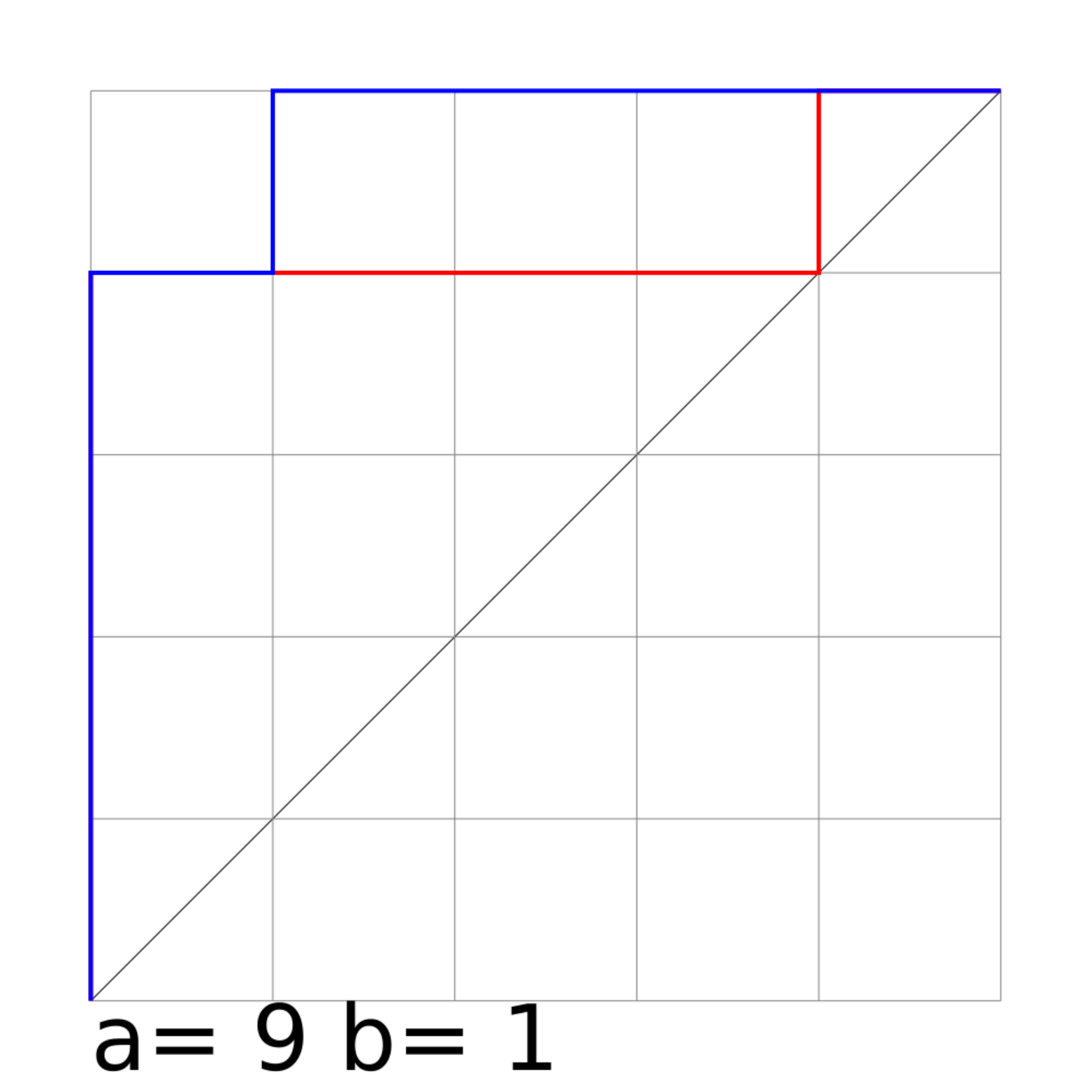}
  \end{subfigure}
  \begin{subfigure}[t]{0.15\textwidth}
    \includegraphics[width=\textwidth]{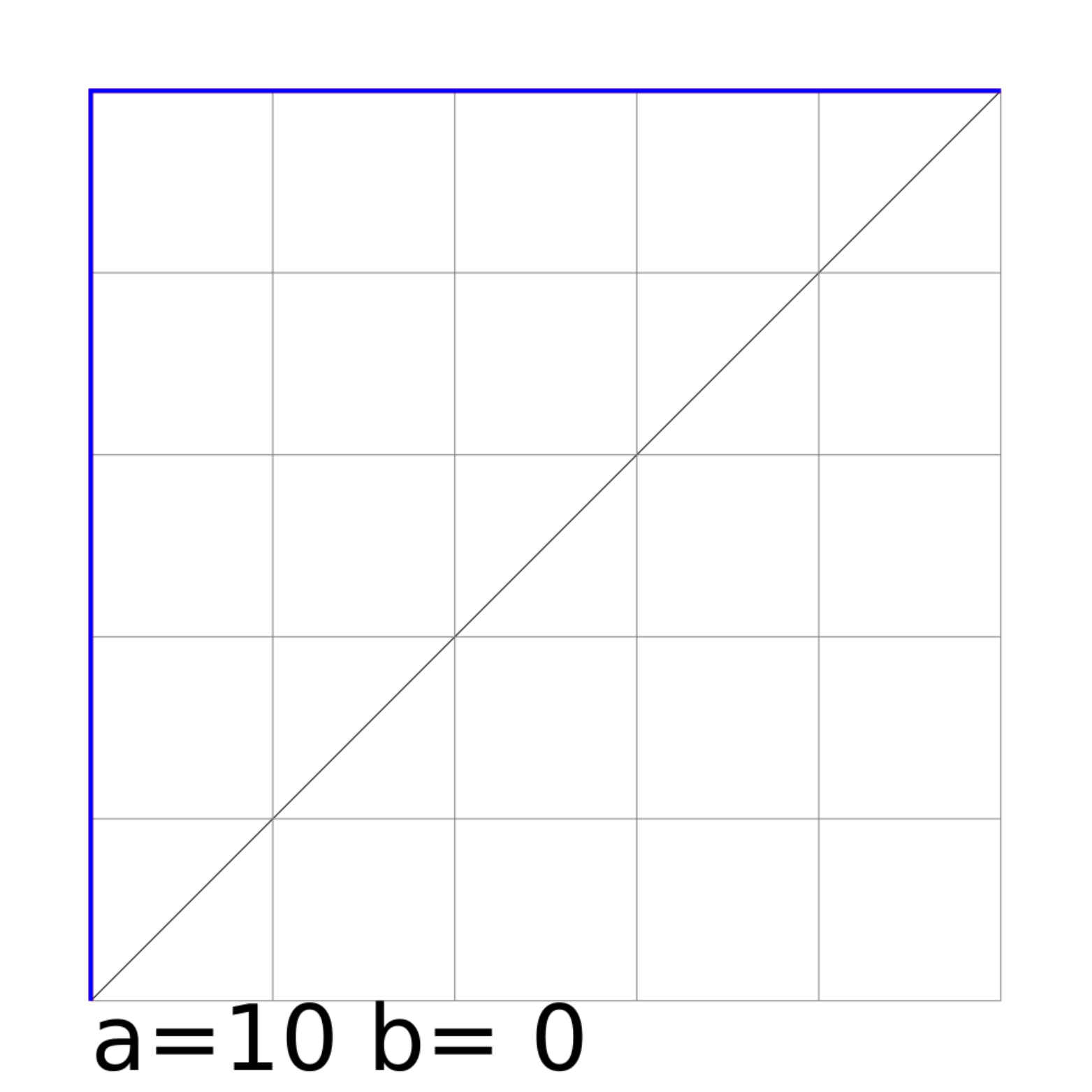}
  \end{subfigure}
  \caption{An illustration of \cref{lem:continuously_increase_ab}.}
  \label{fig:continuously_increasing_ab}
\end{figure}

\begin{lem}
  \label{lem:q_bell_numbers}
  The number of coefficients in $B_n(q)$ is given by $g(n)+1$.
\end{lem}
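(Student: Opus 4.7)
The plan is strong induction on $n$, with the strengthened inductive statement that $B_n(q)$ is a polynomial whose coefficients are \emph{strictly positive} integers in degrees $0, 1, \ldots, g(n)$ and zero in all higher degrees. Since $B_n(q)$ has non-negative integer coefficients (immediate from the recurrence \eqref{qbell} and induction), the strengthened statement immediately yields $g(n)+1$ as the number of nonzero coefficients. The base case $n=0$ is clear from $B_0(q)=1$ and $g(0)=0$.

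For the inductive step, I would first invoke the classical fact that the Gaussian polynomial $\qbinom{n-1}{k}$ has strictly positive integer coefficients throughout the full range $\{0,1,\ldots,k(n-1-k)\}$, for instance because it is the generating function by size of partitions fitting inside a $k \times (n-1-k)$ box and every integer size in that range is realized. Combined with the inductive hypothesis that $B_k(q)$ is strictly positive on $\{0,\ldots,g(k)\}$, the product $\qbinom{n-1}{k}\,B_k(q)$ is then strictly positive on $\{0,\ldots,k(n-1-k)+g(k)\}$: for any $j$ in this range, write $j=i_1+i_2$ with $0 \le i_1 \le k(n-1-k)$ and $0 \le i_2 \le g(k)$, yielding a positive contribution to the coefficient of $q^j$. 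Summing over $k$ in \eqref{qbell} and using that sums of polynomials with non-negative coefficients admit no cancellations, $B_n(q)$ then has strictly positive coefficients in degrees $0$ through
\[
M \;=\; \max_{0 \le k \le n-1} \bigl( k(n-1-k)+g(k) \bigr),
\]
and zero in higher degrees.

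It remains to verify $M = g(n)$. Substituting $j = n-k$ in the defining recurrence $g(n) = \max\{(k-1)(n-k)+g(n-k) : 1 \le k \le n\}$ rewrites it as $g(n) = \max_{0 \le j \le n-1} \bigl( j(n-1-j)+g(j) \bigr) = M$, closing the induction. The only non-trivial input is the ``gap-free'' positivity of the coefficients of the Gaussian polynomial over its entire range of degrees; everything else is bookkeeping dictated by the recurrence, and it is precisely the match between the degree expression $k(n-1-k)$ coming from $\qbinom{n-1}{k}$ and the arithmetic quantity $(k-1)(n-k)$ in the definition of $g$ (after reindexing) that makes the equality $M=g(n)$ hold on the nose.
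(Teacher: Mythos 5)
Your proof is correct and takes essentially the same route as the paper's: both rest on the degree formula $\deg \qbinom{a+b}{b}=ab$, the gap-free positivity of Gaussian-polynomial coefficients (partitions in a box), and the observation that maximizing the resulting degrees reproduces exactly the recursion defining $g(n)$. The only difference is presentational --- you induct directly on the recurrence \eqref{qbell} while the paper unrolls it into a sum over compositions of $n$ --- and your inductive packaging makes the gap-freeness of the products and sums slightly more explicit.
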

\begin{proof}
  Unrolling the recursion in $B_n(q)$ leads to each term being a product of
  $q$-binomials like
  \begin{equation}
    \qbinom{n-1}{n-k_1} \qbinom{n-k_1-1}{n-k_1-k_2} \dots \qbinom{n-k_1-\dots-k_{r-1}-1}{n-k_1-\dots-k_r}
  \end{equation}
  where $k_1 + \dots + k_r = n$. The degree of this term is the
  sum of the degrees of each $q$-binomial and the degree of a $q$-binomial coefficient $\qbinom{a+b}{b}$ is $ab$.
  To maximize the degree of this product, we have to choose $k_1, \dots, k_r$
  such that
  \[
    \sum_{i=1}^r (k_i-1)(n - k_1 - \cdots - k_i)
  \]
  is maximized.
  In other words, $g(n)$ maximizes the degree of this product.
  Since the $q$-binomial coefficient $\qbinom{a+b}{a}$ enumerates all up-right lattice paths (indexed by area) in a $a \times b$ grid from the bottom left to the top right point, every term with degree less than $ab$ has a non-zero coefficient. Thus, $g(n)+1$ is the number of coefficients, proving the result.
\end{proof}

\begin{proof}[Proof of \cref{thm:distinct_ab}]
  By \cref{lem:smallest_ab}, the smallest value of $\ab$ is
  $\binom{n}{2} - g(n)$. By \cref{lem:continuously_increase_ab}, there is a
  path expressing every $\ab$ between this minimum and
  $\binom{n}{2}$. Hence, there are $g(n)+1$ distinct values of $\ab$ in
  $\dycks(n)$. By \cref{lem:q_bell_numbers}, $g(n)+1$ is also
  the number of nonzero coefficients in $B_n(q)$.
\end{proof}

\section*{Acknowledgements}
We thank Digjoy Paul and Hiranya Dey for discussions and the anonymous referees for extremely useful feedback.
The first author (AA) acknowledges support from SERB Core grant CRG/ 2021/001592 as well as the DST FIST program - 2021 [TPN - 700661].

\bibliographystyle{alpha}
\bibliography{qtcatalan}

\end{document}